\pgfplotsset{compat=newest}
\newtheorem{observation}{Observation}
\crefname{equation}{}{}
\crefname{table}{Table}{Tables}
\crefname{figure}{Figure}{Figures}
\crefname{section}{Section}{Sections}
\crefname{theorem}{Theorem}{Theorems}
\crefname{remark}{Remark}{Remarks}
\crefname{lemma}{Lemma}{Lemmas}
\crefname{proposition}{Proposition}{Propositions}
\crefname{definition}{Definition}{Definitions}
\crefname{observation}{Observation}{Observations}
\DeclareMathOperator*{\argmin}{argmin}
\newcommand{\lrp}[1]{\left(#1\right)}
\newcommand{\ds}{\displaystyle}
\newcommand{\lrbr}[1]{\llbracket #1 \rrbracket}
\renewcommand{\L}{L}
\newcommand{\y}{z}
\newcommand{\z}{z}
\newcommand{\x}{\mathring{x}}
\newcommand{\xmin}{\ubar{x}}
\newcommand{\ymin}{\ubar{y}}
\newcommand{\xmax}{\bar{x}}
\newcommand{\ymax}{\bar{y}}
\newcommand{\xhat}{\hat{x}}
\newcommand{\yhat}{\hat{\y}}
\newcommand{\lx}{w_x}
\newcommand{\lxi}{w_{x_i}}
\newcommand{\ly}{w_y}
\newcommand{\lyj}{w_{y_j}}
\DeclareMathOperator{\proj}{proj}
\DeclareMathOperator{\conv}{conv}
\newcommand{\D}{P_{(x, \bm g, \bm \alpha)}}%
\newcommand{\DD}{\check{P}_{(x, \bm g, \bm \alpha)}}%
\newcommand{\tD}{\tilde P_{(x, \bm g, \bm \alpha)}}
\newcommand{\X}{\check{X}}
\newcommand{\F}{f}
\newcommand{\capF}{F}
\newcommand{\funcf}{F}
\newcommand{\capG}{\check F}%
\newcommand{\G}{\check f}%
\newcommand{\uG}{\capG_{\XIP}}%
\newcommand{\tuG}{\tilde{\capG}_{\XIP}}
\newcommand{\IPtiny}{{\text{\normalfont \fontsize{6}{6}\selectfont IP}}}
\newcommand{\LPtiny}{{\text{\normalfont \fontsize{6}{6}\selectfont LP}}}
\newcommand{\PLP}{P^{\LPtiny}} 
\newcommand{\PIP}{P^{\IPtiny}} 
\newcommand{\PIPP}{P^{\IPtiny+}_{L,L_1}}
\newcommand{\PIPM}{P^{\IPtiny-}_{L,L_1}}
\newcommand{\hPIP}{\hat P^{\IPtiny, \ubmalpha}}
\newcommand{\cPIP}{\check P^{\IPtiny, \ubmalpha}}
\newcommand{\ctPIP}{\check P^{\IPtiny, \tilde \ubmalpha}}
\newcommand{\ctPLP}{\check P^{\LPtiny, \tilde \ubmalpha}}
\newcommand{\cPLP}{\check P^{\LPtiny,\ubmalpha}}
\newcommand{\DLP}{\DD^{\LPtiny,\ubmalpha }} 
\newcommand{\DIP}{\DD^{\IPtiny,\ubmalpha}}
\newcommand{\tDIP}{\tilde{\check{P}}_{(x, \bm g, \bm \alpha)}^{\IPtiny,\ubmalpha}}
\newcommand{\XIP}{\X^{\IPtiny}} 
\newcommand{\tXIP}{\tilde\X^{\IPtiny}} 
\newcommand{\XLP}{\X^{\LPtiny}} 
\newcommand{\tXLP}{\tilde \X^{\LPtiny}}
\newcommand{\tDLP}{\tilde{P}_{(x, \bm g, \bm \alpha)}^{\LPtiny}}
\newcommand{\williamsreform}{\textnormal{Bin1}\xspace}
\newcommand{\morsireform}{\textnormal{Bin2}\xspace}
\newcommand{\zellmerreform}{\textnormal{Bin3}\xspace}
\newcommand{\HybS}{\textnormal{HybS}\xspace}
\newcommand{\NMDT}{\textnormal{NMDT}\xspace}
\newcommand{\fcdot}{\,\cdot\,}
\newcommand{\fcarg}[1]{\def\fc@rg{#1}\ifx\fc@rg\empty\fcdot\else\fc@rg\fi}
\newcommand{\abs}[1]{\lvert\fcarg{#1}\rvert}
\newcommand{\field}{\mathbbm}
\newcommand{\reals}{\field{R}}
\newcommand{\R}{\reals} %
\newcommand{\N}{\field{N}} %
\newcommand{\abbr}[1][abbrev]{#1.\ }%
\newcommand{\cf}{\abbr[cf]}
\newcommand{\eg}{\abbr[e.g]}
\newcommand{\ie}{\abbr[i.e]}
\newcommand{\wrt}{\abbr[w.r.t]}
\newcommand{\st}{\mathrm{s.t.}}
\newcommand{\pwl}{\abbr[p.w.l]}
\renewcommand{\Set}[1]{\left\{#1\right\}}
\newcommand{\objref}[4]{\def\obj@rg{#4}%
  #1\ifx\obj@rg\empty#2\else#3\xspace\ref{#4}--\fi\ref}
\newcommand{\Sobjref}[1]{\objref{#1}{~}{s}}
\newcommand{\Tabref}[1][]{\Sobjref{Table}{#1}}
\DeclareRobustCommand{\ubar}[1]{\text{\b{$#1$}}}
\newcommand{\define}{\coloneqq}%
\newcommand{\non}{non-}%
\DeclareMathOperator{\epi}{epi}
\DeclareMathOperator{\gra}{gra}
\DeclareMathOperator{\hyp}{hyp}
\DeclareMathOperator{\vol}{vol}
\newcommand{\proju}{\proj_{\bm u}}
\newcommand{\projxyz}{\proj_{x,y,\z}}
\newcommand{\averageErrorWidth}{average error}
\newcommand{\zx}{z_x}
\newcommand{\zxi}{z_{x_i}}
\newcommand{\yxi}{\zxi}
\newcommand{\zy}{z_y}
\newcommand{\zpone}{z_{p_1}}
\newcommand{\zptwo}{z_{p_2}}
\newcommand{\zp}{z_p}
\newcommand{\zpij}{z_{p_{ij}}}
\newcommand{\ypij}{\zpij}
\newcounter{claims}
\newenvironment{claims}[1][]{\refstepcounter{claims}\par\medskip
   \noindent\textbf{Claim~\theclaims. #1} \rmfamily}{\medskip}
\newenvironment{claimproof}[1]{\par\medskip\noindent\emph{Claim proof:}\space#1}{\hfill $\diamond$ \medskip}
\pgfplotsset{compat=newest}
\NewDocumentCommand{\mref}{m}{\textup{\quinn_mref:n {#1}}}
\begin{document}

\title{Enhancements of Discretization Approaches for Non-Convex Mixed-Integer Quadratically Constrained Quadratic Programming: Part I\thanks{B. Beach and R. Hildebrand are supported by AFOSR grant FA9550-21-0107. Furthermore, R. Hildebrand was also partially supported by ONR Grant N00014-20-1-2156, L. Hager acknowledges financial support
by the Bavarian Ministry of Economic Affairs, Regional Development and Energy through the Center for Analytics -- Data -- Applications (ADA-Center) within the framework of ``BAYERN DIGITAL II''.}}
\titlerunning{Enhancements of Discretization Approaches for Non-Convex MIQCQPs}
\author{
Benjamin Beach\inst{1}
\and
Robert Burlacu\inst{2} %
\and
Andreas B\"armann\inst{3} %
\and
Lukas Hager \inst{3}
\and
Robert Hildebrand\inst{1}
}
\authorrunning{ 
B. Beach,
R. Burlacu,
A. B\"armann,
L. Hager,
R. Hildebrand}
\institute{
Grado Department of Industrial and Systems Engineering, Virginia Tech, Blacksburg, Virginia, USA\\
\email{\{bben6,rhil\}@vt.edu}
\and
Fraunhofer Institute for Integrated Circuits IIS, D-90411 N\"urnberg, Germany
\email{robert.burlacu@iis.fraunhofer.de}\\ \and
Friedrich-Alexander-Universit\"at Erlangen-N\"urnberg, D-91058 Erlangen, Germany 
\email{andreas.baermann@math.uni-erlangen.de,
lukas.hager@fau.de}
}
\maketitle              %
\begin{abstract}
We study mixed-integer programming (MIP) relaxation techniques for the solution of \non convex mixed-integer quadratically constrained quadratic programs (MIQCQPs).
We present MIP relaxation methods for \non convex continuous variable products.
In this paper,  we consider MIP relaxations based on separable reformulation.
The main focus is the introduction of the enhanced separable MIP relaxation for \non convex quadratic products of the form $z=xy$, called \emph{hybrid separable} (\HybS).
Additionally, we introduce a logarithmic MIP relaxation
for univariate quadratic terms, called \emph{sawtooth relaxation}, based on
\cite{Beach2020-compact}.
We combine the latter with \HybS and existing separable reformulations to derive MIP relaxations of MIQCQPs.
We provide a comprehensive theoretical analysis of these techniques, underlining the theoretical advantages of \HybS compared to its predecessors. 
We perform a broad computational study
to demonstrate the effectiveness of the enhanced MIP relaxation
in terms of producing tight dual bounds for MIQCQPs.
\textcolor{black}{In Part II, we study MIP relaxations that extend the MIP relaxation
\emph{normalized multiparametric disaggregation technique} (\NMDT)~\cite{castro2015-nmdt} and present a computational study which also includes the MIP relaxations from this work and compares them with a state-of-the-art of MIQCQP solvers.}
\end{abstract}
\keywords{Quadratic Programming \and MIP Relaxations \and Discretization \and Binarization \and Piecewise Linear Approximation.}

\section{Introduction}
\label{ssec:pdisc}

In this work, we study relaxations
of general mixed-integer quadratically constrained quadratic programs (MIQCQPs).
More precisely, we consider discretization techniques for \non convex MIQCQPs that allow for relaxations  of the set of feasible solutions
based on mixed-integer programming (MIP) formulations.
To this end, we study a number of MIP formulations that form relaxations
of the quadratic equations $ \y = x^2 $ and $ \z = xy $.
\textcolor{black}{These MIP relaxations can then be applied to MIQCQPs by introducing auxiliary variables and constraints for each quadratic term
to form a relaxation
of the overall problem.}
In particular, we consider the strength of various MIP relaxations applied directly to a given problem,
which is the simplest approach to enable the solution of MIQCQPs via an MIP solver.  
Our focus here is to analyze these approaches both theoretically and computationally
with respect to the quality of the dual bound they deliver for MIQCQPs.
\textcolor{black}{Dual bounds give a lower bound for the optimal value in a minimization problem. The term comes from the so-called dual program, which can also be used to determine such bounds.}

\noindent\textbf{Background}
MIQCQPs naturally arise in the solution of many real-world optimization problems,
stemming \eg from the contexts of power supply systems (\cite{aigner2020solving}),
gas networks (\cite{Correa-Posada:2014,Geissler:2012}),
water management (\cite{Faria-Bagajewicz:2011})
or pooling/mixing (\cite{Beach2020,barmann2022bipartite,Castro2016,Joly2003,Kolodziej2013b}).
See \cite{Furini:2019,Hao:1982} and the references therein for more examples.
For the solution of such problems,
there are a number of different approaches,
which differ in case the problems are convex or \non convex.
Within this work, we focus on the most general case, \ie \non convex MIQCQPs,
and only require finite upper and lower bounds on the variables. 

In the literature, a variety of solution techniques for \non convex MIQCQPs exists.
The most prominent class among them are \emph{McCormick}-based techniques,
see \eg \cite{Castillo2018,castro2015-nmdt,Castro2015-Chem,Castro2022,Misener2012,Nagarajan:2019}. 
For quadratic programs, in particular, convexification can be applied to bivariate monomials~$xy$
by introducing a new variable $ \z = xy $
and constructing the convex hull over the bounds on~$x$ and~$y$.
This yields the so-called \emph{McCormick relaxation}, which is the smallest convex set
containing the feasible set of the equation $ \z = xy $ for given finite bounds on~$x$ and~$y$.
This relaxation is known to be a polytope described by four linear inequalities (see~\cite{McCormick1976}),
and it is tighter the smaller the a priori known bounds on~$x$ and~$y$ are.
Hence, one standard solution approach is \emph{spatial branch-and-bound},
where the key idea is to split the domain recursively into two subregions.
For instance, one can choose the two subregions where $ x \leq \bar x $ and $ x \geq \bar x $, respectively,
for some value~$ \bar x $.
By branching on subregions, we can improve the convexification of the feasible region
by adding valid inequalities to the subproblems.    
Thus, applying spatial branch-and-bound in conjunction with convexification (such as McCormick Relaxations)
sequentially tightens the relaxation of the problem.

Alternatively, similar effects can be achieved through some kind of \emph{binarization}.
This is a general term that describes the conversion of continuous or integer variables
into binary variables.
By branching on these new binary variables,
we also partition the space into subproblems in a way that simulates spatial branch-and-bound.
The binarization of the partition makes the resulting problem a piecewise linear (\pwl) relaxation
of the original problem with binary auxiliary variables.
McCormick-based methods can differ in the way the partition and the binarization are performed.
The partition can be performed purely on one variable or on both variables,
equidistantly or \non equidistantly.
The binarization can be done linearly or logarithmically
in the number of partition elements, see~\cite{Vielma:2010,Kutzer:2020}.
In a broader sense, (axial-)spatial branching for bilinear terms
can also be seen as a piecewise McCormick linearization approach. 
Here, the partition is not performed a priori,
but rather an initial partition is refined via branching on continuous variables.
An overview of spatial-branching techniques can be found in \cite{Belotti:2009}.
    
Another common idea for linearizing variable products
is to use \emph{quadratic convex reformulations}
as in \cite{Billionnet2012,Galli2014,Dong-Luo-2018,Dong:2018,Beach2020-compact}.
This technique transforms the \non convex parts of the problem
into univariate terms via reformulations.
In \cite{Beach2020-compact}, 
the authors apply \emph{diagonal perturbation} to convexify the quadratic matrices.
The resulting univariate quadratic correction terms
are then linearized by introducing new variables and constraints of the form $ z_i = x_i^2 $,
which are then approximated by \pwl functions. 
The binarization of the univariate \pwl functions is done logarithmically
by using the so-called \emph{sawtooth} function, introduced in \cite{Yarotsky-2016}. 
An advantage of this approach is that only linearly many expressions of the form $ z_i = x_i^2 $
have to be linearized instead of quadratically many equations of the form $ z_{ij} = x_i x_j $,
with respect to the dimension of the original quadratic matrix. 
This approach yields a convex MIQCQP relaxation
instead of the MIP relaxation obtained via direct modeling using bilinear terms.
\textcolor{black}{See also \cite{ADJIMAN19981137} that adapts the branch and bound approach $\alpha$BB~\cite{Androulakis1995} %
to general twice differentiable objectives by providing convex reformulations via perturbations.}

A further set of approaches relies on \emph{separable reformulations}
of the \non convex variable products, as done \eg in \cite{Hager-2021}. 
Here, each term of the form~$xy$ is reformulated as a sum of separable univariate terms,
for example using the equivalent reformulation $ xy = \nicefrac{1}{2}(x^2 + y^2 - (x - y)^2)= \nicefrac{1}{2}(r+s-t)$ with $ r = x^2 $, $ s = y^2 $, and $ t = (x - y)^2 $
as described by \cite{Williams:2006}. 
The univariate constraints, here equations of the form $ r = x^2 $, $ s = y^2 $, and $ t = (x - y)^2 $,
are then relaxed. 
Again, this approach can be combined with a logarithmic encoding
of the univariate linear segments, as in \cite{Dong-Luo-2018,Beach2020-compact}.
In \cite{Hager-2021}, the authors analyze the following possible reformulations:
\begin{equation*}
\begin{array}{l}
     \williamsreform\!: xy = \lrp{\nicefrac{1}{2}(x + y)}^2 - \lrp{\nicefrac{1}{2}(x - y)}^2,\\
     \morsireform\!: xy = \nicefrac{1}{2}\left((x + y)^2 - x^2 - y^2\right),\\
     \zellmerreform:\! xy = \nicefrac{1}{2}\left(x^2 + y^2 - (x - y)^2\right).
\end{array}
\end{equation*}
They prove that MIP-based approximations of each of these univariate reformulations
require fewer binary variables
than a bivariate MIP-based approximation that guarantees the same maximal approximation error,
if this prescribed error is small enough.
However, this comes at the cost of weaker linear programming (LP) relaxations.

Alternatively, one can also obtain an MIP relaxation of $xy$ directly
via a bivariate \pwl relaxation,
see \eg \cite{Hager-2021,Burlacu-et-al:2020,Geissler:2012,Vielma:2010}.
One way to do this is to perform a triangulation of the domain,
which defines a \pwl approximation of the variable product. 
This \pwl approximation can then easily be converted into a relaxation of the feasible set
by axis-parallel shifting, which yields a \pwl underestimator and overestimator.
Bivariate \pwl approximations can also be binarized using (logarithmically-many) binary variables,
see \eg \cite{Geissler:2012,Vielma:2010,Kutzer:2020}.

\noindent\textbf{Contribution}
We compare different MIP relaxation approaches, both known ones, and a new one,
in terms of the dual bound, they impose for \non convex MIQCQPs.
We extend the separable approximation approaches \morsireform and \zellmerreform from \cite{Hager-2021} to MIP relaxations for $z=xy$.
Additionally, we introduce a novel MIP relaxation for $ \z = xy $ called \emph{hybrid separable} (HybS) that is based on a sophisticated combination of \morsireform and \zellmerreform that allows us to relax only linearly-many univariate quadratic terms
(in the dimension of the quadratic matrix).
In a theoretical analysis, we show that \HybS has theoretical advantages, such as fewer binary variables and better LP relaxations compared to \morsireform and \zellmerreform.
We combine \HybS, \morsireform, and \zellmerreform with an MIP relaxation, called \emph{sawtooth relaxation}, for $z=x^2$ that requires only logarithmically-many binary variables with respect to the relaxation error. Thus, we can obtain MIP relaxations for MIQCQPs.
The sawtooth relaxation is an extension of the sawtooth approximation from \cite{Beach2020-compact}, which
has the strong property of \emph{hereditary sharpness}.
The hereditary sharpness of an MIP formulation means that the formulation is tight in the space of the original variables,
even after branching on integer variables.
We can show that the sawtooth relaxation is also hereditary sharp.

Finally, we perform an extensive numerical study where we generate MIP relaxations of \non convex MIQCQPs. 
Foremost, we test the different relaxation techniques in their ability to generate tight dual bounds
for the original quadratic problems.
We will see that \HybS
has a clear advantage over its predecessors \morsireform and \zellmerreform. This effect becomes even more apparent on dense instances.

\textcolor{black}{We present Part II of this work in a separate paper, where we study MIP relaxations that are distinctly different and are extensions of the {\emph{normalized multiparametric disaggregation technique} }}(\NMDT)~\cite{castro2015-nmdt}. We provide further theoretical and computational analyses. \textcolor{black}{The \NMDT uses a combination of McCormick envelopes and selective discretization of variables; it was useful in some applications to chemical engineering. }
In addition, we perform a comparison of \HybS with \NMDT-based methods and Gurobi as an MIQCQP solver.

\noindent\textbf{Outline}
We proceed as follows.
In \Cref{sec:preliminaries}, we introduce several useful concepts and notations
used throughout the work.
In \Cref{sec:form-core}, we present core formulations used repeatedly
in our linear relaxations of quadratic terms.
In \Cref{sec:direct}, we introduce the new MIP relaxation \HybS for equations of the form $ \z = xy $.
In \Cref{sec:theory}, we prove various properties about the strengths of this MIP relaxation focusing on volume, sharpness, and optimal choice of breakpoints.
In \Cref{sec:proof_thm3} we prove that the sawtooth relaxation is hereditarily sharp.
In \Cref{sec:computations}, we present our computational study.

\section{MIP Formulations}
\label{sec:preliminaries}

In this work, we study relaxations
of general mixed-integer quadratically constrained quadratic programs (MIQCQPs),
which are defined as
\begin{equation}
    \label{eqn:generic-problem}
    \textcolor{black}{
    \begin{array}{rll}
        \ds \min & \bm x^\top Q_0 \bm x + \bm c^0 \cdot \bm x,\\
            \text{s.t.} & \bm x^\top Q_j \bm x + \bm c^j \cdot \bm x + b_j \le 0 \quad& j = 1, \ldots, m,\\
            & x_i \in [\xmin_i, \xmax_i] & i = 1, \ldots, k,\\
            & x_l \in \{0, 1\} & l = k+1, \ldots, n,
    \end{array}
    }
\end{equation}
\textcolor{black}{for $ Q_0, Q_j \in \R^{n \times n} $, $ \bm c^0, \bm c^j \in \R^n $
and $ b_j \in \R $, $ j = 1, \ldots m $.}

Throughout this article, we use the following convenient notation:
for any two integers $ i \leq j $, we define $ \lrbr{i, j} \define \{i, i + 1, \ldots, j\} $,
and for an integer $ i \geq 1 $ we define $ \lrbr{i} \define \lrbr{1, i} $.
{\color{black} 
We will denote sets using capital letters but also use capital letters for matrices, some functions, and the number of layers $L$.
We typically denote
variables using lowercase letters
and vectors of variables using boldface.
}
For a vector $ \bm u = (u_1, \ldots, u_n) $ and some index set $ I \subseteq \lrbr{n} $,
we write $ \bm u_I \define (u_i)_{i \in I} $.
Thus, \eg $ \bm u_{\lrbr{i}} = (u_1, \ldots, u_i) $.
Furthermore, %
we introduce the following notation:
for a function $ \funcf \colon X \to \R $ and a subset $ B \subseteq X $,
let $ \gra_B(\funcf) $, $ \epi_B(\funcf) $ and~$ \hyp_B(\funcf) $
denote the \emph{graph}, \emph{epigraph} and \emph{hypograph}
of the function~$\funcf$ over the set~$B$, respectively.
That is,
\begin{align*}
    &\gra_B(\funcf) \define \{(\bm u,\y) \in B \times \R: \y = \funcf(\bm u)\},\ \ \\
    &\epi_B(\funcf) \define \{(\bm u,\y) \in B \times \R: \y \ge \funcf(\bm u)\},\\
    &\hyp_B(\funcf) \define \{(\bm u,\y) \in B \times \R: \y \le \funcf(\bm u)\}.
\end{align*}
\textcolor{black}{In the following, we introduce the concept of MIP formulations as
well as properties regarding MIP formulations which will be used later on.}

\label{sec:fstrength}

We will study mixed-integer linear sets, so-called \emph{mixed-integer programming (MIP) formulations},
of the form
\begin{equation*}
    \PIP \define \{(\bm u, \bm v, \bm z)
		\in \R^{d + 1} \times [0, 1]^p \times \{0, 1\}^q :
		A (\bm u, \bm v, \bm z) \leq b\}
\end{equation*}
for some matrix~$A$ and vector~$b$ of suitable dimensions.
The \emph{linear programming (LP) relaxation} or \emph{continuous relaxation} $ \PLP $ of $ \PIP $
is given by
\begin{equation*}
    \PLP \define \{(\bm u, \bm v, \bm z)
		\in \R^{d + 1} \times [0, 1]^p \times [0, 1]^q :
		A (\bm u, \bm v, \bm z) \leq b\}.
\end{equation*}
We will often focus on the projections of these sets onto the variables $ \bm u $, \ie
\begin{equation}
    \proj_{\bm u}(\PIP) \define \{\bm u \in \R^{d + 1} :
        \exists (\bm v, \bm z) \in [0, 1]^p \times \{0, 1\}^q
	    \quad
	    \st
	    \quad (\bm u, \bm v, \bm z) \in \PIP\}. 
\end{equation}
The corresponding \emph{projected linear relaxation} $ \proj_{\bm u}(\PLP) $ onto the $ \bm u $-space 
is defined accordingly.

In order to assess the quality of an MIP formulation,
we will work with several possible measures of formulation strength.
First, we define notions of sharpness,
as in \cite{Beach2020-compact,Huchette:2018}.
These relate to the tightness of the LP relaxation of an MIP formulation.
Whereas properties such as total unimodularity
guarantee an LP relaxation to be a complete description for the mixed-integer points in the full space,
we are interested here in LP relaxations that are tight description of the mixed-integer points
in the projected space.  
\begin{definition}[Sharpness]
\label{def:sharp}
    We say that the MIP formulation $ \PIP $ is \emph{sharp} if
    \begin{equation*}
        \proj_{\bm u}(\PLP) = \conv(\proju(\PIP))
    \end{equation*}
    holds. Further, we call it \emph{hereditarily sharp} if,
    for all $ I \subseteq \lrbr{q} $ and $ \hat{\bm z} \in \{0, 1\}^{|I|} $, we have
    \begin{equation*}
        \proj_{\bm u}(\PLP|_{\bm{z}_I = \hat{\bm z}})
            = \conv\left(\proj_{\bm u}(\PIP|_{\bm{z}_I = \hat{\bm z}})\right).
    \end{equation*}
\end{definition}
Sharpness expresses a tightness at the root node of a branch-and-bound tree.  
Hereditarily sharp means that fixing any subset of binary variables to~$0$ or~$1$
preserves sharpness, and therefore this means sharpness is preserved
throughout a branch-and-bound tree. 

In this article, we study certain non-polyhedral sets $ U \subseteq \R^{d + 1} $
and will develop MIP formulations~$ \PIP $ to form relaxations of~$U$ in the projected space,
as defined in the following.
\begin{definition}[MIP relaxation]
	\label{def:mipmodel_relaxation}
	For a set $ U \subseteq \R^{d + 1} $
	we say that an MIP formulation~$ \PIP $
	is an \emph{MIP relaxation} of~$U$ if
	\begin{equation*}
    	U \subseteq \proj_{\bm u}(\PIP) .
	\end{equation*}
\end{definition}
Given a function $ \funcf\colon [0, 1]^d \to \R $, we will mostly consider
\begin{equation*}
    U = \gra_{[0, 1]^d}(\funcf) \subseteq \R^{d + 1}.
\end{equation*}
In particular, we will focus on either
\begin{equation*}
    U = \{(x, \y) \in [0, 1]^2 : \y = x^2\}
        \quad \text{or} \quad U = \{(x, y, \z) \in [0, 1]^3 : \z = xy\}.
\end{equation*}
We now define several quantities to measure the error of an MIP relaxation.
\begin{definition}[Error]
    \label{def:errors}
    For an MIP relaxation $ \PIP $ of a set $ U \subseteq \R^{d + 1} $,
    let $ \bar{\bm u} \in \proju(\PIP) $.
    We then define the \emph{pointwise error} of $ \bar{\bm u} $ as
    \begin{equation*}
        \mathcal E(\bar{\bm u}, U) \define \min\{\abs{\bm u_{d + 1} - \bar {\bm u}_{d + 1}} :
            \bm u \in U, {\bm u}_{\lrbr{d}} = \bar{\bm u}_{\lrbr{d}}\}.
    \end{equation*}
    We next define the following two error measures for $ \PIP $ \wrt $U$:
    \begin{enumerate}
        \item The \emph{maximum error} of $ \PIP $ \wrt $U$ is defined as
            \begin{equation*}
                \mathcal E^{\max}(\PIP, U) \define \max_{\bm u \in \proju(\PIP)}  \mathcal E(\bm u, U).
            \end{equation*}
        \item The \emph{\averageErrorWidth} of $ \PIP $ \wrt $U$
            is defined as
            \begin{equation*}
                \mathcal E^{\textnormal{avg}}(\proju(\PIP), U) \define \vol(\PIP \setminus U).
            \end{equation*}
    \end{enumerate}
\end{definition}
Via integral calculus, the second, volume-based error measure
can be interpreted as the average pointwise error of all points $ \bm u \in \proju(\PIP) $.
Note that whenever the volume of~$U$ is zero (\ie it is a lower-dimensional set),
the \averageErrorWidth $\,$just reduces to the volume of~$ \PIP $.

Both of the defined error quantities for an MIP relaxation~$ \PIP $
can also be used to measure the tightness of the corresponding LP relaxation~$ \PLP $.
In \Cref{subsubsec:LPrelaxationvolume}, we use these to compare formulations
when $ \PLP $ is not sharp.

\section{Core Relaxations}
\label{sec:form-core}

In the definition of the MIP relaxations studied in this work,
we repeatedly make use of several ``core'' formulations
for specific sets of feasible points.
They are introduced in the following.

For our relaxations of MIQCQPs, we will frequently need to consider terms of the form~$z=xy$
for continuous or integer variables~$x$ and~$y$
within certain bounds $ D_x \define [\xmin, \xmax] $
and $ D_y \define [\ymin, \ymax] $, respectively.
To this end, we introduce the function $ F\colon D \to \R,\, F(x, y) = xy $,
$ D \define D_x \times D_y $,
and refer to the set of feasible solutions to the equation $ \z = xy $
via the graph of~$F$, \ie $ \gra_D(F) = \{(x, y, \z) \in D \times \R: \z = xy\} $.
In order to simplify the exposition, we will, for example, often write $ \gra_D(xy) $
or refer to a relaxation of the equation $ \z = xy $ instead of $ \gra_D(F) $.
We will do this similarly for the epigraph and hypograph of~$F$
as well as for the univariate function $ f\colon D_x \to \R,\, f(x) = x^2 $
and equations of the form $ \y = x^2 $, for example.
\subsection{McCormick Envelopes}
\label{sssec:McCormick}

The convex hull of the equation $ \z = xy $ for $ (x, y) \in D $
is given by a set of linear equations known as the McCormick envelope.  See \cite{McCormick1976}.
\begin{tcolorbox}[colback = white]
\begin{equation}
\mathcal{M}(x, y) \define \left\{(x, y, \z) \in [\xmin, \xmax] \times [\ymin, \ymax] \times \R : 
    \eqref{eq:McCormick}
    \right\}.
    \label{eq:McCormick-set}
\end{equation}
\begin{equation}
    \begin{aligned}
          &\xmin \cdot y + x \cdot \ymin - \xmin \cdot \ymin \leq& \z &&\le \xmax \cdot y + x \cdot \ymin - \xmax \cdot \ymin,\\
           &\xmax \cdot y + x \cdot \ymax - \xmax \cdot \ymax \leq& \z &&\le \xmin \cdot y + x \cdot \ymax - \xmin \cdot \ymax.
    \end{aligned}
    \label{eq:McCormick}
\end{equation}
\end{tcolorbox}
\subsection{Sawtooth-Based MIP Formulations}
\label{ssec:Sawtooth}

We next recall an MIP formulation for approximating equations of the form $ \y = x^2 $
that requires only logarithmically-many binary variables in the number of linear segments. 
It makes use of an elegant \pwl formulation
for $ \gra_{[0, 1]}(x^2) $ from \cite{Yarotsky-2016}
using the recursively defined \emph{sawtooth} function presented in \cite{Telgarsky2015}
to formulate the approximation of $ \gra_{[0, 1]}(x^2) $,
as described in \cite{Beach2020-compact}.

Let $L$ be an positive integer and let $ F^L $ be the piecewise linear interpolation of $x^2$
            at uniformly spaced breakpoints $ \tfrac{i}{2^L} $
            for $ i = 0, 1, \ldots, 2^L $;
            see Figure~\ref{fig:Gi}.
This function has a convenient recursive definition~\cite{Yarotsky-2016,Telgarsky2015}.
To this end, define the ``tooth'' function
$ G\colon [0, 1] \to [0, 1],\, G(x) = \min\{2x, 2(1 - x)\} $.  Subsequently, we define compositions of the tooth function
\begin{equation}
     G^j \define \underbrace{G \circ G \circ \ldots \circ G}_j.
    \label{eq:g}
\end{equation}
Under this notation, we can formally define the function $ \capF^L \colon [0, 1] \to [0, 1]$,
\begin{equation}
    \capF^L(x) \define x - \sum_{j = 1}^L 2^{-2j} G^j(x).
    \label{eq:def_F^j}
\end{equation}

We summarize useful information from~\cite{Yarotsky-2016,Beach2020-compact} about the approximation $\capF^L$. These properties will be used in our analysis of the models that we propose.

\begin{proposition}[\cite{Yarotsky-2016,Beach2020-compact}]
    \label{prop:F^L}
    The function $ F^L $ satisfies the following properties:
    \begin{enumerate}
        \item The function $ F^L $ is the piecewise linear interpolation of $x^2$
            at uniformly spaced breakpoints $ \tfrac{i}{2^L} $
            for $ i = 0, 1, \ldots, 2^L $;
            see Figure~\ref{fig:Gi}.

            The shifted piecewise linear function $ F^L - 2^{-2L-2} $
            has each affine part being the tangent to~$ x^2 $
            at the midpoint $ \tfrac{i}{2^L} + \frac{1}{2^{L + 1}} $;
            see Figure~\ref{fig:Fi-shifted}.
            
        \item It holds $ 0 \leq F^L(x) - x^2 \leq 2^{-2L-2} $ for all $ x \in [0, 1] $.  \\
        Equivalently, $ 0 \leq x^2 - (F^L(x)  - 2^{-2L-2}) \leq 2^{-2L-2} $ for all $ x \in [0, 1] $.
        \item It holds $ F^L(x) - 2^{-2L-2} = x^2 $ if and only if $ x = \tfrac{i}{2^L} + \frac{1}{2^{L + 1}} $
            with $ i = 0, 1, \ldots, 2^L-1 $.
        \label{prop1:weak}
        \item The function $ F^L $ is convex on the interval $ [0, 1] $.
    \end{enumerate}
\end{proposition}
\begin{figure}[h]
    \hfill
    \subfigure[The sawtooth functions $ G^j $ for $ j = 1, 2, 3 $.]{
        \begin{tikzpicture}
\pgfplotsset{%
    width=0.45\textwidth,
}
\definecolor{color0}{rgb}{0.12156862745098,0.466666666666667,0.705882352941177}
\definecolor{color1}{rgb}{1,0.498039215686275,0.0549019607843137}
\definecolor{color2}{rgb}{0.172549019607843,0.627450980392157,0.172549019607843}

\begin{axis}[
legend cell align={left},
legend columns=3,
legend style={at={(0.5,1.2)}, anchor=north, draw=white!80.0!black},
tick align=outside,
tick pos=left,
x grid style={white!69.01960784313725!black},
xmin=-0.05, xmax=1.05,
xtick style={color=black},
xtick={0,0.125,0.25,0.375,0.5,0.625,0.75,0.875,1},
xticklabels={$0$,$\tfrac18$,$\tfrac14$,$\tfrac38$,$\tfrac12$,$\tfrac58$,$\tfrac34$,$\tfrac78$,$1$},
y grid style={white!69.01960784313725!black},
ymin=-0.05, ymax=1.05,
ytick style={color=black},
ytick={0,0.125,0.25,0.375,0.5,0.625,0.75,0.875,1},
yticklabels={0,1/8,1/4,3/8,1/2,5/8,3/4,7/8,1}
]
\addplot [semithick, color0]
table {%
0 0
0.5 1
1 0
};
\addlegendentry{$G^1$}
\addplot [semithick, color1]
table {%
0 0
0.25 1
0.5 0
0.75 1
1 0
};
\addlegendentry{$G^2$}
\addplot [semithick, color2]
table {%
0 0
0.125 1
0.25 0
0.375 1
0.5 0
0.625 1
0.75 0
0.875 1
1 0
};
\addlegendentry{$G^3$}
\end{axis}
\end{tikzpicture}
        \label{fig:y equals x}
    }
    \hfill
    \subfigure[The successive piecewise linear approximations (interpolations) of $ F(x) = x^2 $.]{
        \begin{tikzpicture}
\pgfplotsset{%
    width=0.45\textwidth,
}
\definecolor{color0}{rgb}{0.12156862745098,0.466666666666667,0.705882352941177}
\definecolor{color1}{rgb}{1,0.498039215686275,0.0549019607843137}
\definecolor{color2}{rgb}{0.172549019607843,0.627450980392157,0.172549019607843}
\definecolor{color3}{rgb}{0.83921568627451,0.152941176470588,0.156862745098039}
\definecolor{color4}{rgb}{0.580392156862745,0.403921568627451,0.741176470588235}

\begin{axis}[
legend cell align={left},
legend columns=5,
legend style={at={(0.5,1.2)}, anchor=north, draw=white!80.0!black},
tick align=outside,
tick pos=left,
x grid style={white!69.01960784313725!black},
xmin=-0.05, xmax=1.05,
xtick style={color=black},
xtick={0,0.125,0.25,0.375,0.5,0.625,0.75,0.875,1},
xticklabels={$0$,$\tfrac18$,$\tfrac14$,$\tfrac38$,$\tfrac12$,$\tfrac58$,$\tfrac34$,$\tfrac78$,$1$},
y grid style={white!69.01960784313725!black},
ymin=-0.05, ymax=1.05,
ytick style={color=black},
ytick={0,0.125,0.25,0.375,0.5,0.625,0.75,0.875,1},
yticklabels={0,1/8,1/4,3/8,1/2,5/8,3/4,7/8,1}
]
\addplot [semithick, color0]
table {%
0 0
1 1
};
\addlegendentry{$F^0$}
\addplot [semithick, color1]
table {%
0 0
0.5 0.25
1 1
};
\addlegendentry{$F^1$}
\addplot [semithick, color2]
table {%
0 0
0.25 0.0625
0.5 0.25
0.75 0.5625
1 1
};
\addlegendentry{$F^2$}
\addplot [semithick, color3]
table {%
0 0
0.125 0.015625
0.25 0.0625
0.375 0.140625
0.5 0.25
0.625 0.390625
0.75 0.5625
0.875 0.765625
1 1
};
\addlegendentry{$F^3$}
\addplot [semithick, color4]
table {%
0 0
0.01 0.0001
0.02 0.0004
0.03 0.0009
0.04 0.0016
0.05 0.0025
0.06 0.0036
0.07 0.0049
0.08 0.0064
0.09 0.0081
0.1 0.01
0.11 0.0121
0.12 0.0144
0.13 0.0169
0.14 0.0196
0.15 0.0225
0.16 0.0256
0.17 0.0289
0.18 0.0324
0.19 0.0361
0.2 0.04
0.21 0.0441
0.22 0.0484
0.23 0.0529
0.24 0.0576
0.25 0.0625
0.26 0.0676
0.27 0.0729
0.28 0.0784
0.29 0.0841
0.3 0.09
0.31 0.0961
0.32 0.1024
0.33 0.1089
0.34 0.1156
0.35 0.1225
0.36 0.1296
0.37 0.1369
0.38 0.1444
0.39 0.1521
0.4 0.16
0.41 0.1681
0.42 0.1764
0.43 0.1849
0.44 0.1936
0.45 0.2025
0.46 0.2116
0.47 0.2209
0.48 0.2304
0.49 0.2401
0.5 0.25
0.51 0.2601
0.52 0.2704
0.53 0.2809
0.54 0.2916
0.55 0.3025
0.56 0.3136
0.57 0.3249
0.58 0.3364
0.59 0.3481
0.6 0.36
0.61 0.3721
0.62 0.3844
0.63 0.3969
0.64 0.4096
0.65 0.4225
0.66 0.4356
0.67 0.4489
0.68 0.4624
0.69 0.4761
0.7 0.49
0.71 0.5041
0.72 0.5184
0.73 0.5329
0.74 0.5476
0.75 0.5625
0.76 0.5776
0.77 0.5929
0.78 0.6084
0.79 0.6241
0.8 0.64
0.81 0.6561
0.82 0.6724
0.83 0.6889
0.84 0.7056
0.85 0.7225
0.86 0.7396
0.87 0.7569
0.88 0.7744
0.89 0.7921
0.9 0.81
0.91 0.8281
0.92 0.8464
0.93 0.8649
0.94 0.8836
0.95 0.9025
0.96 0.9216
0.97 0.9409
0.98 0.9604
0.99 0.9801
1 1
};
\addlegendentry{$F$}
\end{axis}
\end{tikzpicture}
        \label{fig:three sin x}
    }
    \hfill\null
    \caption{An illustration of the functions $ G^j $ and $ F^L $
        that underlie the construction of our MIP formulations.}
    \label{fig:Gi}
\end{figure}
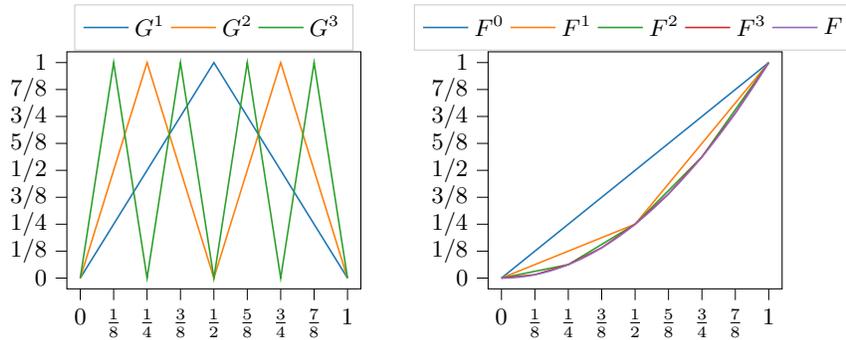
\begin{figure}[h]
    \centering
    \begin{tikzpicture}[scale = 0.8]

\definecolor{color0}{rgb}{1,0.647058823529412,0}
\definecolor{color1}{rgb}{0.12156862745098,0.466666666666667,0.705882352941177}
\definecolor{color2}{rgb}{1,0.498039215686275,0.0549019607843137}
\definecolor{color3}{rgb}{0.172549019607843,0.627450980392157,0.172549019607843}
\definecolor{color4}{rgb}{0.83921568627451,0.152941176470588,0.156862745098039}

\begin{axis}[
legend cell align={left},
legend columns=1,
legend style={at={(1.5,0.65)}, draw=white!80.0!black},
tick align=outside,
tick pos=left,
x grid style={white!69.0196078431373!black},
xmin=-0.05, xmax=1.05,
xtick style={color=black},
xtick={0,0.125,0.25,0.375,0.5,0.625,0.75,0.875,1},
xticklabels={0,\(\frac18\),\(\frac28\),\(\frac38\),\(\frac48\),\(\frac58\),\(\frac68\),\(\frac78\),1},
y grid style={white!69.0196078431373!black},
ymin=-0.3125, ymax=1.0625,
ytick style={color=black},
ytick={0,0.125,0.25,0.375,0.5,0.625,0.75,0.875,1},
yticklabels={0,\(\displaystyle {1}/{8}\),\(\displaystyle {2}/{8}\),\(\displaystyle {3}/{8}\),\(\displaystyle {4}/{8}\),\(\displaystyle {5}/{8}\),\(\displaystyle {6}/{8}\),\(\displaystyle {7}/{8}\),1}
]
\addplot [draw=none, mark=None, draw=color1, fill=color1, colormap/viridis, forget plot]
table{%
x                      y
0.485 0.23
0.515 0.23
0.515 0.27
0.485 0.27
0.485 0.23
};
\addplot [only marks, mark=triangle*, draw=color0, fill=color0, colormap/viridis, forget plot]
table{%
x                      y
0.25 0.0625
0.75 0.5625
};
\addplot [only marks, mark=*, draw=green!50!black, fill=green!50!black, colormap/viridis, forget plot]
table{%
x                      y
0.125 0.015625
0.375 0.140625
0.625 0.390625
0.875 0.765625
};
\addplot [semithick, color1]
table {%
0 -0.25
1 0.75
};
\addlegendentry{$F^0 - 2^{-2}$}
\addplot [semithick, color2]
table {%
0 -0.0625
0.5 0.1875
1 0.9375
};
\addlegendentry{$F^1 - 2^{-4}$}
\addplot [semithick, color3]
table {%
0 -0.015625
0.25 0.046875
0.5 0.234375
0.75 0.546875
1 0.984375
};
\addlegendentry{$F^2 - 2^{-6}$}
\addplot [semithick, color4]
table {%
0 0
0.015625 0.000244140625
0.03125 0.0009765625
0.046875 0.002197265625
0.0625 0.00390625
0.078125 0.006103515625
0.09375 0.0087890625
0.109375 0.011962890625
0.125 0.015625
0.140625 0.019775390625
0.15625 0.0244140625
0.171875 0.029541015625
0.1875 0.03515625
0.203125 0.041259765625
0.21875 0.0478515625
0.234375 0.054931640625
0.25 0.0625
0.265625 0.070556640625
0.28125 0.0791015625
0.296875 0.088134765625
0.3125 0.09765625
0.328125 0.107666015625
0.34375 0.1181640625
0.359375 0.129150390625
0.375 0.140625
0.390625 0.152587890625
0.40625 0.1650390625
0.421875 0.177978515625
0.4375 0.19140625
0.453125 0.205322265625
0.46875 0.2197265625
0.484375 0.234619140625
0.5 0.25
0.515625 0.265869140625
0.53125 0.2822265625
0.546875 0.299072265625
0.5625 0.31640625
0.578125 0.334228515625
0.59375 0.3525390625
0.609375 0.371337890625
0.625 0.390625
0.640625 0.410400390625
0.65625 0.4306640625
0.671875 0.451416015625
0.6875 0.47265625
0.703125 0.494384765625
0.71875 0.5166015625
0.734375 0.539306640625
0.75 0.5625
0.765625 0.586181640625
0.78125 0.6103515625
0.796875 0.635009765625
0.8125 0.66015625
0.828125 0.685791015625
0.84375 0.7119140625
0.859375 0.738525390625
0.875 0.765625
0.890625 0.793212890625
0.90625 0.8212890625
0.921875 0.849853515625
0.9375 0.87890625
0.953125 0.908447265625
0.96875 0.9384765625
0.984375 0.968994140625
1 1
};
\addlegendentry{$x^2$}
\end{axis}

\end{tikzpicture}
    \caption{The successive piecewise linear approximations of $ x^2 $ shifted down to be underestimators.
        The markers indicate the places where the underestimators coincide with~$ x^2 $
        and in fact, show that the affine segments are tangent lines to the function.
        The inequality $ \y \geq F^L(x) - 2^{-2L-2} $ in fact creates $ 2^L $~tangent lower bounds.}
    \label{fig:Fi-shifted}
\end{figure}
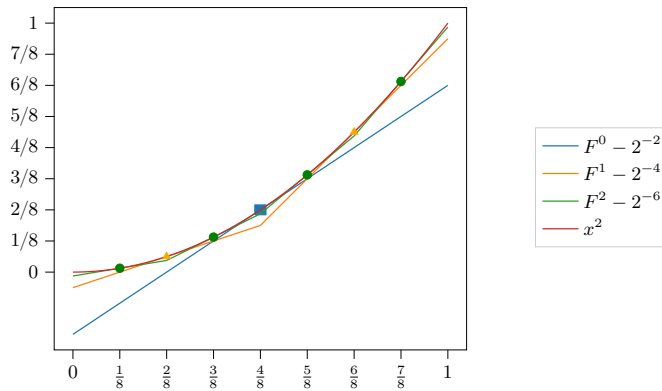
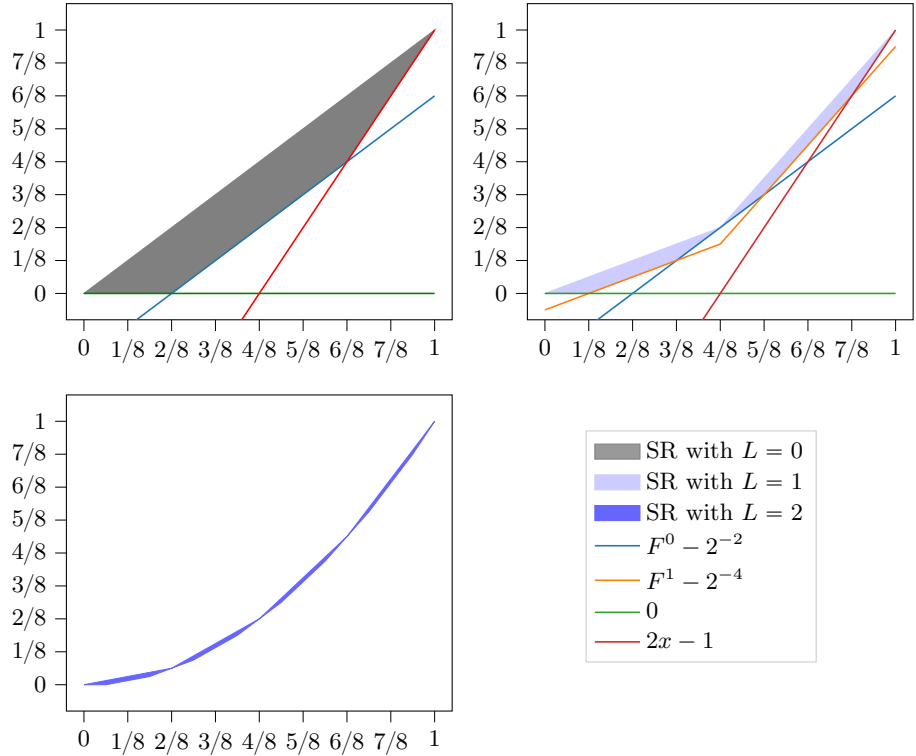
\begin{figure}[h]
    \centering
    \begin{tikzpicture}
\pgfplotsset{%
    width=0.55\textwidth,
}
\definecolor{color0}{rgb}{0.12156862745098,0.466666666666667,0.705882352941177}
\definecolor{color1}{rgb}{1,0.498039215686275,0.0549019607843137}
\definecolor{color2}{rgb}{0.172549019607843,0.627450980392157,0.172549019607843}
\definecolor{color3}{rgb}{0.83921568627451,0.152941176470588,0.156862745098039}

\begin{groupplot}[group style={group size=2 by 2}]
\nextgroupplot[
tick align=outside,
tick pos=left,
x grid style={white!69.0196078431373!black},
xmin=-0.05, xmax=1.05,
xtick style={color=black},
xtick={0,0.125,0.25,0.375,0.5,0.625,0.75,0.875,1},
xticklabels={0,\(\displaystyle {1}/{8}\),\(\displaystyle {2}/{8}\),\(\displaystyle {3}/{8}\),\(\displaystyle {4}/{8}\),\(\displaystyle {5}/{8}\),\(\displaystyle {6}/{8}\),\(\displaystyle {7}/{8}\),1},
y grid style={white!69.0196078431373!black},
ymin=-0.1, ymax=1.1,
ytick style={color=black},
ytick={0,0.125,0.25,0.375,0.5,0.625,0.75,0.875,1},
yticklabels={0,\(\displaystyle {1}/{8}\),\(\displaystyle {2}/{8}\),\(\displaystyle {3}/{8}\),\(\displaystyle {4}/{8}\),\(\displaystyle {5}/{8}\),\(\displaystyle {6}/{8}\),\(\displaystyle {7}/{8}\),1}
]
\path [draw=white!50.1960784313725!black, fill=white!50.1960784313725!black]
(axis cs:0,0)
--(axis cs:0,0)
--(axis cs:0.25,0)
--(axis cs:0.5,0.25)
--(axis cs:0.75,0.5)
--(axis cs:1,1)
--(axis cs:1,1)
--(axis cs:1,1)
--(axis cs:0.75,0.75)
--(axis cs:0.5,0.5)
--(axis cs:0.25,0.25)
--(axis cs:0,0)
--cycle;

\addplot [semithick, color0]
table {%
0 -0.25
1 0.75
};
\addplot [semithick, green!50!black]
table {%
0 0
1 0
};
\addplot [semithick, red]
table {%
0 -1
1 1
};

\nextgroupplot[
legend cell align={left},
legend style={fill opacity=0.8, draw opacity=1, text opacity=1, 
at={(0.75,-0.35)}, %
draw=white!80!black},
tick align=outside,
tick pos=left,
x grid style={white!69.0196078431373!black},
xmin=-0.05, xmax=1.05,
xtick style={color=black},
xtick={0,0.125,0.25,0.375,0.5,0.625,0.75,0.875,1},
xticklabels={0,\(\displaystyle {1}/{8}\),\(\displaystyle {2}/{8}\),\(\displaystyle {3}/{8}\),\(\displaystyle {4}/{8}\),\(\displaystyle {5}/{8}\),\(\displaystyle {6}/{8}\),\(\displaystyle {7}/{8}\),1},
y grid style={white!69.0196078431373!black},
ymin=-0.1, ymax=1.1,
ytick style={color=black},
ytick={0,0.125,0.25,0.375,0.5,0.625,0.75,0.875,1},
yticklabels={0,\(\displaystyle {1}/{8}\),\(\displaystyle {2}/{8}\),\(\displaystyle {3}/{8}\),\(\displaystyle {4}/{8}\),\(\displaystyle {5}/{8}\),\(\displaystyle {6}/{8}\),\(\displaystyle {7}/{8}\),1}
]
\path [draw=blue, fill=blue, opacity=0.2]
(axis cs:0,0)
--(axis cs:0,0)
--(axis cs:0.125,0)
--(axis cs:0.25,0.0625)
--(axis cs:0.375,0.125)
--(axis cs:0.5,0.25)
--(axis cs:0.625,0.375)
--(axis cs:0.75,0.5625)
--(axis cs:0.875,0.75)
--(axis cs:1,1)
--(axis cs:1,1)
--(axis cs:1,1)
--(axis cs:0.875,0.8125)
--(axis cs:0.75,0.625)
--(axis cs:0.625,0.4375)
--(axis cs:0.5,0.25)
--(axis cs:0.375,0.1875)
--(axis cs:0.25,0.125)
--(axis cs:0.125,0.0625)
--(axis cs:0,0)
--cycle;
\addlegendimage{area legend, draw=white!50.1960784313725!black, fill=white!50.1960784313725!black}
\addlegendentry{SR with $L=0$}
\addlegendimage{area legend, draw=blue, fill=blue, opacity=0.2}
\addlegendentry{SR with $L=1$}
\addlegendimage{area legend, draw=blue, fill=blue, opacity = 0.6}
\addlegendentry{SR with $L=2$}
\addplot [semithick, color0]
table {%
0 -0.25
1 0.75
};
\addlegendentry{$F^0 - 2^{-2}$}
\addplot [semithick, color1]
table {%
0 -0.0625
0.5 0.1875
1 0.9375
};
\addlegendentry{$F^1 - 2^{-4}$}
\addplot [semithick, color2]
table {%
0 0
1 0
};
\addlegendentry{$0$}
\addplot [semithick, color3]
table {%
0 -1
1 1
};
\addlegendentry{$2x-1$}

\nextgroupplot[
tick align=outside,
tick pos=left,
x grid style={white!69.0196078431373!black},
xmin=-0.05, xmax=1.05,
xtick style={color=black},
xtick={0,0.125,0.25,0.375,0.5,0.625,0.75,0.875,1},
xticklabels={0,\(\displaystyle {1}/{8}\),\(\displaystyle {2}/{8}\),\(\displaystyle {3}/{8}\),\(\displaystyle {4}/{8}\),\(\displaystyle {5}/{8}\),\(\displaystyle {6}/{8}\),\(\displaystyle {7}/{8}\),1},
y grid style={white!69.0196078431373!black},
ymin=-0.1, ymax=1.1,
ytick style={color=black},
ytick={0,0.125,0.25,0.375,0.5,0.625,0.75,0.875,1},
yticklabels={0,\(\displaystyle {1}/{8}\),\(\displaystyle {2}/{8}\),\(\displaystyle {3}/{8}\),\(\displaystyle {4}/{8}\),\(\displaystyle {5}/{8}\),\(\displaystyle {6}/{8}\),\(\displaystyle {7}/{8}\),1}
]
\path [draw=blue, fill=blue, opacity = 0.6]
(axis cs:0,0)
--(axis cs:0,0)
--(axis cs:0.0625,0)
--(axis cs:0.125,0.015625)
--(axis cs:0.1875,0.03125)
--(axis cs:0.25,0.0625)
--(axis cs:0.3125,0.09375)
--(axis cs:0.375,0.140625)
--(axis cs:0.4375,0.1875)
--(axis cs:0.5,0.25)
--(axis cs:0.5625,0.3125)
--(axis cs:0.625,0.390625)
--(axis cs:0.6875,0.46875)
--(axis cs:0.75,0.5625)
--(axis cs:0.8125,0.65625)
--(axis cs:0.875,0.765625)
--(axis cs:0.9375,0.875)
--(axis cs:1,1)
--(axis cs:1,1)
--(axis cs:1,1)
--(axis cs:0.9375,0.890625)
--(axis cs:0.875,0.78125)
--(axis cs:0.8125,0.671875)
--(axis cs:0.75,0.5625)
--(axis cs:0.6875,0.484375)
--(axis cs:0.625,0.40625)
--(axis cs:0.5625,0.328125)
--(axis cs:0.5,0.25)
--(axis cs:0.4375,0.203125)
--(axis cs:0.375,0.15625)
--(axis cs:0.3125,0.109375)
--(axis cs:0.25,0.0625)
--(axis cs:0.1875,0.046875)
--(axis cs:0.125,0.03125)
--(axis cs:0.0625,0.015625)
--(axis cs:0,0)
--cycle;

\end{groupplot}

\end{tikzpicture}
    \caption{The sawtooth relaxation from Definition~\ref{def:sawtooth-approx-relax}
        at depths $ L = 0, 1, 2 $.
        The shaded region is the relaxation.
        Some additional inequalities are plotted to help visualize the inequalities
        with respect to the functions~$ \capF^j $.}
    \label{fig:my_label}
\end{figure}
Following~\cite{Beach2020-compact}, we create an MIP formulation to encode this piecewise linear function.  We create variables $ g_j $ to represent the output of a ``sawtooth'' function of~$x$ and binary variables $ \bm \alpha \in \{0, 1\}^L $ that represent decision in $G(x)$ that either $2x \leq 2(1-x)$ or $2(1-x) \leq 2x$.  In particular, we design the formulation such when $ \bm \alpha \in \{0, 1\}^L $,
the relationship between~$ g_j$ and $ g_{j - 1} $
is $ g_j = \min\{2g_{j - 1}, 2(1 - g_{j - 1})\} $ for $ j = 1, \ldots, L $,

To this end, we define a formulation parameterized by the depth~$ L \in \N $:
\begin{tcolorbox}[colback = white]
\begin{equation}
    S^L \define \Set{(x, \bm{g}, \bm{\alpha}) \in [0, 1] \times [0, 1]^{L + 1} \times \{0, 1\}^L :
        \eqref{eqn:sawtooth-formulation}}.
\end{equation}
\begin{equation}
    \label{eqn:sawtooth-formulation}
    \begin{array}{rll}
        g_0 &= x,\\
        2(g_{j - 1} - \alpha_j) &\le g_j \le 2 g_{j - 1} &\quad j = 1, \ldots, L,\\
        2(\alpha_j - g_{j - 1}) &\le g_j \le 2(1 - g_{j - 1}) &\quad j = 1, \ldots, L.
    \end{array}
\end{equation}
\end{tcolorbox}

\noindent Using the relationships~\eqref{eq:g} and~\eqref{eq:def_F^j} between~$x$ and~$ \bm g $,
any constraint of the form $ \y = x^2 $ can be approximated via the function
\begin{tcolorbox}[colback = white]
$ \F^L\colon [0, 1] \times [0, 1]^{L + 1} \to [0, 1] $, 
    \begin{equation}
        \F^L(x, \bm g) = x - \sum_{j = 1}^L 2^{-2j} g_j,
        \label{eq:FL}
    \end{equation}
\end{tcolorbox}
\noindent
for an integer $L \geq 0$.
We use the above definitions to give an MIP formulation
that approximates equations of the form $ \y = x^2 $.
\begin{definition}[Sawtooth Approximation, \cite{Beach2020-compact}]
\label{def:sawtooth}
Given some $ L \in \N $, the \emph{depth-$L$ sawtooth approximation} for $ \y = x^2 $
on the interval $ x \in [0, 1] $ is given by
\begin{equation}
    \label{eq:sawtooth-approx}
    \Set{(x, \y) \in [0, 1]^2 : \exists (\bm g, \bm \alpha) \in [0, 1]^{L + 1} \times \{0, 1\}^L : \y = \F^L(x, \bm g),\, (x, \bm{g}, \bm{\alpha}) \in S^L}.
\end{equation}
\end{definition}
The set~\eqref{eq:sawtooth-approx} is a compact approximation of $ \gra_{[0, 1]}(x^2) $
in terms of the number of variables and constraints.

Based on the sawtooth approximation,
we can now present the sawtooth relaxation for $ \y = x^2 $ from \cite{Beach2020-compact},
illustrated in \Cref{fig:my_label},
which arises by shifting each approximating function~$ F^j $, $ j = 0, \ldots, L $,
down by its maximum error~$ 2^{-2j - 2} $ (established in \Cref{prop:F^L}, Item~2)
and then adding additional outer-approximation cuts to~$ x^2 $ at $ x = 0 $ and $ x = 1 $.
\begin{definition}[Sawtooth Relaxation, SR \cite{Beach2020-compact}]
    \label{def:sawtooth-approx-relax}
    Given some $ L \in \N $, the \emph{depth-$L$ sawtooth relaxation} for $ \y = x^2 $
    on the interval $ x \in [0, 1] $ is given by
    \begin{tcolorbox}[colback = white]
        \begin{equation}
            \label{eq:sawtooth-relax-MIP}
            \Set{(x, \y) \in [0, 1] \times \R :
                \exists (\bm g, \bm \alpha) \in [0, 1]^{L + 1} \times \{0, 1\}^L :
                    \eqref{eq:sawtooth-relax-constr}}.
        \end{equation}
        \begin{equation}
            \label{eq:sawtooth-relax-constr}
            \begin{array}{rll}
                \y &\le \F^L(x, \bm g),\\
                \y &\ge \F^j(x, \bm g) - 2^{-2j - 2} & \quad j = 0, \ldots, L\\
                \y &\ge 0, \quad
                \y \ge 2x - 1,\\
                (x, \bm{g}, \bm{\alpha}) &\in S^L.
            \end{array}
        \end{equation}
    \end{tcolorbox}
\end{definition}

\begin{figure}
    \centering
    \begin{tikzpicture}
\pgfplotsset{%
    width=0.55\textwidth,
}
\definecolor{color0}{rgb}{0.12156862745098,0.466666666666667,0.705882352941177}
\definecolor{color1}{rgb}{1,0.498039215686275,0.0549019607843137}
\definecolor{color2}{rgb}{0.172549019607843,0.627450980392157,0.172549019607843}
\definecolor{color3}{rgb}{0.83921568627451,0.152941176470588,0.156862745098039}
\definecolor{color4}{rgb}{0.580392156862745,0.403921568627451,0.741176470588235}

\begin{axis}[
legend cell align={left},
legend style={fill opacity=0.8, draw opacity=1, text opacity=1, 
at={(2.2,0.8)}, %
draw=white!80!black},
tick align=outside,
tick pos=left,
x grid style={white!69.0196078431373!black},
xmin=-0.05, xmax=1.05,
xtick style={color=black},
xtick={0,0.125,0.25,0.375,0.5,0.625,0.75,0.875,1},
xticklabels={0,\(\displaystyle {1}/{8}\),\(\displaystyle {2}/{8}\),\(\displaystyle {3}/{8}\),\(\displaystyle {4}/{8}\),\(\displaystyle {5}/{8}\),\(\displaystyle {6}/{8}\),\(\displaystyle {7}/{8}\),1},
y grid style={white!69.0196078431373!black},
ymin=-0.1, ymax=1.1,
ytick style={color=black},
ytick={0,0.125,0.25,0.375,0.5,0.625,0.75,0.875,1},
yticklabels={0,\(\displaystyle {1}/{8}\),\(\displaystyle {2}/{8}\),\(\displaystyle {3}/{8}\),\(\displaystyle {4}/{8}\),\(\displaystyle {5}/{8}\),\(\displaystyle {6}/{8}\),\(\displaystyle {7}/{8}\),1}
]
\path [draw=white!50.1960784313725!black, fill=white!50.1960784313725!black]
(axis cs:0,0)
--(axis cs:0,0)
--(axis cs:0.125,0)
--(axis cs:0.25,0.0625)
--(axis cs:0.375,0.125)
--(axis cs:0.5,0.25)
--(axis cs:0.625,0.375)
--(axis cs:0.75,0.5625)
--(axis cs:0.875,0.75)
--(axis cs:1,1)
--(axis cs:1,1)
--(axis cs:1,1)
--(axis cs:0.875,0.875)
--(axis cs:0.75,0.75)
--(axis cs:0.625,0.625)
--(axis cs:0.5,0.5)
--(axis cs:0.375,0.375)
--(axis cs:0.25,0.25)
--(axis cs:0.125,0.125)
--(axis cs:0,0)
--cycle;
\addlegendimage{area legend, draw=white!50.1960784313725!black, fill=white!50.1960784313725!black}
\addlegendentry{TSR with $L=0, L_1 = 1$}

\addplot [semithick, color0]
table {%
0 -0.25
1 0.75
};
\addlegendentry{$F^0 - 2^{-2}$}
\addplot [semithick, color1]
table {%
0 -0.0625
0.5 0.1875
1 0.9375
};
\addlegendentry{$F^1 - 2^{-4}$}
\addplot [semithick, color3]
table {%
0 0
1 0
};
\addlegendentry{$0$}
\addplot [semithick, color4]
table {%
0 -1
1 1
};
\addlegendentry{$2x-1$}
\end{axis}

\end{tikzpicture}
    \begin{tikzpicture}
\pgfplotsset{%
    width=0.55\textwidth,
}
\definecolor{color0}{rgb}{0.12156862745098,0.466666666666667,0.705882352941177}
\definecolor{color1}{rgb}{1,0.498039215686275,0.0549019607843137}
\definecolor{color2}{rgb}{0.172549019607843,0.627450980392157,0.172549019607843}
\definecolor{color3}{rgb}{0.83921568627451,0.152941176470588,0.156862745098039}
\definecolor{color4}{rgb}{0.580392156862745,0.403921568627451,0.741176470588235}

\begin{axis}[
legend cell align={left},
legend style={fill opacity=0.8, draw opacity=1, text opacity=1, 
at={(2.2,0.8)}, %
draw=white!80!black},
tick align=outside,
tick pos=left,
x grid style={white!69.0196078431373!black},
xmin=-0.05, xmax=1.05,
xtick style={color=black},
xtick={0,0.125,0.25,0.375,0.5,0.625,0.75,0.875,1},
xticklabels={0,\(\displaystyle {1}/{8}\),\(\displaystyle {2}/{8}\),\(\displaystyle {3}/{8}\),\(\displaystyle {4}/{8}\),\(\displaystyle {5}/{8}\),\(\displaystyle {6}/{8}\),\(\displaystyle {7}/{8}\),1},
y grid style={white!69.0196078431373!black},
ymin=-0.1, ymax=1.1,
ytick style={color=black},
ytick={0,0.125,0.25,0.375,0.5,0.625,0.75,0.875,1},
yticklabels={0,\(\displaystyle {1}/{8}\),\(\displaystyle {2}/{8}\),\(\displaystyle {3}/{8}\),\(\displaystyle {4}/{8}\),\(\displaystyle {5}/{8}\),\(\displaystyle {6}/{8}\),\(\displaystyle {7}/{8}\),1}
]
\path [draw=blue, fill=blue, opacity=0.2]
(axis cs:0,0)
--(axis cs:0,0)
--(axis cs:0.0625,0)
--(axis cs:0.125,0.015625)
--(axis cs:0.1875,0.03125)
--(axis cs:0.25,0.0625)
--(axis cs:0.3125,0.09375)
--(axis cs:0.375,0.140625)
--(axis cs:0.4375,0.1875)
--(axis cs:0.5,0.25)
--(axis cs:0.5625,0.3125)
--(axis cs:0.625,0.390625)
--(axis cs:0.6875,0.46875)
--(axis cs:0.75,0.5625)
--(axis cs:0.8125,0.65625)
--(axis cs:0.875,0.765625)
--(axis cs:0.9375,0.875)
--(axis cs:1,1)
--(axis cs:1,1)
--(axis cs:1,1)
--(axis cs:0.9375,0.9375)
--(axis cs:0.875,0.875)
--(axis cs:0.8125,0.8125)
--(axis cs:0.75,0.75)
--(axis cs:0.6875,0.6875)
--(axis cs:0.625,0.625)
--(axis cs:0.5625,0.5625)
--(axis cs:0.5,0.5)
--(axis cs:0.4375,0.4375)
--(axis cs:0.375,0.375)
--(axis cs:0.3125,0.3125)
--(axis cs:0.25,0.25)
--(axis cs:0.1875,0.1875)
--(axis cs:0.125,0.125)
--(axis cs:0.0625,0.0625)
--(axis cs:0,0)
--cycle;
\addlegendimage{area legend, draw=blue, fill=blue, opacity=0.2}
\addlegendentry{TSR with $L=0, L_1 = 2$}

\addplot [semithick, color0]
table {%
0 -0.25
1 0.75
};
\addlegendentry{$F^0 - 2^{-2}$}
\addplot [semithick, color1]
table {%
0 -0.0625
0.5 0.1875
1 0.9375
};
\addlegendentry{$F^1 - 2^{-4}$}
\addplot [semithick, color2]
table {%
0 -0.015625
0.25 0.046875
0.5 0.234375
0.75 0.546875
1 0.984375
};
\addlegendentry{$F^2 - 2^{-6}$}
\addplot [semithick, color3]
table {%
0 0
1 0
};
\addlegendentry{$0$}
\addplot [semithick, color4]
table {%
0 -1
1 1
};
\addlegendentry{$2x-1$}
\end{axis}

\end{tikzpicture}
    \begin{tikzpicture}
\pgfplotsset{%
    width=0.55\textwidth,
}

\definecolor{color0}{rgb}{0.12156862745098,0.466666666666667,0.705882352941177}
\definecolor{color1}{rgb}{1,0.498039215686275,0.0549019607843137}
\definecolor{color2}{rgb}{0.172549019607843,0.627450980392157,0.172549019607843}
\definecolor{color3}{rgb}{0.83921568627451,0.152941176470588,0.156862745098039}
\definecolor{color4}{rgb}{0.580392156862745,0.403921568627451,0.741176470588235}

\begin{axis}[
legend cell align={left},
legend style={fill opacity=0.8, draw opacity=1, text opacity=1, 
at={(2.2,0.8)}, %
draw=white!80!black},
tick align=outside,
tick pos=left,
x grid style={white!69.0196078431373!black},
xmin=-0.05, xmax=1.05,
xtick style={color=black},
xtick={0,0.125,0.25,0.375,0.5,0.625,0.75,0.875,1},
xticklabels={0,\(\displaystyle {1}/{8}\),\(\displaystyle {2}/{8}\),\(\displaystyle {3}/{8}\),\(\displaystyle {4}/{8}\),\(\displaystyle {5}/{8}\),\(\displaystyle {6}/{8}\),\(\displaystyle {7}/{8}\),1},
y grid style={white!69.0196078431373!black},
ymin=-0.1, ymax=1.1,
ytick style={color=black},
ytick={0,0.125,0.25,0.375,0.5,0.625,0.75,0.875,1},
yticklabels={0,\(\displaystyle {1}/{8}\),\(\displaystyle {2}/{8}\),\(\displaystyle {3}/{8}\),\(\displaystyle {4}/{8}\),\(\displaystyle {5}/{8}\),\(\displaystyle {6}/{8}\),\(\displaystyle {7}/{8}\),1}
]
\path [draw=blue, fill=blue, opacity=0.6]
(axis cs:0,0)
--(axis cs:0,0)
--(axis cs:0.0625,0)
--(axis cs:0.125,0.015625)
--(axis cs:0.1875,0.03125)
--(axis cs:0.25,0.0625)
--(axis cs:0.3125,0.09375)
--(axis cs:0.375,0.140625)
--(axis cs:0.4375,0.1875)
--(axis cs:0.5,0.25)
--(axis cs:0.5625,0.3125)
--(axis cs:0.625,0.390625)
--(axis cs:0.6875,0.46875)
--(axis cs:0.75,0.5625)
--(axis cs:0.8125,0.65625)
--(axis cs:0.875,0.765625)
--(axis cs:0.9375,0.875)
--(axis cs:1,1)
--(axis cs:1,1)
--(axis cs:1,1)
--(axis cs:0.9375,0.90625)
--(axis cs:0.875,0.8125)
--(axis cs:0.8125,0.71875)
--(axis cs:0.75,0.625)
--(axis cs:0.6875,0.53125)
--(axis cs:0.625,0.4375)
--(axis cs:0.5625,0.34375)
--(axis cs:0.5,0.25)
--(axis cs:0.4375,0.21875)
--(axis cs:0.375,0.1875)
--(axis cs:0.3125,0.15625)
--(axis cs:0.25,0.125)
--(axis cs:0.1875,0.09375)
--(axis cs:0.125,0.0625)
--(axis cs:0.0625,0.03125)
--(axis cs:0,0)
--cycle;
\addlegendimage{area legend, draw=blue, fill=blue, opacity=0.6}
\addlegendentry{TSR with $L=1, L_1 = 2$}

\addplot [semithick, color0]
table {%
0 -0.25
1 0.75
};
\addlegendentry{$F^0 - 2^{-2}$}
\addplot [semithick, color1]
table {%
0 -0.0625
0.5 0.1875
1 0.9375
};
\addlegendentry{$F^1 - 2^{-4}$}
\addplot [semithick, color2]
table {%
0 -0.015625
0.25 0.046875
0.5 0.234375
0.75 0.546875
1 0.984375
};
\addlegendentry{$F^2 - 2^{-6}$}
\addplot [semithick, color3]
table {%
0 0
1 0
};
\addlegendentry{$0$}
\addplot [semithick, color4]
table {%
0 -1
1 1
};
\addlegendentry{$2x-1$}
\end{axis}

\end{tikzpicture}
    \caption{The tightened sawtooth relaxations $ R^{L, L_1} $ from \cref{def:sawtooth-str}
        for the pairs $ (L, L_1) = (0, 1), (0, 2), (1, 2) $.
        By increasing~$ L_1 $ beyond~$L$, we tighten the lower bound by creating more inequalities.
        This is done by only adding linearly-many variables and inequalities in the extended formulation
        to gain exponentially-many equally spaced cuts in the projection.}
    \label{fig:SSR}
\end{figure}
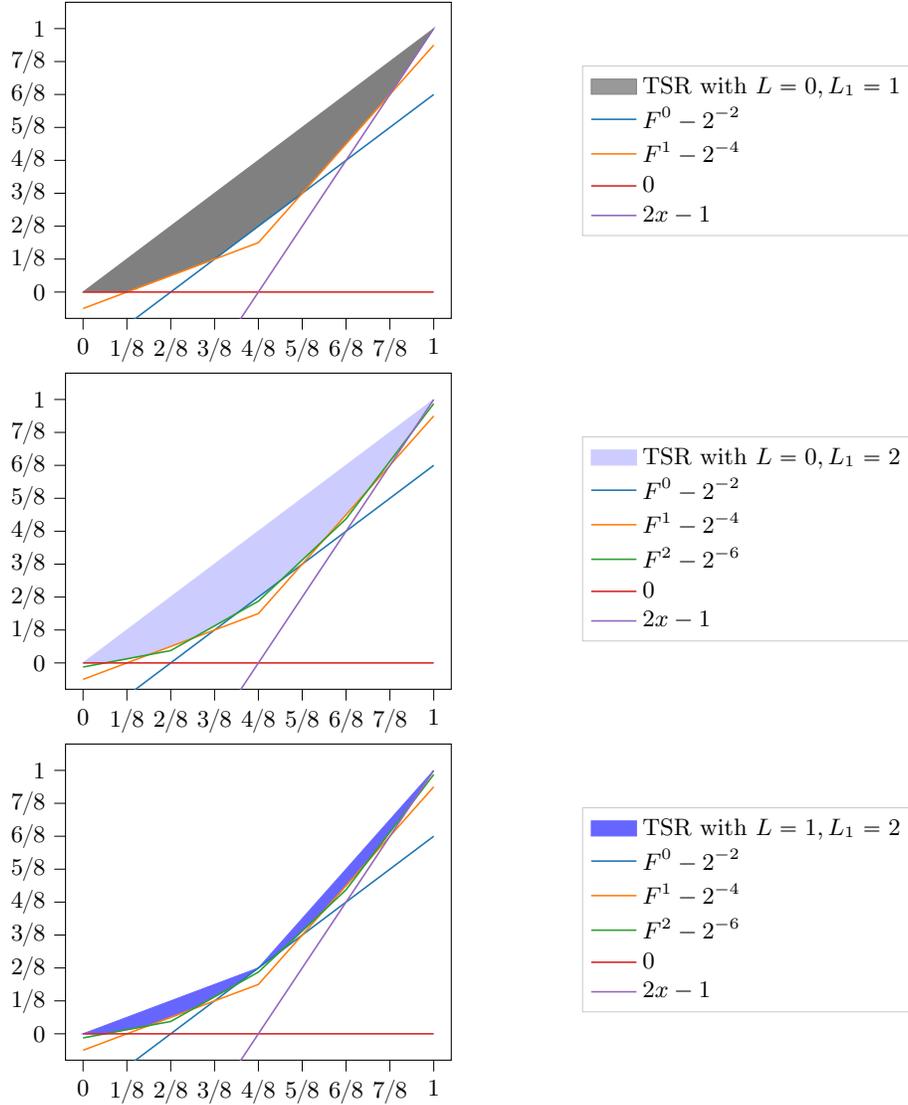

\begin{remark}[Transformation to General Bounds]
\label{rem:sawtooth-gbnds}
\textcolor{black}{To this point, the sawtooth MIP formulations were presented for $x^2$ with $x\in[0,1]$.}
However, all sawtooth-based MIP formulations 
can be extended to general intervals $ x \in [\xmin, \xmax] $
by mapping $ [\xmin, \xmax] $ to $ [0, 1] $
via the substitution $ \xhat = \tfrac{x - \xmin}{\xmax - \xmin} \in [0, 1] $
and applying the sawtooth formulation to model the equation %
\begin{equation*}
    \begin{array}{rl}
        \yhat &= \xhat^2 = \lrp{\tfrac{x - \xmin}{\xmax - \xmin}}^2
            = \tfrac{x^2 - 2x\xmin + \xmin^2}{(\xmax - \xmin)^2}
            = \tfrac{\y - 2x\xmin + \xmin^2}{(\xmax - \xmin)^2}
            = \tfrac{\y - \xmin(2x - \xmin)}{(\xmax - \xmin)^2}.
    \end{array}
\end{equation*}
Thus, for general intervals, we first apply the approximation to $ \yhat = \xhat^2 $,
then add the equations
\begin{equation*}
    \xhat = \tfrac{x-\xmin}{\xmax - \xmin}, \quad
    \yhat = \tfrac{\y - \xmin(2x - \xmin)}{(\xmax - \xmin)^2}.
\end{equation*}
In our computational study in \Cref{sec:computations}, these constraints are implemented
as defining expressions for~$ \xhat $ and~$ \yhat $,
and the MIP formulations are constructed for~$ \xhat $ and~$ \yhat $ then.
See~\Cref{app:gen-bnds} for the generalized MIP formulations under this transformation. \hfill$\diamond$ 
\end{remark}

Now, we consider the LP relaxation of~$ S^L $,
where each variable~$ \alpha_j $ is relaxed to the interval $ [0, 1] $.
Then, via the constraints~\eqref{eqn:sawtooth-formulation},
we see that the weakest lower bounds on each~$ g_j $ \wrt $ g_{j - 1} $
can be attained via setting $ \alpha_j = g_{j - 1} $, yielding a lower bound of~$0$.
Thus, after projecting out $ \bm \alpha $,
the LP relaxation of $ S^L $ in terms of just~$x$ and~$ \bm g $
can be stated as 
\begin{tcolorbox}[colback = white]
\begin{equation*}
    T^L = \Set{(x, \bm{g}) \in [0, 1] \times [0, 1]^{L + 1}:
        \eqref{eqn:sawtooth-formulation-LP-constr}}.
\end{equation*}
\begin{equation} \label{eqn:sawtooth-formulation-LP-constr}
\begin{array}{rll}
        g_0 &= x,\\
        g_j &\le 2(1 - g_{j - 1}) & \quad j = 1, \ldots, L,\\
        g_j &\le 2g_{j - 1} & \quad j = 1, \ldots, L.
\end{array} 
\end{equation}
\end{tcolorbox}

The sawtooth relaxation~\eqref{eq:sawtooth-relax-MIP} is sharp by \Cref{thm:sawtooth-sharp}
(proved later in this work), which follows in much the same way
as the sharpness of the sawtooth approximation~\eqref{eq:sawtooth-approx},
as established in \cite[Theorem 1]{Beach2020-compact}.
Thus, the LP relaxation of the sawtooth relaxation~\eqref{eq:sawtooth-relax-MIP}
yields the same lower bound on~$ \y $ as the MIP version
due to sharpness and the convexity of~$ F^L $.
This allows us to define an LP outer approximation
for inequalities of the form $ \y \geq x^2 $:
\begin{definition}[Sawtooth Epigraph Relaxation, SER]
    \label{def:sawtooth-epi}
    Given some $ L \in \N $,
    the \emph{depth-$L$ sawtooth epigraph relaxation} for $ \y \geq x^2 $
    on the interval $ x \in [0, 1] $ is given by
    \begin{tcolorbox}[colback = white]
        \begin{equation}
            \label{eq:sawtooth-epi-relax}
            Q^L \define \Set{(x, \y) \in [0, 1] \times \R : \exists \bm g \in [0, 1]^{L + 1} :
                \eqref{eq:sawtooth-epi-relax-constr}}.
        \end{equation}
        \begin{equation}
            \label{eq:sawtooth-epi-relax-constr}
            \begin{array}{rll}
                \y &\ge \F^j(x, \bm g) - 2^{-2j - 2} & \quad j = 0, \ldots, L,\\
                \y &\ge 0,\quad
                \y \ge 2x - 1,\\
                (x, \bm{g}) &\in T^L.
            \end{array}
        \end{equation}
    \end{tcolorbox}
\end{definition}
We will prove in \Cref{prop:sawtooth-epi-error}
that the maximum error for the sawtooth epigraph relaxation is $ 2^{-2L - 4} $.

Finally, we combine the depth-$L$ sawtooth relaxation~\eqref{eq:sawtooth-relax-MIP}
with the depth-$ L_1 $ sawtooth epigraph relaxation~\eqref{eq:sawtooth-epi-relax}
for some $ L_1 \geq L $ to obtain a sawtooth relaxation
which is stronger in the lower bound,
but uses the same number of binary variables.
\begin{definition}[Tightened Sawtooth Relaxation, TSR]
    \label{def:sawtooth-str}
    Given some $ L, L_1 \in \N $ with $ L_1 \ge L $,
    the \emph{tightened sawtooth relaxation} for $ \y = x^2 $ on the interval $ x \in [0, 1] $
    with upper-bounding depth~$L$ and lower-bounding depth~$ L_1 $ is given by
    \begin{tcolorbox}[colback = white]
        \begin{equation}
            \label{eq:sawtooth-relax-tight}
            R^{L, L_1} \define \{(x, \y) \in [0, 1] \times \R:
                \exists (\bm g, \bm \alpha) \in [0, 1]^{L_1 + 1} \times \{0, 1\}^L:
                    \eqref{eq:sawtooth-relax-tight-constr}\}.
        \end{equation}
        \vspace{-2\baselineskip}

\begin{subequations}
    \label{eq:sawtooth-relax-tight-constr}
    \begin{alignat}{2}
       \y &\le \F^L(x, \bm{g}_{\lrbr{0, L}}) \label{eq:sawtooth-relax-tight-UB},\hspace{3.505cm}\\
         (x, \bm{g}_{\lrbr{0,L}}, \bm{\alpha}) &\in S^L \label{eq:x-sawtooth-constr},
    \end{alignat}
    \begin{empheq}[left=\empheqlbrace]{alignat=2}
    (x, \bm{g}) &\in T^{L_1} \label{eq:x-sawtooth-epi-constr},\\
        \y &\ge \F^j(x, \bm{g}) - 2^{-2j - 2} &&\quad j = 0, \ldots, L_1, \label{eq:sawtooth-relax-tight-LB}\\
        \y &\ge 0, \\
        \y &\ge 2x - 1. \label{eq:sawtooth-relax-tight-LB-ends}
    \end{empheq}
\end{subequations}
    \end{tcolorbox}
    We connect the last constraints with a brace since there are all defining constraints for $Q^{L_1}$.  Since we define $Q^{L_1}$ in the projection space $(x,z)$, we cannot simply write $(x,\bm g) \in Q^{L_1}$ since we need the same $\bm \alpha$ and $\bm g$ to apply to the other constraints as well.
\end{definition}
We will prove in \Cref{thm:sawtooth-sharp} that the tightened sawtooth relaxation is also sharp,
and in \Cref{thm:sawtooth-hsharp} that it is hereditarily sharp.

\section{MIP Relaxations for Non-Convex MIQCQPs}
\label{sec:direct}

In this section, we focus on MIP relaxations for bilinear equations of the form $ \z = xy $.
For convenience, we define a \emph{completely dense} MIQCQP
as an MIQCQP for which all terms of the form~$ x_i^2 $ and~$ x_i x_j $
appear in either the objective or in some constraint.
The novel formulation \emph{HybS} presented herein
is an extension of existing formulations \morsireform and \zellmerreform,
designed to significantly reduce the number of binary variables
required to reach the same level of relaxation accuracy
compared to its original predecessors \emph{Bin2} and \emph{Bin3}
for completely dense MIQCQPs, which will also be introduced in the following.

\subsection{Separable MIP Relaxations}
\label{ssec:direct-univar}
We present three MIP relaxations based on separable reformulations.
A separable reformulation turns a multivariate expression into a sum of univariate functions.
To this end, we make use of the reformulation approaches \morsireform and \zellmerreform, given via
\begin{equation*}
    \begin{array}{rcl}
         \text{\morsireform:} &xy &= \tfrac{1}{2} ((x+y)^2 - x^2 - y^2),\\
         \text{\zellmerreform:} &xy &= \tfrac{1}{2} (x^2 + y^2 - (x-y)^2),
    \end{array}
\end{equation*}
see \eg \cite{Hager-2021}, and combine them with the sawtooth relaxation~\eqref{eq:sawtooth-relax-tight}
to derive MIP relaxations for the occurring equations of the form $ \z = xy $.
While the following MIP relaxations on \morsireform and \zellmerreform are natural extensions
of the MIP approximations studied in \cite{Hager-2021} to MIP relaxations,
we will also combine both reformulations to a new formulation
in which the MIP relaxation requires significantly less binary variables
if it is used to solve problems of the form~\eqref{eqn:generic-problem}
\textcolor{black}{As a reminder, in the definitions below, the notation $\mathcal{M}$ is used to describe the McCormick envelope.}
\begin{remark}
    In \cite{Hager-2021}, Bin1: $xy=(\nicefrac{1}{2}(x+y))^2 - (\nicefrac{1}{2}(x-y))^2$ is also discussed as a possible separable reformulation. However, for completely dense MIQCQPs, Bin1 requires a number of binary variables that is by a factor of roughly 2 greater than that required for \morsireform and \zellmerreform.
    This is due to the fact that for each bivariate product $x_i x_j$, we need to discretize both $(\nicefrac{1}{2}(x_i+x_j))^2$ and $(\nicefrac{1}{2}(x_i-x_j))^2$ instead of only one of the two squares for \morsireform and \zellmerreform.
    Therefore, we omit Bin1 in the following. \hfill$\diamond$ 
\end{remark}

\begin{definition}[Bin2]%
    The MIP relaxation \emph{\morsireform} of $ \z = xy $, $ x, y \in [0, 1]^2 $,
    with a lower-bounding depth~$ L_1 \in \N $ and an upper-bounding depth~$ L \in \N $,
    is defined as follows:
    \begin{tcolorbox}[colback = white]
        \begin{equation}
            \label{eq:bin2}
            \begin{array}{rll}
                p &= x + y\\
                \z &= \nicefrac{1}{2} (\zp - \zx - \zy)\\
                (x, y, \z) &\in \mathcal{M}(x, y)\\
                (x, \zx), & (y, \zy), (p, \zp) \in R^{L, L_1}\\
                x, y &\in [0, 1], \quad
                p \in [0, 2].
            \end{array}
        \end{equation}
    \end{tcolorbox}
\end{definition}
\begin{definition}[Bin3]%
    The MIP relaxation \emph{\zellmerreform} of $ z = xy $, $ x, y \in [0, 1]^2 $,
    with a lower-bounding depth~$ L_1 \in \N $ and an upper-bounding depth of~$ L \in \N $,
    is defined as follows:
    \begin{tcolorbox}[colback = white]
        \begin{equation}
            \label{eq:bin3}
            \begin{array}{rll}
                p &= x - y\\
                \z &= \nicefrac{1}{2} (\zx + \zy - \zp)\\
                (x, y, \z) &\in \mathcal{M}(x, y)\\
                (x, \zx), &(y, \zy), (p, \zp) \in R^{L, L_1}\\
                x, y &\in [0, 1], \quad
                p \in [-1, 1].
            \end{array}
        \end{equation}
    \end{tcolorbox}
\end{definition}
Note that we apply the tightened sawtooth relaxation~$ R^{L, L_1} $,
defined in~\eqref{eq:sawtooth-relax-tight}, not only to $ x, y \in [0, 1] $,
but also to the variable~$p$, where the domain is either $ [0, 2] $ or $ [-1, 1] $. 
This is done by following the transformation in \Cref{rem:sawtooth-gbnds}
to map~$p$ and~$ \zp $ to the interval $ [0, 1] $
and then applying~\eqref{eq:sawtooth-relax-tight} to the transformed variables.

We now combine \morsireform and \zellmerreform
to derive an MIP relaxation for $ \z = xy $ based on bounding~$\z$ in the following two ways:
\begin{equation*}
    \begin{array}{rl}
         \z &\le \nicefrac{1}{2} (x^2 + y^2 - (x - y)^2),\\
         \z &\ge \nicefrac{1}{2} ((x + y)^2 - x^2 - y^2),
    \end{array}
\end{equation*}
and then replacing each right-hand side with proper upper and lower bounds.
We choose this setting so that we only have to model lower bounds
for the $ (x - y)^2 $- and $ (x + y)^2 $-terms
and can thus apply the sawtooth epigraph relaxation~\eqref{eq:sawtooth-epi-relax}
to circumvent the use of binary variables for these terms.
To this end, we introduce the continuous auxiliary variables
$ p_1 $, $ p_2 $, $ \zx $, $\zy$, $ \zpone $, $ \zptwo$ and $z$
to obtain an equivalent relaxation for $ \z = xy $:
\begin{subequations}
\label{eq:hybrid-exact}
    \begin{align}
         p_1 &= x+y,
         p_2 = x-y,\\
         \label{eq:univ_up}
         \zx &\le x^2,
         \zy \le y^2,\\
         \label{eq:univ_low}
         \zpone&\ge p_1^2,
         \zptwo\ge p_2^2,\\
         \label{eq:univ_tot}
         \z &\le \zx + \zy - \zpone, \,
         \z \ge \zptwo- \zx - \zy.
    \end{align}
\end{subequations}
Finally, we replace $ x^2 $ and $ y^2 $ in the \non convex constraints~\eqref{eq:univ_up}
with a sawtooth relaxation~\eqref{eq:sawtooth-relax-tight-UB} of depth~$L$
and~$ p_1^2 $ and~$ p_2^2 $ in the convex constraints~\eqref{eq:univ_low}
by a sawtooth epigraph relaxation~\eqref{eq:sawtooth-relax-tight-LB-ends}
with depth~$ L_1 $ to obtain a relaxation of $ \z = xy $ in~\eqref{eq:univ_tot}.
The resulting model is especially interesting as, in contrast to \morsireform and \zellmerreform,
it does not require binary variables
to model equations of the form $ p_1^2 = (x + y)^2 $ and $ p_2^2 = (x - y)^2 $,
since we only need to incorporate lower bounds as used in~$Q^L$.

\begin{definition}[Hybrid Separable HybS]%
\label{def:HybS}
    Let $ x, y \in [0, 1] $, and let $ L, L_1 \in \N $. 
    The following MIP relaxation for $ \z = xy $,
    which combines the relaxations \morsireform and \zellmerreform,
    is called the \emph{hybrid separable} MIP relaxation, in short \HybS,
    with a lower-bounding depth of~$ L_1 $ and an upper-bounding depth of~$L$:
\begin{tcolorbox}[colback = white]
    \begin{equation}
        \label{eq:bin2-bin3}
        \begin{array}{rll}
             p_1 &= x + y, \quad
             p_2 = x - y\\
             (x, \zx), &(y, \zy) \in R^{L, L_1}\\
             (p_1, \zpone), &(p_2, \zptwo) \in Q^{L_1}\\
              \nicefrac{1}{2} (\zpone - \zx - \zy) \leq  &\z \le \nicefrac{1}{2} (\zx + \zy - \zptwo)\\
             (x, y, \z) &\in \mathcal{M}(x,y)\\
             x, y &\in [0, 1], \quad
             p_1 \in [0, 2], \quad
             p_2 \in [-1, 1].
        \end{array}
    \end{equation}
\end{tcolorbox}
\end{definition}
As $ Q^{L_1} $ in~\eqref{eq:bin2-bin3} is originally defined for variables in $ [0, 1] $,
we again use the transformation from \Cref{rem:sawtooth-gbnds}
to extend it to other domains.

Note that, when some constraint of an MIQCQP has a completely dense quadratic matrix,
the number of \eqref{eq:univ_low}-type constraints is quadratic in the dimension of~$x$.
Thus, the number of binary variables for \morsireform and \zellmerreform is in $ O(n^2 L) $,
while the formulation \HybS requires only $ nL $~binary variables.
As we will show in \Cref{sec:theory},
the formulation \HybS also has a strictly tighter LP relaxation
than that of either formulation \morsireform or \zellmerreform. 
This implies a smaller volume of the projected LP relaxation as well.
We also note, however, that the MIP relaxation is not strictly tighter.
For example, let $ L = L_1 = 1 $ and consider the point $ (x, y) = (\tfrac 14, \tfrac 34) $.
The upper bound on $ \z = xy  $ produced by the MIP relaxation \morsireform at this point
is $ z \leq \tfrac{3}{16} $, \ie the exact value.
The MIP relaxation \HybS (as well as \zellmerreform), however,
has a weaker upper bound of $ \z \leq \tfrac{1}{4} $ at this point.

When we apply any of the separable formulations \morsireform, \zellmerreform and \HybS
to compute dual bounds for MIQCQPs in \Cref{sec:computations},
all original univariate quadratic terms of the form~$ x_i^2 $
(\ie those not resulting from any reformulations)
are modeled via the tightened sawtooth relaxation~\eqref{eq:sawtooth-relax-tight}.
\begin{remark}
    We can alternatively obtain a convex mixed-integer quadratic relaxation of $ \z = xy $
    by directly incorporating the \textcolor{black}{convex} quadratic constraints $ \zx \leq x^2 $, $ \zy \leq y^2 $,
    $ \zpone \geq p_1^2 $ and $ \zptwo \geq p_2^2 $ in~\eqref{eq:hybrid-exact}
    exactly instead of using \pwl relaxations.  \textcolor{black}{This variation could be implemented using a convex solver instead of a linear solver.}
 \hfill$\diamond$ 
\end{remark}
\begin{remark}[Binary Variables and Dense MIQCQPs]
    When modeling Problem~\eqref{eqn:generic-problem}
    using the MIP relaxations \morsireform and \zellmerreform at depth~$L$,
    we have $L$~binary variables created
    whenever the tightened sawtooth relaxation~$ R^{L, L_1} $ is used.
    For \morsireform, we need the relaxations $ (x_i, \yxi) \in R^{L, L_1} $
    and $ (p_{ij}, \ypij ) \in R^{L, L_1} $ for all pairs $ i \neq j $,
    where $ p_{ij} = x_i + x_j $.
    Note that $ p_{ij} = p_{ji} $.
    Thus, we need $ (n + \tfrac12 (n - 1)^2) L = \tfrac{1}{2}(n^2 + 1) L$ binary variables.
    
    We have the same result for \zellmerreform,
    where instead we have $ p_{ij} = x_i - x_j $ for all pairs $ i \neq j $.
    Although this means $ p_{ij} \neq p_{ji} $, we still have $ p_{ij}^2 = p_{ji}^2 $.
    Thus, a careful implementation also has $ \tfrac{1}{2}(n^2 + 1)L $ binary variables. 
    
    \HybS uses significantly fewer binary variables
    as it only requires $ (x_i, \y^{x_i}) \in R^{L, L_1} $ for each~$i$.
    Hence, there are only $nL$~binary variables.
    Surprisingly, this relaxation halves the error bound from \morsireform and \zellmerreform.
    The strength in this approach is gained without quadratically-many binary variables
    by using the tightening set~$ Q^{L_1} $ with the $ p_1 $-and $ p_2 $-variables. 
\hfill$\diamond$ 
\end{remark}
\section{Theoretical Analysis}
\label{sec:theory}
In this section, we give a theoretical analysis of the presented MIP relaxations
for the equation $ \z = xy $ over $ x, y \in [0, 1] $
as well as the equation $ \z = x^2 $ over $ x \in [0, 1] $, respectively,
in order to allow for a comparison of structural properties between them.
In particular, we will analyze their maximum and average errors,
formulation strengths, \ie (hereditary) sharpness and LP relaxation volumes,
as well as the optimal placement of breakpoints to minimize average errors.
The results we will arrive at are summarized in \Cref{tab:full-char}.

\begin{table}[h]
    \centering
    \def\arraystretch{1.5}
    \begin{tabular}{ccccc}
        \toprule
        MIP relax. &  \# Bin.\ variables & \# Constraints & Max.\ error & Avg.\ error\\
        \midrule
        \HybS & $nL$ & $n(\tfrac 12(5n-3) + 2n(L+L_1))$ & $2^{-2L-2}$ & $\tfrac 13 2^{-2L}$\\
        \midrule
        \morsireform & $\tfrac{1}{2}(n^2 + 1)L$ & $n(\tfrac 12(3n-1) + (n+1)(L+L_1))$ & $2^{-2L-1}$ & $\tfrac{1}{2}2^{-2L}$\\
        \midrule
        \zellmerreform & $\tfrac{1}{2}(n^2 + 1)L$ & $n(\tfrac 12(3n-1) + (n+1)(L+L_1))$ & $2^{-2L-1}$ & $\tfrac{1}{2}2^{-2L}$\\
       \bottomrule
    \end{tabular}
    \caption{A summary of characteristics of the different MIP relaxations.
        Binary variables and constraints are given in the worst-case,
        in which every possible quadratic term must be modeled,
        for example if some matrix~$ Q_i $ is completely dense.
        The average error for \HybS, \morsireform and \zellmerreform
        with respect to $ \gra_{[0, 1]^2}(xy) $ is calculated for $ L_1 \to \infty $
        and without the McCormick envelopes added.
        Finally, the average errors for \morsireform and \zellmerreform apply only to $ L \ge 1 $;
        the corresponding volumes are $ \tfrac{7}{12} $ for $ L = 0 $.
        Finite~$L_1$ leads to slightly increased error bounds
        for the methods \morsireform, \zellmerreform and \HybS.}%
    \label{tab:full-char}
\end{table}

\begin{figure}[h]
    \centering
    \includegraphics[scale=0.3]{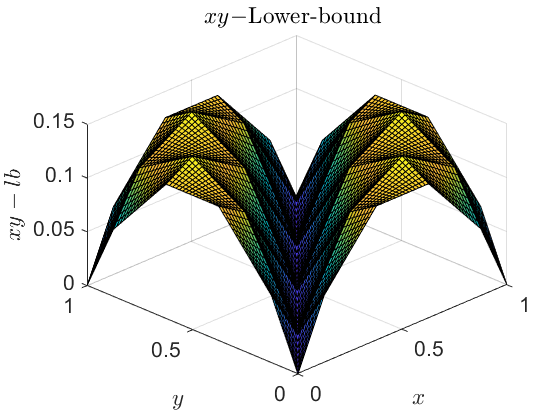}
    \includegraphics[scale=0.3]{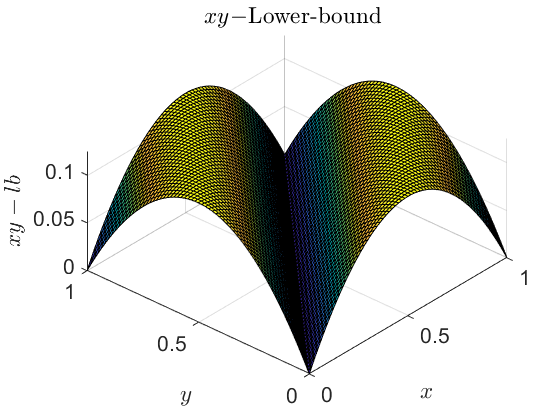}
    \includegraphics[scale=0.3]{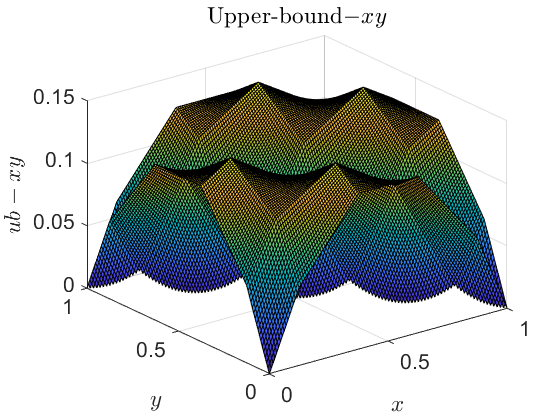}
    \includegraphics[scale=0.3]{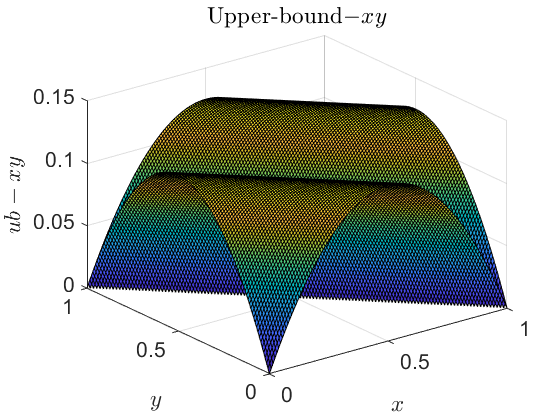}
    
    \caption{Maximum overestimation and maximum underestimation
        of the MIP relaxation \morsireform defined in~\eqref{eq:bin2}.
        In the left column, we show the case $ L = L_1 = 1 $.
        In the right column, we show $ L = 1 $ and $ L_1 \to \infty $.}
    \label{fig:Bin2_bounds}
\end{figure}

\begin{figure}[h]
    \centering
    \includegraphics[scale = 0.3]{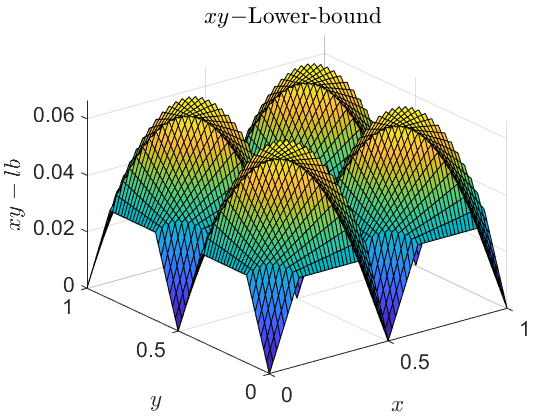}
    \includegraphics[scale =0.3]{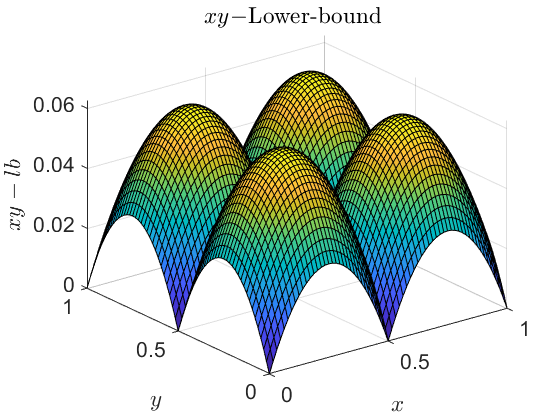}
    \includegraphics[scale =0.3]{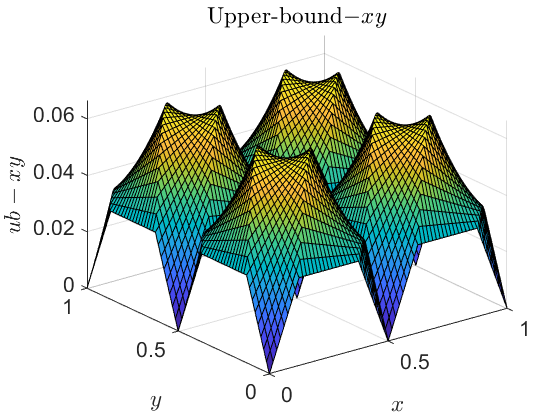}
    \includegraphics[scale =0.3]{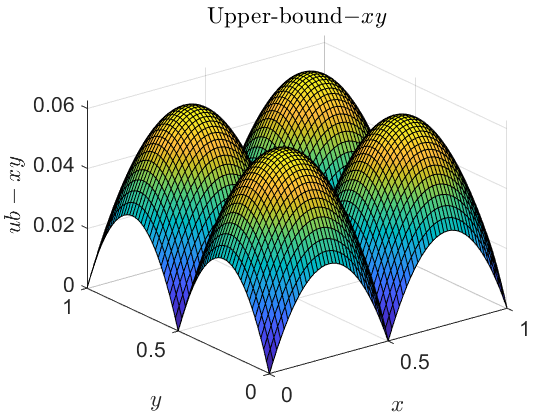}
    
    \caption{Maximum overestimation and maximum underestimation
        of the MIP relaxation \HybS defined in~\eqref{eq:bin2-bin3}.
        In the left column, we show the case $ L = L_1 = 1 $.
        In the right column, we show $ L = 1 $ and $ L_1 \to \infty $.}
    \label{fig:Hyb_bounds}
\end{figure}

\subsection{Maximum Error}

We start the error analysis by discussing the maximum errors of the presented MIP relaxations.

\subsubsection{Core Formulations}

First, we discuss the maximum errors of the core formulations from \Cref{sssec:McCormick}.
For the sawtooth approximation~\eqref{eq:sawtooth-approx},
the maximum error is an overestimation by $ 2^{-2L - 2} $, see \cite{Beach2020-compact}.
The maximum error of the sawtooth epigraph relaxation is~$ 2^{-2L - 4} $,
which we prove in the following.
The tightened sawtooth relaxation stated in~\eqref{eq:sawtooth-relax-tight}
uses the sawtooth approximation for overestimation while the lower bound,
which is incident with the sawtooth epigraph relaxation~\eqref{eq:sawtooth-epi-relax},
gains an extra layer of accuracy, with a maximum error of~$ 2^{-2L - 4} $.
Due to the overestimator, the (tightened) sawtooth relaxation has the same maximum error of~$ 2^{-2L - 2} $
as the sawtooth approximation.

\begin{proposition}[Error of the sawtooth epigraph relaxation]
    \label{prop:sawtooth-epi-error}
    The maximum error of the sawtooth epigraph relaxation $ Q^L $ for $ \y \geq x^2 $
    with $ x \in [0, 1] $
    defined in~\eqref{eq:sawtooth-epi-relax} is~$ 2^{-2L - 4} $.
\end{proposition}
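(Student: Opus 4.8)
The plan is to first push the extended formulation $Q^L$ down to the $(x,y)$-plane. For every $x\in[0,1]$ the vector $(x,0,\dots,0)$ lies in $T^L$, so all sufficiently large second coordinates are feasible and $\projx(Q^L)=[0,1]$; let $\mu_L(x)$ be the smallest second coordinate with $(x,\mu_L(x))\in Q^L$ (attained, as $Q^L$ is a polyhedron bounded below by $y\ge 0$). Feeding in the integral sawtooth assignment $g_j=G^j(x)$, which is $T^L$-feasible, shows that the value $\phi_L(x)\define\max\{0,\,2x-1,\,\max_{0\le j\le L}(\capF^j(x)-2^{-2j-2})\}$ is attained, so $\mu_L(x)\le\phi_L(x)$, and $\phi_L(x)\le x^2$ by \Cref{prop:F^L}, Item~3 applied at each level $j$ together with $0\le x^2$ and $2x-1\le x^2$. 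Hence $\mu_L(x)\le x^2$, the pointwise error of $(x,y)\in Q^L$ is $(x^2-y)^+$, and the maximum error of $Q^L$ equals $\max_{x\in[0,1]}(x^2-\mu_L(x))$. It therefore remains to prove $\mu_L(x)\ge x^2-2^{-2L-4}$ for all $x$, with equality at some point.

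For the lower bound I would prove the stronger statement that $\Psi_L(x,\bm g)\define\max\{0,\,2x-1,\,\max_{0\le j\le L}(\F^j(x,\bm g)-2^{-2j-2})\}\ge x^2-2^{-2L-4}$ for every $(x,\bm g)\in T^L$, by induction on $L$. The base case $L=0$ reads $\max\{0,\,x-\tfrac14,\,2x-1\}\ge x^2-\tfrac1{16}$, which is a three-interval check (the left side is the upper envelope of the tangents to $x^2$ at $0,\tfrac12,1$). For the inductive step, write $\bm g=(x,g_1,\dots,g_L)$ and $\bm g'=(g_1,\dots,g_L)$; then $(g_1,\bm g')\in T^{L-1}$, and a short computation yields, for $0\le k\le L-1$,
\[
\F^{k+1}(x,\bm g)-2^{-2(k+1)-2}=\tfrac14\bigl(\F^{k}(g_1,\bm g')-2^{-2k-2}\bigr)+x-\tfrac{g_1}{2},
\]
so that $\max_{1\le j\le L}(\F^j(x,\bm g)-2^{-2j-2})=\tfrac14A+x-\tfrac{g_1}{2}$ with $A\define\max_{0\le k\le L-1}(\F^k(g_1,\bm g')-2^{-2k-2})$. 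Applying the inductive hypothesis to $(g_1,\bm g')$ gives $\max\{0,\,2g_1-1,\,A\}\ge g_1^2-2^{-2L-2}$, hence at least one of the following holds: (i)~$g_1\le 2^{-L-1}$; (ii)~$g_1\ge 1-2^{-L-1}$; (iii)~$A\ge g_1^2-2^{-2L-2}$.

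Each case is settled by an elementary estimate. In case~(iii), $\Psi_L(x,\bm g)\ge\tfrac14A+x-\tfrac{g_1}{2}\ge\tfrac14(g_1^2-2^{-2L-2})+x-\tfrac{g_1}{2}$, and the resulting inequality $\tfrac{g_1^2}{4}-\tfrac{g_1}{2}+x-x^2\ge 0$ is exactly equivalent to the $T^L$-constraint $g_1\le\min\{2x,2-2x\}$. In case~(i) the constraints of $T^L$ propagate to $g_i\le 2^{\,i-L-2}$ for all $i\le L$, giving $\F^L(x,\bm g)\ge x-2^{-L-2}+2^{-2L-2}$, so $\Psi_L(x,\bm g)\ge\max\{0,\,2x-1,\,x-2^{-L-2}\}$, which dominates $x^2-2^{-2L-4}$ via the factorization $x^2-x+2^{-L-2}-2^{-2L-4}=(x-2^{-L-2})(x-(1-2^{-L-2}))$ together with the endpoint bounds ($y\ge0$ on $x\le 2^{-L-2}$, $y\ge 2x-1$ on $x\ge 1-2^{-L-2}$). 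In case~(ii) the same constraint $g_1\le\min\{2x,2-2x\}$ forces $|x-\tfrac12|\le 2^{-L-2}$, so $\Psi_L(x,\bm g)\ge\F^0(x,\bm g)-2^{-2}=x-\tfrac14=x^2-(x-\tfrac12)^2\ge x^2-2^{-2L-4}$. For tightness I would take $x^{*}=2^{-L-2}$: there $(x^{*})^2=2^{-2L-4}$ while $\phi_L(x^{*})=0$ (the tangents to $x^2$ at $0$ and at $2^{-L-1}$ both pass through $(x^{*},0)$ and dominate the others there), so $\mu_L(x^{*})=0$ by the bound just proved and the error at $x^{*}$ is exactly $2^{-2L-4}$.

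The main obstacle is the induction, and within it the two boundary cases~(i) and~(ii): there the recursion is vacuous and one must instead estimate $\F^L-2^{-2L-2}$ or $\F^0-2^{-2}$ directly and exploit that $T^L$-feasibility of $\bm g$ already forces all $g_i$ to be small (resp.\ pins $x$ near $\tfrac12$). Alternatively, once the sharpness of the sawtooth relaxation (\Cref{thm:sawtooth-sharp}) is available, one can shortcut the lower bound by arguing $\mu_L=\phi_L$ and then invoking only the elementary fact (via \Cref{prop:F^L}, Item~5) that $\phi_L$ is the upper envelope of the tangents to $x^2$ at the $2^{L+1}+1$ equally spaced points $k2^{-L-1}$, whose maximal underestimation of $x^2$, attained at the interval midpoints, is $\bigl(2^{-L-2}\bigr)^2=2^{-2L-4}$.
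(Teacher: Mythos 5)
Your proof is correct, and it takes a genuinely different route from the paper's. The paper's argument is short: it asserts (leaning on the sharpness of the sawtooth relaxation, \cite[Theorem 1]{Beach2020-compact}, and \Cref{prop:F^L}) that $\proj_{x,\y}(Q^L)$ is exactly the region above the upper envelope of the tangent lines to $x^2$ at equally spaced points with spacing $2^{-L-1}$, and then computes the gap at the intersection of two consecutive tangents, $\bigl(\tfrac{x_k+x_{k+1}}{2}\bigr)^2 - x_kx_{k+1} = 2^{-2L-4}$. This is essentially the "shortcut" you mention in your last paragraph. Your main argument instead proves the key lower bound $\Psi_L(x,\bm g)\ge x^2-2^{-2L-4}$ for \emph{every} LP-feasible $(x,\bm g)\in T^L$ by induction on $L$, using the exact recursion $\F^{k+1}(x,\bm g)-2^{-2(k+1)-2}=\tfrac14(\F^{k}(g_1,\bm g')-2^{-2k-2})+x-\tfrac{g_1}{2}$ and the embedding $(g_1,\bm g')\in T^{L-1}$; the three cases (i)--(iii) and the tightness point $x^*=2^{-L-2}$ all check out (in case~(iii), note that $\tfrac{g_1^2}{4}-\tfrac{g_1}{2}+x-x^2=\tfrac14(g_1-2x)(g_1-2+2x)$ is \emph{implied by}, rather than equivalent to, $g_1\le\min\{2x,2-2x\}$ — a harmless overstatement). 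What your route buys is self-containedness: it establishes directly that projecting out the relaxed $\bm g$-variables costs nothing in the lower bound, i.e.\ it re-derives the sharpness-in-lower-bound of $Q^L$ rather than importing it, at the price of a three-case induction. The paper's route is shorter but rests on the projection characterization being supplied elsewhere.
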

\begin{proof}
    The lower-bounding inequalities on $ \z $
    induced by the $ (x, \y) $-projection of the sawtooth epigraph relaxation,
    \ie $ \proj_{x, \y}(Q^L) $, are exactly the supporting valid linear inequalities to $ \y \ge x^2$
    at the points $ x_k \define \tfrac{k}{2^{L + 1}} $, $ k = 0, \ldots, 2^L $; see~\Cref{prop:F^L}.
    The maximum error is attained at the intersection of two consecutive linear segments
    on the boundary of the feasible region defined by these inequalities, \ie at
    $ (\bar{x}_k, \y_k) \define (\tfrac{x_k + x_{k + 1}}{2}, x_k x_{k + 1})
        = ((k + \tfrac 12)2^{-L - 1}, k(k + 1)2^{-2L - 2}) $.
    Thus, the maximum error is given by
    \begin{equation*}
        \mathcal E^{\text{max}}(Q^L, \epi_{[0, 1]}(x^2))
            = \lrp{(k + \tfrac 12)2^{-L - 1}}^2 - k(k + 1)2^{-2L - 2} 
        = 2^{-2L-4},
    \end{equation*}
    independent of the choice of~$k$.
    \hfill \qed
\end{proof}
In addition to the sawtooth-based formulations,
we use McCormick relaxations as core formulations to form MIP relaxations of MIQCQPs.
For the McCormick relaxation of the equation $ \z = xy $
over the box domain $ [\xmin, \xmax] \times [\ymin, \ymax] $,
the maximum under- and overestimation is $ \tfrac{1}{4}(\xmax - \xmin)(\ymax - \ymin) $,
attained at $ (x, y) = (\tfrac{1}{2}(\xmin + \xmax), \tfrac{1}{2}(\ymin + \ymax)) $,
see \eg \cite[page 23]{Linderoth:2005}.

\subsubsection{Separable MIP Relaxations}

\textcolor{black}{In order to generate MIP relaxations of MIQCQPs with either the \morsireform, \zellmerreform, or the \HybS approach,
we need to discretize univariate quadratic terms and products of variables.\\
\noindent\textbf{Univariate Quadratic Terms in MICQCP's.} First, for univariate quadratic terms, i.e.,  $z = x^2$, in MIQCQPs,
we use the tightened sawtooth relaxation to discretize in either approach.
The tightened sawtooth relaxation  has a maximum error of $ 2^{-2L - 2} $, as shown in Proposition~\ref{prop:F^L}.\\
\noindent\textbf{Bivariate Products in MICQCP's.} Second, for bivariate products, i.e., $\z=xy$, in MIQCQPs, we use a different separable reformulation in each approach.
In the following, we derive upper bounds, purely depending on $L$, and lower bounds, depending on $L$ and $L_1$, on the maximum errors for variable products.
Depending on the reformulation, we have to address two different maximum error scenarios in the bounds on $\z$.\\
We start with the maximum error in the relaxations for~$ \z $ in which~$ x^2 $ and~$ y^2 $ are overestimated and~$ p^2 $ is underestimated.} 
This applies to the upper and lower bound on $ \z $ in \HybS, the lower bound on $\z$ in \morsireform, and the upper bound on $ \z $ in \zellmerreform. 
In each of these cases, the maximum overestimation of both $ \zx = x^2 $ and $ \zy = y^2 $ with the sawtooth relaxation is~$ 2^{-2L - 2} $,
occurring at the grid centers $ x_k = y_k = (k + \tfrac 12) 2^{-L}$, $ k = 0, \ldots, 2^L - 1 $.
If we combine these points, 
\textcolor{black}{$x_k$ and $y_k$},
with a point on the graph of~$ p^2 $, i.e.~$ \zp=p^2$, 
this point has an approximation error~$0$ and
we obtain a lower bound for the maximum error in the relaxation of $z=xy$.
Namely, if~$ \PIP_{L, L_1} $ denotes either of the MIP relaxations \morsireform, \zellmerreform or \HybS
of~$ \gra_{[0, 1]^2}(xy) $ with depths~$ L, L_1 $, we have
\begin{align*}
    \mathcal E^{\max}(\PIP_{L, L_1} , \gra_{[0, 1]^2}(xy))) &\geq\tfrac{1}{2}(((x_k^2+2^{-2L - 2}) -x_k^2 ) + ((y_k^2+ 2^{-2L - 2})-y_k^2)\\
    & \quad + ((p^2+0)-p^2))\\
    &\geq\tfrac{1}{2}\left(2^{-2L - 2} + 2^{-2L - 2} + 0\right)\\
        &= 2^{-2L - 2},
\end{align*}
independent of the choice of~$k$.
This yields the following proposition.
\begin{proposition}
    The maximum error in the MIP relaxations \morsireform, \zellmerreform and \HybS for $ \z = xy $
    with $ x, y \in [0, 1] $ is at least $ 2^{-2L - 2} $. 
\end{proposition}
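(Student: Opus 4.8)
The plan is to prove this lower bound by exhibiting, for each of the three formulations, one explicit point $(\bar x,\bar y,\bar z)$ in the projection onto $(x,y,z)$ of the corresponding MIP relaxation at which the pointwise error of \Cref{def:errors} is at least $2^{-2L-2}$. Since $\gra_{[0,1]^2}(xy)$ meets each line $\{x=\bar x,\,y=\bar y\}$ only in $(\bar x,\bar y,\bar x\bar y)$, this pointwise error equals $|\bar z-\bar x\bar y|$, so one such point suffices. This is entirely the easy (existence) direction of the error analysis, so I do not expect a deep obstacle; the only step that needs attention is the verification that the chosen point is feasible for all constraints of the formulation.

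For the point I would take $\bar x=\bar y=2^{-L-1}$, a depth-$L$ grid centre (any common grid centre $(k+\tfrac12)2^{-L}$ works equally well). By \Cref{prop:F^L}, Items~2 and~4, the piecewise-linear interpolant $F^L$ of $x^2$ used in the upper-bounding constraint~\eqref{eq:sawtooth-relax-tight-UB} of $R^{L,L_1}$ satisfies $F^L(\bar x)=\bar x^2+2^{-2L-2}$, the maximal over-estimate; and a short check from the definitions shows that $(t,F^L(t))$ and $(t,t^2)$ lie in $R^{L,L_1}$, and $(t,t^2)$ lies in $Q^{L_1}$, for every $t$ (the graph of squaring being contained in each of these relaxations). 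I would then set the auxiliary squares of $x$ and $y$ at their maxima, $z^x=z^y=\bar x^2+2^{-2L-2}$, and the auxiliary square of the combination variable $p$ (equal to $x+y$ for \morsireform and to $x-y$ for \zellmerreform and \HybS) at the exact value $p^2$. With these choices the defining relations for $z$ — the equation $z=\tfrac12(z^p-z^x-z^y)$ in \morsireform, the equation $z=\tfrac12(z^x+z^y-z^p)$ in \zellmerreform, and the upper bound $z\le\tfrac12(z^x+z^y-z^{p_2})$ in \HybS — together with $xy=\tfrac12((x+y)^2-x^2-y^2)=\tfrac12(x^2+y^2-(x-y)^2)$ and $z^x-x^2=z^y-y^2=2^{-2L-2}$, put $z$ at the value $\bar x\bar y-2^{-2L-2}$ for \morsireform and $\bar x\bar y+2^{-2L-2}$ for \zellmerreform and \HybS. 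Taking $\bar z$ equal to that value gives $|\bar z-\bar x\bar y|=2^{-2L-2}$.

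The remaining work is to confirm that $(\bar x,\bar y,\bar z)$ is feasible for the other constraints. The McCormick envelope $\mathcal M(x,y)$ of \eqref{eq:McCormick} reduces at $\bar x=\bar y$ to $0\le z\le\bar x$, and $\bar x\bar y\pm 2^{-2L-2}$ indeed lies in $[0,\bar x]$ for every $L\ge0$ (using $\bar x\bar y=\bar x^2=2^{-2L-2}$ and $\bar x^2+2^{-2L-2}=2^{-2L-1}\le 2^{-L-1}=\bar x$); the variable-range constraints on $p$ hold, as $p\in\{0,2^{-L}\}$; and in \HybS the unused lower bound on $z$ is satisfied, as it evaluates to $\bar x\bar y-2^{-2L-2}\le\bar z$. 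One also has to remember that $p$ ranges in $[0,2]$ or $[-1,1]$ rather than $[0,1]$, so "$(p,p^2)$ feasible in $R^{L,L_1}$" and "$(p,p^2)$ feasible in $Q^{L_1}$" are meant after the affine rescaling of \Cref{rem:sawtooth-gbnds}, under which the graph of $t\mapsto t^2$ still maps into these sets. None of this is difficult; the genuine content of the statement is the observation that a separable reformulation makes two of the three univariate squares over-estimated at once, each feeding $2^{-2L-2}$ into $z$ through the factor $\tfrac12$, while the third square can be realized exactly.
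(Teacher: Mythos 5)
Your proposal is correct and follows essentially the same route as the paper: evaluate at a common grid centre, take the maximal overestimate $2^{-2L-2}$ on both $z^x$ and $z^y$, realize $z^p$ exactly, and read off an error of $\tfrac12(2^{-2L-2}+2^{-2L-2}+0)=2^{-2L-2}$. The only addition is your explicit check that the resulting point survives the McCormick envelope, a detail the paper's derivation leaves implicit.
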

Furthermore, the maximum underestimation of~$ p^2 $ is $ 2^{-2L_1-2} $
(twice the domain width, which means the error quadruples).
This means we have an upper bound of 
\begin{equation*}
    \frac{1}{2}(2^{-2L-2}+ 2^{-2L-2} + 2^{-2L_1-2})=2^{-2L-2}  + 2^{-2L_1-3}
\end{equation*}
on the maximum error in the lower bound on~$z$ in \morsireform,
the upper bound on~$ \z $ in \zellmerreform and both the upper and lower bound on~$ \z $ in \HybS.
We can use this observation to give an upper bound on the maximum error
in the MIP relaxation \HybS for $ \z = xy $.
\textcolor{black}{See \Cref{fig:Hyb_bounds} for the maximum over- and underestimation of the \HybS MIP relaxation.}
\begin{proposition}
    The maximum error in the MIP relaxation \HybS for $ \z = xy $ with $ x, y \in [0, 1] $
    is at most $ 2^{-2L - 2} + 2^{-2L_1 - 3} $.
\end{proposition}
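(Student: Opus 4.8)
The plan is to exploit the fact that in~\eqref{eq:bin2-bin3} the variable $\z$ is squeezed between an explicit lower bound $\tfrac{1}{2}(z^{p_1} - z^x - z^y)$ and an explicit upper bound $\tfrac{1}{2}(z^x + z^y - z^{p_2})$, and to bound the distance of each of these from $xy$ using the separable identities $xy = \tfrac{1}{2}((x+y)^2 - x^2 - y^2) = \tfrac{1}{2}(x^2 + y^2 - (x-y)^2)$ underlying \morsireform and \zellmerreform together with the error estimates for $R^{L,L_1}$ and $Q^{L_1}$. In fact the estimate has already been carried out in the discussion preceding the statement: it shows that whenever $x^2,y^2$ are overestimated by at most $2^{-2L-2}$ and a shifted square on a domain of width $2$ is underestimated by at most $2^{-2L_1-2}$, the associated bound on $\z$ deviates from $xy$ by at most $\tfrac{1}{2}(2^{-2L-2} + 2^{-2L-2} + 2^{-2L_1-2}) = 2^{-2L-2} + 2^{-2L_1-3}$. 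What I would add is the observation that \emph{both} bounds on $\z$ in \HybS are of exactly this type, after which the result is immediate.

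In detail, let $(x,y,\z) \in \proj_{x,y,\z}(\HybS)$ with witnessing values $z^x,z^y,z^{p_1},z^{p_2}$ (and the internal sawtooth variables), and set $p_1 = x+y$, $p_2 = x-y$; the McCormick constraint $(x,y,\z)\in\mathcal{M}(x,y)$ can only tighten the set and is not needed for an upper bound on the error. Since $(x,z^x)\in R^{L,L_1}$ and the binary vector $\bm{\alpha}\in\{0,1\}^L$ pins $g_0,\dots,g_L$ to $x,G(x),\dots,G^L(x)$ (see the discussion around~\eqref{eq:g}), constraint~\eqref{eq:sawtooth-relax-tight-UB} gives $z^x \le \F^L(x,\bm{g}_{\lrbr{0,L}}) = \capF^L(x) \le x^2 + 2^{-2L-2}$ by Item~2 of \Cref{prop:F^L}, and likewise $z^y \le y^2 + 2^{-2L-2}$; and since $(p_i, z^{p_i}) \in Q^{L_1}$ after the transformation of \Cref{rem:sawtooth-gbnds} to the width-$2$ domains $[0,2]$ and $[-1,1]$, \Cref{prop:sawtooth-epi-error} together with the quadratic scaling of the error gives $z^{p_i} \ge p_i^2 - 2^{-2L_1-2}$ for $i=1,2$. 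From the upper bound in~\eqref{eq:bin2-bin3} we then obtain
\begin{align*}
    \z - xy &\le \tfrac{1}{2}\big(z^x + z^y - z^{p_2}\big) - xy
        = \tfrac{1}{2}\big((z^x - x^2) + (z^y - y^2) + (p_2^2 - z^{p_2})\big)\\
    &\le \tfrac{1}{2}\big(2^{-2L-2} + 2^{-2L-2} + 2^{-2L_1-2}\big) = 2^{-2L-2} + 2^{-2L_1-3},
\end{align*}
and, symmetrically, from the lower bound in~\eqref{eq:bin2-bin3} and $xy = \tfrac{1}{2}(p_1^2 - x^2 - y^2)$,
\begin{align*}
    xy - \z &\le xy - \tfrac{1}{2}\big(z^{p_1} - z^x - z^y\big)\\
    &= \tfrac{1}{2}\big((z^x - x^2) + (z^y - y^2) + (p_1^2 - z^{p_1})\big) \le 2^{-2L-2} + 2^{-2L_1-3}.
\end{align*}
Hence $\abs{\z - xy} \le 2^{-2L-2} + 2^{-2L_1-3}$, and as $(x,y,\z)$ was an arbitrary point of $\proj_{x,y,\z}(\HybS)$ this bounds $\mathcal E^{\max}(\HybS, \gra_{[0,1]^2}(xy))$.

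The only steps that require care, and which I would make explicit, are the two inputs used above: first, that in $R^{L,L_1}$ the upper-bounding auxiliary variable equals $\capF^L(x)$ exactly — because the binary vector $\bm{\alpha}$ forces the first $L$ sawtooth coordinates, so Item~2 of \Cref{prop:F^L} applies verbatim rather than to some larger $\F^L(x,\bm g)$; and second, the scaling of the $Q^{L_1}$-error from the unit interval (where it is $2^{-2L_1-4}$ by \Cref{prop:sawtooth-epi-error}) to the intervals of width $2$ on which $p_1$ and $p_2$ live, which multiplies it by $4$. Everything else is the bookkeeping displayed above; in particular no lower bounds on $z^x$ or $z^y$, and no upper bound on $z^{p_1}$ or $z^{p_2}$, are needed, since those would only improve the estimates.
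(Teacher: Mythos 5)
Your proof is correct and follows essentially the same route as the paper: the paper's argument is exactly the computation $\tfrac12(2^{-2L-2}+2^{-2L-2}+2^{-2L_1-2})=2^{-2L-2}+2^{-2L_1-3}$ carried out in the discussion immediately preceding the proposition, applied to both the upper- and lower-bounding inequalities on $\z$ in \HybS, using the sawtooth overestimation bound $2^{-2L-2}$ for $z^x,z^y$ and the quadrupled sawtooth-epigraph underestimation bound $2^{-2L_1-2}$ for $z^{p_1},z^{p_2}$ on their width-$2$ domains. You merely make the bookkeeping and the two inputs (that binary $\bm\alpha$ pins $\F^L(x,\bm g)$ to $\capF^L(x)$, and the quadratic scaling of the $Q^{L_1}$ error) explicit, which the paper leaves implicit.
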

Next, we consider the upper bound on $ \z $ in \morsireform
and the lower bound on~$ \z $ in \zellmerreform. 
Here, we are interested in the overestimation of $ p^2 $ %
and the underestimation of~$ x^2 $ and~$ y^2 $. 
The maximum overestimation of~$ p^2 $ is~$ 2^{-2L} $
(again, doubling the domain width quadruples the error).
Combined with the maximum underestimation of the sawtooth relaxation
for~$ x^2 $ and~$ y^2 $ of $ 2^{-2L_1 - 4} $,
this yields an upper bound on the maximum error on~$ \z $ of 
\begin{equation*}
    \frac{1}{2}(2^{-2L} + 2^{-2L_1 - 4} + 2^{-2L_1 - 4}) = 2^{-2L - 1} + 2^{-2L_1 - 4}
\end{equation*}
in terms of overestimation in \morsireform and underestimation in \zellmerreform.
Thus, we obtain the following upper bound for the maximum error in \morsireform and \zellmerreform.
\textcolor{black}{See \Cref{fig:Bin2_bounds} for the maximum over- and underestimation of the \morsireform MIP relaxation.}
\begin{proposition}
    The maximum error in the MIP relaxations \morsireform and \zellmerreform for $ \z = xy $
    with $ x, y \in [0, 1] $ is at most $ 2^{-2L - 1} + 2^{-2L_1 - 3} $.
\end{proposition}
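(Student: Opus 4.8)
The plan is to obtain the bound by decomposing the maximum error into a maximum overestimation and a maximum underestimation of~$\z$ and then taking the larger of the two, reusing the one-sided estimates already derived in the two paragraphs immediately preceding the proposition. Recall that for $\morsireform$ the defining relation is $\z = \tfrac12(z^p - z^x - z^y)$ with $p = x+y$, so $\z$ is largest (overestimated) exactly when $z^p$ takes its largest admissible sawtooth value, i.e.\ overestimates $p^2$, while $z^x$ and $z^y$ take their smallest values, i.e.\ underestimate $x^2$ and $y^2$; the opposite assignment gives the maximum underestimation of~$\z$. For $\zellmerreform$ the relation is $\z = \tfrac12(z^x + z^y - z^p)$ with $p = x-y$, which merely swaps which of the three terms is over- versus underestimated, so the two one-sided bounds are the same as for $\morsireform$ with the roles of overestimation and underestimation interchanged.

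Concretely, I would first record the one-sided errors of the tightened sawtooth relaxation $R^{L,L_1}$: on the unit domain it overestimates $x^2$ by at most $2^{-2L-2}$ (\Cref{prop:F^L}, Item~2, via the upper bound $\F^L$) and underestimates it by at most $2^{-2L_1-4}$ (\Cref{prop:sawtooth-epi-error}, since the lower-bounding part coincides with $Q^{L_1}$). On the width-$2$ domain of $p$ (either $[0,2]$ or $[-1,1]$) these errors are multiplied by~$4$ under the affine rescaling of \Cref{rem:sawtooth-gbnds}, giving $2^{-2L}$ and $2^{-2L_1-2}$, respectively; this is exactly the blow-up already noted in the surrounding text. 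Plugging these into the $\tfrac12(\cdot)$ combinations yields: the maximum overestimation of $\z$ in $\morsireform$ (equivalently, underestimation in $\zellmerreform$) is at most $\tfrac12\bigl(2^{-2L} + 2\cdot 2^{-2L_1-4}\bigr) = 2^{-2L-1} + 2^{-2L_1-4}$, which is precisely the quantity in the display just above the proposition, and the maximum underestimation of $\z$ in $\morsireform$ (equivalently, overestimation in $\zellmerreform$) is at most $\tfrac12\bigl(2^{-2L_1-2} + 2\cdot 2^{-2L-2}\bigr) = 2^{-2L-2} + 2^{-2L_1-3}$.

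Finally I would take the maximum of these two quantities. Since $2^{-2L-1} + 2^{-2L_1-4} \le 2^{-2L-1} + 2^{-2L_1-3}$ and $2^{-2L-2} + 2^{-2L_1-3} \le 2^{-2L-1} + 2^{-2L_1-3}$, both one-sided errors are at most $2^{-2L-1} + 2^{-2L_1-3}$, which is the asserted bound. Along the way I would remark that the extra McCormick-envelope constraints $(x,y,\z)\in\mathcal M(x,y)$ included in both $\morsireform$ and $\zellmerreform$ can only shrink $\proju(\PIP)$ and hence only decrease the error, so the estimates computed without them remain valid upper bounds. I do not anticipate any genuine obstacle here; the only things that require care are the factor-$4$ rescaling of the sawtooth errors on the width-$2$ domains and assigning the ``upper-bounding depth~$L$'' and ``lower-bounding depth~$L_1$'' errors to the correct side of each sawtooth relaxation in the sum.
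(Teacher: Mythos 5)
Your proof is correct and follows essentially the same route as the paper: it computes the same two one-sided bounds, $2^{-2L-1}+2^{-2L_1-4}$ for the overestimation side of \morsireform (underestimation side of \zellmerreform) and $2^{-2L-2}+2^{-2L_1-3}$ for the opposite side, and then bounds both by the componentwise maximum $2^{-2L-1}+2^{-2L_1-3}$. The only additions beyond the paper's argument are harmless: the explicit final ``take the max of the two sides'' step and the remark that the McCormick constraints can only tighten the relaxation.
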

In summary, we have the same lower bound for the maximum error of $ 2^{-2L-2} $
in \morsireform, \zellmerreform and \HybS.
However, the known upper bound $ 2^{-2L - 1} + 2^{-2L_1 - 4} $ in \HybS
is slightly better than that of \morsireform and \zellmerreform with $ 2^{-2L - 1} + 2^{-2L_1 - 3} $.
\begin{remark}
    In the MIP relaxations \morsireform, \zellmerreform, and \HybS,
    increasing~$ L_1 $ does not introduce any new binary variables.
    Therefore, we note that in our computations in \Cref{sec:computations}
    we choose~$ L_1 $ to be significantly larger than~$L$,
    such that the maximum error depends primarily on~$L$.
    As~$ L_1 $ increases to infinity,
    the maximum errors in all three MIP relaxations converge to~$ 2^{-2L - 2} $.\hfill$\diamond$ 
\end{remark}

\subsection{Average Error and Minimizing the Average Error}
\label{section:MIPerror_and_volume}
In this section, we will study the average error of an MIP relaxation
by computing the volume enclosed by the projected MIP relaxation
as an additional measure of its relaxation quality.

First, we compute the volumes of all presented MIP relaxations. 
Then we prove that the uniform discretizations,
which are used by definition in each MIP formulation in this article,
are indeed optimal in terms of minimizing the volume of the projected MIP relaxation
if the number of discretization points is fixed (\ie if~$L$ and~$ L_1 $ are fixed).

In all separable formulations,
we use the sawtooth relaxation~\eqref{eq:sawtooth-relax-MIP} for equations of the form $ \y = x^2 $.
In \cite[Propostion 6]{Beach2020-compact}, the authors show
that the volume of this relaxation~$ R^{L, L} $ is $ \nicefrac{3}{16} \cdot 2^{-2L} $.
Furthermore, from \cite[Proposition 5]{Beach2020-compact} it follows
that for any fixed number of breakpoints
a uniform discretization minimizes the volume of the sawtooth epigraph relaxation.

Next, we consider the volumes for the MIP relaxations of $ \z = xy $.
We start by showing that \morsireform, \zellmerreform and \HybS induce a grid structure
in terms of relaxation error and have constant volumes over the resulting grid pieces.
While the grid structure for \HybS is obvious,
we have yet to show it for \morsireform and \zellmerreform.
From \cite[Table~4]{Hager-2021}, we further know that for $ L, L_1 \to \infty $
the $ \z $-values in the projected LP relaxation of \morsireform \eqref{eq:bin2}
are bounded from below by the convex function~$ C_2^L\colon [\xmin, \xmax] \times [\ymin, \ymax] \to \R $
and from above by the concave function~$ C_2^U\colon [\xmin, \xmax] \times [\ymin, \ymax] \to \R $,
\begin{align}
    \label{eq:CL2}
    C^L_2(x, y)
        &= \frac12((x + y)^2 - (\xmax + \xmin) x + \xmax \xmin - (\ymax + \ymin)y + \ymax \ymin),\\
    \label{eq:CU2}
    C^U_2(x, y)
        &= \frac12((\xmin + \xmax + \ymin + \ymax)(x + y) - (\xmin + \ymin)(\xmax + \ymax) - x^2 - y^2).
\end{align}
The same holds for \zellmerreform \eqref{eq:bin3}
and the convex and concave functions~$ C_3^L\colon [\xmin, \xmax] \times [\ymin, \ymax] \to \R $
and~$ C_3^U\colon [\xmin, \xmax] \times [\ymin, \ymax] \to \R $,
\begin{align}
    \label{eq:CL3}
    C^L_3(x, y)
        &= \frac12(x^2 + y^2 - (\xmax + \xmin - \ymax - \ymin)(x - y) + (\xmax - \ymax)(\xmax - \ymin)),\\
    \label{eq:CU3}
    C^U_3(x, y)
        &= \frac12((\xmin + \xmax)x - \xmin\xmax + (\ymin + \ymax)y - \ymin\ymax - (x - y)^2).
\end{align}
As the upper bound on the $z$-value in \HybS is the same as that for \morsireform
and the lower bound is the same as that for \zellmerreform,
the respective projected LP relaxations~$ \PLP_{L, L_1} $ in the limit
for \morsireform, \zellmerreform and \HybS are

\begin{align}
    \label{eq:CLU2}
    \begin{split}
     \text{\textcolor{black}{[Bin2]:} } \quad\quad\quad \lim_{L, L_1 \to \infty}(\projxyz(\PLP_{L, L_1}))
        = \{&(x, y, z) \in [0, 1]^2 \times \R:\\ 
        &C^L_2(x, y) \leq z \leq C^U_2(x, y)\},
    \end{split}       
\end{align}
\begin{align}
    \label{eq:CLU3}
    \begin{split}
     \text{\textcolor{black}{[Bin3]:} }\quad\quad\,\,\,\,\,\, \lim_{L, L_1 \to \infty}(\projxyz(\PLP_{L, L_1}))
        = \{&(x, y, z) \in [0, 1]^2 \times \R:\\ 
        &C^L_3(x, y) \leq z \leq C^U_3(x, y)\},
    \end{split}       
\end{align}
\begin{align}
    \label{eq:CLUHybS}
    \begin{split}
     \text{\textcolor{black}{[HybS]:} }\quad\quad\,\,\,\, \lim_{L, L_1 \to \infty}(\projxyz(\PLP_{L, L_1}))
        = \{&(x, y, z) \in [0, 1]^2 \times \R:\\ 
        &C^L_3(x, y) \leq z \leq C^U_2(x, y)\}.
    \end{split}       
\end{align}

In the following discussion, we will let $ L_1 \to \infty $ \textcolor{black}{in all three formulations}.
This simplifies the proofs considerably
and is relevant in so far as in our computations we use a relatively high value of $ L_1 = 10 $,
which has a resulting maximum error below \textcolor{black}{the standard accuracy of state-of-the-art MIP solvers ($10^{-6}$)}
and yet has no influence on the number of binary variables and uses only $ O(L_1) $ constraints.
Although for different values of~$ L_1 $ the volumes are different,
the hierarchy of MIP relaxations that we establish is independent of this choice. 
We start with the volume of the MIP relaxation \HybS.

\begin{proposition}
    \label{prop:HybS_MIPvol}
    Let $ \PIP_{(L_x, L_y), L_1} $ be the MIP relaxation \HybS from~\eqref{eq:bin2-bin3}
    without the McCormick inequalities,
    where we now allow for independent discretization depths~$ L_x $ and~$ L_y $
    to overestimate~$ x^2 $ and~$ y^2 $, respectively
    (\ie with $ (x, \zx) \in R^{L_x, L_1} $ and $ (y, \zy) \in R^{L_y, L_1} $),\textcolor{black}{\ie}
    \begin{equation*}
        \begin{array}{rll}
             \textcolor{black}{
             p_1} & \textcolor{black}{=x + y, \quad
             p_2 = x - y
             }\\
             \textcolor{black}{(x, \zx)\in R^{L_x, L_1},} &\textcolor{black}{(y, \zy) \in R^{L_y, L_1}}\\
             \textcolor{black}{(p_1, \zpone),} &\textcolor{black}{(p_2, \zptwo) \in Q^{L_1}}\\
              \textcolor{black}{\nicefrac{1}{2} (\zpone - \zx - \zy) \leq}  & \textcolor{black}{\z \le \nicefrac{1}{2} (\zx + \zy - \zptwo)}\\
             \textcolor{black}{x, y} &\textcolor{black}{\in [0, 1], \quad
             p_1 \in [0, 2], \quad
             p_2 \in [-1, 1].}
        \end{array}
    \end{equation*}

    Then the volume of $ \PIP_{(L_x, L_y), L_1} $ converges to the same value
    over each grid piece of the form
    $ [k_x 2^{-L_x}, (k_x+1) 2^{-L_x}] \times [k_y 2^{-L_y}, (k_y+1) 2^{-L_y}] $,
    where $ k_x \in \lrbr{0, 2^{L_x}} $ and $ k_y \in \lrbr{0, 2^{L_y}} $
    for $ L_1 \to \infty $.
    Furthermore, for the total volume of $ \PIP_{(L_x, L_y), L_1} $, we have
    \begin{equation*}
        \lim_{L_1 \to \infty} \vol\left(\projxyz(\PIP_{(L_x, L_y), L_1})\right)
            = \tfrac 16 (2^{-2L_x} + 2^{-2L_y}).
    \end{equation*}
\end{proposition}
\begin{proof}

    Since $ F^{L_1} \to x^2 $ uniformly over $ [0, 1] $ as $ L_1 \to \infty $,
    we have
    \begin{align*}
        &\lim_{L_1 \to \infty} \{(p, \zp) \in [0, 1] \times \R : (p,\zp) \in Q^{L_1}\}\\
            &= \{(p, \zp) \in [0, 1] \times \R : (p,\zp) \in \epi_{[0, 1]}(p^2)\}
    \end{align*}
    under Hausdorff distance.
    In HybS, we have $ (p_1, \zpone), (p_2, \zptwo) \in Q^{L_1} $
    (via the transformation in \Cref{rem:sawtooth-gbnds})
    as well as $ p_1 = x + y $ and $ p_2 = x - y $.
    Thus, we have in the limit, as $ L_1 \to \infty $:
    \begin{align*}
        \zpone \geq (x+y)^2 \, \text{ and } \,
        \zptwo \geq (x-y)^2.
    \end{align*}
    Furthermore, since $ F^L(x) \geq x^2 $ for all $ x \in [0, 1] $, $ L \in \{L_x, L_y\} $,
    and $ (x, \zx) \in R^{L_x, L_1}, (y, \zy) \in R^{L_y, L_1} $,
    we obtain
    \begin{align*}
        \zx \leq F^{L_x}(x) \, \text{ and } \,
        \zy \leq F^{L_y}(y).
    \end{align*}
    Therefore, the inequality
    \begin{equation*}
        \nicefrac{1}{2} (\zpone - \zx - \zy) \leq \z \le \nicefrac{1}{2} (\zx + \zy - \zptwo)\\
    \end{equation*}     
    from~\eqref{eq:bin2-bin3} implies the following in the limit:
    \begin{equation*}
        \nicefrac 12((x + y)^2 - F^{L_x}(x) - F^{L_y}(y))
            \le \z \le \nicefrac 12(F^{L_x}(x) + F^{L_y}(y) - (x - y)^2).
    \end{equation*}
    Now we apply these inequalities to grid pieces of the form $ [\xmin, \xmax] \times [\ymin, \ymax] $.
    Let $ \xmin \define k_x 2^{-L_x} $, $ \xmax \define (k_x + 1) 2^{-L_x} $,
    $ \ymin \define k_y 2^{-L_y} $ and $ \ymax \define (k_y + 1) 2^{-L_y} $,
    and define $ \lx \define \xmax - \xmin = 2^{-L_x} $
    as well as $ \ly \define \ymax - \ymin = 2^{-L_y} $.
    Then, as $ F^{L_x}(x) = -(\xmax+ \xmin) x + \xmax \xmin $ for $ x \in [\xmin, \xmax] $
    and $ F^{L_x}(y) = -(\ymax+ \ymin) y + \ymax \ymin $ for $ y \in [\ymin, \ymax] $,
    the above bounds on~$ \z $ are exactly the envelopes $ C_2^L(x, y) $ for the lower bound
    and $ C_3^U(x, y) $ for the upper bound, respectively.
    Thus, by \Cref{prop:HybS_LPvol}, which is proved later,
    the volume of $ \projxyz(\PIP_{(L_x, L_y), L_1}) $ over the grid piece is
    \begin{equation*}
        \tfrac 16 (\lx \ly^3 + \ly \lx^3) = \tfrac 16 2^{-(L_x + L_y)}(2^{-2L_x} + 2^{-2L_y})
    \end{equation*}
    in the limit.
    Note that this does not depend on the choice of~$ k_x $ and~$ k_y $
    (and thus the choice of grid piece).
    
    Since we have $ 2^{L_xL_y} $ grid pieces overall, the total volume in the limit is then given by
    \begin{align*}
        \lim_{L_1 \to \infty} \vol(\projxyz(\PIP_{(L_x, L_y), L_1}))
            &=  2^{L_xL_y}2^{-(L_x + L_y)}(2^{-2L_x} + 2^{-2L_y})\\
            &= \tfrac 16(2^{-2L_x} + 2^{-2L_y}).
    \end{align*}
    which finishes the proof.
    \hfill \qed
\end{proof}
The following proposition establishes the volumes of the MIP relaxations and grid structure
for the MIP relaxations \morsireform and \zellmerreform.
As this derivation is extensive, we prove it in \Cref{app:lem-pfs}.

\begin{proposition}
    \label{prop:Bin2Bin3-MIPvol}
    Let $ \PIP_{L, L_1} $ be either the MIP relaxation \morsireform from~\eqref{eq:bin2}
    or \zellmerreform from~\eqref{eq:bin3}.
    Then the volume of $ \PIP_{L, L_1} $ converges to the same value
    over each grid piece of the form $[k 2^{-(L-1)}, (k+1) 2^{-(L-1)}] \times [k 2^{-(L-1)}, (k+1) 2^{-(L-1)}]$, %
    where $ k \in \lrbr{0, 2^{L}} $.
    Furthermore, for the total volume we have
    \begin{equation*}
        \lim_{L_1 \to \infty} \vol\left(\projxyz(\PIP_{L, L_1})\right)
            = \tfrac{1}{2}2^{-2L}.
    \end{equation*}
\end{proposition}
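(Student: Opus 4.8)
The plan is to mimic the structure of the proof of \Cref{prop:HybS_MIPvol}, reducing the statement to a volume computation over a single grid piece and then multiplying by the number of pieces. First I would let $L_1 \to \infty$ and invoke the limiting descriptions of the projected LP relaxations given in~\eqref{eq:CLU2} and~\eqref{eq:CLU3}: in the limit, the projected MIP relaxation of \morsireform (resp. \zellmerreform) restricted to a grid piece is sandwiched between $C^L_2$ and $C^U_2$ (resp. $C^L_3$ and $C^U_3$), where the sawtooth interpolant $F^L$ has been replaced on each grid interval $[\xmin,\xmax]$ by the affine function $-(\xmax+\xmin)x + \xmax\xmin$ agreeing with $x^2$ at the endpoints. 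Here the grid has spacing $2^{-(L-1)}$ rather than $2^{-L}$, because the $p$-variable ($x+y$ or $x-y$) ranges over an interval of width $2$, so its discretization at depth $L$ has mesh $2\cdot 2^{-L} = 2^{-(L-1)}$; this coarser mesh (relative to \HybS) is exactly the source of the larger constant $\tfrac12$ versus $\tfrac16$.

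Next I would carry out the per-grid-piece computation. On a grid piece $[\xmin,\xmax]\times[\ymin,\ymax]$ of side lengths $\lx = \ly = 2^{-(L-1)}$, both $x^2$ and $y^2$ are replaced by their secant (affine interpolant at the endpoints), so the upper and lower bounding functions for $\z$ become explicit quadratics in $(x,y)$: for \morsireform, $\z$ lies between the convex paraboloid piece coming from $C^L_2$ (whose curvature is carried by the $(x+y)^2$ term) and the concave piece from $C^U_2$ (curvature from $-x^2-y^2$); symmetrically for \zellmerreform. The enclosed volume over the grid piece is $\int_{\ymin}^{\ymax}\int_{\xmin}^{\xmax}\big(\text{upper}(x,y) - \text{lower}(x,y)\big)\,dx\,dy$, and by the same elementary integration as in \Cref{prop:HybS_LPvol} this evaluates to $\tfrac16(\lx\ly^3 + \ly\lx^3)$ — the key point being that the difference upper$-$lower, after cancellation of the affine terms, is a positive-semidefinite quadratic form whose integral over a box depends only on $\lx$ and $\ly$. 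Substituting $\lx=\ly=2^{-(L-1)}$ gives $\tfrac16\cdot 2\cdot 2^{-4(L-1)} = \tfrac13 \cdot 2^{-4(L-1)}$ per grid piece, and since this is independent of $(k^x,k^y)$ the grid-structure claim follows. There are $(2^{L-1})^2 = 2^{2(L-1)}$ grid pieces, so the total volume is $2^{2(L-1)}\cdot \tfrac13 \cdot 2^{-4(L-1)} = \tfrac13\cdot 2^{-2(L-1)} = \tfrac13\cdot 4\cdot 2^{-2L} = \tfrac43\cdot 2^{-2L}$; I would then double-check the bookkeeping against the claimed $\tfrac12\,2^{-2L}$, since the two McCormick inequalities (present in~\eqref{eq:bin2}–\eqref{eq:bin3} but excluded in the \HybS volume of \Cref{prop:HybS_MIPvol}) clip off part of this region near the grid-piece corners and must be accounted for — this reconciliation is where the constant $\tfrac12$ as opposed to $\tfrac13\cdot 4$ presumably comes from.

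The main obstacle I anticipate is precisely establishing the grid structure for \morsireform and \zellmerreform — i.e. verifying that the boundary of the projected MIP relaxation really does break along the lines $x = k^x 2^{-(L-1)}$ and $y = k^y 2^{-(L-1)}$ and that the enclosed shape is congruent across grid pieces. Unlike \HybS, where the $Q^{L_1}$ components contribute no binary variables and the grid arises transparently from the sawtooth interpolation of $x^2$ and $y^2$ alone, here the binary discretization acts on the $p$-variable, so one must track how fixing the $\bm\beta$-pattern for $p = x\pm y$ interacts with the simultaneous sawtooth relaxations of $x^2$ and $y^2$, and confirm that the combined feasible region tiles correctly. I expect this is exactly why the authors defer the proof to \Cref{app:lem-pfs}; I would follow their lead and, in the main text, reduce to the grid piece via the limiting envelopes and the explicit affine form of $F^L$ on each interval, then cite the appendix for the tiling argument and the per-piece integral. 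The remaining volume computation and the summation over $2^{2(L-1)}$ pieces are routine.
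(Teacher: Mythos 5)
Your overall strategy (let $L_1 \to \infty$, reduce to one grid piece, integrate, multiply by the number of pieces) matches the paper's, but the central computation rests on an incorrect identification of the bounding functions, and the resulting mismatch is real, not a McCormick artefact. Over a grid piece of side $2^{-(L-1)}$ the projected \morsireform relaxation in the limit is \emph{not} the region between the envelopes $C_2^L$ and $C_2^U$ (nor between $C_2^L$ and $C_3^U$, whose volume is the $\tfrac16(\lx\ly^3+\ly\lx^3)$ you quote --- that is the \HybS envelope volume from \Cref{prop:HybS_LPvol}). Those envelopes describe the LP relaxation in the limit $L\to\infty$, where the sawtooth degenerates to a single secant over the whole interval. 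At finite depth $L$ the correct description over the grid piece is
\begin{equation*}
\tfrac 12\bigl((x+y)^2 - \capF^L(x) - \capF^L(y)\bigr) \;\le\; z \;\le\; \tfrac 12\bigl(4\,\capF^L\bigl(\tfrac{x+y}{2}\bigr) - x^2 - y^2\bigr),
\end{equation*}
in which $\capF^L(x)$ and $\capF^L(y)$ each consist of \emph{two} secant pieces on the $2^{-(L-1)}$-wide cell (their breakpoints sit at multiples of $2^{-L}$), and the term $4\capF^L(\tfrac{x+y}{2})$ --- the overestimator of $(x+y)^2$ coming from $(p,z^p)\in R^{L,L_1}$ --- is piecewise linear in $x+y$ with a diagonal kink across the cell. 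The volume therefore splits into three sawtooth-error integrals (two axis-aligned, one requiring a rotation to the variables $u=\tfrac{(x-x_{k-1})+(y-y_{k-1})}{2}$, $v=\tfrac{(x-x_{k-1})-(y-y_{k-1})}{2}$), which total $2\cdot 2^{-4L}$ per piece and hence $2^{2(L-1)}\cdot 2\cdot 2^{-4L} = \tfrac12\,2^{-2L}$ overall. This diagonal structure is also what justifies the grid tiling you were worried about: the cell size $2^{-(L-1)}$ is forced by the $p$-discretization, and each such cell contains a congruent $2\times2$ block of the finer $x$- and $y$-sawtooth cells.

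Your fallback --- that the gap between your $\tfrac43\,2^{-2L}$ and the claimed $\tfrac12\,2^{-2L}$ is ``clipped off'' by the McCormick inequalities --- cannot be made to work: the stated average errors (see the footnote of \Cref{tab:full-char}) are computed \emph{without} the McCormick envelopes, and the paper's computation never invokes them. The discrepancy is entirely due to substituting the wrong (coarser, envelope-level) bounding functions for the actual depth-$L$ piecewise-linear ones. To repair the proof you must keep $\capF^L$ as a genuine two-piece interpolant on each cell and carry out the diagonal integral explicitly; the envelope formulas of \Cref{prop:HybS_LPvol} and \Cref{prop:bin2bin3_LPvol} measure a different object and cannot be reused here.
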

Now that we have calculated the average error,
\ie the volume of the MIP relaxations, for uniform breakpoints,
we show that among all possible breakpoint choices, uniform placement of breakpoints minimizes the average error.
For $ \y = x^2 $ and the sawtooth functions,
this has already been shown in \cite{Beach2020-compact};
for equations $ \z = xy $ it still has to be shown.
We prove average error minimization for uniform breakpoint placement in \HybS
and do not consider the formulations \morsireform and \zellmerreform here,
as they are hard to analyze in this respect,
which is also mentioned in \cite{Hager-2021} for approximations.
In \Cref{prop:HybS_MIPvol}, we have shown that \HybS has a grid structure where on each grid piece,
the average error is $ \tfrac 16 (\lx \ly^3 + \ly \lx^3) $,
where~$ \lx $ and~$ \ly $ are the widths of the grid piece in~$x$- and $y$-direction respectively.
In the following, we consider a piecewise relaxation defined via these grid pieces
and show that the total average error is minimized by a uniform breakpoint placement,
as is the result of \HybS.

\begin{proposition}
\label{prop:min_avg_error_hybs}
    Let $ 0 = x_0 < x_1 < \ldots < x_n = 1 $ and $ 0 = y_0 < y_1 < \ldots < y_m = 1 $
    be sets of breakpoints.
    For each grid piece $ [x_{i - 1}, x_i] \times [y_{j - 1}, y_j] $,
    consider a relaxation of $ \gra_{[0, 1]^2}(xy) $
    with average error $ \tfrac 16 (\lxi \lyj^3 + \lyj \lxi^3) $,
    where $ \lxi \define x_i - x_{i - 1} $ and $ \lyj \define y_j - y_{j - 1} $ are the widths
    of the grid piece with $ i \in \lrbr{n} $ and $ j \in \lrbr{m} $.
    Then a uniform spacing of these breakpoints minimizes the average error
    overall piecewise relaxations of this form.
\end{proposition}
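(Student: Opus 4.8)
The plan is to write the total average error explicitly, observe that it separates into independent contributions from the $x$- and from the $y$-breakpoints, and then minimize each contribution by a one-dimensional convexity argument.

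First I would sum the per-grid-piece error over all pieces and expand the product:
\[
\sum_{i=1}^n \sum_{j=1}^m \tfrac16\bigl(\lxi \lyj^3 + \lyj \lxi^3\bigr)
= \tfrac16\Bigl(\sum_{i=1}^n \lxi\Bigr)\Bigl(\sum_{j=1}^m \lyj^3\Bigr)
+ \tfrac16\Bigl(\sum_{j=1}^m \lyj\Bigr)\Bigl(\sum_{i=1}^n \lxi^3\Bigr).
\]
Since $x_0 = 0$ and $x_n = 1$ we have the telescoping identity $\sum_{i=1}^n \lxi = 1$, and likewise $\sum_{j=1}^m \lyj = 1$. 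Hence the total average error collapses to $\tfrac16\bigl(\sum_{i=1}^n \lxi^3 + \sum_{j=1}^m \lyj^3\bigr)$. The two sums involve disjoint sets of variables and disjoint constraints, so it suffices to minimize each of them separately over the corresponding simplex.

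Next I would minimize $\sum_{i=1}^n \ell_i^3$ over $\{\ell \in \R_{\ge 0}^n : \sum_{i=1}^n \ell_i = 1\}$. Since $t \mapsto t^3$ is strictly convex on $[0,\infty)$, Jensen's inequality gives $\tfrac1n \sum_{i=1}^n \ell_i^3 \ge \bigl(\tfrac1n \sum_{i=1}^n \ell_i\bigr)^3 = n^{-3}$, so $\sum_{i=1}^n \ell_i^3 \ge n^{-2}$, with equality if and only if $\ell_1 = \cdots = \ell_n = 1/n$; equivalently one may argue via Lagrange multipliers or the power-mean inequality. The same bound holds for the $y$-sum with $m$ in place of $n$. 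Combining, the total average error is at least $\tfrac16\bigl(n^{-2} + m^{-2}\bigr)$, attained exactly when $\lxi = 1/n$ for all $i$ and $\lyj = 1/m$ for all $j$, i.e.\ for the uniform breakpoint placement — which is precisely the one used in \HybS.

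There is no substantial obstacle here: the only points requiring care are the decoupling step (which rests on the telescoping identities $\sum_i \lxi = \sum_j \lyj = 1$) and the uniqueness of the minimizer, which follows from strict convexity of $t^3$ on the relevant domain. I would, however, note that the argument uses the additively separable form $\lxi \lyj^3 + \lyj \lxi^3$ of the per-piece error in an essential way; a per-piece error that does not split in this manner need not decouple into independent $x$- and $y$-problems, consistent with the paper's remark that \morsireform and \zellmerreform are harder to analyze in this respect.
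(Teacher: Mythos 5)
Your proof is correct and follows essentially the same route as the paper: expand the double sum, use the telescoping identities $\sum_i \lxi = \sum_j \lyj = 1$ to reduce the objective to $\tfrac16\bigl(\sum_i \lxi^3 + \sum_j \lyj^3\bigr)$, and minimize the two decoupled simplex-constrained subproblems. The only difference is that you resolve the one-dimensional subproblems directly via Jensen's inequality (which also gives uniqueness for free), whereas the paper cites the equivalent sawtooth-area result from \cite{Beach2020-compact}; both yield the same optimal value $\tfrac16(\tfrac1{n^2}+\tfrac1{m^2})$ at the uniform placement.
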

\begin{proof}
    The problem of minimizing the average error of a piecewise relaxation of this form
    can be formulated as
    \begin{equation} 
    \label{eqn:bin23_opt}
        \begin{array}{rll}
            \ds \min &  \tfrac 16  \sum_{i = 1}^n \sum_{j = 1}^m (\lxi \lyj^3 + \lyj \lxi^3)\\
            \text{s.t.} & \sum_{i = 1}^n \lxi =1\\
            & \sum_{j=1}^m \lyj  =1\\
            & \lxi \geq 0 & i = 1, \ldots, n\\
            & \lyj \geq 0 & j = 1, \ldots, m.\\
        \end{array}
    \end{equation}
    The objective function in~\eqref{eqn:bin23_opt}
    sums the average errors over the single grid pieces
    while the constraints ensure that all single grid lengths sum up to~$1$
    and are greater than or equal to~$0$.
    The objective function can be rewritten to
    \begin{align*}
        \tfrac 16 \sum_{i = 1}^n \sum_{j = 1}^m (\lxi \lyj^3 + \lyj \lxi^3)&=
        \tfrac 16  \left(\sum_{i = 1}^n \sum_{j = 1}^m (\lxi \lyj^3)  + \sum_{i = 1}^n \sum_{j = 1}^m(\lyj \lxi^3)\right)\\
        =\tfrac 16  \left(\sum_{i = 1}^n \lxi \sum_{j = 1}^m \lyj^3  + \sum_{j = 1}^m \lyj \sum_{i = 1}^n \lxi^3\right) &= \tfrac 16  \left(1 \cdot \sum_{j = 1}^m \lyj^3  + 1 \cdot \sum_{i = 1}^n \lxi^3\right)\\
        &=\tfrac 16 \sum_{j = 1}^m \lyj^3 + \tfrac 16  \sum_{i=1}^n \lxi^3.
    \end{align*}
    Thus, \eqref{eqn:bin23_opt} decomposes into two independent problems
    where the respective optimal solutions $ \bm x^* $ and $ \bm y^* $,
    can be composed to create $ (\bm x^*, \bm y^*) $,
    which is optimal for the original problem~\eqref{eqn:bin23_opt}.
    The subproblems are
    \begin{equation} 
    \label{eqn:bin23_opt_x}
        \begin{array}{rll}
            \ds \min & \sum_{i = 1}^n \lxi^3\\
            \text{s.t.} & \sum_{i = 1}^n \lxi =1\\
            & \lxi \geq 0 & i = 1, \ldots, n
        \end{array}
    \end{equation}
    and
    \begin{equation} 
    \label{eqn:bin23_opt_y}
        \begin{array}{rll}
            \ds \min & \sum_{j = 1}^m \lyj^3\\
            \text{s.t.} & \sum_{j = 1}^m \lyj = 1\\
            & \lyj \geq 0 & j = 1, \ldots, m.
        \end{array}
    \end{equation}
    These are exactly the sawtooth-area optimization problems from \cite[Proposition 5]{Beach2020-compact},
    such that a uniform placement of the breakpoints where each $\lxi=\frac{1}{n}$ is optimal for \eqref{eqn:bin23_opt_x}, and $\lyj=\frac{1}{m}$ is optimal for \eqref{eqn:bin23_opt_y}. Consequently, a uniform placement of grid points is optimal for \eqref{eqn:bin23_opt} and the total volume is $\tfrac 16 (\tfrac 1{m^2} + \tfrac 1{n^2})$.%
     \hfill \qed
\end{proof}
\begin{remark}
    \label{prop:optimal_hybs}
    \textcolor{black}{
    Let 
    $ \PIP_{L,L} $ be a depth-$L$ \HybS MIP relaxation
    of $ \gra_{[0, 1]^2}(xy) $ from~\eqref{eq:bin2-bin3}, with $L=L_1$.
    Since $ \PIP_{L,L} $ satisfies the uniform spacing of breakpoints discussed in~\Cref{prop:min_avg_error_hybs}, we see that 
    $ \PIP_{L,L} $ is an optimal piecewise relaxation in the sense of 
    minimizing the average error, attaining the average error of\\$ \mathcal E^{\text{avg}}(\PIP_{L,L}, \gra_{[0, 1]^2}(xy)) = \tfrac 13 2^{-2L} $.\hfill$\diamond$ }
\end{remark}

\subsection{Formulation Strength}
\label{subsec:strength}

In the previous section, we discussed the maximum and average errors
incurred from using certain discretizations.
We will now consider the strength of the resulting MIP relaxations
by analyzing their LP relaxation.
First, we will check for sharpness and later compare them via the volume of the projected LP relaxation.
Sharpness means that the projected LP relaxation
equals the convex hull of the set to be formulated.
If we now consider the volume of a projected LP relaxation,
it can minimally be the volume of the convex hull,
which precisely holds if the formulation is sharp.
If a formulation is not sharp,
the volume of the projected LP relaxation \textcolor{black}{
measures how much a formulation deviates from sharpness.}
The volume of LP relaxation as a measure of formulation strength
was previously used in \cite{Hager-2021}.

\subsubsection{Sharpness}

We start with the core formulations from \Cref{sec:form-core}.
It is well known that the McCormick relaxation
yields the convex hull of the feasible set of $ \z = xy $ over box domains.
Therefore, it is obviously sharp.
In \cite{Beach2020-compact}, it is shown that the sawtooth approximation for $ \y = x^2 $ is sharp. 
We use this result to prove that sharpness also holds
for the tightened sawtooth relaxation~\eqref{eq:sawtooth-relax-tight}.
See \Cref{fig:SSR} for examples of this relaxation under different parameter choices.
\begin{theorem}[Sharpness of the tightened sawtooth relaxation]
    \label{thm:sawtooth-sharp}
    Consider the tightened sawtooth relaxation $ \PIP_{L, L_1} $
    described in~\eqref{eq:sawtooth-relax-tight} in the space of $ (x, \y, \bm g, \bm \alpha) $
    for $ L, L_1 \in \N $ with $ L \le L_1 $. 
    The MIP relaxation $ \PIP_{L, L_1} $ is sharp.
\end{theorem}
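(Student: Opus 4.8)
The plan is to reduce sharpness of $\PIP_{L,L_1}$ to the single identity $\proj_{x,\y}(\PLP_{L,L_1}) = \conv(R^{L,L_1})$ (using $R^{L,L_1} = \proj_{x,\y}(\PIP_{L,L_1})$ from the hypothesis), and to prove that by writing both sides in closed form. Let $G^j$ denote the $j$-fold composition of the tooth function ($G^0(x)=x$), let $F^j$ be the depth-$j$ piecewise linear interpolant of $x^2$ from \Cref{prop:F^L}, and set
\begin{equation*}
    \phi_{L_1}(x) \define \max\Bigl(\max_{0 \le j \le L_1}\bigl(F^j(x) - 2^{-2j-2}\bigr),\ 0,\ 2x-1\Bigr).
\end{equation*}
By \Cref{prop:F^L} (items 1 and 5), each $F^j - 2^{-2j-2}$ is an upper envelope of tangent lines to $x^2$ at dyadic midpoints, so $\phi_{L_1}$ is a convex piecewise linear underestimator of $x^2$ on $[0,1]$ (the terms $0$ and $2x-1$ playing the role of the tangents at $x=0,1$). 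The inclusion $\conv(R^{L,L_1}) \subseteq \proj_{x,\y}(\PLP_{L,L_1})$ is immediate: the right-hand side is convex and contains $\proj_{x,\y}(\PIP_{L,L_1})$, hence its convex hull.

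I would first compute $\conv(R^{L,L_1})$. Fixing $\bm\alpha \in \{0,1\}^L$ in~\eqref{eq:x-sawtooth-constr} forces $g_j = G^j(x)$ for $j \le L$ and restricts $x$ to exactly one dyadic interval of the form $[k2^{-L},(k+1)2^{-L}]$; there the upper bound~\eqref{eq:sawtooth-relax-tight-UB} becomes $\y \le \F^L(x,\bm g_{\lrbr{0,L}}) = F^L(x)$. For the lower bounds~\eqref{eq:sawtooth-relax-tight-LB}--\eqref{eq:sawtooth-relax-tight-LB-ends}, completing the saturated chain $g_j = G^j(x)$ for $L < j \le L_1$ is feasible in $T^{L_1}$ and makes every $\F^j(x,\bm g)$ as small as possible, namely equal to $F^j(x)$; hence the attainable lower envelope on this piece is exactly $\phi_{L_1}$, and since $\phi_{L_1} \le x^2 \le F^L$ the piece contributes $\{(x,\y): x \in [k2^{-L},(k+1)2^{-L}],\ \phi_{L_1}(x) \le \y \le F^L(x)\}$. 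Taking the union over $\bm\alpha$ gives $R^{L,L_1} = \{(x,\y): x\in[0,1],\ \phi_{L_1}(x)\le \y \le F^L(x)\}$. For a region $\{a(x)\le \y\le b(x)\}$ with $a,b$ convex and $a\le b$, the convex hull is $\{a(x)\le \y\le \hat b(x)\}$ with $\hat b$ the concave envelope of $b$; here $a=\phi_{L_1}$ is already convex, and $F^L$ is convex with $F^L(0)=0$, $F^L(1)=1$, so its concave envelope is the chord $\y=x$. Thus $\conv(R^{L,L_1}) = \{(x,\y): \phi_{L_1}(x) \le \y \le x\}$.

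It then remains to show $\proj_{x,\y}(\PLP_{L,L_1}) = \{(x,\y): \phi_{L_1}(x)\le \y\le x\}$. Projecting $\bm\alpha$ out of the LP relaxation of $S^L$ leaves exactly $T^L$ (as recorded before \Cref{def:sawtooth-epi}), so the $\bm g$-constraints of $\PLP_{L,L_1}$ reduce to $(x,\bm g)\in T^{L_1}$. The upper bound is $\y \le \F^L(x,\bm g_{\lrbr{0,L}}) = x - \sum_{j=1}^L 2^{-2j}g_j \le x$. For the lower bound, the crucial monotonicity fact is $\F^j(x,\bm g)\ge F^j(x)$ for all $(x,\bm g)\in T^{L_1}$: indeed $F^j(x)$ is the value of $\F^j(x,\cdot)$ at the saturated chain, and that chain maximizes $\sum_{i=1}^j 2^{-2i}g_i$ among all feasible $\bm g$ because the geometric coefficient decay $2^{-2i}$ dominates the at-most-doubling of feasible slack propagated down the chain. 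Consequently any LP-feasible point has $\y \ge \F^j(x,\bm g) - 2^{-2j-2} \ge F^j(x) - 2^{-2j-2}$ for all $j\le L_1$, plus $\y\ge 0$, $\y\ge 2x-1$, hence $\y \ge \phi_{L_1}(x)$; so $\proj_{x,\y}(\PLP_{L,L_1}) \subseteq \{\phi_{L_1}(x)\le\y\le x\}$. For the reverse inclusion I would, for fixed $x$, scan the feasible family $\bm g(t)\define (x, tG^1(x),\dots,tG^{L_1}(x))\in T^{L_1}$, $t\in[0,1]$: the admissible $\y$-interval at parameter $t$ is $[\Lambda(t),\ (1-t)x + tF^L(x)]$ with $\Lambda(t) = \max(\max_j((1-t)x + tF^j(x) - 2^{-2j-2}),0,2x-1)$, which is nonempty for all $t$ since $F^j(x) - 2^{-2j-2}\le x^2 \le (1-t)x + tF^L(x)$, with $\Lambda$ nonincreasing (because $F^j\le x$) and $\Lambda(1)=\phi_{L_1}(x)$, while the upper endpoint decreases from $x$ at $t=0$. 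The union of these intervals over $t$, being the $\y$-projection of a connected planar region, equals $[\phi_{L_1}(x),x]$. Combining the three steps gives $\proj_{x,\y}(\PLP_{L,L_1}) = \conv(R^{L,L_1}) = \conv(\proj_{x,\y}(\PIP_{L,L_1}))$, i.e., $\PIP_{L,L_1}$ is sharp.

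The step I expect to be the main obstacle is the monotonicity lemma $\F^j(x,\bm g)\ge F^j(x)$ on $T^{L_1}$. It is morally obvious — raising any $g_i$ only lowers $\F^j$, and the saturated chain realizes the interpolant — but a rigorous argument must control how saturating one coordinate shrinks or enlarges the feasible range of the later coordinates, which is precisely where the factor-$2$ expansion in the sawtooth must be weighed against the factor-$4$ decay of the weights $2^{-2i}$. This is the same estimate that underlies the sharpness of the plain sawtooth approximation in \cite[Theorem 1]{Beach2020-compact}, so an alternative route is to invoke that result directly: it yields $\{F^L(x)\le\y\le x\}$ as the projected LP relaxation of the sawtooth approximation, and since the additional inequalities in~\eqref{eq:sawtooth-relax-tight-constr} (the relaxed upper bound $\y\le\F^L$ together with the epigraph cuts, each sharp on its own by convexity of the $F^j$) cut both $\proj_{x,\y}(\PLP_{L,L_1})$ and $\conv(R^{L,L_1})$ down to the same set $\{\phi_{L_1}(x)\le\y\le x\}$; the remaining bookkeeping (global convexity of $\phi_{L_1}$, the concave envelope of $F^L$, connectedness of the scan) is then routine.
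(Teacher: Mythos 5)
Your proposal is correct and follows essentially the same route as the paper: both arguments split sharpness into the upper-bounding part (which reduces to the known sharpness of the sawtooth approximation, \cite[Theorem 1]{Beach2020-compact}, since the concave envelope of the convex $F^L$ is the chord $\y=x$) and the lower-bounding part, whose crux is exactly your monotonicity claim that the saturated chain $g_i=G^i(x)$ maximizes $\sum_i 2^{-2i}g_i$ over $T^{L_1}$ — the same greedy-optimality estimate the paper invokes (and likewise defers to the proof of \cite[Theorem 1]{Beach2020-compact}). Your additional bookkeeping — the closed-form description $\{\phi_{L_1}(x)\le\y\le F^L(x)\}$ of the MIP projection and the scan $\bm g(t)=(x,tG^1(x),\dots,tG^{L_1}(x))$ showing every point of $\{\phi_{L_1}(x)\le\y\le x\}$ is LP-attained — is sound and merely makes explicit what the paper's decomposition into $\PIPP\cap\PIPM$ handles implicitly.
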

\renewcommand*{\proofname}{Proof sketch}
\begin{proof}
    In $ \PIP_{L, L_1} $, the upper bounds on~$ \y $ are always strictly greater than~$ x^2 $
    while the lower bounds are always strictly smaller.
    Thus, we can consider sharpness with respect to upper and lower bounds independently. 
    More formally, define
    \begin{equation*}
        \begin{array}{rl}
            \PIPP &\define \{(x,\y, \bm g, \bm \alpha) \in [0,1] \times \R \times [0,1]^{L_1+1} \times \{0,1\}^L:
            \mref{eq:x-sawtooth-constr,eq:x-sawtooth-epi-constr,eq:sawtooth-relax-tight-UB}
            ,\\
             \PIPM &\define \{(x,\y, \bm g, \bm \alpha) \in [0,1] \times \R \times [0,1]^{L_1+1} \times \{0,1\}^L: \mref{eq:x-sawtooth-constr,eq:x-sawtooth-epi-constr,eq:sawtooth-relax-tight-LB,eq:sawtooth-relax-tight-LB-ends}
            \}.
        \end{array}
    \end{equation*}
    Then $ \PIP_{L, L_1} $ is sharp if and only if both $ \PIPP $ and $ \PIPM $ are sharp. %
    This simplification holds since $ \PIP_{L, L_1} = \PIPP \cap \PIPM $
    and since the upper bound~$ \PIPP $ strictly overestimates~$ x^2 $,
    while the lower bound, $ \PIPM $ strictly underestimates~$ x^2 $,
    such that sharpness of the two can be considered separately.
    
    Now, the sharpness of $ \PIPP $ follows directly
    from the sharpness of the sawtooth approximation~\eqref{eq:sawtooth-approx},
    which holds by \cite[Theorem 1]{Beach2020-compact}.
    For the sharpness of~$ \PIPM $, the proof closely follows the proof of sharpness
    in \cite[Theorem 1]{Beach2020-compact}, except that, after choosing some fixed $ x \in [0, 1] $,
    we frame the contradiction as follows:
    \begin{enumerate}
        \item Choose $\bm g^*$ as in \cite[Theorem 1]{Beach2020-compact}, and choose the minimum possible value of~$\y^*$ given $\bm g^*$, such that $\y^*$ attains one of its lower bounds.
        \item Observe that the chosen solution admits a feasible solution in $\PIP_{L,L_1}$, such that if it is minimal in the LP, then we are done.
        \item Suppose for a contradiction that there exists a better $\y$-minimal solution $(\hat{\y}, \hat{\bm g})$ than the proposed solution $(\y^*, \bm g^*)$, such that some incident lower bound must have been improved.
        \item Observe that the improved incident lower bound must be of the form $\y \ge \F^j(x, \bm g^*) - 2^{-2L-2}$ for some $j \ge 0$, as the lower bounds $0$ and $2x-1$ do not change with the choice of $g^*$. Thus, $\F^j(x, \bm g^*) - 2^{-2L-2} \ge \F^j(x, \hat{\bm g}) - 2^{-2L-2}$
        \item Show that $\F^j(x, \hat{\bm g}) - \F^j(x, \bm g^*) < 0$, a contradiction on the choice of $(\hat{y}, \hat{\bm g})$. Thus, the solution ($\y^*,\bm g^*$) was optimal to begin with, and therefore sharpness must hold.
    \end{enumerate}
    The proof that $\F^j(x, \bm g^*) - \F^j(x, \bm g^*) < 0$
    follows in exactly the same manner as \cite[Theorem 1]{Beach2020-compact} and is thus omitted here.
    \hfill \qed
\end{proof}
\renewcommand*{\proofname}{Proof}
    
In \cite{Beach2020-compact}, besides sharpness, it is further shown that the sawtooth approximation is also hereditarily sharp. 
The following theorem states that the same is true for the tightened sawtooth relaxation~\eqref{eq:sawtooth-relax-tight}
and $ \y = x^2 $.

\begin{theorem}
    \label{thm:sawtooth-hsharp}
    The tightened sawtooth relaxation  %
    for $ \y = x^2 $ is hereditarily sharp.%
\end{theorem}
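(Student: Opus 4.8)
The plan is to follow the decomposition used in the proof of \Cref{thm:sawtooth-sharp} and to check that every step survives fixing an arbitrary subset $ \bm{\alpha}_I = \hat{\bm{\alpha}} $ of the binary variables, $ I \subseteq \lrbr{L} $, $ \hat{\bm{\alpha}} \in \{0, 1\}^{|I|} $. Exactly as there, in $ \PIP_{L, L_1}|_{\bm{\alpha}_I = \hat{\bm{\alpha}}} $ the upper bound on~$ \y $ from~\eqref{eq:sawtooth-relax-tight-UB} strictly overestimates~$ x^2 $ while the lower bounds~\eqref{eq:sawtooth-relax-tight-LB}--\eqref{eq:sawtooth-relax-tight-LB-ends} strictly underestimate it, and neither property depends on which binary variables are fixed; hence $ \PIP_{L, L_1}|_{\bm{\alpha}_I = \hat{\bm{\alpha}}} = \PIPP|_{\bm{\alpha}_I = \hat{\bm{\alpha}}} \cap \PIPM|_{\bm{\alpha}_I = \hat{\bm{\alpha}}} $ is sharp as soon as both restricted factors are. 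So it suffices to prove that $ \PIPP $ and $ \PIPM $ (as introduced in the proof of \Cref{thm:sawtooth-sharp}) are each hereditarily sharp.

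For $ \PIPP $ this is essentially the hereditary sharpness of the sawtooth approximation established in \cite{Beach2020-compact}. The coordinates $ g_{L + 1}, \dots, g_{L_1} $ do not occur in~\eqref{eq:sawtooth-relax-tight-UB} and can always be completed so as to satisfy the $ T^{L_1} $-constraints (e.g.\ by zeros), so after projecting them out $ \PIPP $ is exactly the sawtooth approximation~\eqref{eq:sawtooth-approx} with the defining equation $ \y = \F^L(x, \bm{g}) $ relaxed to the inequality $ \y \le \F^L(x, \bm{g}) $, i.e.\ its hypograph. A short argument shows that passing to the hypograph, together with adjoining always-extendable auxiliary coordinates, preserves hereditary sharpness: the feasible $x$-set is unchanged, and the supremum of $ \F^L(x, \fcdot) $ over the LP-feasible~$ \bm{g} $ at a given~$x$ is the upper boundary of the (hereditarily sharp) projected sawtooth approximation there. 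Hence $ \PIPP $ is hereditarily sharp.

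The genuinely delicate part — and the main obstacle — is the hereditary sharpness of the lower part $ \PIPM $; here the plan is to re-run the contradiction argument sketched for \Cref{thm:sawtooth-sharp} uniformly over all fixings $ (I, \hat{\bm{\alpha}}) $. The first observation is that fixing $ \alpha_j = \hat\alpha_j $ for $ j \in I $, already in the continuous relaxation, turns the $ S^L $-inequalities $ 2(g_{j - 1} - \alpha_j) \le g_j $ and $ 2(\alpha_j - g_{j - 1}) \le g_j $ into the equalities $ g_j = 2 g_{j - 1} $, resp.\ $ g_j = 2(1 - g_{j - 1}) $, because $ T^{L_1} $ already supplies the matching upper bounds $ g_j \le 2 g_{j - 1} $ and $ g_j \le 2(1 - g_{j - 1}) $; cascading these identities pins down the reachable $x$-values and lets one describe the LP relaxation of $ \PIPM|_{\bm{\alpha}_I = \hat{\bm{\alpha}}} $ over a union of dyadic intervals. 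The second, and harder, step is to show that the smallest~$ \y $ attainable in this LP, as a function of~$x$, coincides with the convex-hull lower envelope of the $ \y $-values in $ \proj_{x, \y}(\PIPM|_{\bm{\alpha}_I = \hat{\bm{\alpha}}}) $: for $x$ inside an actual grid interval this is the $x$-fixed minimisation over~$ \bm{g} $ treated in \cite[Theorem~1]{Beach2020-compact} — realised by the canonical completion $ \bm{g}^* = (G^j(x))_{j = 0}^{L_1} \in T^{L_1} $ and shown optimal via the estimate $ \F^j(x, \bm{g}^*) - \F^j(x, \hat{\bm{g}}) < 0 $ for the binding index~$j$ — and across the ``gaps'' between grid intervals the value is forced to the linear interpolant by convexity of $ F^L $ and of the tangent-line lower envelope (\Cref{prop:F^L}). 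The main work is the bookkeeping in this second step: tracking which lower-bounding inequality $ \y \ge \F^j(x, \bm{g}) - 2^{-2j - 2} $ is binding as~$x$ ranges over the restricted domain, and verifying that the componentwise-monotonicity estimate for $ \F^j $ along $ T^{L_1} $ from \cite{Beach2020-compact} still applies on each sub-domain cut out by the fixing. Once this is done, $ \PIPM $ is hereditarily sharp, and combined with the hereditary sharpness of $ \PIPP $ this proves the theorem.
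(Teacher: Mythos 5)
Your decomposition $ \PIP_{L,L_1}|_{\bm\alpha_I=\hat{\bm\alpha}} = \PIPP|_{\bm\alpha_I=\hat{\bm\alpha}} \cap \PIPM|_{\bm\alpha_I=\hat{\bm\alpha}} $ and your treatment of the upper part are exactly what the paper does: $ \PIPP $ inherits hereditary sharpness from the sawtooth approximation of \cite{Beach2020-compact}, and the extra coordinates $ g_{L+1},\dots,g_{L_1} $ are harmless. The problem is the lower part, and your sketch for it contains a genuine gap at precisely the point you flag as ``the main work''.

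First, a local error: on a gap of $ \XIP $ the canonical completion $ \bm g^* = (G^j(x))_j $ is in general \emph{not} feasible once $ \bm\alpha_I $ is fixed, because the fixing propagates through the cascaded equalities and imposes nontrivial bounds $ g_i \in [a_i,b_i] $ on every layer. The correct $ \y $-minimiser is $ g_i^* = \min\{b_i,\, 2g_{i-1}^*,\, 2(1-g_{i-1}^*)\} $ for unfixed indices (the paper's adaptation of \cite[Lemma 5]{Beach2020-compact}), and the possibility that $ b_i $ is the active bound is exactly what distinguishes the hereditary case from the plain sharpness proof you propose to re-run.

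Second, and more seriously, your claim that across a gap $ (\x_-,\x_+) $ of $ \XIP $ the LP minimum ``is forced to the linear interpolant by convexity'' points in the wrong direction. Write $ m(x) \define \min\{\y : (\y,\bm g) \in \proj_{\y,\bm g}(\cPLP|_{x})\} $. Convexity of $ m $ together with $ m(\x_\pm) = \capG(\x_\pm) $ yields $ m(x) \le \text{secant}(x) $ on the gap; the containment $ \cPLP \supseteq \conv(\cPIP) $ already gives the same inequality. Sharpness requires the \emph{reverse} inequality $ m(x) \ge \text{secant}(x) $, and no convexity argument supplies it --- a priori the LP could dip strictly below the secant inside the gap. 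Establishing that it does not is the entire content of the paper's induction on $ L $: the reduction to $ L_1 = L $ (which needs that the deeper lower-bounding cuts are slack at the dyadic boundary points of $ \XIP $), the rescaling map $ \Phi $ onto $ [0,\tfrac12] $, the reflection lemma for $ [\tfrac12,1] $, and the separate analysis of a gap straddling $ \tfrac12 $, where one shows $ g_1^* = b_1 $ is constant across the gap so that $ \G(\x,\bm g^*) $ is affine there. None of this is recoverable from ``re-running the contradiction argument uniformly over all fixings'', so as it stands your proof of hereditary sharpness of $ \PIPM $ is a statement of intent rather than an argument.
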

As the proof of \cref{thm:sawtooth-hsharp} takes up a significant amount of space, we moved it to \Cref{sec:proof_thm3}.

Next, we show that neither of the MIP relaxations \morsireform, \zellmerreform nor \HybS
for $ \z = xy $ are sharp.
That is, their projected LP relaxation does not equal~$ \mathcal{M}(x, y) $
for any $ L, L_1 \in \N $. 
Note that we have included the McCormick inequalities
in the definitions of \morsireform, \zellmerreform and \HybS
to make the formulations stronger.
The following proofs, however, refer to the fact
that if one omits the McCormick inequalities in these formulations,
then they are not sharp.
Together with the McCormick inequalities, of course, they are sharp trivially.
\begin{proposition}
\label{prop:1}
    Let $ P^{\IPtiny}_{L, L_1} $ be the MIP relaxation \HybS for $ z = xy $
    stated in~\eqref{eq:bin2-bin3}.
    Then, without the inequalities from the McCormick envelope~$ \mathcal{M}(x, y) $,
    $ P^{\IPtiny}_{L, L_1} $ is not sharp for any $ L, L_1 \in \N $.
\end{proposition}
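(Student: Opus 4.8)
The plan is to disprove sharpness directly from \Cref{def:sharp}: I will exhibit a point of $\projxyz(\PLP_{L,L_1})$ that is not in $\conv(\projxyz(\PIP_{L,L_1}))$, where $\PIP_{L,L_1}$ denotes \HybS from~\eqref{eq:bin2-bin3} with the McCormick inequalities deleted (here and below $L\ge 1$; for $L=0$ the formulation has no integer variables). The organizing observation is that both projections are ``$\z$-sandwiches'' $\{(x,y,\z):(x,y)\in[0,1]^2,\ \underline g(x,y)\le\z\le\overline g(x,y)\}$, because $F^L$, the piecewise-linear underestimators of $t^2$ carried by $Q^{L_1}$ (transported to $[0,2]$ and $[-1,1]$ via \Cref{rem:sawtooth-gbnds}; write $q_{L_1}(t)\le t^2$ for the one on $[-1,1]$, which is convex, piecewise linear, and exact at $-1,0,1$), and the sawtooth lower bounds are all \emph{global} functions of $x$, $y$, $x+y$, $x-y$. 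Hence the grid/integrality structure disappears after projection, and sharpness comes down to comparing the two upper $\z$-envelopes, which I will show disagree at the single point $(x,y)=(\tfrac12,1)$.

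First I would determine the upper $\z$-envelope $U_{LP}$ of $\PLP_{L,L_1}$. The key point is that in the LP relaxation of the tightened sawtooth relaxation $R^{L,L_1}$ one may take $\alpha_j=g_{j-1}$ and $g_j=0$ for all $j\ge 1$: this lies in both $S^L$ and $T^{L_1}$, makes $\F^L(x,\bm g)=x$ (so $\z^x\le\F^L(x,\bm g)$ permits $\z^x=x$), and is compatible with every lower bound, which then reads $\z^x\ge x-2^{-2j-2}$. Hence in $\PLP_{L,L_1}$ one may set $\z^x=x$, $\z^y=y$, and $\z^{p_1},\z^{p_2}$ minimal on $Q^{L_1}$; feeding this into $\tfrac12(\z^{p_1}-\z^x-\z^y)\le\z\le\tfrac12(\z^x+\z^y-\z^{p_2})$ — and using $x^2\le x$, $y^2\le y$ to see the resulting $\z$-interval is non-empty — shows $(x,y,U_{LP}(x,y))\in\projxyz(\PLP_{L,L_1})$ for every $(x,y)\in[0,1]^2$, with $U_{LP}(x,y)=\tfrac12\bigl(x+y-q_{L_1}(x-y)\bigr)$.

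For the MIP relaxation the integrality constraints only replace $\z^x\le x$ by $\z^x\le F^L(x)$ (and likewise for $y$), so $\projxyz(\PIP_{L,L_1})=\{(x,y,\z):(x,y)\in[0,1]^2,\ L_{IP}(x,y)\le\z\le U_{IP}(x,y)\}$ with $U_{IP}(x,y)=\tfrac12\bigl(F^L(x)+F^L(y)-q_{L_1}(x-y)\bigr)$, and therefore the upper $\z$-envelope of $\conv(\projxyz(\PIP_{L,L_1}))$ is $\concenv(U_{IP})$. Since $U_{IP}\le U_{LP}$ and $U_{LP}$ is concave, $\concenv(U_{IP})\le U_{LP}$ always; the hard part — the crux of the proof — is to show this is \emph{strict} at $(\tfrac12,1)$. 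Because $y=1$ is extreme in $[0,1]$, $\concenv(U_{IP})(\tfrac12,1)$ is the one-dimensional concave envelope of $x\mapsto U_{IP}(x,1)=U_{LP}(x,1)-h(x)$ at $\tfrac12$, where $h(x)=\tfrac12(x-F^L(x))\ge 0$ vanishes exactly at $x\in\{0,1\}$ (and $h(\tfrac12)=\tfrac18>0$, since $F^L(\tfrac12)=\tfrac14$ for $L\ge1$) and $U_{LP}(\cdot,1)$ is an affine function of $x$ minus $\tfrac12 q_{L_1}(x-1)$, with $q_{L_1}$ convex and non-affine on $[-1,0]$. A Jensen estimate then forces any convex combination of mean $\tfrac12$ that attains $U_{LP}(\tfrac12,1)$ to be supported on $\{0,1\}$ with equal weights, but there it only yields $\tfrac12\bigl(U_{LP}(0,1)+U_{LP}(1,1)\bigr)=\tfrac12<\tfrac58\le U_{LP}(\tfrac12,1)$, the last step because $q_{L_1}(-\tfrac12)\le\tfrac14$. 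Hence $\concenv(U_{IP})(\tfrac12,1)<U_{LP}(\tfrac12,1)$, and any $\bar\z$ strictly between them makes $(\tfrac12,1,\bar\z)\in\projxyz(\PLP_{L,L_1})\setminus\conv(\projxyz(\PIP_{L,L_1}))$, so $\PIP_{L,L_1}$ is not sharp. (For $L=L_1=1$ the gap is concretely $\concenv(U_{IP})(\tfrac12,1)=\tfrac9{16}<\tfrac58=U_{LP}(\tfrac12,1)$.)
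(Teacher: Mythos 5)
Your proof is correct, but it takes a genuinely different route from the paper's. The paper also argues by exhibiting a point of $\projxyz(\PLP_{L,L_1})$ outside $\conv(\projxyz(\PIP_{L,L_1}))$, but it attacks the \emph{lower} $\z$-bound at $(x,y)=(0,\tfrac14)$: it first passes to the limit $L,L_1\to\infty$ on the LP side and to $\conv(\projxyz(\PIP_{1,1}))$ on the MIP side via monotonicity, produces the LP-feasible point $(0,\tfrac14,-\tfrac3{16})$ using the same observation you make (that $\alpha_j=g_{j-1}$, $g_j=0$ lets $z^x=x$ in the LP), and then verifies $\min\{\z:(y,\z)\in\proj_{y,\z}(\PIP_{1,1}|_{x=0})\}=-\tfrac18$ by an explicit case split at $y=\tfrac12$ resolved with a solver. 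You instead attack the \emph{upper} bound at $(\tfrac12,1)$, characterize both projections as $\z$-sandwiches for arbitrary $L,L_1$, and close the gap with a Jensen argument against the concave LP envelope; this buys you a uniform, solver-free argument with no limit step, whereas the paper's choice of a lower-bound witness is what lets it recycle the computation verbatim for \morsireform in the following proposition. Two small points to tighten: (i) the transported underestimator is \emph{not} exact at $p_2=0$ (its value there is $-2^{-2L_1}$), so the equal-weight combination on $\{0,1\}$ yields $\tfrac12+2^{-2L_1-2}$ rather than $\tfrac12$ — still at most $\tfrac9{16}<\tfrac58$, consistent with your concrete $L=L_1=1$ value, but the general display should carry the correction; (ii) the Jensen step as phrased only controls combinations that \emph{attain} $U_{LP}(\tfrac12,1)$, so you should note that all functions involved are piecewise linear with finitely many pieces, hence the concave envelope is attained and the strict inequality survives the supremum (either the optimal support meets $(0,1)$ and loses a positive amount of $h$, or it is $\{0,1\}$ and is bounded by $\tfrac9{16}$).
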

\begin{proof}
    Without the McCormick envelope, the \HybS MIP relaxation $ \PIP_{L, L_1} $,
    and its LP-relaxation $ \PLP_{L, L_1} $, become strictly tighter as either~$L$ or~$L_1$ increases.
    Thus, we have
    \begin{equation*}
        \projxyz(\PLP_{L, L_1}) \supseteq \lim_{L, L_1 \to \infty} \projxyz(\PLP_{L, L_1})
    \end{equation*}
    and
    \begin{equation*}
        \conv(\projxyz(\PIP_{1, 1})) \supseteq \conv(\projxyz(\PIP_{L, L_1}))
            \quad \text {for any } L, L_1 \in \N.
    \end{equation*}
    We now show
    $ \left(\lim_{L, L_1 \to \infty} \projxyz(\PLP_{L, L_1}) \right) \setminus \conv(\projxyz(\PIP_{1, 1}))
        \neq \emptyset $,
    which implies $ \projxyz(\PLP_{L, L_1}) \setminus \conv(\projxyz(\PIP_{L, L_1})) \neq \emptyset $,
    such that $ \PIP_{L, L_1} $ is not sharp for any $ L, L_1 \in \N $.
    The argument works in the following manner:
    \begin{alignat*}{2}
        &\projxyz(\PLP_{L, L_1})\setminus \conv(\projxyz(\PIP_{L, L_1})) \\
        \supseteq& \left(\lim_{L, L_1 \to \infty} \projxyz(\PLP_{L, L_1}) \right) \setminus \conv(\projxyz(\PIP_{1,1})) \neq \emptyset\\
        \Rightarrow & \projxyz(\PLP_{L, L_1})\setminus \conv(\projxyz(\PIP_{L, L_1})) \neq \emptyset\\
        \Rightarrow & \projxyz(\PLP_{L, L_1}) \neq \conv(\projxyz(\PIP_{L, L_1})).
    \end{alignat*}
    To this end, we show that there exist points
    $ (x, y, \z) \in \lim_{L, L_1 \to \infty} \projxyz(\PLP_{L, L_1}) $
    with $ (x, y, \z) \notin \projxyz(\PIP_{1, 1}) $.
    Observe that, for any~$L$, the point $ (x, x) $ is feasible within the LP relaxation
    of the tightened sawtooth relaxation~\eqref{eq:sawtooth-relax-tight} for~$ x^2 $,
    with $ \alpha_i = g_{i - 1} $, $ g_i = 0 $. 
    Thus, for all $ L, L_1 \ge 0 $ and for all $ \hat{x}, \hat{y} \in [0, 1]^2 $,
    we have that $ \PLP_{L, L_1} $, and thus also its limit
    $ \lim_{L, L_1 \to \infty} \projxyz(\PLP_{L, L_1}) $,
    admits the values $ \zx = \hat{x}, \zy=\hat{y} $ and $ \zpone = (\hat{x} + \hat{y})^2 $.
    Therefore, for $ (x, y) = (0, \tfrac 14) $, we obtain
    \begin{align*}
        \z = \tfrac 12((x + y)^2 - x - y) = -\tfrac 3{16},
    \end{align*}
    such that $ (0, \tfrac 14, -\tfrac 3{16}) \in \PLP_{\infty, \infty} $.
    
    Next, in order to prove $ (0, \tfrac 14, -\tfrac 3{16}) \notin \conv(\proj_{x, y, \z}(\PIP_{1, 1}))$,
    we show $ \min\{\z: (y, \z) \in \proj_{y, \z}(\PIP_{1, 1}|_{x = 0})\} = -\tfrac 18 $. 
    If this holds, then we have
    $ \min\{\z: (y, \z) \in \conv(\proj_{y, \z}(\PIP_{1, 1}|_{x = 0}))\} = -\tfrac 18 $,
    such that $ (0, \tfrac 14, -\tfrac 3{16}) \notin \conv(\proj_{x, y, \z}(\PIP_{1, 1})) $.
    We derive a representation of $ \proj_{y, \z}(\PIP_{1, 1}|_{x = 0}) $
    that becomes an LP after branching spatially at $ y = \tfrac 12 $
    to resolve the upper bound on~$ \zy $.
    We then minimize~$z$ over both branches via solving an MIP.
    
    Let $ x = 0 $.
    Then the bounds on $ \z, \zx, \zy, \zpone $ within $ \projxyz(\PIP_{1, 1}) $ are
    \begin{alignat*}{1}
        \zx &\le 0,\quad \zy \le y - \tfrac 14\min\{2y, 2(1-y)\} = \max\{\tfrac y2, \tfrac{3y - 1}{2}\} \\
        \zpone &\ge 4 \lrp{\tfrac y2 - \tfrac 14\min\{2 \tfrac y2, 2(1 - \tfrac y2) -\tfrac{1}{16}\}} = \max\{y-\tfrac 14, 3y-\tfrac 94\}\\
        \zpone &\ge 4(\tfrac y2 - \tfrac 14) = 2y - 1\\
        \zpone &\ge 0\\
        \zpone &\ge 4(2 \tfrac y2 - 1) = 4(y-1)\\\
        \z &\ge \zpone - \zx - \zy\\
        y &\in [0, 1].
    \end{alignat*}
    Note that the two pieces of the upper bound on $ \zy $ meet at $ y = \tfrac 12 $.
    Using this to separately minimize $ \z $ over the above set,
    once over $ y \in [0, \tfrac 12] $ and once over $ y \in [\tfrac 12, 1] $,
    \eg using an MIQCQP solver,
    we obtain two globally minimizing solutions with $ \z = -\tfrac 18 $,
    namely at $ y = \tfrac 14 $ and at $ y = \tfrac 34 $.
    Thus, we conclude that $ (0, \tfrac 14, -\tfrac 3{16}) \notin \conv(\projxyz(\PIP_{1, 1})) $,
    such that~$ \PIP_{L, L_1} $ is not sharp for any $ 1 \le L \le L_1 $. \hfill \qed

\end{proof}
\begin{proposition}
    Let $ P^{\IPtiny}_{L, L_1} $ be either of the two MIP relaxations \morsireform \eqref{eq:bin2}
    or \zellmerreform \eqref{eq:bin3}.
    Then, without the inequalities from the McCormick envelope~$ \mathcal{M}(x, y) $,
    $ P^{\IPtiny}_{L, L_1} $ is not sharp for any $ L, L_1 \in \N $.
\end{proposition}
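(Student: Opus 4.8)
The plan is to mirror the structure of the proof of \Cref{prop:1}, handling \morsireform through its lower envelope and \zellmerreform through its upper envelope. Drop the McCormick inequalities and write $\PIP_{L,L_1}$, resp.\ $\PLP_{L,L_1}$, for the resulting MIP relaxation and its LP relaxation. First I would note that, once $\mathcal M(x,y)$ is removed, each of $z^x,z^y,z^p$ is governed by a tightened sawtooth relaxation $R^{L,L_1}$, so both $\projxyz(\PLP_{L,L_1})$ and $\projxyz(\PIP_{L,L_1})$ are nested-decreasing in $L$ and in $L_1$. Consequently
\begin{equation*}
  \projxyz(\PLP_{L,L_1}) \ \supseteq\ \lim_{L,L_1\to\infty}\projxyz(\PLP_{L,L_1}),
  \qquad
  \conv\bigl(\projxyz(\PIP_{L,L_1})\bigr)\ \subseteq\ \conv\bigl(\projxyz(\PIP_{1,1})\bigr),
\end{equation*}
and it suffices to exhibit a single point $(x_0,y_0,z_0)$ in the limiting LP relaxation that is not in $\conv(\projxyz(\PIP_{1,1}))$; this disproves sharpness for all $1\le L\le L_1$ at once.

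For \morsireform I use the lower envelope. By \eqref{eq:CLU2} the limiting LP relaxation equals $\{(x,y,z):C_2^L(x,y)\le z\le C_2^U(x,y)\}$, and on $[0,1]^2$ one has $C_2^L(x,y)=\tfrac12((x+y)^2-x-y)$, which is $\le xy$ with strict inequality in the interior (indeed $C_2^L$ even drops below the McCormick underestimator). I take the point $(x_0,y_0)=(0,\tfrac14)$ with $z_0=C_2^L(0,\tfrac14)=-\tfrac3{32}$. Since $\{x=0\}$ is the face of $[0,1]^2$ on which $x$ is minimal, any point of $\conv(\projxyz(\PIP_{1,1}))$ with $x=0$ is a convex combination of points of $\projxyz(\PIP_{1,1})$ with $x=0$, so it is enough to check that $\min\{z:(0,y,z)\in\projxyz(\PIP_{1,1})\}$ is strictly larger than $-\tfrac3{32}$. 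Setting $x=0$ forces $z^x=0$ and $p=y$, leaving $z=\tfrac12(z^p-z^y)$; spelling out the depth-$1$ sawtooth bounds for $z^y$ on $[0,1]$ and $z^p$ on $[0,2]$ (via the rescaling of \Cref{rem:sawtooth-gbnds}) shows this minimum equals $-\tfrac1{16}$, attained at $y=\tfrac14$ and $y=\tfrac34$, and $-\tfrac1{16}>-\tfrac3{32}$. As in \Cref{prop:1}, this slice minimum can alternatively be obtained with an MIQCQP solver.

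For \zellmerreform I argue symmetrically, using the concave upper envelope. By \eqref{eq:CLU3} the limiting LP relaxation lies below $C_3^U(x,y)=\tfrac12(x+y-(x-y)^2)$, and $C_3^U(x,y)-xy=\tfrac12(x(1-x)+y(1-y))>0$ in the interior. I take $(x_0,y_0)=(\tfrac12,1)$ on the face $\{y=1\}$, on which $y$ is maximal, so that restriction to this face again commutes with taking convex hulls, and set $z_0=C_3^U(\tfrac12,1)=\tfrac58$. Fixing $y=1$ forces $z^y=1$ and $p=x-1$, leaving $z=\tfrac12(z^x+1-z^p)$; the profile $\phi(x)=\max\{z:(x,1,z)\in\projxyz(\PIP_{1,1})\}$ takes the values $0,\tfrac5{16},\tfrac12,\tfrac{13}{16},1$ at $x=0,\tfrac14,\tfrac12,\tfrac34,1$, whose concave envelope equals $\tfrac9{16}$ at $x=\tfrac12$; since $\tfrac58>\tfrac9{16}$, the point $(\tfrac12,1,\tfrac58)$ is not in $\conv(\projxyz(\PIP_{1,1}))$.

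The main obstacle is the explicit bookkeeping in the two middle paragraphs: translating the depth-$1$ tightened sawtooth relaxation — including the affine rescalings of the $p$-domains onto $[0,2]$ and $[-1,1]$ — into piecewise-linear bounds for $z^x,z^y,z^p$, and then forming the convex (resp.\ concave) envelope of the one-dimensional profile of $\min z$ (resp.\ $\max z$) over the chosen face. The one step that requires genuine care rather than routine calculation is the reduction to a one-dimensional problem, which rests on the fact that restricting to a face where a coordinate attains an extreme value of its domain commutes with passing to the convex hull.
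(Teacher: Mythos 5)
Your proof is correct and follows essentially the same route as the paper: for \morsireform you reuse the lower-envelope witness $(0,\tfrac14)$ together with the slice minimum of the depth-$1$ MIP at $x=0$, exactly as in the reduction to \Cref{prop:1}, and for \zellmerreform you run the symmetric upper-envelope argument, merely choosing a different witness point ($(\tfrac12,1,\tfrac58)$ on the face $y=1$ rather than the upper-bounding version of $(0,\tfrac14)$ used in the paper). Incidentally, your explicit values $-\tfrac3{32}$ and $-\tfrac1{16}$ are the correct ones: the paper's stated $-\tfrac3{16}$ and $-\tfrac18$ drop the factor $\tfrac12$ from the defining constraint $z=\tfrac12(z^{p}-z^x-z^y)$, but since both quantities are scaled consistently the conclusion is unaffected.
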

\begin{proof}
Since \morsireform \eqref{eq:bin2} has the same lower-bounding constraints as \HybS,
the proof follows directly from \Cref{prop:1}.
Moreover, for \zellmerreform \eqref{eq:bin3}, the proof follows in exactly the same way
as the proof of \Cref{prop:1}, except for the upper-bounding version of the same point,
$ (x, y, \z) = (0, \tfrac 14, \tfrac 38) $,
and acting on the upper-bounding constraints from~\eqref{eq:bin2-bin3} and maximizing $z$ instead.
As the proof is very similar, with the corresponding upper bound $ \z = \tfrac 18 $
on $ \proj_{y, \z}(\PIP_{1, 1}|_{x = 0}) $, we omit it here. \hfill \qed
\end{proof}

\subsubsection{LP Relaxation Volume}
\label{subsubsec:LPrelaxationvolume}

Having proved that none of the separable MIP relaxations is sharp,
which implies that they are also not hereditarily sharp,
we now turn to consider the volume of projected LP relaxations. 

For $ L = L_1 $,
the volume for the tightened sawtooth formulation~\eqref{def:sawtooth-str}
is~$ \tfrac{3}{16} 2^{-2L} $, which has been shown in \cite{Beach2020-compact}.
For general~$ L_1 $, by integrating over the overapproximation and underapproximation errors separately
with the same analysis as in \cite{Beach2020-compact},
we can derive a general volume of $ \tfrac 16 2^{-2L} + \tfrac {1}{48} 2^{-2L_1} $.
We omit the precise calculation here. 

In our analysis of the separable MIP relaxations,
we only consider the limits for $ L, L_1 \to \infty $.
This allows us to evaluate the volumes independently of the underlying discretizations.
For the additional volumes resulting from discretization errors,
we refer to \cite[Appendix]{Beach2020-compact},
where the volume over the error function of the sawtooth approximation is given.
We start with \HybS.
\begin{proposition}
    \label{prop:HybS_LPvol}
    Let $ P^{\LPtiny}_{L, L_1} $ be the LP relaxation of the MIP relaxation \HybS
    stated in~\eqref{eq:bin2-bin3}
    over the general domain $ [\xmin, \xmax] \times [\ymin, \ymax] $.
    Without the McCormick envelope constraints,
    the volume of the limit of the projected LP relaxation $ \lim_{L, L_1 \to \infty} \projxyz(P^{\LPtiny}_{L, L_1}) $
    is $ \tfrac 16(\lx \ly^3 + \ly \lx^3) $, where $ \lx = \xmax - \xmin $ and $ \ly = \ymax - \ymin $.
\end{proposition}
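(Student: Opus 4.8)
The plan is to first identify the limiting shape of the projected LP relaxation and then integrate its width over the box $D \define [\xmin,\xmax]\times[\ymin,\ymax]$. Throughout I would work over $D$ via the change of variables of \Cref{rem:sawtooth-gbnds} and drop the McCormick inequalities, as the statement prescribes; note that then the sets $\projxyz(\PLP_{L,L_1})$ are nested and decreasing in both $L$ and $L_1$, so the stated limit is well defined as their intersection. In~\eqref{eq:bin2-bin3}, $\z$ is coupled to the auxiliary variables only through $\tfrac12(z^{p_1}-z^x-z^y)\le\z\le\tfrac12(z^x+z^y-z^{p_2})$, with $p_1=x+y$, $p_2=x-y$, $(x,z^x),(y,z^y)\in R^{L,L_1}$ and $(p_1,z^{p_1}),(p_2,z^{p_2})\in Q^{L_1}$, so the first step is to pin down, in the limit $L,L_1\to\infty$, the attainable ranges of $z^x,z^y,z^{p_1},z^{p_2}$ in the LP relaxation.

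\textbf{Limiting set.} For $R^{L,L_1}$ I would observe that setting $g_j=0$ for all $j\ge1$ (which is feasible for $T^{L_1}$ by taking $\alpha_j=g_{j-1}$) gives $\F^L(x,\bm g)=x$, so the upper bound $\z\le\F^L(x,\bm g)$ translates, after the transformation of \Cref{rem:sawtooth-gbnds}, into the secant bound $z^x\le s_x(x)\define(\xmin+\xmax)x-\xmin\xmax$, and likewise $z^y\le s_y(y)\define(\ymin+\ymax)y-\ymin\ymax$; these bounds are independent of $L,L_1$. For the $Q^{L_1}$-variables, \Cref{prop:sawtooth-epi-error} gives a maximum error $2^{-2L_1-4}\to0$, so in the limit $z^{p_1}\ge(x+y)^2$ and $z^{p_2}\ge(x-y)^2$, with no upper bounds imposed. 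Feeding these into the two linking inequalities, the largest feasible $\z$ is $\tfrac12(s_x(x)+s_y(y)-(x-y)^2)=C^U_3(x,y)$ (cf.~\eqref{eq:CU3}), attained at $z^x=s_x(x)$, $z^y=s_y(y)$, $z^{p_2}=(x-y)^2$, and the smallest feasible $\z$ is $\tfrac12((x+y)^2-s_x(x)-s_y(y))=C^L_2(x,y)$ (cf.~\eqref{eq:CL2}), attained at $z^{p_1}=(x+y)^2$ together with the same $z^x,z^y$; at both extremes the remaining, inactive linking inequality holds since $C^L_2(x,y)\le C^U_3(x,y)$. This gives $\lim_{L,L_1\to\infty}\projxyz(\PLP_{L,L_1})=\{(x,y,\z)\in D\times\R:C^L_2(x,y)\le\z\le C^U_3(x,y)\}$, the same description that arises over each grid piece in the proof of \Cref{prop:HybS_MIPvol}.

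\textbf{Volume and main obstacle.} It then remains to integrate the width. Using $(x+y)^2+(x-y)^2=2x^2+2y^2$, the width collapses to $C^U_3(x,y)-C^L_2(x,y)=(s_x(x)-x^2)+(s_y(y)-y^2)=(x-\xmin)(\xmax-x)+(y-\ymin)(\ymax-y)$. Since $\int_{\xmin}^{\xmax}(x-\xmin)(\xmax-x)\,dx=\tfrac16\lx^3$ and, symmetrically, $\int_{\ymin}^{\ymax}(y-\ymin)(\ymax-y)\,dy=\tfrac16\ly^3$, Fubini's theorem gives that the limiting volume equals $\ly\cdot\tfrac16\lx^3+\lx\cdot\tfrac16\ly^3=\tfrac16(\lx\ly^3+\ly\lx^3)$, as claimed. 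The main obstacle will be the second step: making the passage to the limit rigorous — in particular the Hausdorff convergence of the $(p,z^p)$-projection of $Q^{L_1}$ to the epigraph of $p^2$, and the check that $C^L_2$ and $C^U_3$ are simultaneously attainable by one consistent choice of $(z^x,z^y,z^{p_1},z^{p_2})$ (which works precisely because the slack in the inactive linking inequality equals $C^U_3-C^L_2\ge0$). The final integration is routine.
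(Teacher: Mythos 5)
Your proposal is correct and follows essentially the same route as the paper: identify the limiting projected LP relaxation as the region between the envelopes $C^L_2$ and $C^U_3$ and integrate their difference over the box. The only difference is that you derive the bounding functions and the attainability of both extremes from first principles and make the integration explicit via the factorization $(x-\xmin)(\xmax-x)+(y-\ymin)(\ymax-y)$, whereas the paper simply cites the earlier identification of these envelopes (via \cite{Hager-2021} and \eqref{eq:CLUHybS}) and states the value of the integral.
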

\begin{proof}
    The $ \z $-values in the projected LP relaxation of~\eqref{eq:bin2-bin3}
    are bounded by the convex function~$ C_2^L $ and the concave function~$ C_3^U $,
    which are stated above in~\eqref{eq:CL2} and~\eqref{eq:CU3}, respectively.
    The volume of the projected LP relaxation~\eqref{eq:bin2-bin3} is then calculated via integration:
    \begin{align*}
        \int_{\xmin}^{\xmax} \int_{\ymin}^{\ymax} (C_3^U(x, y) - C_2^L(x, y)) dydx
            = \tfrac 16(\lx \ly^3 + \ly \lx^3).
    \end{align*}
    \hfill\qed
\end{proof}
\begin{proposition}
    \label{prop:bin2bin3_LPvol}
    Let $ P^{\LPtiny}_{L, L_1} $ be the LP relaxation of either the MIP relaxation \morsireform
    or \zellmerreform stated in~\eqref{eq:bin2} and~\eqref{eq:bin3}
    over the domain $ [\xmin, \xmax] \times [\ymin, \ymax] $.
    Without the McCormick envelope constraints,
    the volume of the limit of the projected LP relaxation is
    \begin{equation*}
        \lim_{L, L_1 \to \infty} \vol (\projxyz (P^{\LPtiny}_{L, L_1}))
            = \frac{1}{12} \lx \ly(2\lx^2 + 3\lx\ly + 2\ly^2),
    \end{equation*}
    where $ \lx = \xmax-\xmin $ and $ \ly = \ymax - \ymin $.
\end{proposition}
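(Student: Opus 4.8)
The plan is to express the limiting volume as an integral of the gap between the concave and convex envelope functions that bound the $\z$-values in the projected LP relaxation. By \eqref{eq:CLU2} and \eqref{eq:CLU3} (which in turn follow from \cite[Table~4]{Hager-2021}), in the limit $L, L_1 \to \infty$ the projected LP relaxation of \morsireform~\eqref{eq:bin2} without the McCormick inequalities is $\{(x, y, \z) : C^L_2(x, y) \le \z \le C^U_2(x, y)\}$ over $[\xmin, \xmax] \times [\ymin, \ymax]$, and that of \zellmerreform~\eqref{eq:bin3} is $\{(x, y, \z) : C^L_3(x, y) \le \z \le C^U_3(x, y)\}$, with $C^L_2, C^U_2, C^L_3, C^U_3$ as in \eqref{eq:CL2}--\eqref{eq:CU3}. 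Hence the volume in each case equals $\intxy \lrp{C^U_i - C^L_i} \dxy$ for $i = 2$ and $i = 3$, respectively, and the task reduces to evaluating these two integrals.

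First I would simplify each integrand using the elementary identity that the secant line $s(t) = (\ubar t + \bar t)t - \ubar t \bar t$ of $t^2$ on $[\ubar t, \bar t]$ satisfies $s(t) - t^2 = (\bar t - t)(t - \ubar t)$. Writing $C^L_2 = \tfrac12\lrp{(x + y)^2 - s_x(x) - s_y(y)}$ and $C^U_2 = \tfrac12\lrp{s_{x+y}(x+y) - x^2 - y^2}$, where $s_x, s_y, s_{x+y}$ denote the secant lines of $x^2, y^2, p^2$ over $[\xmin, \xmax], [\ymin, \ymax], [\xmin + \ymin, \xmax + \ymax]$ respectively (and analogously for \zellmerreform with $p = x - y$), this identity gives
\begin{align*}
    C^U_2 - C^L_2 &= \tfrac12\big[(\xmax + \ymax - x - y)(x + y - \xmin - \ymin) \\
        &\qquad\qquad + (\xmax - x)(x - \xmin) + (\ymax - y)(y - \ymin)\big], \\
    C^U_3 - C^L_3 &= \tfrac12\big[(\xmax - \ymin - x + y)(x - y - \xmin + \ymax) \\
        &\qquad\qquad + (\xmax - x)(x - \xmin) + (\ymax - y)(y - \ymin)\big].
\end{align*}
Next I would observe that the reflection $y \mapsto \ymin + \ymax - y$, which fixes the integration domain, carries the \zellmerreform gap onto the \morsireform gap: the first factor becomes $\xmax + \ymax - x - y$, the second becomes $x + y - \xmin - \ymin$, and the two remaining terms are invariant. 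Consequently the two integrals coincide, so it suffices to compute the one for \morsireform.

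For that I would substitute $u = x - \xmin \in [0, \lx]$ and $v = y - \ymin \in [0, \ly]$, after which the \morsireform integrand becomes the polynomial $\tfrac12\lrp{2\lx u + 2\ly v + \lx v + \ly u - 2u^2 - 2uv - 2v^2}$, and a termwise integration over $[0, \lx] \times [0, \ly]$ yields
\begin{equation*}
    \intxy \lrp{C^U_2 - C^L_2} \dxy
        = \tfrac16 \lx^3 \ly + \tfrac14 \lx^2 \ly^2 + \tfrac16 \lx \ly^3
        = \tfrac{1}{12}\lx \ly \lrp{2\lx^2 + 3\lx\ly + 2\ly^2},
\end{equation*}
which is the asserted value. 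I do not expect a genuine obstacle here: the argument is a reduction to the envelope gap followed by a quadratic integral. The steps requiring the most care are the secant identity used to linearize the gaps and the reflection symmetry identifying the \zellmerreform integral with the \morsireform one, since the latter is what lets the rather symmetric closed form be verified in just one of the two cases.
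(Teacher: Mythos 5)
Your proposal is correct and takes essentially the same route as the paper: both reduce the limiting volume to the integral of $C^U_i - C^L_i$ over the box using the envelope characterizations \eqref{eq:CL2}--\eqref{eq:CU3} and then evaluate that double integral. The paper merely asserts that the \morsireform and \zellmerreform integrals coincide and states the result, whereas you supply the missing justification (the secant-error identity and the reflection $y \mapsto \ymin + \ymax - y$) and the explicit termwise integration, both of which check out.
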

\begin{proof}
    The $ \z $-values in the projected LP relaxation of~\eqref{eq:bin2} and~\eqref{eq:bin3}
    are bounded by the convex function~$ C_2^L $ and the concave function~$ C_3^U $,
    which are stated above in~\eqref{eq:CL2} and~\eqref{eq:CU3}, respectively.
    The volume calculation is then done via integration:
    \begin{align*}
        \int_{\xmin}^{\xmax} \int_{\ymin}^{\ymax} (C_3^U(x, y) - C_3^L(x, y)) dydx
            &= \int_{\xmin}^{\xmax} \int_{\ymin}^{\ymax} (C_2^U(x, y) - C_2^L(x, y)) dydx\\
            &= \frac{1}{12} \lx \ly(2\lx^2 + 3\lx\ly + 2\ly^2).
    \end{align*}
    \hfill\qed
\end{proof}
We use \Cref{prop:HybS_LPvol} and \Cref{prop:bin2bin3_LPvol}
to prove that \HybS yields strictly tighter LP relaxations than \morsireform and \zellmerreform.
\begin{proposition}
\label{lem:Bin2Bin3vol}
    Without the McCormick envelope constraints, the LP relaxation of the MIP relaxation \HybS
    in the limit as $ L, L_1 \to \infty $ is strictly tighter than that of \morsireform or \zellmerreform.
    Moreover, the volume of the projected LP relaxation of formulation \HybS
    in the limit as $ L, L_1 \to \infty $ is smaller by $ \frac{1}{4} \lx^2\ly^2 $.
\end{proposition}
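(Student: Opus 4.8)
The plan is to directly compare the two volume formulas established in \Cref{prop:HybS_LPvol} and \Cref{prop:bin2bin3_LPvol}. By those propositions, in the limit $L, L_1 \to \infty$ and without the McCormick constraints, the projected LP relaxation of \HybS has volume $\tfrac 16(\lx \ly^3 + \ly \lx^3) = \tfrac16 \lx\ly(\lx^2 + \ly^2)$, while the projected LP relaxations of \morsireform and \zellmerreform both have volume $\tfrac1{12}\lx\ly(2\lx^2 + 3\lx\ly + 2\ly^2)$. So the first step is simply to subtract:
\begin{align*}
    \tfrac1{12}\lx\ly(2\lx^2 + 3\lx\ly + 2\ly^2) - \tfrac16\lx\ly(\lx^2+\ly^2)
    &= \tfrac1{12}\lx\ly\bigl(2\lx^2 + 3\lx\ly + 2\ly^2 - 2\lx^2 - 2\ly^2\bigr)\\
    &= \tfrac1{12}\lx\ly(3\lx\ly) = \tfrac14\lx^2\ly^2,
\end{align*}
which is exactly the claimed gap. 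Since $\lx, \ly > 0$ (these are widths of a nondegenerate box), this difference is strictly positive.

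**Second,** I would argue the genuine set-containment claim, not merely the volume inequality, since the statement says ``strictly tighter.'' Recall from \eqref{eq:CLU2}, \eqref{eq:CLU3}, \eqref{eq:CLUHybS} that in the limit the three projected LP relaxations are $\{C_2^L \le z \le C_2^U\}$ for \morsireform, $\{C_3^L \le z \le C_3^U\}$ for \zellmerreform, and $\{C_3^L \le z \le C_2^U\}$ for \HybS. The key observation is that $C_3^L(x,y) \ge C_2^L(x,y)$ and $C_2^U(x,y) \le C_3^U(x,y)$ pointwise on $[\xmin,\xmax]\times[\ymin,\ymax]$ — i.e., \HybS takes the \emph{better} (higher) lower bound from \zellmerreform and the \emph{better} (lower) upper bound from \morsireform. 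This immediately gives the inclusion $\projxyz(\PLP_{\text{\HybS}}) \subseteq \projxyz(\PLP_{\morsireform})$ and likewise $\subseteq \projxyz(\PLP_{\zellmerreform})$. Strictness then follows because the volumes differ by the strictly positive quantity $\tfrac14\lx^2\ly^2$ computed above, so the containments cannot be equalities.

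**The main obstacle** — really the only nonroutine point — is verifying the pointwise inequalities $C_3^L \ge C_2^L$ and $C_2^U \le C_3^U$. For the lower bounds, from \eqref{eq:CL2} and \eqref{eq:CL3} one computes $C_3^L(x,y) - C_2^L(x,y) = \tfrac12\bigl(x^2 + y^2 - (x+y)^2\bigr) + (\text{affine terms}) = -xy + (\text{affine terms})$, and after collecting the linear parts this should reduce to $\tfrac12(\xmax - x + x - \xmin)(\cdots)$-type products of the form $(x-\xmin)(\ymax - y) \ge 0$ or similar nonnegative bilinear expressions on the box; an analogous manipulation handles $C_3^U - C_2^U$. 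Alternatively, and more cleanly, one can bypass the explicit algebra: note that over the whole box, $C_2^U$ and $C_3^L$ are precisely the concave and convex envelopes used by \HybS, and $C_2^L$ is $\le$ the true function $xy$ is $\le C_2^U$ while $C_3^L \le xy \le C_3^U$; combined with the fact that $C_3^L$ is the \emph{tightest} convex underestimator arising from the $\zellmerreform$ split and $C_2^L$ from the $\morsireform$ split, monotonicity of the construction gives the inequality.

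**Finally,** I would wrap up by noting that the volume difference $\tfrac14\lx^2\ly^2$ is exactly the second claim of the proposition, so both assertions are proved simultaneously. A one-line remark that the same comparison holds grid-piece-by-grid-piece (by \Cref{prop:HybS_MIPvol} and \Cref{prop:Bin2Bin3-MIPvol}, which show both formulations have a grid structure with constant per-piece volume) would make the ``strictly tighter'' claim robust even before passing to the limit, but this is not needed for the stated result.
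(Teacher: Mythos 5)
Your volume arithmetic is correct and is exactly the paper's computation: both evaluate $\tfrac1{12}\lx\ly(2\lx^2+3\lx\ly+2\ly^2)-\tfrac16\lx\ly(\lx^2+\ly^2)=\tfrac14\lx^2\ly^2$, so the second assertion of the proposition is fine.

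The containment step, however, contains a genuine error: your ``key observation'' has both inequalities backwards, and the identification of which bounds \HybS inherits is also swapped. By \Cref{prop:HybS_LPvol} (and the proof of \Cref{prop:HybS_MIPvol}), the limiting LP relaxation of \HybS is $\{C_2^L\le z\le C_3^U\}$ — the lower bound comes from the \morsireform-type inequality $z\ge\tfrac12(z^{p_1}-z^x-z^y)$ and the upper bound from the \zellmerreform-type inequality $z\le\tfrac12(z^x+z^y-z^{p_2})$ — not $\{C_3^L\le z\le C_2^U\}$ as displayed in \eqref{eq:CLUHybS} (that display, and the sentence preceding it, appear to be typos). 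Correspondingly, the true pointwise facts are $C_2^L\ge C_3^L$ and $C_3^U\le C_2^U$, i.e.\ the \emph{opposite} of what you assert. One sees this by writing each bound as $xy\mp\tfrac12(\text{secant error})$: with $e_x=(x-\xmin)(\xmax-x)$, $e_y=(y-\ymin)(\ymax-y)$ and $e_{p}$ the secant error of $p^2$, one has $C_2^L=xy-\tfrac12(e_x+e_y)$, $C_3^L=xy-\tfrac12 e_{p_2}$, $C_2^U=xy+\tfrac12 e_{p_1}$, $C_3^U=xy+\tfrac12(e_x+e_y)$, and $e_{p_1},e_{p_2}\ge e_x+e_y$ on the box. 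Had you carried out the algebra you sketch for $C_3^L-C_2^L$, you would have obtained a \emph{non-positive} quantity, so that step of your proof would fail as written. The two reversals cancel, so your final conclusion (that \HybS is contained in both) is correct, but the stated justification is not; the paper instead invokes \cite[Appendix, Proposition~2]{Hager-2021}, which records the comparison in the correct orientation ($C_2^L$ is the tighter underestimator, $C_3^U$ the tighter overestimator). With the inequalities corrected, your structure — containment plus the strictly positive volume gap — does yield strict tightness.
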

\begin{proof}
    In \cite[Appendix, Proposition 2]{Hager-2021}
    it has been shown that $ C_2^L $ is a tighter convex underestimator than~$ C_3^L $
    and that~$ C_3^U $ is a tighter concave overestimator than~$ C_2^U $ for $ \z = xy $.
    Thus, since the \HybS approach converges to ~$ C_2^L $ as an underestimator
    and~$ C_3^L $ as an overestimator,
    it is strictly tighter than either of \morsireform or \zellmerreform.
    The volume calculation can again be done via integration:
    \begin{align*}
        &\int_{\xmin}^{\xmax} \int_{\ymin}^{\ymax} (C_2^U(x, y) - C_2^L(x, y)) dydx
            - \int_{\xmin}^{\xmax}\int_{\ymin}^{\ymax} (C_3^U(x, y) - C_2^L(x, y)) dydx\\
            =& \int_{\xmin}^{\xmax} \int_{\ymin}^{\ymax} (C_3^U(x, y) - C_3^L(x, y)) dydx
                - \int_{\xmin}^{\xmax}\int_{\ymin}^{\ymax} (C_3^U(x, y) - C_2^L(x, y)) dydx\\
            =& \frac{1}{4} \lx^2\ly^2 > 0.
    \end{align*}
    \hfill \qed
\end{proof}

\section{Computational Results}
\label{sec:computations}

\textcolor{black}{In the previous sections, we have shown the theoretical advantages of \HybS compared to \morsireform and \zellmerreform, most importantly that it requires fewer binary variables to model MIP relaxations of variable products with the same accuracy.
As the density of quadratic matrices in MIQCQPs increases, this advantage becomes larger, leading to a maximum of $\mathcal{O}(n)$ binary variables for \HybS and $\mathcal{O}(n^2)$ binary variables for \morsireform and \zellmerreform; see~\cref{tab:full-char}.
In general, the number of binary variables of an MIP relaxation is crucial for its solution time.
Hence, the theoretical results suggest that the \HybS formulation yields MIP relaxations that are faster to solve than the \morsireform and \zellmerreform relaxations.
Consequently, shorter run times or better primal and dual bounds after certain run time limits can be expected.}
\textcolor{black}{
To analyze these MIP relaxations for $\z=xy$, it is preferable to use a model for the $x^2$ terms that requires as few binaries as possible.
Otherwise, the impact of fewer binaries for \HybS might not be that noticeable, since the difficulty of the various MIP models might then be more determined by the MIP formulations of the $x^2$ terms.
The sawtooth relaxation does exactly that with its logarithmic number of binary variables.
Furthermore, we proved that it is also a hereditary sharp formulation.}
\textcolor{black}{In the computational study, we first compare both run times and dual bounds of the MIP relaxations.
MIP relaxations  are primarily used to deliver dual bounds for the MIQCQPs.
The best dual bound of an MIP relaxation is then a valid dual bound for the MIQCQP.
However, with increasing accuracy of the relaxations, the solution times also increase.
Therefore, both the run time (for coarser relaxations) and the best dual bounds (for finer relaxations) are important measures if we want to compare different MIP relaxations with the same accuracy.}

\textcolor{black}{Complementary to this, in a second part of the study we investigate to what extent the MIP solutions can serve as a starting point to find feasible solutions to the MIQCQP.
A common heuristic approach is to fix any integer variables from the original problem according to the MIP solution and solve the resulting QCQP to local optimality.
The starting points of the continuous variables of the original problem again correspond to the values of the MIP solution.
As before, our theoretical results imply that the \HybS relaxations are generally more likely to find MIP solutions after certain run time limits due to the smaller number of binary variables.
Presumably, this translates to a higher probability of finding feasible solutions to the MIQCQP using the heuristic approach.}
In detail, we solve MIP relaxations using either \HybS, \morsireform, or \zellmerreform in combination with the sawtooth relaxation using Gurobi \cite{gurobi} and a callback function that uses the \non linear programming (NLP) solver IPOPT \cite{ipopt} to find local optimal solutions for the QCQP.

All instances were solved in Python 3.8.3,
via Gurobi 9.5.1 and IPOPT~3.12.13 on the `Woody' cluster,
using the ``Kaby Lake” nodes with two Xeon E3-1240 v6 chips
(4~cores, HT~disabled), running at 3.7~GHZ with 32~GB of~RAM.
For more information, see the \href{https://hpc.fau.de/systems-services/systems-documentation-instructions/clusters/woody-cluster/}{Woody~Cluster~Website of Friedrich-Alexander-Universit\"at Erlangen-N\"urnberg}.
The global relative optimality tolerance in Gurobi was set to the default value of ~0.01\%, for all MIPs and MIQCQPs.

\subsection{Study Design}
\label{sec:study_design}
In the following, we explain the design of our study and go into detail regarding the instance set as well as the various parameter configurations.
\ \\

\noindent\textbf{Instances.}
We consider a three-part benchmark set of 60 instances:
20 \non convex boxQP instances from \cite{Dong-Luo-2018,Beach2020-compact,Chen2012} and earlier works,
20 AC optimal power flow (ACOPF) instances from the NESTA benchmark set (v0.7.0) (see \cite{NESTA}),
previously used in \cite{aigner2020solving}, and 20 MIQCQP instances from the QPLIB \cite{qplib}. 
In \Cref{sec:instance_set} links that contain download options and detailed descriptions of the instances can be found. For an overview of the IDs of all instances, see \cref{table_instance}.
The benchmark set is equally divided into 30 sparse and 30 dense instances.
We call an instance dense if either the objective function and/or at least one quadratic function in the constraint set is of the form $\bm x^\top Q \bm x$, where $x \in \mathbb{R}^n$ are all variables of the problem and $Q \in \mathbb{R}^{n, n}$ is a matrix with at least 25\% of its entries being nonzero.
\ \\

\noindent\textbf{Parameters.}
For each instance, we solve the resulting MIP relaxation of each method from \Cref{sec:direct} using various approximation depths of $ L \in \{1, 2, 4, 6\} $ and a time limit of 8~hours. 
\textcolor{black}{In \cref{tab:max_errors} , we have listed the maximum errors associated with each $L$, which are derived from the values in \cref{tab:full-char}.}
All sawtooth and separable MIP relaxations are solved once with $ L_1 = L $ and once with a tightened underestimator version for univariate quadratic terms where $ L_1 = \max\{2, 1.5L\} $.
This tightening is done as described in \cref{def:sawtooth-str} by adding linear cuts and without introducing further binary variables. In the separable methods \HybS, \morsireform, and \zellmerreform this leads to a tightening of the relaxation of $z=xy$ terms as well as of $z=x^2$ terms in the original MIQCQP.
We refer to the tightened MIP relaxations as T-\HybS, T-\morsireform, and T-\zellmerreform.
\Tabref{tab:study_structure} gives an overview of the different parameters in our study.
In total, we have 24~parameter configurations for 60~original problems, which means that we solve 1440 MIP~instances.
\begin{table}[h]
\caption{In the study, we consider the parameters cuts, depth, and formulation on 60 MIQCQP instances and thus solve $ (2 \cdot 4) \cdot 3 \cdot 60 = 1440 $ MIP relaxations.}

\begin{center}
    \fbox{
\begin{minipage}[t]{.35\textwidth}
\underline{\textbf{Depth}}\\
$L= 1,\, 2,\,4,\, 6$\\
$L_1 = L$\\
Tightened:\\
$L= 1,\, 2,\,4,\, 6$\\
$L_1 = \max \{2, 1.5L\}$
\end{minipage}%
\begin{minipage}[t]{.3\textwidth}
\underline{\textbf{Formulation}}\\
\morsireform\\
\zellmerreform\\
HybS\\
\\
\end{minipage}%
\begin{minipage}[t]{.3\textwidth}
\underline{\textbf{Instances}}\\
boxQP (20 instances)\\
ACOPF (20 instances)\\
QPLIB (20 instances)
\end{minipage}%
}
\end{center}
\label{tab:study_structure}
\end{table}

\begin{table}[h!]
\label{tab:max_errors}
\centering
\begin{tabular}{c r r}
\midrule
 & \HybS & \morsireform/\zellmerreform \\
\midrule
L = 1 & 2e-02 & 3e-02 \\
L = 2 & 5e-03 & 8e-03 \\
L = 4 & 3e-04 & 5e-04 \\
L = 6 & 2e-05 & 3e-05 \\
\bottomrule
\end{tabular}
\caption{Maximum error for different values of \( L \)}
\end{table}

\noindent\textbf{Callback function.}
Solving all MIP relaxations, we use a callback function with the local NLP solver IPOPT that works as follows: given any MIP-feasible solution,
the callback function fixes any integer variables from the original problem (before applying any of the discretization techniques from this work) according to this solution and then solves the resulting \textcolor{black}{QCCP, the original MIQCQP with fixed binaries,} locally via IPOPT in an attempt to find a feasible solution for the original MIQCQP problem. 

\subsection{Number of Binaries}
\textcolor{black}{
In advance of the results of the study, we provide another table that shows, how many binary variables can be saved relatively with \HybS compared to \morsireform and \zellmerreform.
In \cref{tab:numberofbinaries} we specify how many variables occur on average with each method in the MIP relaxation models. Apart from a few original variables of the MIQCQPs, the main part of the binary variables comes from the MIP relaxations of quadratic terms. Since \morsireform and \zellmerreform require exactly the same number of binary variables for each univariate or bivariate MIP relaxation, only \morsireform is listed in \cref{tab:numberofbinaries}.
The table shows that \HybS requires close to two-thirds of the binary variables on the sparse instances. 
The difference is much greater on the dense instances,
where \HybS requires only nearly 6\% of the binary variables of \morsireform and \zellmerreform.
Both numbers are in line with our theoretical findings. 
Assuming, we had an MIQCQP instance with only one variable product $x_ix_j$ and we would set $L=1$,
then there would be three binary variables each for \morsireform and \zellmerreform, while we would need only two for \HybS.
The fact that this effect is significantly stronger for dense instances stems from the quadratic increase of binary variables in dense matrices for \morsireform and \zellmerreform compared to the linear increase for \HybS.
}
\begin{table}[h]
\caption{\textcolor{black}{Average number of binary variables per instance and the relative percentage of binary variables in \HybS models compared to those of \morsireform and \zellmerreform.}}
\label{tab:numberofbinaries}
\centering
\begin{tabular}{l c c c c c c}
\toprule
& sparse &&&dense&&\\
\midrule
 & \morsireform/\zellmerreform & \HybS & rel. & \morsireform/\zellmerreform & \HybS & rel. \\ 
\midrule
L=1 & 318 & 231 & 72.8\% & 987 & 61 & 6.2\% \\ 
L=2 & 579 & 406 & 70.2\% & 1972 & 119 & 6.1\% \\ 
L=4 & 1102 & 756 & 68.6\% & 3942 & 236 & 6.0\% \\ 
L=6 & 1625 & 1106 & 68.0\% & 5912 & 352 & 6.0\% \\ \bottomrule
\end{tabular}
\end{table}

\subsection{Results \label{sec:results}} 
\textcolor{black}{
In the following, we present the results of our study at a detailed level.
In particular, we aim to answer the following questions regarding run times, dual bounds, and the ability to find feasible solutions for the MIQCQPs:}
\begin{itemize}
    \item Is our enhanced method \HybS computationally superior to its predecessors \morsireform or \zellmerreform?
    \item Is it beneficial to use tightened versions of the MIP relaxations \HybS, \morsireform, and \zellmerreform, i.e., to choose $L_1>L$?
\end{itemize}

\textcolor{black}{We point out that in Part II of this work, we also present a more detailed comparison with different MIP relaxation methods and the state-of-art MIQCQP solver Gurobi.}

\subsubsection{Run Times\label{sec:results:rt}} 

We start with a discussion on the run times for the different methods. 
Here, we use the shifted geometric mean, which is a common measure for comparing two different MIP-based solution approaches. The shifted geometric mean of $n$ numbers $t_1,\ldots, t_n$ with shift $s$ is defined as $\big(\prod_{i=1}^n (t_i+s)\big)^{1/n} - s$.
It has the advantage that it is neither affected by very large outliers (in contrast to the arithmetic mean) nor by very small outliers (in contrast to the geometric mean).
We use a typical shift $s = 10$.
Moreover, we only include those instances in the computation of the shifted geometric mean, where at least one solution method delivered an optimal solution within the run time limit of $8$ hours.

In \cref{table_sgm_univar_all}, the shifted geometric mean values of the run times for solving the separable MIP relaxations on all instances are given.
Here, \HybS clearly outperforms all other methods, including its tightened variant T-\HybS.
\HybS is at least a factor of two faster than (T-)\morsireform and (T-)\zellmerreform. 
\textcolor{black}{Tightening \HybS, \morsireform, and \zellmerreform results in comparable but slightly higher run times for \morsireform and \zellmerreform and partially in notably higher run times for \HybS, \eg by a factor of more than two in case of $L=4$.
}

\textcolor{black}{
For sparse instances, the same picture emerges, although the benefit of \HybS is not as great as before, see the second block in \Cref{table_sgm_univar_all}. 
Conversely, the advantage of \HybS increases dramatically for dense instances.
Here, \HybS is at least a factor of five faster than (T-)\morsireform and (T-)\zellmerreform, see the third block \Cref{table_sgm_univar_all}. Tightening the three methods again leads to mostly slightly higher run times for \morsireform and \zellmerreform and to considerably higher run times for \HybS.}

\begin{table}[h]
    \caption{\textcolor{black}{Shifted geometric mean for run times on all instances.}}
    \label{table_sgm_univar_all}
    \centering
    \begin{tabular} {l r r r r r r}
        \toprule
        {\qquad} & {\quad \morsireform} & {\quad T-\morsireform} & {\quad \ \ \zellmerreform} & {\quad T-\zellmerreform} & {\quad \ \ \HybS} & {\quad T-\HybS} \\  
        \midrule
        all &&&&&&\\
        \midrule
        L=1 &    74.62 &    95.53 &    74.67 &    96.69 &   \textbf{31.00} &     44.55 \\
        L=2 &   174.87 &   265.15 &   271.16 &   265.70 &   \textbf{67.62} &     77.07 \\
        L=4 &   940.70 &   895.52 &   754.62 &   895.13 &  \textbf{172.59} &    395.29 \\
        L=6 &  1301.88 &  1485.40 &  1104.60 &  1484.55 &  \textbf{455.38} &    859.92 \\
        \midrule 
        sparse &&&&&&\\
        \midrule
        L=1 &   40.47 &   42.10 &   39.59 &   42.91 &   \textbf{33.66} &     48.78 \\
        L=2 &   63.64 &   81.66 &   93.12 &   81.88 &   \textbf{62.65} &     66.49 \\
        L=4 &  362.13 &  367.90 &  297.24 &  367.98 &  \textbf{154.53} &    253.81 \\
        L=6 &  499.46 &  602.40 &  487.41 &  601.63 &  \textbf{380.29} &    441.66 \\
        \midrule
        dense &&&&&&\\
        \midrule
        L=1 &   236.27 &   443.88 &   245.83 &   444.68 &   \textbf{26.01} &     36.77 \\
        L=2 &  1020.66 &  2131.53 &  1818.35 &  2134.26 &   \textbf{77.82} &    100.90 \\
        L=4 &  3872.15 &  3348.79 &  2991.87 &  3344.09 &  \textbf{203.47} &    761.74 \\
        L=6 &  4850.41 &  5137.58 &  3396.35 &  5139.58 &  \textbf{583.77} &   2145.94 \\
        \bottomrule
    \end{tabular}
\end{table}

\subsubsection{Dual Bounds \label{sec:results:db}} 
\textcolor{black}{As mentioned before, MIP relaxations are primarily used to deliver (tight) dual bounds for MIQCQPs.
Thus, we now compare the tightness of the dual bounds provided by the various methods.
To this end, we compute relative optimality gaps $g_{p,s} \define |d_{p,s} - b_{p}| / |b_{p}|$ for all methods $s$ (with a certain $L$ value) and instances $p$ of the benchmark set, where $d_{p,s}$ is the corresponding best dual bound found by method $s$ and $b_{p}$ is the best-known primal bound for instance~$p$.}

\textcolor{black}{\Cref{table_gaps_univar_all} shows the arithmetic and geometric 
means of the relative optimality gaps for all 60 instances.
Please note that we rounded each gap below $0.0001$ to avoid multiplications by 0 for the geometric mean.
First, the arithmetic mean decreases with higher $L$ values but then starts to increase again. 
This pattern indicates the presence of more outliers with higher $L$ values, leading to inconsistencies in the arithmetic mean.
On the other hand, the geometric mean shows a tendency that with higher $L$ values, we can expect tighter dual bounds for the considered instances.
This trend is more consistent and reflects a more balanced view of overall performance.
\HybS often achieves the lowest geometric mean values, which indicates its superior performance. 
In summary, the geometric means in \Cref{table_gaps_univar_all} emphasize the effectiveness of higher $L$ values for tighter dual bounds, with \HybS standing out as a particularly strong method based on the considered data.
Comparing the tightened versions (T-\morsireform, T-\zellmerreform, and T-\HybS) with their non-tightened counterparts, the results are mixed. 
The tightened versions yield similar optimality gaps, with some showing slightly better and others slightly worse performance depending on different $L$ values. 
However, there is no clear trend, suggesting that there is generally no advantage to tightening the methods.}

\textcolor{black}{Dividing the benchmark set into sparse and dense instances, gives a similar picture for dense instances as on the full benchmark set, see the third block in~\Cref{table_gaps_univar_all}.
However, a different trend can be seen for sparse instances in \Cref{table_gaps_univar_all}.
Here, for higher $L$ values, both the arithmetic and geometric means consistently decrease, while \HybS again outperforms \morsireform and \zellmerreform.
In contrast to the full benchmark set, the tightening is now slightly beneficial for all three methods.}

\begin{table}
    \caption{Arithmetic (left) and geometric (right) mean of relative optimality gaps (in~\%) on all instances for separable MIP relaxations.}
    \label{table_gaps_univar_all}
    \centering
\begin{tabularx}{\textwidth}{lXXXXXX}
\toprule
{} &        BIN2 &      T-BIN2 &        BIN3 &      T-BIN3 &      \HybS &    T-\HybS \\
\midrule
all &&&&&&\\
\midrule
$L=1$ &  65.04/8.39 &  47.32/8.84 &  46.35/8.35 &  47.33/8.84 &  46.13/7.94 &  \textbf{46.04}/\textbf{7.57} \\
$L=2$ &  45.99/7.92 &  37.35/7.32 &  36.65/6.67 &  37.36/7.32 &  33.07/4.96 &  \textbf{32.33}/\textbf{4.50} \\
$L=4$ &  45.07/4.36 &  40.86/4.04 &  35.53/4.24 &  51.89/4.08 &  \textbf{24.84}/\textbf{1.81} &  31.42/1.90 \\
$L=6$ &  48.42/2.53 &  45.53/2.80 &  41.84/2.75 &  57.68/2.81 &  \textbf{32.97}/\textbf{1.05} &  53.75/1.83 \\
\midrule
sparse &&&&&&\\
\midrule
$L=1$ &  24.30/14.34 &  \textbf{23.30}/\textbf{13.50} &  23.73/13.88 &  23.30/13.50 &  23.85/14.01 &  23.53/13.70 \\
$L=2$ &  21.11/11.39 &  \textbf{20.33}/10.44 &  20.78/10.87 &  \textbf{20.33}/\textbf{10.43} &  21.21/11.52 &  20.39/10.36 \\
$L=4$ &   15.18/3.06 &   \textbf{14.90}/\textbf{2.08} &   14.92/2.45 &   14.87/\textbf{2.08} &   14.93/2.19 &   15.04/2.13 \\
$L=6$ &   11.23/0.93 &   12.09/0.84 &   12.41/0.89 &   12.07/0.83 &   10.91/0.72 &   11.65/0.74 \\
\midrule
dense &&&&&&\\
\midrule 
$L=1$ &  105.77/4.90 &  71.34/5.78 &  68.98/5.03 &   71.37/5.79 &  \textbf{68.40}/4.50 &  68.56/\textbf{4.19}  \\
$L=2$ &   70.88/5.50 &  54.36/5.13 &  52.52/4.09 &   54.40/5.13 &  44.94/2.14 &  \textbf{44.28}/\textbf{1.96} \\
$L=4$ &   74.97/6.22 &  66.82/7.84 &  56.14/7.36 &   88.92/8.02 &  \textbf{34.76}/\textbf{1.49} &  47.80/1.69 \\
$L=6$ &   85.61/6.89 &  78.97/9.34 &  71.27/8.54 &  103.28/9.51 &  \textbf{55.04}/\textbf{1.52} &  95.86/4.56 \\
\bottomrule
\end{tabularx}
\end{table}

\textcolor{black}{Additionally, we provide performance profile plots as proposed by Dolan and More \cite{Dolan2002} to illustrate the scaling of the dual bounds, see \cref{univar_all} -- \cref{univar_dense}.
The intention here is to obtain a more sophisticated picture of how the various methods perform if we allow the dual bounds to lie within a given factor of the best overall dual bound.}
The performance profiles work as follows:
Let $d_{p,s}$ again be the best dual bound obtained by MIP relaxation $s$ for instance $p$ after a certain time limit. 
With the performance ratio $r_{p,s} \define d_{p,s} / \min_s d_{p,s}$, the performance profile function value $P(\tau)$ is the percentage of problems solved by approach $s$ such that the ratios $r_{p,s}$ are within a factor $\tau \in \mathbb{R}$ of the best possible ratios.
All performance profiles are generated with the help of \emph{Perprof-py} by Siqueira et al. \cite{perprof}.
In addition to the performance profiles across all instances, we also show performance profiles for the dense and sparse subsets of the instance set.
\textcolor{black}{Please note that in minimization problems, the higher the value of a dual bound, the better it is.
Since lower values are considered better in performance profiles, we simply take the inverse of the dual bound as the value to be compared.}

In Figure~\ref{univar_all} the performance profiles of the separable MIP relaxations with regard to dual bounds using all instances can be seen.
Starting with $L=2$, the newly introduced methods \HybS and T-\HybS deliver significantly better dual bounds.
Except for $L=2$, where T-\HybS dominates \HybS, we do not obtain better dual bounds by tightening the separable MIP relaxations.
With $L=4$ and $L=6$, \HybS yields dual bounds that are within a factor $1.05$ of the overall best bounds among separable MIP relaxations for nearly all instances.
The other methods require a corresponding factor of at least~$1.2$.
\begin{figure}[h]
    \begin{center}
        \begin{minipage}{0.475\textwidth}
            \begin{center}
                 \begin{tikzpicture}
  \begin{axis}[const plot,
  cycle list={
  {blue,solid},
  {red!40!gray,dashed},
  {black,dotted},
  {brown,dashdotted},
  {green!80!black,dashdotdotted},
  {magenta!80!black,densely dotted}},
    xmin=1, xmax=1.25,
    ymin=-0.003, ymax=1.003,
    ymajorgrids,
    ytick={0,0.2,0.4,0.6,0.8,1.0},
    xlabel={$\tau$},
    ylabel={$P(\tau)$},
,
    legend pos={south east},
    legend style={font=\tiny},
    width=\textwidth
    ]
  \addplot+[mark=none, thick, pink!40!gray, dashed] coordinates {
    (1.0000,0.4000)
    (1.0000,0.4500)
    (1.0004,0.4500)
    (1.0004,0.4667)
    (1.0008,0.4667)
    (1.0009,0.4833)
    (1.0014,0.4833)
    (1.0019,0.5000)
    (1.0053,0.5000)
    (1.0056,0.5167)
    (1.0071,0.5167)
    (1.0073,0.5333)
    (1.0081,0.5333)
    (1.0083,0.5500)
    (1.0093,0.5500)
    (1.0105,0.5667)
    (1.0121,0.5667)
    (1.0122,0.5833)
    (1.0124,0.5833)
    (1.0125,0.6000)
    (1.0138,0.6167)
    (1.0139,0.6167)
    (1.0145,0.6333)
    (1.0159,0.6333)
    (1.0160,0.6500)
    (1.0179,0.6500)
    (1.0189,0.6667)
    (1.0219,0.6667)
    (1.0219,0.6833)
    (1.0241,0.6833)
    (1.0243,0.7000)
    (1.0245,0.7167)
    (1.0317,0.7167)
    (1.0317,0.7333)
    (1.0372,0.7333)
    (1.0376,0.7500)
    (1.0379,0.7500)
    (1.0382,0.7667)
    (1.0408,0.7833)
    (1.0415,0.7833)
    (1.0417,0.8000)
    (1.0482,0.8000)
    (1.0491,0.8167)
    (1.0504,0.8167)
    (1.0509,0.8333)
    (1.0513,0.8500)
    (1.0523,0.8500)
    (1.0526,0.8667)
    (1.0544,0.8667)
    (1.0561,0.8833)
    (1.0579,0.8833)
    (1.0599,0.9000)
    (1.0702,0.9000)
    (1.0714,0.9167)
    (1.0718,0.9167)
    (1.0720,0.9333)
    (1.0848,0.9333)
    (1.0893,0.9500)
    (1.0930,0.9500)
    (1.0943,0.9667)
    (1.1255,0.9667)
    (1.1668,0.9833)
    (2.2671,1.0000)
  };
  \addplot+[mark=none, thick, orange, dashed] coordinates {
    (1.0000,0.2667)
    (1.0000,0.3667)
    (1.0001,0.3667)
    (1.0001,0.3833)
    (1.0004,0.3833)
    (1.0004,0.4000)
    (1.0005,0.4000)
    (1.0005,0.4167)
    (1.0006,0.4167)
    (1.0007,0.4333)
    (1.0010,0.4333)
    (1.0012,0.4500)
    (1.0014,0.4667)
    (1.0019,0.4667)
    (1.0019,0.4833)
    (1.0028,0.4833)
    (1.0028,0.5167)
    (1.0033,0.5333)
    (1.0039,0.5500)
    (1.0047,0.5667)
    (1.0050,0.5833)
    (1.0056,0.5833)
    (1.0066,0.6000)
    (1.0068,0.6167)
    (1.0071,0.6167)
    (1.0071,0.6333)
    (1.0073,0.6333)
    (1.0081,0.6500)
    (1.0122,0.6500)
    (1.0124,0.6667)
    (1.0156,0.6667)
    (1.0156,0.6833)
    (1.0199,0.6833)
    (1.0200,0.7000)
    (1.0205,0.7000)
    (1.0207,0.7167)
    (1.0219,0.7167)
    (1.0222,0.7333)
    (1.0226,0.7333)
    (1.0231,0.7500)
    (1.0240,0.7667)
    (1.0243,0.7667)
    (1.0245,0.7833)
    (1.0305,0.7833)
    (1.0309,0.8000)
    (1.0317,0.8167)
    (1.0372,0.8167)
    (1.0372,0.8333)
    (1.0382,0.8333)
    (1.0382,0.8500)
    (1.0433,0.8500)
    (1.0469,0.8667)
    (1.0526,0.8667)
    (1.0544,0.8833)
    (1.0544,0.9000)
    (1.0562,0.9000)
    (1.0562,0.9167)
    (1.0694,0.9167)
    (1.0702,0.9333)
    (1.0967,0.9333)
    (1.1043,0.9500)
    (1.1668,0.9500)
    (1.1840,0.9667)
    (2.2671,1.0000)
  };
  \addplot+[mark=none, thick, red!40!gray, dashed] coordinates {
    (1.0000,0.4000)
    (1.0000,0.4500)
    (1.0001,0.4500)
    (1.0001,0.4833)
    (1.0007,0.4833)
    (1.0007,0.5000)
    (1.0014,0.5000)
    (1.0014,0.5167)
    (1.0019,0.5167)
    (1.0019,0.5500)
    (1.0052,0.5500)
    (1.0053,0.5667)
    (1.0068,0.5667)
    (1.0070,0.5833)
    (1.0071,0.6000)
    (1.0081,0.6000)
    (1.0081,0.6167)
    (1.0083,0.6167)
    (1.0093,0.6333)
    (1.0105,0.6333)
    (1.0107,0.6500)
    (1.0117,0.6667)
    (1.0121,0.6833)
    (1.0124,0.6833)
    (1.0124,0.7000)
    (1.0151,0.7000)
    (1.0156,0.7167)
    (1.0159,0.7167)
    (1.0159,0.7333)
    (1.0160,0.7333)
    (1.0179,0.7500)
    (1.0200,0.7500)
    (1.0205,0.7667)
    (1.0210,0.7667)
    (1.0210,0.7833)
    (1.0231,0.7833)
    (1.0240,0.8000)
    (1.0243,0.8000)
    (1.0245,0.8167)
    (1.0245,0.8333)
    (1.0286,0.8333)
    (1.0305,0.8500)
    (1.0317,0.8500)
    (1.0325,0.8667)
    (1.0376,0.8667)
    (1.0379,0.8833)
    (1.0408,0.8833)
    (1.0409,0.9000)
    (1.0492,0.9000)
    (1.0493,0.9167)
    (1.0714,0.9167)
    (1.0718,0.9333)
    (1.0846,0.9333)
    (1.0848,0.9500)
    (1.0893,0.9500)
    (1.0930,0.9667)
    (2.2671,1.0000)
  };
  \addplot+[mark=none, thick, pink!40!gray, densely dotted] coordinates {
    (1.0000,0.4167-0.01)
    (1.0000,0.5167-0.01)
    (1.0001,0.5167-0.01)
    (1.0001,0.5500-0.01)
    (1.0002,0.5667-0.01)
    (1.0002,0.5833-0.01)
    (1.0004,0.6000-0.01)
    (1.0005,0.6167-0.01)
    (1.0006,0.6333-0.01)
    (1.0009,0.6333-0.01)
    (1.0010,0.6500-0.01)
    (1.0019,0.6500-0.01)
    (1.0022,0.6667-0.01)
    (1.0189,0.6667-0.01)
    (1.0199,0.6833-0.01)
    (1.0219,0.6833-0.01)
    (1.0219,0.7000-0.01)
    (1.0372,0.7000-0.01)
    (1.0372,0.7167-0.01)
    (1.0414,0.7167-0.01)
    (1.0414,0.7333-0.01)
    (1.0415,0.7500-0.01)
    (1.0469,0.7500-0.01)
    (1.0482,0.7667-0.01)
    (1.0492,0.7667-0.01)
    (1.0492,0.7833-0.01)
    (1.0516,0.7833-0.01)
    (1.0522,0.8000-0.01)
    (1.0561,0.8000-0.01)
    (1.0562,0.8167-0.01)
    (1.0720,0.8167-0.01)
    (1.0790,0.8333-0.01)
    (1.0801,0.8333-0.01)
    (1.0817,0.8500-0.01)
    (1.0948,0.8500-0.01)
    (1.0949,0.8667-0.01)
    (1.0967,0.8833-0.01)
    (1.1043,0.8833-0.01)
    (1.1046,0.9000-0.01)
    (1.1101,0.9167-0.01)
    (1.1168,0.9333-0.01)
    (1.1243,0.9333-0.01)
    (1.1255,0.9500-0.01)
    (1.1840,0.9500-0.01)
    (1.2006,0.9667-0.01)
    (1.2041,0.9667-0.01)
    (1.25,1.0000-0.01)
  };
  \addplot+[mark=none, thick, orange, densely dotted] coordinates {
    (1.0000,0.4167)
    (1.0000,0.5333)
    (1.0001,0.5333)
    (1.0001,0.5667)
    (1.0002,0.5833)
    (1.0002,0.6000)
    (1.0004,0.6000)
    (1.0005,0.6167)
    (1.0007,0.6167)
    (1.0007,0.6333)
    (1.0009,0.6333)
    (1.0010,0.6500)
    (1.0019,0.6500)
    (1.0022,0.6667)
    (1.0189,0.6667)
    (1.0199,0.6833)
    (1.0210,0.6833)
    (1.0219,0.7000)
    (1.0370,0.7000)
    (1.0372,0.7167)
    (1.0409,0.7167)
    (1.0414,0.7333)
    (1.0415,0.7500)
    (1.0469,0.7500)
    (1.0482,0.7667)
    (1.0491,0.7667)
    (1.0492,0.7833)
    (1.0522,0.7833)
    (1.0523,0.8000)
    (1.0562,0.8000)
    (1.0579,0.8167)
    (1.0720,0.8167)
    (1.0790,0.8333)
    (1.0801,0.8333)
    (1.0817,0.8500)
    (1.0943,0.8500)
    (1.0948,0.8667)
    (1.0949,0.8667)
    (1.0967,0.8833)
    (1.1046,0.8833)
    (1.1046,0.9000)
    (1.1101,0.9167)
    (1.1168,0.9167)
    (1.1168,0.9333)
    (1.1243,0.9333)
    (1.1255,0.9500)
    (1.2006,0.9500)
    (1.2041,0.9667)
    (2.2671,1.0000)
  };
  \addplot+[mark=none, thick, red!40!gray, densely dotted] coordinates {
    (1.0000,0.4167)
    (1.0000,0.5500)
    (1.0001,0.5667)
    (1.0001,0.5833)
    (1.0007,0.5833)
    (1.0008,0.6000)
    (1.0022,0.6000)
    (1.0028,0.6167)
    (1.0050,0.6167)
    (1.0052,0.6333)
    (1.0056,0.6333)
    (1.0066,0.6500)
    (1.0071,0.6500)
    (1.0071,0.6667)
    (1.0138,0.6667)
    (1.0139,0.6833)
    (1.0145,0.6833)
    (1.0151,0.7000)
    (1.0156,0.7167)
    (1.0159,0.7333)
    (1.0207,0.7333)
    (1.0210,0.7500)
    (1.0222,0.7500)
    (1.0222,0.7667)
    (1.0226,0.7833)
    (1.0240,0.7833)
    (1.0241,0.8000)
    (1.0243,0.8000)
    (1.0245,0.8167)
    (1.0286,0.8333)
    (1.0325,0.8333)
    (1.0370,0.8500)
    (1.0417,0.8500)
    (1.0433,0.8667)
    (1.0493,0.8667)
    (1.0504,0.8833)
    (1.0513,0.8833)
    (1.0515,0.9000)
    (1.0516,0.9167)
    (1.0599,0.9167)
    (1.0694,0.9333)
    (1.0790,0.9333)
    (1.0801,0.9500)
    (1.0817,0.9500)
    (1.0833,0.9667)
    (1.0846,0.9833)
    (1.1168,0.9833)
    (1.1243,1.0000)
    (2.2671,1.0000)
  };
  \draw node[right,draw,align=left] {$L=1$\\};
  \end{axis}
\end{tikzpicture}
            \end{center}
        \end{minipage}
        \hfill
        \begin{minipage}{0.475\textwidth}
            \begin{center}
                 \begin{tikzpicture}
  \begin{axis}[const plot,
  cycle list={
  {blue,solid},
  {red!40!gray,dashed},
  {black,dotted},
  {brown,dashdotted},
  {green!80!black,dashdotdotted},
  {magenta!80!black,densely dotted}},
    xmin=1, xmax=1.25,
    ymin=-0.003, ymax=1.003,
    ymajorgrids,
    ytick={0,0.2,0.4,0.6,0.8,1.0},
    xlabel={$\tau$},
    ylabel={$P(\tau)$},
,
    legend pos={south east},
    legend style={font=\tiny},
    width=\textwidth
    ]
  \addplot+[mark=none, thick, pink, dashed] coordinates {
    (1.0000,0.3000)
    (1.0000,0.3500)
    (1.0001,0.3500)
    (1.0001,0.3667)
    (1.0007,0.3667)
    (1.0007,0.3833)
    (1.0010,0.3833)
    (1.0012,0.4000)
    (1.0013,0.4000)
    (1.0014,0.4167)
    (1.0015,0.4167)
    (1.0016,0.4333)
    (1.0019,0.4333)
    (1.0020,0.4500)
    (1.0028,0.4500)
    (1.0029,0.4667)
    (1.0031,0.4667)
    (1.0031,0.4833)
    (1.0053,0.4833)
    (1.0053,0.5000)
    (1.0063,0.5000)
    (1.0063,0.5167)
    (1.0072,0.5167)
    (1.0074,0.5333)
    (1.0136,0.5333)
    (1.0137,0.5500)
    (1.0146,0.5667)
    (1.0164,0.5667)
    (1.0164,0.5833)
    (1.0174,0.5833)
    (1.0175,0.6000)
    (1.0182,0.6167)
    (1.0225,0.6167)
    (1.0230,0.6333)
    (1.0273,0.6333)
    (1.0275,0.6500)
    (1.0290,0.6500)
    (1.0303,0.6667)
    (1.0398,0.6667)
    (1.0453,0.6833)
    (1.0457,0.7000)
    (1.0472,0.7000)
    (1.0491,0.7167)
    (1.0504,0.7333)
    (1.0532,0.7333)
    (1.0539,0.7500)
    (1.0619,0.7500)
    (1.0620,0.7667)
    (1.0646,0.7667)
    (1.0686,0.7833)
    (1.0691,0.8000)
    (1.0694,0.8000)
    (1.0719,0.8167)
    (1.0725,0.8333)
    (1.0729,0.8500)
    (1.0736,0.8500)
    (1.0737,0.8667)
    (1.0966,0.8667)
    (1.1027,0.8833)
    (1.1053,0.9000)
    (1.1477,0.9000)
    (1.1516,0.9167)
    (1.1581,0.9167)
    (1.1585,0.9333)
    (1.1703,0.9333)
    (1.1743,0.9500)
    (1.2067,0.9500)
    (1.2121,0.9667)
    (1.2199,0.9833)
    (2.0500,1.0000)
  };
  \addplot+[mark=none, thick, orange, dashed] coordinates {
    (1.0000,0.2333)
    (1.0000,0.2667)
    (1.0001,0.2667)
    (1.0001,0.3000)
    (1.0005,0.3000)
    (1.0005,0.3167)
    (1.0007,0.3167)
    (1.0007,0.3333)
    (1.0008,0.3500)
    (1.0010,0.3667)
    (1.0012,0.3667)
    (1.0012,0.3833)
    (1.0017,0.3833)
    (1.0019,0.4000)
    (1.0020,0.4000)
    (1.0022,0.4167)
    (1.0024,0.4167)
    (1.0025,0.4333)
    (1.0027,0.4500)
    (1.0028,0.4667)
    (1.0031,0.4667)
    (1.0034,0.4833)
    (1.0038,0.4833)
    (1.0039,0.5000)
    (1.0040,0.5167)
    (1.0041,0.5167)
    (1.0043,0.5333)
    (1.0046,0.5333)
    (1.0046,0.5500)
    (1.0053,0.5500)
    (1.0053,0.5667)
    (1.0131,0.5667)
    (1.0132,0.5833)
    (1.0150,0.5833)
    (1.0150,0.6000)
    (1.0182,0.6000)
    (1.0201,0.6167)
    (1.0222,0.6167)
    (1.0225,0.6333)
    (1.0250,0.6333)
    (1.0254,0.6500)
    (1.0275,0.6500)
    (1.0275,0.6667)
    (1.0290,0.6833)
    (1.0316,0.6833)
    (1.0329,0.7000)
    (1.0356,0.7167)
    (1.0360,0.7167)
    (1.0370,0.7333)
    (1.0379,0.7333)
    (1.0398,0.7500)
    (1.0530,0.7500)
    (1.0532,0.7667)
    (1.0539,0.7667)
    (1.0554,0.7833)
    (1.0686,0.7833)
    (1.0691,0.8000)
    (1.0694,0.8167)
    (1.0733,0.8167)
    (1.0736,0.8333)
    (1.0888,0.8333)
    (1.0966,0.8500)
    (1.1162,0.8500)
    (1.1193,0.8667)
    (1.1279,0.8667)
    (1.1325,0.8833)
    (1.1359,0.9000)
    (1.1427,0.9000)
    (1.1477,0.9167)
    (1.1516,0.9167)
    (1.1540,0.9333)
    (1.1743,0.9333)
    (1.1762,0.9500)
    (1.2199,0.9500)
    (1.2303,0.9667)
    (2.0500,1.0000)
  };
  \addplot+[mark=none, thick, red!40!gray, dashed] coordinates {
    (1.0000,0.3000)
    (1.0000,0.3833)
    (1.0001,0.4000)
    (1.0001,0.4167)
    (1.0004,0.4167)
    (1.0005,0.4333)
    (1.0005,0.4500)
    (1.0007,0.4667)
    (1.0012,0.4667)
    (1.0013,0.4833)
    (1.0016,0.4833)
    (1.0017,0.5000)
    (1.0020,0.5000)
    (1.0020,0.5167)
    (1.0022,0.5167)
    (1.0022,0.5333)
    (1.0024,0.5500)
    (1.0025,0.5500)
    (1.0025,0.5667)
    (1.0028,0.5667)
    (1.0028,0.5833)
    (1.0029,0.5833)
    (1.0031,0.6000)
    (1.0031,0.6167)
    (1.0034,0.6167)
    (1.0038,0.6333)
    (1.0046,0.6333)
    (1.0048,0.6500)
    (1.0053,0.6500)
    (1.0053,0.6667)
    (1.0063,0.6833)
    (1.0063,0.7000)
    (1.0072,0.7000)
    (1.0072,0.7167)
    (1.0076,0.7167)
    (1.0113,0.7333)
    (1.0120,0.7333)
    (1.0120,0.7500)
    (1.0132,0.7500)
    (1.0136,0.7667)
    (1.0146,0.7667)
    (1.0150,0.7833)
    (1.0157,0.8000)
    (1.0164,0.8167)
    (1.0174,0.8167)
    (1.0174,0.8333)
    (1.0175,0.8333)
    (1.0175,0.8500)
    (1.0182,0.8667)
    (1.0230,0.8667)
    (1.0250,0.8833)
    (1.0370,0.8833)
    (1.0370,0.9000)
    (1.0511,0.9000)
    (1.0530,0.9167)
    (1.0686,0.9167)
    (1.0691,0.9333)
    (1.0725,0.9333)
    (1.0725,0.9500)
    (1.0858,0.9500)
    (1.0888,0.9667)
    (1.1838,0.9667)
    (1.2007,0.9833)
    (1.2067,1.0000)
    (2.0500,1.0000)
  };
  \addplot+[mark=none, thick, pink, densely dotted] coordinates {
    (1.0000,0.4333-0.01)
    (1.0000,0.5167-0.01)
    (1.0001,0.5167-0.01)
    (1.0002,0.5333-0.01)
    (1.0003,0.5333-0.01)
    (1.0004,0.5500-0.01)
    (1.0007,0.5500-0.01)
    (1.0007,0.5667-0.01)
    (1.0008,0.5667-0.01)
    (1.0008,0.5833-0.01)
    (1.0024,0.5833-0.01)
    (1.0024,0.6000-0.01)
    (1.0125,0.6000-0.01)
    (1.0131,0.6167-0.01)
    (1.0211,0.6167-0.01)
    (1.0222,0.6333-0.01)
    (1.0254,0.6333-0.01)
    (1.0269,0.6500-0.01)
    (1.0305,0.6500-0.01)
    (1.0316,0.6667-0.01)
    (1.0360,0.6667-0.01)
    (1.0360,0.6833-0.01)
    (1.0370,0.6833-0.01)
    (1.0379,0.7000-0.01)
    (1.0457,0.7000-0.01)
    (1.0472,0.7167-0.01)
    (1.0504,0.7167-0.01)
    (1.0510,0.7333-0.01)
    (1.0571,0.7333-0.01)
    (1.0599,0.7500-0.01)
    (1.0620,0.7500-0.01)
    (1.0646,0.7667-0.01)
    (1.0733,0.7667-0.01)
    (1.0733,0.7833-0.01)
    (1.0737,0.7833-0.01)
    (1.0858,0.8000-0.01)
    (1.1053,0.8000-0.01)
    (1.1077,0.8167-0.01)
    (1.1110,0.8167-0.01)
    (1.1137,0.8333-0.01)
    (1.1162,0.8500-0.01)
    (1.1193,0.8500-0.01)
    (1.1234,0.8667-0.01)
    (1.1279,0.8833-0.01)
    (1.1359,0.8833-0.01)
    (1.1427,0.9000-0.01)
    (1.1540,0.9000-0.01)
    (1.1554,0.9167-0.01)
    (1.1585,0.9167-0.01)
    (1.1702,0.9333-0.01)
    (1.1762,0.9333-0.01)
    (1.1835,0.9500-0.01)
    (1.2303,0.9500-0.01)
    (1.2500,1.0000-0.01)
  };
  \addplot+[mark=none, thick, orange, densely dotted] coordinates {
    (1.0000,0.4167)
    (1.0000,0.5000)
    (1.0001,0.5000)
    (1.0001,0.5167)
    (1.0002,0.5333)
    (1.0003,0.5333)
    (1.0004,0.5500)
    (1.0007,0.5500)
    (1.0007,0.5667)
    (1.0008,0.5667)
    (1.0008,0.5833)
    (1.0024,0.5833)
    (1.0024,0.6000)
    (1.0125,0.6000)
    (1.0131,0.6167)
    (1.0211,0.6167)
    (1.0222,0.6333)
    (1.0269,0.6333)
    (1.0270,0.6500)
    (1.0303,0.6500)
    (1.0305,0.6667)
    (1.0356,0.6667)
    (1.0360,0.6833)
    (1.0370,0.6833)
    (1.0379,0.7000)
    (1.0457,0.7000)
    (1.0472,0.7167)
    (1.0510,0.7167)
    (1.0511,0.7333)
    (1.0571,0.7333)
    (1.0599,0.7500)
    (1.0620,0.7500)
    (1.0646,0.7667)
    (1.0729,0.7667)
    (1.0733,0.7833)
    (1.0737,0.7833)
    (1.0858,0.8000)
    (1.1077,0.8000)
    (1.1110,0.8167)
    (1.1137,0.8333)
    (1.1162,0.8500)
    (1.1193,0.8500)
    (1.1234,0.8667)
    (1.1279,0.8833)
    (1.1427,0.8833)
    (1.1427,0.9000)
    (1.1554,0.9000)
    (1.1581,0.9167)
    (1.1702,0.9167)
    (1.1703,0.9333)
    (1.1835,0.9333)
    (1.1838,0.9500)
    (2.0500,1.0000)
  };
  \addplot+[mark=none, thick, red!40!gray, densely dotted] coordinates {
    (1.0000,0.4833)
    (1.0000,0.6000)
    (1.0001,0.6000)
    (1.0001,0.6167)
    (1.0002,0.6167)
    (1.0003,0.6333)
    (1.0004,0.6333)
    (1.0004,0.6500)
    (1.0012,0.6500)
    (1.0012,0.6667)
    (1.0014,0.6667)
    (1.0015,0.6833)
    (1.0015,0.7000)
    (1.0017,0.7000)
    (1.0017,0.7167)
    (1.0022,0.7167)
    (1.0022,0.7333)
    (1.0029,0.7333)
    (1.0029,0.7500)
    (1.0040,0.7500)
    (1.0041,0.7667)
    (1.0041,0.7833)
    (1.0043,0.7833)
    (1.0046,0.8000)
    (1.0048,0.8000)
    (1.0053,0.8167)
    (1.0063,0.8167)
    (1.0071,0.8333)
    (1.0072,0.8500)
    (1.0074,0.8500)
    (1.0076,0.8667)
    (1.0113,0.8667)
    (1.0120,0.8833)
    (1.0120,0.9000)
    (1.0125,0.9167)
    (1.0164,0.9167)
    (1.0174,0.9333)
    (1.0201,0.9333)
    (1.0211,0.9500)
    (1.0270,0.9500)
    (1.0273,0.9667)
    (1.0554,0.9667)
    (1.0571,0.9833)
    (1.0599,0.9833)
    (1.0619,1.0000)
    (2.0500,1.0000)
  };
  \draw node[right,draw,align=left] {$L=2$\\};
  \end{axis}
\end{tikzpicture}
            \end{center}
        \end{minipage}
    \end{center}
    \hfill
    \begin{center}
        \begin{minipage}{0.475\textwidth}
            \begin{center}
                 \begin{tikzpicture}
  \begin{axis}[const plot,
  cycle list={
  {blue,solid},
  {red!40!gray,dashed},
  {black,dotted},
  {brown,dashdotted},
  {green!80!black,dashdotdotted},
  {magenta!80!black,densely dotted}},
    xmin=1, xmax=1.25,
    ymin=-0.003, ymax=1.003,
    ymajorgrids,
    ytick={0,0.2,0.4,0.6,0.8,1.0},
    xlabel={$\tau$},
    ylabel={$P(\tau)$},
,
    legend pos={south east},
    legend style={font=\tiny},
    width=\textwidth
    ]
  \addplot+[mark=none, thick, pink, dashed] coordinates {
    (1.0000,0.1833)
    (1.0000,0.2000)
    (1.0001,0.2000)
    (1.0001,0.2167)
    (1.0003,0.2167)
    (1.0003,0.2333)
    (1.0005,0.2333)
    (1.0006,0.2500)
    (1.0006,0.2667)
    (1.0010,0.2667)
    (1.0011,0.2833)
    (1.0011,0.3000)
    (1.0012,0.3167)
    (1.0012,0.3333)
    (1.0018,0.3333)
    (1.0019,0.3500)
    (1.0023,0.3667)
    (1.0026,0.3667)
    (1.0028,0.3833)
    (1.0029,0.4000)
    (1.0031,0.4000)
    (1.0031,0.4167)
    (1.0037,0.4333)
    (1.0040,0.4333)
    (1.0040,0.4500)
    (1.0047,0.4500)
    (1.0049,0.4667)
    (1.0051,0.4667)
    (1.0060,0.4833)
    (1.0077,0.4833)
    (1.0080,0.5000)
    (1.0083,0.5000)
    (1.0085,0.5167)
    (1.0105,0.5167)
    (1.0120,0.5333)
    (1.0133,0.5500)
    (1.0177,0.5500)
    (1.0185,0.5667)
    (1.0189,0.5833)
    (1.0213,0.5833)
    (1.0219,0.6000)
    (1.0224,0.6000)
    (1.0241,0.6167)
    (1.0259,0.6167)
    (1.0280,0.6333)
    (1.0286,0.6333)
    (1.0287,0.6500)
    (1.0343,0.6500)
    (1.0344,0.6667)
    (1.0372,0.6833)
    (1.0446,0.6833)
    (1.0466,0.7000)
    (1.0470,0.7000)
    (1.0490,0.7167)
    (1.0505,0.7167)
    (1.0511,0.7333)
    (1.0542,0.7333)
    (1.0564,0.7500)
    (1.0685,0.7500)
    (1.0687,0.7667)
    (1.0717,0.7833)
    (1.0732,0.7833)
    (1.0754,0.8000)
    (1.0838,0.8167)
    (1.0961,0.8167)
    (1.0977,0.8333)
    (1.0983,0.8333)
    (1.0996,0.8500)
    (1.1139,0.8500)
    (1.1139,0.8667)
    (1.1211,0.8667)
    (1.1216,0.8833)
    (1.1406,0.8833)
    (1.1479,0.9000)
    (1.1508,0.9167)
    (1.1518,0.9167)
    (1.1549,0.9333)
    (1.1775,0.9333)
    (1.1859,0.9500)
    (1.1902,0.9667)
    (1.2356,0.9667)
    (4.2982,1.0000)
  };
  \addplot+[mark=none, thick, orange, dashed] coordinates {
    (1.0000,0.1500)
    (1.0000,0.2000)
    (1.0001,0.2000)
    (1.0002,0.2167)
    (1.0002,0.2333)
    (1.0003,0.2333)
    (1.0003,0.2667)
    (1.0004,0.2667)
    (1.0004,0.2833)
    (1.0005,0.2833)
    (1.0005,0.3000)
    (1.0006,0.3000)
    (1.0006,0.3167)
    (1.0007,0.3167)
    (1.0007,0.3500)
    (1.0009,0.3500)
    (1.0009,0.3667)
    (1.0011,0.3667)
    (1.0011,0.3833)
    (1.0013,0.3833)
    (1.0013,0.4000)
    (1.0016,0.4167)
    (1.0017,0.4333)
    (1.0018,0.4500)
    (1.0029,0.4500)
    (1.0030,0.4667)
    (1.0037,0.4667)
    (1.0040,0.4833)
    (1.0042,0.5000)
    (1.0085,0.5000)
    (1.0087,0.5167)
    (1.0088,0.5167)
    (1.0090,0.5333)
    (1.0098,0.5333)
    (1.0105,0.5500)
    (1.0211,0.5500)
    (1.0213,0.5667)
    (1.0253,0.5667)
    (1.0259,0.5833)
    (1.0372,0.5833)
    (1.0373,0.6000)
    (1.0383,0.6000)
    (1.0395,0.6167)
    (1.0539,0.6167)
    (1.0542,0.6333)
    (1.0564,0.6333)
    (1.0576,0.6500)
    (1.0588,0.6667)
    (1.0595,0.6667)
    (1.0600,0.6833)
    (1.0604,0.7000)
    (1.0626,0.7167)
    (1.0663,0.7333)
    (1.0717,0.7333)
    (1.0730,0.7500)
    (1.0880,0.7500)
    (1.0881,0.7667)
    (1.0891,0.7833)
    (1.0915,0.7833)
    (1.0938,0.8000)
    (1.0960,0.8167)
    (1.1131,0.8167)
    (1.1139,0.8333)
    (1.1223,0.8333)
    (1.1257,0.8500)
    (1.1300,0.8500)
    (1.1301,0.8667)
    (1.1349,0.8833)
    (1.1667,0.8833)
    (1.1687,0.9000)
    (1.1756,0.9000)
    (1.1775,0.9167)
    (1.1902,0.9167)
    (1.1978,0.9333)
    (1.2164,0.9500)
    (4.2982,1.0000)
  };
  \addplot+[mark=none, thick, red!40!gray, dashed] coordinates {
    (1.0000,0.5167)
    (1.0000,0.5833)
    (1.0001,0.5833)
    (1.0001,0.6167)
    (1.0003,0.6167)
    (1.0003,0.6333)
    (1.0005,0.6333)
    (1.0005,0.6500)
    (1.0006,0.6500)
    (1.0006,0.6667)
    (1.0007,0.6833)
    (1.0007,0.7167)
    (1.0011,0.7167)
    (1.0011,0.7333)
    (1.0012,0.7333)
    (1.0013,0.7500)
    (1.0016,0.7500)
    (1.0016,0.7667)
    (1.0018,0.7667)
    (1.0018,0.7833)
    (1.0023,0.7833)
    (1.0026,0.8000)
    (1.0042,0.8000)
    (1.0042,0.8167)
    (1.0043,0.8333)
    (1.0046,0.8500)
    (1.0060,0.8500)
    (1.0062,0.8667)
    (1.0077,0.8833)
    (1.0080,0.8833)
    (1.0083,0.9000)
    (1.0087,0.9000)
    (1.0088,0.9167)
    (1.0133,0.9167)
    (1.0177,0.9333)
    (1.0241,0.9333)
    (1.0245,0.9500)
    (1.0511,0.9500)
    (1.0531,0.9667)
    (1.1156,0.9667)
    (1.1156,0.9833)
    (1.1390,0.9833)
    (1.1406,1.0000)
    (4.2982,1.0000)
  };
  \addplot+[mark=none, thick, pink, densely dotted] coordinates {
    (1.0000,0.2833-0.01)
    (1.0000,0.3667-0.01)
    (1.0001,0.3833-0.01)
    (1.0001,0.4000-0.01)
    (1.0002,0.4167-0.01)
    (1.0003,0.4333-0.01)
    (1.0005,0.4667-0.01)
    (1.0008,0.4667-0.01)
    (1.0008,0.4833-0.01)
    (1.0010,0.4833-0.01)
    (1.0010,0.5000-0.01)
    (1.0016,0.5000-0.01)
    (1.0016,0.5167-0.01)
    (1.0030,0.5167-0.01)
    (1.0031,0.5333-0.01)
    (1.0077,0.5333-0.01)
    (1.0077,0.5500-0.01)
    (1.0189,0.5500-0.01)
    (1.0211,0.5667-0.01)
    (1.0245,0.5667-0.01)
    (1.0252,0.5833-0.01)
    (1.0287,0.5833-0.01)
    (1.0322,0.6000-0.01)
    (1.0467,0.6000-0.01)
    (1.0470,0.6167-0.01)
    (1.0493,0.6167-0.01)
    (1.0493,0.6333-0.01)
    (1.0663,0.6333-0.01)
    (1.0678,0.6500-0.01)
    (1.0684,0.6667-0.01)
    (1.0685,0.6667-0.01)
    (1.0685,0.6833-0.01)
    (1.0731,0.6833-0.01)
    (1.0732,0.7000-0.01)
    (1.0838,0.7000-0.01)
    (1.0872,0.7167-0.01)
    (1.0879,0.7333-0.01)
    (1.0960,0.7333-0.01)
    (1.0961,0.7500-0.01)
    (1.1210,0.7500-0.01)
    (1.1211,0.7667-0.01)
    (1.1219,0.7667-0.01)
    (1.1223,0.7833-0.01)
    (1.1349,0.7833-0.01)
    (1.1361,0.8000-0.01)
    (1.1384,0.8167-0.01)
    (1.1508,0.8167-0.01)
    (1.1518,0.8333-0.01)
    (1.1549,0.8333-0.01)
    (1.1561,0.8500-0.01)
    (1.1568,0.8667-0.01)
    (1.1622,0.8667-0.01)
    (1.1624,0.8833-0.01)
    (1.1667,0.9000-0.01)
    (1.2260,0.9000-0.01)
    (1.2261,0.9167-0.01)
    (1.2354,0.9333-0.01)
    (1.25,1.0000-0.01)
  };
  \addplot+[mark=none, thick, orange, densely dotted] coordinates {
    (1.0000,0.2667)
    (1.0000,0.3833)
    (1.0001,0.4000)
    (1.0001,0.4167)
    (1.0002,0.4333)
    (1.0003,0.4333)
    (1.0005,0.4667)
    (1.0007,0.4667)
    (1.0008,0.4833)
    (1.0009,0.4833)
    (1.0010,0.5000)
    (1.0016,0.5000)
    (1.0016,0.5167)
    (1.0030,0.5167)
    (1.0031,0.5333)
    (1.0077,0.5333)
    (1.0077,0.5500)
    (1.0221,0.5500)
    (1.0224,0.5667)
    (1.0252,0.5667)
    (1.0253,0.5833)
    (1.0322,0.5833)
    (1.0322,0.6000)
    (1.0466,0.6000)
    (1.0467,0.6167)
    (1.0490,0.6167)
    (1.0493,0.6333)
    (1.0588,0.6333)
    (1.0595,0.6500)
    (1.0684,0.6500)
    (1.0685,0.6667)
    (1.0685,0.6833)
    (1.0730,0.6833)
    (1.0731,0.7000)
    (1.0872,0.7000)
    (1.0872,0.7167)
    (1.0879,0.7167)
    (1.0880,0.7333)
    (1.0960,0.7333)
    (1.0961,0.7500)
    (1.1182,0.7500)
    (1.1210,0.7667)
    (1.1216,0.7667)
    (1.1219,0.7833)
    (1.1349,0.7833)
    (1.1361,0.8000)
    (1.1384,0.8167)
    (1.1508,0.8167)
    (1.1518,0.8333)
    (1.1549,0.8333)
    (1.1561,0.8500)
    (1.1568,0.8667)
    (1.1624,0.8667)
    (1.1667,0.8833)
    (1.1687,0.8833)
    (1.1756,0.9000)
    (1.2164,0.9000)
    (1.2260,0.9167)
    (1.2354,0.9167)
    (1.2356,0.9333)
    (4.2982,1.0000)
  };
  \addplot+[mark=none, thick, red!40!gray, densely dotted] coordinates {
    (1.0000,0.4167)
    (1.0000,0.5167)
    (1.0002,0.5167)
    (1.0002,0.5333)
    (1.0007,0.5333)
    (1.0007,0.5500)
    (1.0008,0.5500)
    (1.0009,0.5667)
    (1.0017,0.5667)
    (1.0017,0.5833)
    (1.0023,0.5833)
    (1.0023,0.6000)
    (1.0046,0.6000)
    (1.0047,0.6167)
    (1.0049,0.6167)
    (1.0051,0.6333)
    (1.0090,0.6333)
    (1.0095,0.6500)
    (1.0098,0.6667)
    (1.0219,0.6667)
    (1.0221,0.6833)
    (1.0280,0.6833)
    (1.0284,0.7000)
    (1.0286,0.7167)
    (1.0322,0.7167)
    (1.0343,0.7333)
    (1.0373,0.7333)
    (1.0383,0.7500)
    (1.0395,0.7500)
    (1.0446,0.7667)
    (1.0467,0.7667)
    (1.0467,0.7833)
    (1.0493,0.7833)
    (1.0505,0.8000)
    (1.0531,0.8000)
    (1.0539,0.8167)
    (1.0891,0.8167)
    (1.0915,0.8333)
    (1.0977,0.8333)
    (1.0983,0.8500)
    (1.0996,0.8500)
    (1.1131,0.8667)
    (1.1139,0.8667)
    (1.1156,0.8833)
    (1.1182,0.9000)
    (1.1257,0.9000)
    (1.1300,0.9167)
    (1.1384,0.9167)
    (1.1390,0.9333)
    (1.1568,0.9333)
    (1.1622,0.9500)
    (4.2982,1.0000)
  };
  \draw node[right,draw,align=left] {$L=4$\\};
  \end{axis}
\end{tikzpicture}
            \end{center}
        \end{minipage}
        \hfill
        \begin{minipage}{0.475\textwidth}
            \begin{center}
                 \begin{tikzpicture}
  \begin{axis}[const plot,
  cycle list={
  {blue,solid},
  {red!40!gray,dashed},
  {black,dotted},
  {brown,dashdotted},
  {green!80!black,dashdotdotted},
  {magenta!80!black,densely dotted}},
    xmin=1, xmax=1.25,
    ymin=-0.003, ymax=1.003,
    ymajorgrids,
    ytick={0,0.2,0.4,0.6,0.8,1.0},
    xlabel={$\tau$},
    ylabel={$P(\tau)$},
,
    legend pos={south east},
    legend style={font=\tiny},
    width=\textwidth
    ]
  \addplot+[mark=none, thick, pink, dashed] coordinates {
    (1.0000,0.2167)
    (1.0000,0.2833)
    (1.0001,0.3000)
    (1.0001,0.4167)
    (1.0002,0.4167)
    (1.0002,0.4333)
    (1.0003,0.4333)
    (1.0005,0.4500)
    (1.0007,0.4500)
    (1.0008,0.4667)
    (1.0013,0.4667)
    (1.0014,0.4833)
    (1.0028,0.4833)
    (1.0030,0.5000)
    (1.0035,0.5000)
    (1.0036,0.5167)
    (1.0037,0.5167)
    (1.0037,0.5333)
    (1.0043,0.5500)
    (1.0049,0.5667)
    (1.0057,0.5833)
    (1.0066,0.5833)
    (1.0066,0.6000)
    (1.0069,0.6000)
    (1.0072,0.6167)
    (1.0113,0.6167)
    (1.0134,0.6333)
    (1.0225,0.6333)
    (1.0243,0.6500)
    (1.0244,0.6500)
    (1.0271,0.6667)
    (1.0356,0.6667)
    (1.0381,0.6833)
    (1.0394,0.7000)
    (1.0400,0.7000)
    (1.0404,0.7167)
    (1.0423,0.7167)
    (1.0461,0.7333)
    (1.0577,0.7333)
    (1.0584,0.7500)
    (1.0586,0.7667)
    (1.0635,0.7667)
    (1.0638,0.7833)
    (1.0659,0.8000)
    (1.0691,0.8167)
    (1.0710,0.8167)
    (1.0715,0.8333)
    (1.0736,0.8500)
    (1.0910,0.8500)
    (1.0918,0.8667)
    (1.0938,0.8833)
    (1.1052,0.8833)
    (1.1072,0.9000)
    (1.1293,0.9000)
    (1.1325,0.9167)
    (1.1497,0.9167)
    (1.1526,0.9333)
    (1.1744,0.9333)
    (1.1802,0.9500)
    (1.1845,0.9500)
    (1.1863,0.9667)
    (3.2659,1.0000)
  };
  \addlegendentry{BIN2}
  \addplot+[mark=none, thick, orange, dashed] coordinates {
    (1.0000,0.1500)
    (1.0000,0.3000)
    (1.0001,0.3000)
    (1.0001,0.3833)
    (1.0002,0.4000)
    (1.0003,0.4167)
    (1.0006,0.4167)
    (1.0007,0.4333)
    (1.0011,0.4333)
    (1.0013,0.4500)
    (1.0015,0.4500)
    (1.0016,0.4667)
    (1.0034,0.4667)
    (1.0035,0.4833)
    (1.0072,0.4833)
    (1.0076,0.5000)
    (1.0093,0.5000)
    (1.0107,0.5167)
    (1.0138,0.5167)
    (1.0152,0.5333)
    (1.0200,0.5333)
    (1.0218,0.5500)
    (1.0271,0.5500)
    (1.0289,0.5667)
    (1.0394,0.5667)
    (1.0400,0.5833)
    (1.0412,0.5833)
    (1.0423,0.6000)
    (1.0461,0.6000)
    (1.0473,0.6167)
    (1.0474,0.6333)
    (1.0481,0.6500)
    (1.0496,0.6667)
    (1.0619,0.6667)
    (1.0622,0.6833)
    (1.0623,0.7000)
    (1.0691,0.7000)
    (1.0710,0.7167)
    (1.0736,0.7167)
    (1.0754,0.7333)
    (1.0840,0.7333)
    (1.0848,0.7500)
    (1.0987,0.7500)
    (1.1003,0.7667)
    (1.1034,0.7667)
    (1.1038,0.7833)
    (1.1041,0.8000)
    (1.1072,0.8000)
    (1.1083,0.8167)
    (1.1105,0.8167)
    (1.1117,0.8333)
    (1.1217,0.8333)
    (1.1222,0.8500)
    (1.1333,0.8500)
    (1.1392,0.8667)
    (1.1526,0.8667)
    (1.1588,0.8833)
    (1.1609,0.9000)
    (1.1687,0.9167)
    (1.2054,0.9167)
    (1.2163,0.9333)
    (3.2659,1.0000)
  };
  \addlegendentry{BIN3}
  \addplot+[mark=none, thick, red!40!gray, dashed] coordinates {
    (1.0000,0.4667)
    (1.0000,0.6500)
    (1.0001,0.6500)
    (1.0001,0.6833)
    (1.0002,0.6833)
    (1.0002,0.7000)
    (1.0005,0.7000)
    (1.0006,0.7167)
    (1.0008,0.7167)
    (1.0009,0.7333)
    (1.0009,0.7667)
    (1.0010,0.7833)
    (1.0015,0.7833)
    (1.0015,0.8000)
    (1.0026,0.8000)
    (1.0026,0.8167)
    (1.0030,0.8167)
    (1.0031,0.8333)
    (1.0034,0.8500)
    (1.0057,0.8500)
    (1.0066,0.8667)
    (1.0076,0.8667)
    (1.0091,0.8833)
    (1.0134,0.8833)
    (1.0138,0.9000)
    (1.0152,0.9000)
    (1.0157,0.9167)
    (1.0218,0.9167)
    (1.0225,0.9333)
    (1.0243,0.9333)
    (1.0244,0.9500)
    (1.0938,0.9500)
    (1.0977,0.9667)
    (1.1392,0.9667)
    (1.1461,0.9833)
    (1.1977,0.9833)
    (1.2037,1.0000)
    (3.2659,1.0000)
  };
  \addlegendentry{HybS}
  \addplot+[mark=none, thick, pink, densely dotted] coordinates {
    (1.0000,0.1333-0.01)
    (1.0000,0.3500-0.01)
    (1.0001,0.3500-0.01)
    (1.0001,0.3833-0.01)
    (1.0003,0.3833-0.01)
    (1.0003,0.4000-0.01)
    (1.0014,0.4000-0.01)
    (1.0015,0.4167-0.01)
    (1.0022,0.4167-0.01)
    (1.0023,0.4333-0.01)
    (1.0025,0.4500-0.01)
    (1.0066,0.4500-0.01)
    (1.0069,0.4667-0.01)
    (1.0112,0.4667-0.01)
    (1.0112,0.4833-0.01)
    (1.0113,0.5000-0.01)
    (1.0157,0.5000-0.01)
    (1.0190,0.5167-0.01)
    (1.0218,0.5167-0.01)
    (1.0218,0.5333-0.01)
    (1.0404,0.5333-0.01)
    (1.0412,0.5500-0.01)
    (1.0586,0.5500-0.01)
    (1.0590,0.5667-0.01)
    (1.0616,0.5667-0.01)
    (1.0619,0.5833-0.01)
    (1.0623,0.5833-0.01)
    (1.0635,0.6000-0.01)
    (1.0754,0.6000-0.01)
    (1.0786,0.6167-0.01)
    (1.0848,0.6167-0.01)
    (1.0851,0.6333-0.01)
    (1.0874,0.6333-0.01)
    (1.0878,0.6500-0.01)
    (1.0903,0.6667-0.01)
    (1.1004,0.6667-0.01)
    (1.1006,0.6833-0.01)
    (1.1033,0.7000-0.01)
    (1.1034,0.7167-0.01)
    (1.1041,0.7167-0.01)
    (1.1044,0.7333-0.01)
    (1.1052,0.7500-0.01)
    (1.1104,0.7500-0.01)
    (1.1105,0.7667-0.01)
    (1.1127,0.7667-0.01)
    (1.1130,0.7833-0.01)
    (1.1149,0.8000-0.01)
    (1.1217,0.8167-0.01)
    (1.1222,0.8167-0.01)
    (1.1247,0.8333-0.01)
    (1.1293,0.8500-0.01)
    (1.1325,0.8500-0.01)
    (1.1333,0.8667-0.01)
    (1.1845,0.8667-0.01)
    (1.1845,0.8833-0.01)
    (1.1863,0.8833-0.01)
    (1.1956,0.9000-0.01)
    (1.2225,0.9000-0.01)
    (1.2226,0.9167-0.01)
    (1.25,1.0000-0.01)
  };
  \addlegendentry{T-BIN2}
  \addplot+[mark=none, thick, orange, densely dotted] coordinates {
    (1.0000,0.1833)
    (1.0000,0.3667)
    (1.0001,0.3667)
    (1.0001,0.4000)
    (1.0014,0.4000)
    (1.0015,0.4167)
    (1.0022,0.4167)
    (1.0023,0.4333)
    (1.0025,0.4500)
    (1.0066,0.4500)
    (1.0069,0.4667)
    (1.0107,0.4667)
    (1.0112,0.4833)
    (1.0113,0.4833)
    (1.0113,0.5000)
    (1.0190,0.5000)
    (1.0196,0.5167)
    (1.0218,0.5167)
    (1.0218,0.5333)
    (1.0349,0.5333)
    (1.0352,0.5500)
    (1.0511,0.5500)
    (1.0518,0.5667)
    (1.0590,0.5667)
    (1.0616,0.5833)
    (1.0623,0.5833)
    (1.0635,0.6000)
    (1.0786,0.6000)
    (1.0786,0.6167)
    (1.0848,0.6167)
    (1.0851,0.6333)
    (1.0874,0.6500)
    (1.0878,0.6500)
    (1.0903,0.6667)
    (1.1003,0.6667)
    (1.1004,0.6833)
    (1.1033,0.6833)
    (1.1033,0.7000)
    (1.1034,0.7167)
    (1.1041,0.7167)
    (1.1044,0.7333)
    (1.1052,0.7500)
    (1.1083,0.7500)
    (1.1104,0.7667)
    (1.1127,0.7667)
    (1.1130,0.7833)
    (1.1149,0.8000)
    (1.1217,0.8167)
    (1.1222,0.8167)
    (1.1247,0.8333)
    (1.1293,0.8500)
    (1.1325,0.8500)
    (1.1333,0.8667)
    (1.1802,0.8667)
    (1.1845,0.8833)
    (1.1863,0.8833)
    (1.1956,0.9000)
    (1.2225,0.9000)
    (1.2226,0.9167)
    (3.2659,1.0000)
  };
  \addlegendentry{T-BIN3}
  \addplot+[mark=none, thick, red!40!gray, densely dotted] coordinates {
    (1.0000,0.2667)
    (1.0000,0.4333)
    (1.0001,0.4333)
    (1.0001,0.4500)
    (1.0003,0.4500)
    (1.0003,0.4667)
    (1.0010,0.4667)
    (1.0011,0.4833)
    (1.0016,0.4833)
    (1.0017,0.5000)
    (1.0022,0.5167)
    (1.0025,0.5167)
    (1.0026,0.5333)
    (1.0026,0.5500)
    (1.0028,0.5667)
    (1.0036,0.5667)
    (1.0037,0.5833)
    (1.0091,0.5833)
    (1.0093,0.6000)
    (1.0196,0.6000)
    (1.0200,0.6167)
    (1.0289,0.6167)
    (1.0349,0.6333)
    (1.0352,0.6333)
    (1.0356,0.6500)
    (1.0496,0.6500)
    (1.0511,0.6667)
    (1.0518,0.6667)
    (1.0577,0.6833)
    (1.0786,0.6833)
    (1.0824,0.7000)
    (1.0840,0.7167)
    (1.0903,0.7167)
    (1.0910,0.7333)
    (1.0977,0.7333)
    (1.0987,0.7500)
    (1.1117,0.7500)
    (1.1127,0.7667)
    (1.1461,0.7667)
    (1.1497,0.7833)
    (1.1687,0.7833)
    (1.1712,0.8000)
    (1.1744,0.8167)
    (1.1956,0.8167)
    (1.1977,0.8333)
    (1.2037,0.8333)
    (1.2054,0.8500)
    (1.2163,0.8500)
    (1.2225,0.8667)
    (1.2226,0.8667)
    (1.2373,0.8833)
    (1.2437,0.9000)
    (3.2659,1.0000)
  };
  \addlegendentry{T-HybS}
  \draw node[right,draw,align=left] {$L=6$\\};
  \end{axis}
\end{tikzpicture}
            \end{center}
        \end{minipage}
    \end{center}
    \hfill
    \caption{Performance profiles to dual bounds of separable MIP relaxations on all instances. }\label{univar_all}
\end{figure}
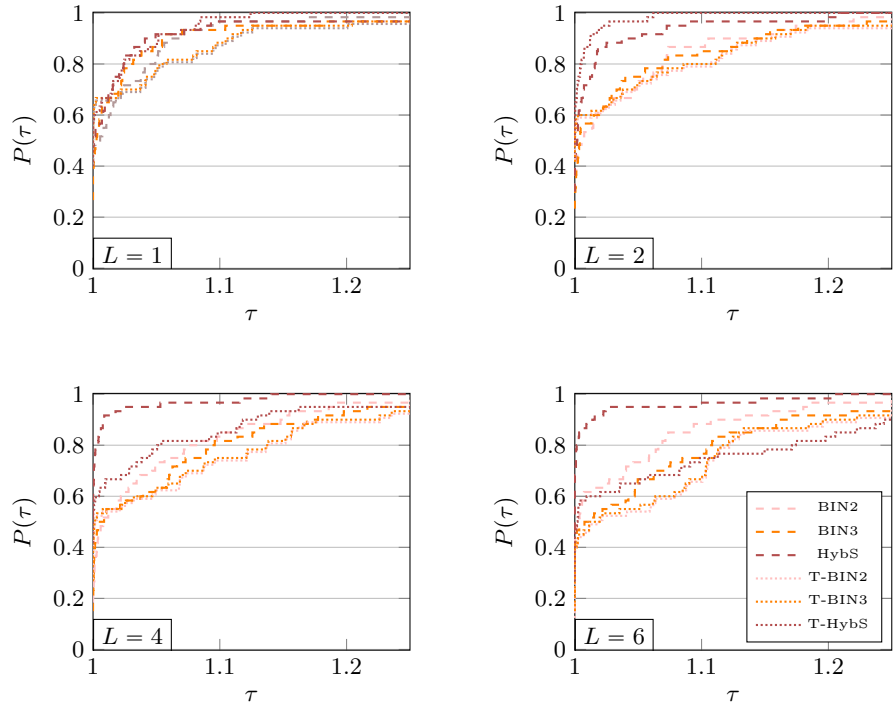
In \cref{univar_sparse} and \cref{univar_dense},
we divide the benchmark set into sparse and dense instances again to obtain a more in-depth look at the benefits of \HybS.
For sparse instances, using \HybS and T-\HybS has no clear advantage, as \cref{univar_sparse} shows.
However, with $L=1$ and $L=2$, the tightened variants deliver notably better dual bounds.
For $L=1$, the dual bounds computed with T-\morsireform and T-\zellmerreform are in almost all cases the overall best-found bounds.
Their counterparts \morsireform and \zellmerreform are only able to provide the overall best bounds for about 50\% of the instances.
For $L=2$, we see a similar picture.
T-\morsireform and T-\zellmerreform deliver the best bounds for roughly 80\% of the instances, while \morsireform and \zellmerreform achieve this only in 40\% of the cases.

\begin{figure}
    \begin{center}
        \begin{minipage}{0.475\textwidth}
            \begin{center}
                 \begin{tikzpicture}
  \begin{axis}[const plot,
  cycle list={
  {blue,solid},
  {red!40!gray,dashed},
  {black,dotted},
  {brown,dashdotted},
  {green!80!black,dashdotdotted},
  {magenta!80!black,densely dotted}},
    xmin=1, xmax=1.12,
    ymin=-0.003, ymax=1.003,
    ymajorgrids,
    ytick={0,0.2,0.4,0.6,0.8,1.0},
    xlabel={$\tau$},
    ylabel={$P(\tau)$},
,
    legend pos={south east},
    legend style={font=\tiny},
    width=\textwidth
    ]
  \addplot+[mark=none, thick, pink, dashed] coordinates {
    (1.0000,0.3667)
    (1.0000,0.4667)
    (1.0007,0.4667)
    (1.0009,0.5000)
    (1.0014,0.5000)
    (1.0019,0.5333)
    (1.0052,0.5333)
    (1.0056,0.5667)
    (1.0071,0.5667)
    (1.0073,0.6000)
    (1.0093,0.6000)
    (1.0105,0.6333)
    (1.0124,0.6333)
    (1.0125,0.6667)
    (1.0145,0.7000)
    (1.0241,0.7000)
    (1.0243,0.7333)
    (1.0245,0.7667)
    (1.0317,0.8000)
    (1.0370,0.8000)
    (1.0382,0.8333)
    (1.0408,0.8667)
    (1.0409,0.8667)
    (1.0417,0.9000)
    (1.0493,0.9000)
    (1.0513,0.9333)
    (1.0599,0.9667)
    (1.0714,1.0000)
    (1.1250,1.0000)
  };
  \addplot+[mark=none, thick, orange, dashed] coordinates {
    (1.0000,0.3000)
    (1.0000,0.4667)
    (1.0001,0.4667)
    (1.0001,0.5000)
    (1.0004,0.5000)
    (1.0007,0.5333)
    (1.0010,0.5333)
    (1.0012,0.5667)
    (1.0014,0.6000)
    (1.0019,0.6000)
    (1.0019,0.6333)
    (1.0028,0.6333)
    (1.0028,0.6667)
    (1.0056,0.6667)
    (1.0066,0.7000)
    (1.0071,0.7000)
    (1.0071,0.7333)
    (1.0073,0.7333)
    (1.0081,0.7667)
    (1.0117,0.7667)
    (1.0124,0.8000)
    (1.0156,0.8000)
    (1.0156,0.8333)
    (1.0199,0.8333)
    (1.0200,0.8667)
    (1.0222,0.9000)
    (1.0240,0.9333)
    (1.0243,0.9333)
    (1.0245,0.9667)
    (1.0382,0.9667)
    (1.0382,1.0000)
    (1.1250,1.0000)
  };
  \addplot+[mark=none, thick, red!40!gray, dashed] coordinates {
    (1.0000,0.3667)
    (1.0000,0.4333)
    (1.0001,0.4333)
    (1.0001,0.5000)
    (1.0007,0.5000)
    (1.0007,0.5333)
    (1.0014,0.5333)
    (1.0014,0.5667)
    (1.0019,0.5667)
    (1.0019,0.6333)
    (1.0066,0.6333)
    (1.0071,0.6667)
    (1.0081,0.6667)
    (1.0081,0.7000)
    (1.0093,0.7333)
    (1.0105,0.7333)
    (1.0117,0.7667)
    (1.0124,0.7667)
    (1.0124,0.8000)
    (1.0145,0.8000)
    (1.0156,0.8333)
    (1.0222,0.8333)
    (1.0240,0.8667)
    (1.0243,0.8667)
    (1.0245,0.9000)
    (1.0245,0.9333)
    (1.0408,0.9333)
    (1.0409,0.9667)
    (1.0417,0.9667)
    (1.0493,1.0000)
    (1.1250,1.0000)
  };
  \addplot+[mark=none, thick, pink, densely dotted] coordinates {
    (1.0000,0.6667-0.01)
    (1.0000,0.8000-0.01)
    (1.0001,0.8000-0.01)
    (1.0001,0.8667-0.01)
    (1.0002,0.9000-0.01)
    (1.0004,0.9333-0.01)
    (1.0009,0.9333-0.01)
    (1.0010,0.9667-0.01)
    (1.0156,0.9667-0.01)
    (1.0199,1.0000-0.01)
    (1.1250,1.0000-0.01)
  };
  \addplot+[mark=none, thick, orange, densely dotted] coordinates {
    (1.0000,0.6667)
    (1.0000,0.8333)
    (1.0001,0.8333)
    (1.0001,0.9000)
    (1.0002,0.9333)
    (1.0009,0.9333)
    (1.0010,0.9667)
    (1.0156,0.9667)
    (1.0199,1.0000)
    (1.1250,1.0000)
  };
  \addplot+[mark=none, thick, red!40!gray, densely dotted] coordinates {
    (1.0000,0.4667)
    (1.0000,0.7000)
    (1.0001,0.7333)
    (1.0001,0.7667)
    (1.0019,0.7667)
    (1.0028,0.8000)
    (1.0052,0.8333)
    (1.0056,0.8333)
    (1.0066,0.8667)
    (1.0145,0.8667)
    (1.0156,0.9000)
    (1.0240,0.9000)
    (1.0241,0.9333)
    (1.0243,0.9333)
    (1.0245,0.9667)
    (1.0317,0.9667)
    (1.0370,1.0000)
    (1.1250,1.0000)
  };
  \draw node[right,draw,align=left] {$L=1$\\};
  \end{axis}
\end{tikzpicture}
            \end{center}
        \end{minipage}
        \hfill
        \begin{minipage}{0.475\textwidth}
            \begin{center}
                 \begin{tikzpicture}
  \begin{axis}[const plot,
  cycle list={
  {blue,solid},
  {red!40!gray,dashed},
  {black,dotted},
  {brown,dashdotted},
  {green!80!black,dashdotdotted},
  {magenta!80!black,densely dotted}},
    xmin=1, xmax=1.14,
    ymin=-0.003, ymax=1.003,
    ymajorgrids,
    ytick={0,0.2,0.4,0.6,0.8,1.0},
    xlabel={$\tau$},
    ylabel={$P(\tau)$},
,
    legend pos={south east},
    legend style={font=\tiny},
    width=\textwidth
    ]
  \addplot+[mark=none, thick, pink, dashed] coordinates {
    (1.0000,0.2667)
    (1.0000,0.3333)
    (1.0001,0.3333)
    (1.0001,0.3667)
    (1.0007,0.3667)
    (1.0007,0.4000)
    (1.0010,0.4000)
    (1.0012,0.4333)
    (1.0014,0.4667)
    (1.0015,0.4667)
    (1.0016,0.5000)
    (1.0028,0.5000)
    (1.0029,0.5333)
    (1.0031,0.5667)
    (1.0046,0.5667)
    (1.0053,0.6000)
    (1.0063,0.6000)
    (1.0063,0.6333)
    (1.0072,0.6333)
    (1.0074,0.6667)
    (1.0136,0.6667)
    (1.0146,0.7000)
    (1.0164,0.7000)
    (1.0164,0.7333)
    (1.0175,0.7667)
    (1.0182,0.8000)
    (1.0222,0.8000)
    (1.0230,0.8333)
    (1.0275,0.8667)
    (1.0303,0.9000)
    (1.0619,0.9000)
    (1.0686,0.9333)
    (1.0691,0.9667)
    (1.0725,1.0000)
    (1.1432,1.0000)
  };
  \addplot+[mark=none, thick, orange, dashed] coordinates {
    (1.0000,0.2000)
    (1.0000,0.2667)
    (1.0001,0.2667)
    (1.0001,0.3333)
    (1.0005,0.3333)
    (1.0005,0.3667)
    (1.0007,0.3667)
    (1.0007,0.4000)
    (1.0008,0.4333)
    (1.0010,0.4667)
    (1.0012,0.4667)
    (1.0012,0.5000)
    (1.0017,0.5000)
    (1.0019,0.5333)
    (1.0024,0.5333)
    (1.0025,0.5667)
    (1.0028,0.6000)
    (1.0031,0.6000)
    (1.0034,0.6333)
    (1.0038,0.6333)
    (1.0039,0.6667)
    (1.0040,0.7000)
    (1.0041,0.7000)
    (1.0043,0.7333)
    (1.0046,0.7333)
    (1.0046,0.7667)
    (1.0053,0.8000)
    (1.0131,0.8000)
    (1.0132,0.8333)
    (1.0150,0.8333)
    (1.0150,0.8667)
    (1.0316,0.8667)
    (1.0329,0.9000)
    (1.0370,0.9333)
    (1.0554,0.9667)
    (1.0686,0.9667)
    (1.0691,1.0000)
    (1.1432,1.0000)
  };
  \addplot+[mark=none, thick, red!40!gray, dashed] coordinates {
    (1.0000,0.0667)
    (1.0000,0.1667)
    (1.0001,0.2000)
    (1.0001,0.2333)
    (1.0004,0.2333)
    (1.0005,0.2667)
    (1.0005,0.3000)
    (1.0007,0.3333)
    (1.0016,0.3333)
    (1.0017,0.3667)
    (1.0019,0.3667)
    (1.0020,0.4000)
    (1.0028,0.4000)
    (1.0028,0.4333)
    (1.0031,0.4333)
    (1.0031,0.4667)
    (1.0034,0.4667)
    (1.0038,0.5000)
    (1.0046,0.5000)
    (1.0053,0.5333)
    (1.0063,0.5667)
    (1.0063,0.6000)
    (1.0072,0.6000)
    (1.0072,0.6333)
    (1.0076,0.6333)
    (1.0120,0.6667)
    (1.0132,0.6667)
    (1.0136,0.7000)
    (1.0146,0.7000)
    (1.0150,0.7333)
    (1.0157,0.7667)
    (1.0164,0.8000)
    (1.0175,0.8000)
    (1.0175,0.8333)
    (1.0182,0.8667)
    (1.0370,0.8667)
    (1.0370,0.9000)
    (1.0686,0.9000)
    (1.0691,0.9333)
    (1.0725,0.9333)
    (1.0725,0.9667)
    (1.0888,1.0000)
    (1.1432,1.0000)
  };
  \addplot+[mark=none, thick, pink, densely dotted] coordinates {
    (1.0000,0.6000-0.01)
    (1.0000,0.7333-0.01)
    (1.0001,0.7333-0.01)
    (1.0002,0.7667-0.01)
    (1.0003,0.7667-0.01)
    (1.0004,0.8000-0.01)
    (1.0007,0.8000-0.01)
    (1.0007,0.8333-0.01)
    (1.0008,0.8333-0.01)
    (1.0008,0.8667-0.01)
    (1.0020,0.8667-0.01)
    (1.0024,0.9000-0.01)
    (1.0120,0.9000-0.01)
    (1.0131,0.9333-0.01)
    (1.0182,0.9333-0.01)
    (1.0222,0.9667-0.01)
    (1.0305,0.9667-0.01)
    (1.0316,1.0000-0.01)
    (1.1432,1.0000-0.01)
  };
  \addplot+[mark=none, thick, orange, densely dotted] coordinates {
    (1.0000,0.6000)
    (1.0000,0.7333)
    (1.0001,0.7333)
    (1.0002,0.7667)
    (1.0003,0.7667)
    (1.0004,0.8000)
    (1.0007,0.8000)
    (1.0007,0.8333)
    (1.0008,0.8333)
    (1.0008,0.8667)
    (1.0020,0.8667)
    (1.0024,0.9000)
    (1.0120,0.9000)
    (1.0131,0.9333)
    (1.0182,0.9333)
    (1.0222,0.9667)
    (1.0303,0.9667)
    (1.0305,1.0000)
    (1.1432,1.0000)
  };
  \addplot+[mark=none, thick, red!40!gray, densely dotted] coordinates {
    (1.0000,0.4333)
    (1.0000,0.5667)
    (1.0001,0.5667)
    (1.0001,0.6000)
    (1.0002,0.6000)
    (1.0003,0.6333)
    (1.0004,0.6333)
    (1.0004,0.6667)
    (1.0014,0.6667)
    (1.0015,0.7000)
    (1.0015,0.7333)
    (1.0017,0.7333)
    (1.0017,0.7667)
    (1.0040,0.7667)
    (1.0041,0.8000)
    (1.0041,0.8333)
    (1.0043,0.8333)
    (1.0046,0.8667)
    (1.0063,0.8667)
    (1.0072,0.9000)
    (1.0074,0.9000)
    (1.0076,0.9333)
    (1.0120,0.9667)
    (1.0554,0.9667)
    (1.0619,1.0000)
    (1.1432,1.0000)
  };
  \draw node[right,draw,align=left] {$L=2$\\};
  \end{axis}
\end{tikzpicture}
            \end{center}
        \end{minipage}
    \end{center}
    \hfill
    \begin{center}
        \begin{minipage}{0.475\textwidth}
            \begin{center}
                 \begin{tikzpicture}
  \begin{axis}[const plot,
  cycle list={
  {blue,solid},
  {red!40!gray,dashed},
  {black,dotted},
  {brown,dashdotted},
  {green!80!black,dashdotdotted},
  {magenta!80!black,densely dotted}},
    xmin=1, xmax=1.20,
    ymin=-0.003, ymax=1.003,
    ymajorgrids,
    ytick={0,0.2,0.4,0.6,0.8,1.0},
    xlabel={$\tau$},
    ylabel={$P(\tau)$},
,
    legend pos={south east},
    legend style={font=\tiny},
    width=\textwidth
    ]
  \addplot+[mark=none, thick, pink, dashed] coordinates {
    (1.0000,0.1000)
    (1.0000,0.1333)
    (1.0001,0.1333)
    (1.0001,0.1667)
    (1.0003,0.1667)
    (1.0003,0.2000)
    (1.0005,0.2000)
    (1.0006,0.2333)
    (1.0006,0.2667)
    (1.0010,0.2667)
    (1.0011,0.3000)
    (1.0011,0.3333)
    (1.0012,0.3667)
    (1.0012,0.4000)
    (1.0018,0.4000)
    (1.0019,0.4333)
    (1.0023,0.4667)
    (1.0026,0.4667)
    (1.0028,0.5000)
    (1.0029,0.5333)
    (1.0031,0.5333)
    (1.0031,0.5667)
    (1.0040,0.5667)
    (1.0040,0.6000)
    (1.0047,0.6000)
    (1.0049,0.6333)
    (1.0051,0.6333)
    (1.0060,0.6667)
    (1.0077,0.6667)
    (1.0080,0.7000)
    (1.0083,0.7000)
    (1.0085,0.7333)
    (1.0105,0.7333)
    (1.0120,0.7667)
    (1.0133,0.8000)
    (1.0185,0.8333)
    (1.0213,0.8333)
    (1.0219,0.8667)
    (1.0224,0.8667)
    (1.0241,0.9000)
    (1.0280,0.9333)
    (1.0287,0.9667)
    (1.0678,0.9667)
    (1.0717,1.0000)
    (1.1977,1.0000)
  };
  \addplot+[mark=none, thick, orange, dashed] coordinates {
    (1.0000,0.1000)
    (1.0000,0.2000)
    (1.0001,0.2000)
    (1.0002,0.2333)
    (1.0002,0.2667)
    (1.0003,0.2667)
    (1.0003,0.3333)
    (1.0004,0.3333)
    (1.0004,0.3667)
    (1.0005,0.3667)
    (1.0005,0.4000)
    (1.0006,0.4000)
    (1.0006,0.4333)
    (1.0007,0.4333)
    (1.0007,0.5000)
    (1.0009,0.5000)
    (1.0009,0.5333)
    (1.0011,0.5333)
    (1.0011,0.5667)
    (1.0013,0.5667)
    (1.0013,0.6000)
    (1.0016,0.6333)
    (1.0017,0.6667)
    (1.0018,0.7000)
    (1.0029,0.7000)
    (1.0030,0.7333)
    (1.0031,0.7333)
    (1.0040,0.7667)
    (1.0042,0.8000)
    (1.0085,0.8000)
    (1.0087,0.8333)
    (1.0088,0.8333)
    (1.0090,0.8667)
    (1.0098,0.8667)
    (1.0105,0.9000)
    (1.0211,0.9000)
    (1.0213,0.9333)
    (1.0343,0.9333)
    (1.0373,0.9667)
    (1.0595,0.9667)
    (1.0663,1.0000)
    (1.1977,1.0000)
  };
  \addplot+[mark=none, thick, red!40!gray, dashed] coordinates {
    (1.0000,0.2000)
    (1.0000,0.3000)
    (1.0001,0.3000)
    (1.0001,0.3667)
    (1.0003,0.3667)
    (1.0003,0.4000)
    (1.0005,0.4000)
    (1.0005,0.4333)
    (1.0006,0.4333)
    (1.0007,0.4667)
    (1.0007,0.5333)
    (1.0011,0.5333)
    (1.0011,0.5667)
    (1.0012,0.5667)
    (1.0013,0.6000)
    (1.0016,0.6000)
    (1.0016,0.6333)
    (1.0018,0.6333)
    (1.0018,0.6667)
    (1.0023,0.6667)
    (1.0026,0.7000)
    (1.0042,0.7000)
    (1.0042,0.7333)
    (1.0043,0.7667)
    (1.0046,0.8000)
    (1.0060,0.8000)
    (1.0062,0.8333)
    (1.0077,0.8667)
    (1.0080,0.8667)
    (1.0083,0.9000)
    (1.0087,0.9000)
    (1.0088,0.9333)
    (1.1156,0.9333)
    (1.1156,0.9667)
    (1.1390,0.9667)
    (1.1406,1.0000)
    (1.1977,1.0000)
  };
  \addplot+[mark=none, thick, pink, densely dotted] coordinates {
    (1.0000,0.4000-0.01)
    (1.0000,0.5667-0.01)
    (1.0002,0.6333-0.01)
    (1.0003,0.6667-0.01)
    (1.0005,0.7333-0.01)
    (1.0008,0.7333-0.01)
    (1.0008,0.7667-0.01)
    (1.0010,0.7667-0.01)
    (1.0010,0.8000-0.01)
    (1.0016,0.8000-0.01)
    (1.0016,0.8333-0.01)
    (1.0030,0.8333-0.01)
    (1.0031,0.8667-0.01)
    (1.0077,0.8667-0.01)
    (1.0077,0.9000-0.01)
    (1.0185,0.9000-0.01)
    (1.0211,0.9333-0.01)
    (1.0467,0.9333-0.01)
    (1.0470,0.9667-0.01)
    (1.0663,0.9667-0.01)
    (1.0678,1.0000-0.01)
    (1.1977,1.0000-0.01)
  };
  \addplot+[mark=none, thick, orange, densely dotted] coordinates {
    (1.0000,0.3667)
    (1.0000,0.6000)
    (1.0001,0.6333)
    (1.0002,0.6667)
    (1.0003,0.6667)
    (1.0005,0.7333)
    (1.0007,0.7333)
    (1.0008,0.7667)
    (1.0009,0.7667)
    (1.0010,0.8000)
    (1.0016,0.8000)
    (1.0016,0.8333)
    (1.0030,0.8333)
    (1.0031,0.8667)
    (1.0077,0.8667)
    (1.0077,0.9000)
    (1.0219,0.9000)
    (1.0224,0.9333)
    (1.0373,0.9333)
    (1.0467,0.9667)
    (1.0470,0.9667)
    (1.0595,1.0000)
    (1.1977,1.0000)
  };
  \addplot+[mark=none, thick, red!40!gray, densely dotted] coordinates {
    (1.0000,0.4333)
    (1.0000,0.6000)
    (1.0002,0.6000)
    (1.0002,0.6333)
    (1.0007,0.6333)
    (1.0007,0.6667)
    (1.0008,0.6667)
    (1.0009,0.7000)
    (1.0017,0.7000)
    (1.0017,0.7333)
    (1.0023,0.7333)
    (1.0023,0.7667)
    (1.0046,0.7667)
    (1.0047,0.8000)
    (1.0049,0.8000)
    (1.0051,0.8333)
    (1.0090,0.8333)
    (1.0095,0.8667)
    (1.0098,0.9000)
    (1.0287,0.9000)
    (1.0343,0.9333)
    (1.0717,0.9333)
    (1.1156,0.9667)
    (1.1390,1.0000)
    (1.1977,1.0000)
  };
  \draw node[right,draw,align=left] {$L=4$\\};
  \end{axis}
\end{tikzpicture}
            \end{center}
        \end{minipage}
        \hfill
        \begin{minipage}{0.475\textwidth}
            \begin{center}
                 \begin{tikzpicture}
  \begin{axis}[const plot,
  cycle list={
  {blue,solid},
  {red!40!gray,dashed},
  {black,dotted},
  {brown,dashdotted},
  {green!80!black,dashdotdotted},
  {magenta!80!black,densely dotted}},
    xmin=1, xmax=1.25,
    ymin=-0.003, ymax=1.003,
    ymajorgrids,
    ytick={0,0.2,0.4,0.6,0.8,1.0},
    xlabel={$\tau$},
    ylabel={$P(\tau)$},
,
    legend pos={south east},
    legend style={font=\tiny},
    width=\textwidth
    ]
  \addplot+[mark=none, thick, pink, dashed] coordinates {
    (1.0000,0.1667)
    (1.0000,0.2333)
    (1.0001,0.2667)
    (1.0001,0.4667)
    (1.0002,0.4667)
    (1.0002,0.5000)
    (1.0003,0.5000)
    (1.0005,0.5333)
    (1.0007,0.5333)
    (1.0008,0.5667)
    (1.0013,0.5667)
    (1.0014,0.6000)
    (1.0028,0.6000)
    (1.0030,0.6333)
    (1.0037,0.6333)
    (1.0037,0.6667)
    (1.0049,0.7333)
    (1.0057,0.7667)
    (1.0066,0.7667)
    (1.0066,0.8000)
    (1.0069,0.8000)
    (1.0072,0.8333)
    (1.0113,0.8333)
    (1.0134,0.8667)
    (1.0225,0.8667)
    (1.0243,0.9000)
    (1.0349,0.9000)
    (1.0381,0.9333)
    (1.0518,0.9333)
    (1.0586,0.9667)
    (1.1006,0.9667)
    (1.1072,1.0000)
    (1.9544,1.0000)
  };
  \addlegendentry{BIN2}
  \addplot+[mark=none, thick, orange, dashed] coordinates {
    (1.0000,0.2000)
    (1.0000,0.4000)
    (1.0001,0.4000)
    (1.0001,0.5667)
    (1.0003,0.6333)
    (1.0006,0.6333)
    (1.0007,0.6667)
    (1.0011,0.6667)
    (1.0013,0.7000)
    (1.0015,0.7000)
    (1.0016,0.7333)
    (1.0034,0.7333)
    (1.0035,0.7667)
    (1.0072,0.7667)
    (1.0076,0.8000)
    (1.0093,0.8000)
    (1.0107,0.8333)
    (1.0134,0.8333)
    (1.0152,0.8667)
    (1.0200,0.8667)
    (1.0218,0.9000)
    (1.0243,0.9000)
    (1.0289,0.9333)
    (1.1072,0.9333)
    (1.1117,0.9667)
    (1.9544,1.0000)
  };
  \addlegendentry{BIN3}
  \addplot+[mark=none, thick, red!40!gray, dashed] coordinates {
    (1.0000,0.1667)
    (1.0000,0.4333)
    (1.0001,0.4333)
    (1.0001,0.5000)
    (1.0002,0.5000)
    (1.0002,0.5333)
    (1.0005,0.5333)
    (1.0006,0.5667)
    (1.0008,0.5667)
    (1.0009,0.6000)
    (1.0009,0.6667)
    (1.0010,0.7000)
    (1.0015,0.7000)
    (1.0015,0.7333)
    (1.0026,0.7333)
    (1.0026,0.7667)
    (1.0030,0.7667)
    (1.0031,0.8000)
    (1.0034,0.8333)
    (1.0057,0.8333)
    (1.0066,0.8667)
    (1.0076,0.8667)
    (1.0091,0.9000)
    (1.0152,0.9000)
    (1.0157,0.9333)
    (1.0218,0.9333)
    (1.0225,0.9667)
    (1.0635,0.9667)
    (1.0977,1.0000)
    (1.9544,1.0000)
  };
  \addlegendentry{HybS}
  \addplot+[mark=none, thick, pink, densely dotted] coordinates {
    (1.0000,0.2333-0.01)
    (1.0000,0.5333-0.01)
    (1.0001,0.5333-0.01)
    (1.0001,0.5667-0.01)
    (1.0003,0.5667-0.01)
    (1.0003,0.6000-0.01)
    (1.0014,0.6000-0.01)
    (1.0015,0.6333-0.01)
    (1.0022,0.6333-0.01)
    (1.0023,0.6667-0.01)
    (1.0025,0.7000-0.01)
    (1.0066,0.7000-0.01)
    (1.0069,0.7333-0.01)
    (1.0112,0.7333-0.01)
    (1.0112,0.7667-0.01)
    (1.0113,0.8000-0.01)
    (1.0157,0.8000-0.01)
    (1.0190,0.8333-0.01)
    (1.0218,0.8333-0.01)
    (1.0218,0.8667-0.01)
    (1.0586,0.8667-0.01)
    (1.0590,0.9000-0.01)
    (1.0635,0.9333-0.01)
    (1.1004,0.9333-0.01)
    (1.1006,0.9667-0.01)
    (1.9544,1.0000-0.01)
  };
  \addlegendentry{T-BIN2}
  \addplot+[mark=none, thick, orange, densely dotted] coordinates {
    (1.0000,0.3333)
    (1.0000,0.5667)
    (1.0001,0.5667)
    (1.0001,0.6000)
    (1.0014,0.6000)
    (1.0015,0.6333)
    (1.0022,0.6333)
    (1.0023,0.6667)
    (1.0025,0.7000)
    (1.0066,0.7000)
    (1.0069,0.7333)
    (1.0107,0.7333)
    (1.0112,0.7667)
    (1.0113,0.7667)
    (1.0113,0.8000)
    (1.0190,0.8000)
    (1.0196,0.8333)
    (1.0218,0.8333)
    (1.0218,0.8667)
    (1.0381,0.8667)
    (1.0518,0.9000)
    (1.0590,0.9000)
    (1.0635,0.9333)
    (1.0977,0.9333)
    (1.1004,0.9667)
    (1.1117,0.9667)
    (1.9544,1.0000)
  };
  \addlegendentry{T-BIN3}
  \addplot+[mark=none, thick, orange, densely dotted] coordinates {
    (1.0000,0.4333)
    (1.0000,0.5667)
    (1.0001,0.5667)
    (1.0001,0.6000)
    (1.0003,0.6000)
    (1.0003,0.6333)
    (1.0010,0.6333)
    (1.0011,0.6667)
    (1.0016,0.6667)
    (1.0017,0.7000)
    (1.0022,0.7333)
    (1.0025,0.7333)
    (1.0026,0.7667)
    (1.0026,0.8000)
    (1.0028,0.8333)
    (1.0035,0.8333)
    (1.0037,0.8667)
    (1.0091,0.8667)
    (1.0093,0.9000)
    (1.0196,0.9000)
    (1.0200,0.9333)
    (1.0289,0.9333)
    (1.0349,0.9667)
    (1.9544,1.0000)
  };
  \addlegendentry{T-HybS}
  \draw node[right,draw,align=left] {$L=6$\\};
  \end{axis}
\end{tikzpicture}
            \end{center}
        \end{minipage}
    \end{center}
    \hfill
    \caption{Performance profiles to dual bounds of separable MIP relaxations on sparse instances. }\label{univar_sparse}
  \vspace*{\floatsep}%
    \begin{center}
        \begin{minipage}{0.475\textwidth}
            \begin{center}
                 \begin{tikzpicture}
  \begin{axis}[const plot,
  cycle list={
  {blue,solid},
  {red!40!gray,dashed},
  {black,dotted},
  {brown,dashdotted},
  {green!80!black,dashdotdotted},
  {magenta!80!black,densely dotted}},
    xmin=1, xmax=1.25,
    ymin=-0.003, ymax=1.003,
    ymajorgrids,
    ytick={0,0.2,0.4,0.6,0.8,1.0},
    xlabel={$\tau$},
    ylabel={$P(\tau)$},
,
    legend pos={south east},
    legend style={font=\tiny},
    width=\textwidth
    ]
  \addplot+[mark=none, thick, pink, dashed] coordinates {
    (1.0000,0.4333)
    (1.0002,0.4333)
    (1.0004,0.4667)
    (1.0071,0.4667)
    (1.0083,0.5000)
    (1.0121,0.5000)
    (1.0122,0.5333)
    (1.0138,0.5667)
    (1.0159,0.5667)
    (1.0160,0.6000)
    (1.0179,0.6000)
    (1.0189,0.6333)
    (1.0219,0.6333)
    (1.0219,0.6667)
    (1.0372,0.6667)
    (1.0376,0.7000)
    (1.0482,0.7000)
    (1.0491,0.7333)
    (1.0504,0.7333)
    (1.0509,0.7667)
    (1.0523,0.7667)
    (1.0526,0.8000)
    (1.0544,0.8000)
    (1.0561,0.8333)
    (1.0718,0.8333)
    (1.0720,0.8667)
    (1.0848,0.8667)
    (1.0893,0.9000)
    (1.0930,0.9000)
    (1.0943,0.9333)
    (1.1255,0.9333)
    (1.1668,0.9667)
    (2.2671,1.0000)
  };
  \addplot+[mark=none, thick, orange, dashed] coordinates {
    (1.0000,0.2333)
    (1.0000,0.2667)
    (1.0004,0.2667)
    (1.0004,0.3000)
    (1.0005,0.3000)
    (1.0005,0.3333)
    (1.0022,0.3333)
    (1.0028,0.3667)
    (1.0033,0.4000)
    (1.0039,0.4333)
    (1.0047,0.4667)
    (1.0050,0.5000)
    (1.0053,0.5000)
    (1.0068,0.5333)
    (1.0205,0.5333)
    (1.0207,0.5667)
    (1.0226,0.5667)
    (1.0231,0.6000)
    (1.0305,0.6000)
    (1.0309,0.6333)
    (1.0317,0.6667)
    (1.0372,0.6667)
    (1.0372,0.7000)
    (1.0433,0.7000)
    (1.0469,0.7333)
    (1.0526,0.7333)
    (1.0544,0.7667)
    (1.0544,0.8000)
    (1.0562,0.8000)
    (1.0562,0.8333)
    (1.0694,0.8333)
    (1.0702,0.8667)
    (1.0967,0.8667)
    (1.1043,0.9000)
    (1.1668,0.9000)
    (1.1840,0.9333)
    (2.2671,1.0000)
  };
  \addplot+[mark=none, thick, red!40!gray, dashed] coordinates {
    (1.0000,0.4333)
    (1.0000,0.4667)
    (1.0050,0.4667)
    (1.0053,0.5000)
    (1.0068,0.5000)
    (1.0070,0.5333)
    (1.0083,0.5333)
    (1.0107,0.5667)
    (1.0121,0.6000)
    (1.0159,0.6000)
    (1.0159,0.6333)
    (1.0160,0.6333)
    (1.0179,0.6667)
    (1.0189,0.6667)
    (1.0205,0.7000)
    (1.0210,0.7000)
    (1.0210,0.7333)
    (1.0286,0.7333)
    (1.0305,0.7667)
    (1.0317,0.7667)
    (1.0325,0.8000)
    (1.0376,0.8000)
    (1.0379,0.8333)
    (1.0702,0.8333)
    (1.0718,0.8667)
    (1.0846,0.8667)
    (1.0848,0.9000)
    (1.0893,0.9000)
    (1.0930,0.9333)
    (2.2671,1.0000)
  };
  \addplot+[mark=none, thick, pink, densely dotted] coordinates {
    (1.0000,0.1667-0.01)
    (1.0000,0.2333-0.01)
    (1.0002,0.2667-0.01)
    (1.0004,0.2667-0.01)
    (1.0006,0.3333-0.01)
    (1.0008,0.3333-0.01)
    (1.0022,0.3667-0.01)
    (1.0219,0.3667-0.01)
    (1.0219,0.4000-0.01)
    (1.0372,0.4000-0.01)
    (1.0372,0.4333-0.01)
    (1.0414,0.4333-0.01)
    (1.0414,0.4667-0.01)
    (1.0415,0.5000-0.01)
    (1.0469,0.5000-0.01)
    (1.0482,0.5333-0.01)
    (1.0492,0.5333-0.01)
    (1.0492,0.5667-0.01)
    (1.0516,0.5667-0.01)
    (1.0522,0.6000-0.01)
    (1.0561,0.6000-0.01)
    (1.0562,0.6333-0.01)
    (1.0720,0.6333-0.01)
    (1.0790,0.6667-0.01)
    (1.0801,0.6667-0.01)
    (1.0817,0.7000-0.01)
    (1.0948,0.7000-0.01)
    (1.0949,0.7333-0.01)
    (1.0967,0.7667-0.01)
    (1.1043,0.7667-0.01)
    (1.1046,0.8000-0.01)
    (1.1101,0.8333-0.01)
    (1.1168,0.8667-0.01)
    (1.1243,0.8667-0.01)
    (1.1255,0.9000-0.01)
    (1.1840,0.9000-0.01)
    (1.2006,0.9333-0.01)
    (1.2041,0.9333-0.01)
    (2.2671,1.0000-0.01)
  };
  \addplot+[mark=none, thick, orange, densely dotted] coordinates {
    (1.0000,0.1667)
    (1.0000,0.2333)
    (1.0002,0.2667)
    (1.0004,0.2667)
    (1.0005,0.3000)
    (1.0006,0.3000)
    (1.0007,0.3333)
    (1.0008,0.3333)
    (1.0022,0.3667)
    (1.0210,0.3667)
    (1.0219,0.4000)
    (1.0325,0.4000)
    (1.0372,0.4333)
    (1.0379,0.4333)
    (1.0414,0.4667)
    (1.0415,0.5000)
    (1.0469,0.5000)
    (1.0482,0.5333)
    (1.0491,0.5333)
    (1.0492,0.5667)
    (1.0522,0.5667)
    (1.0523,0.6000)
    (1.0562,0.6000)
    (1.0579,0.6333)
    (1.0720,0.6333)
    (1.0790,0.6667)
    (1.0801,0.6667)
    (1.0817,0.7000)
    (1.0943,0.7000)
    (1.0948,0.7333)
    (1.0949,0.7333)
    (1.0967,0.7667)
    (1.1046,0.7667)
    (1.1046,0.8000)
    (1.1101,0.8333)
    (1.1168,0.8333)
    (1.1168,0.8667)
    (1.1243,0.8667)
    (1.1255,0.9000)
    (1.2006,0.9000)
    (1.2041,0.9333)
    (2.2671,1.0000)
  };
  \addplot+[mark=none, thick, red!40!gray, densely dotted] coordinates {
    (1.0000,0.3667)
    (1.0000,0.4000)
    (1.0007,0.4000)
    (1.0008,0.4333)
    (1.0070,0.4333)
    (1.0071,0.4667)
    (1.0138,0.4667)
    (1.0139,0.5000)
    (1.0151,0.5333)
    (1.0159,0.5667)
    (1.0207,0.5667)
    (1.0210,0.6000)
    (1.0219,0.6000)
    (1.0222,0.6333)
    (1.0226,0.6667)
    (1.0231,0.6667)
    (1.0286,0.7000)
    (1.0415,0.7000)
    (1.0433,0.7333)
    (1.0492,0.7333)
    (1.0504,0.7667)
    (1.0509,0.7667)
    (1.0515,0.8000)
    (1.0516,0.8333)
    (1.0579,0.8333)
    (1.0694,0.8667)
    (1.0790,0.8667)
    (1.0801,0.9000)
    (1.0817,0.9000)
    (1.0833,0.9333)
    (1.0846,0.9667)
    (1.1168,0.9667)
    (1.1243,1.0000)
    (2.2671,1.0000)
  };
  \draw node[right,draw,align=left] {$L=1$\\};
  \end{axis}
\end{tikzpicture}
            \end{center}
        \end{minipage}
        \hfill
        \begin{minipage}{0.475\textwidth}
            \begin{center}
                 \begin{tikzpicture}
  \begin{axis}[const plot,
  cycle list={
  {blue,solid},
  {red!40!gray,dashed},
  {black,dotted},
  {brown,dashdotted},
  {green!80!black,dashdotdotted},
  {magenta!80!black,densely dotted}},
    xmin=1, xmax=1.25,
    ymin=-0.003, ymax=1.003,
    ymajorgrids,
    ytick={0,0.2,0.4,0.6,0.8,1.0},
    xlabel={$\tau$},
    ylabel={$P(\tau)$},
,
    legend pos={south east},
    legend style={font=\tiny},
    width=\textwidth
    ]
  \addplot+[mark=none, thick, pink, dashed] coordinates {
    (1.0000,0.3333)
    (1.0000,0.3667)
    (1.0013,0.3667)
    (1.0020,0.4000)
    (1.0125,0.4000)
    (1.0137,0.4333)
    (1.0398,0.4333)
    (1.0453,0.4667)
    (1.0457,0.5000)
    (1.0472,0.5000)
    (1.0491,0.5333)
    (1.0504,0.5667)
    (1.0532,0.5667)
    (1.0539,0.6000)
    (1.0599,0.6000)
    (1.0620,0.6333)
    (1.0694,0.6333)
    (1.0719,0.6667)
    (1.0729,0.7000)
    (1.0736,0.7000)
    (1.0737,0.7333)
    (1.0966,0.7333)
    (1.1027,0.7667)
    (1.1053,0.8000)
    (1.1477,0.8000)
    (1.1516,0.8333)
    (1.1581,0.8333)
    (1.1585,0.8667)
    (1.1703,0.8667)
    (1.1743,0.9000)
    (1.2067,0.9000)
    (1.2121,0.9333)
    (1.2199,0.9667)
    (2.0500,1.0000)
  };
  \addplot+[mark=none, thick, orange, dashed] coordinates {
    (1.0000,0.2667)
    (1.0020,0.2667)
    (1.0022,0.3000)
    (1.0025,0.3000)
    (1.0027,0.3333)
    (1.0174,0.3333)
    (1.0201,0.3667)
    (1.0211,0.3667)
    (1.0225,0.4000)
    (1.0250,0.4000)
    (1.0254,0.4333)
    (1.0273,0.4333)
    (1.0275,0.4667)
    (1.0290,0.5000)
    (1.0356,0.5333)
    (1.0379,0.5333)
    (1.0398,0.5667)
    (1.0530,0.5667)
    (1.0532,0.6000)
    (1.0646,0.6000)
    (1.0694,0.6333)
    (1.0733,0.6333)
    (1.0736,0.6667)
    (1.0858,0.6667)
    (1.0966,0.7000)
    (1.1162,0.7000)
    (1.1193,0.7333)
    (1.1279,0.7333)
    (1.1325,0.7667)
    (1.1359,0.8000)
    (1.1427,0.8000)
    (1.1477,0.8333)
    (1.1516,0.8333)
    (1.1540,0.8667)
    (1.1743,0.8667)
    (1.1762,0.9000)
    (1.2199,0.9000)
    (1.2303,0.9333)
    (2.0500,1.0000)
  };
  \addplot+[mark=none, thick, red!40!gray, dashed] coordinates {
    (1.0000,0.5333)
    (1.0000,0.6000)
    (1.0012,0.6000)
    (1.0013,0.6333)
    (1.0022,0.6333)
    (1.0022,0.6667)
    (1.0024,0.7000)
    (1.0025,0.7333)
    (1.0029,0.7333)
    (1.0031,0.7667)
    (1.0048,0.8000)
    (1.0071,0.8000)
    (1.0113,0.8333)
    (1.0174,0.8333)
    (1.0174,0.8667)
    (1.0225,0.8667)
    (1.0250,0.9000)
    (1.0511,0.9000)
    (1.0530,0.9333)
    (1.1838,0.9333)
    (1.2007,0.9667)
    (1.2067,1.0000)
    (2.0500,1.0000)
  };
  \addplot+[mark=none, thick, pink, densely dotted] coordinates {
    (1.0000,0.2667-0.01)
    (1.0000,0.3000-0.01)
    (1.0254,0.3000-0.01)
    (1.0269,0.3333-0.01)
    (1.0360,0.3333-0.01)
    (1.0360,0.3667-0.01)
    (1.0379,0.4000-0.01)
    (1.0457,0.4000-0.01)
    (1.0472,0.4333-0.01)
    (1.0504,0.4333-0.01)
    (1.0510,0.4667-0.01)
    (1.0571,0.4667-0.01)
    (1.0599,0.5000-0.01)
    (1.0620,0.5000-0.01)
    (1.0646,0.5333-0.01)
    (1.0733,0.5333-0.01)
    (1.0733,0.5667-0.01)
    (1.0737,0.5667-0.01)
    (1.0858,0.6000-0.01)
    (1.1053,0.6000-0.01)
    (1.1077,0.6333-0.01)
    (1.1110,0.6333-0.01)
    (1.1137,0.6667-0.01)
    (1.1162,0.7000-0.01)
    (1.1193,0.7000-0.01)
    (1.1234,0.7333-0.01)
    (1.1279,0.7667-0.01)
    (1.1359,0.7667-0.01)
    (1.1427,0.8000-0.01)
    (1.1540,0.8000-0.01)
    (1.1554,0.8333-0.01)
    (1.1585,0.8333-0.01)
    (1.1702,0.8667-0.01)
    (1.1762,0.8667-0.01)
    (1.1835,0.9000-0.01)
    (1.2303,0.9000-0.01)
    (1.2500,1.0000-0.01)
  };
  \addplot+[mark=none, thick, orange, densely dotted] coordinates {
    (1.0000,0.2333)
    (1.0000,0.2667)
    (1.0001,0.3000)
    (1.0269,0.3000)
    (1.0270,0.3333)
    (1.0356,0.3333)
    (1.0360,0.3667)
    (1.0379,0.4000)
    (1.0457,0.4000)
    (1.0472,0.4333)
    (1.0510,0.4333)
    (1.0511,0.4667)
    (1.0571,0.4667)
    (1.0599,0.5000)
    (1.0620,0.5000)
    (1.0646,0.5333)
    (1.0729,0.5333)
    (1.0733,0.5667)
    (1.0737,0.5667)
    (1.0858,0.6000)
    (1.1077,0.6000)
    (1.1110,0.6333)
    (1.1137,0.6667)
    (1.1162,0.7000)
    (1.1193,0.7000)
    (1.1234,0.7333)
    (1.1279,0.7667)
    (1.1427,0.7667)
    (1.1427,0.8000)
    (1.1554,0.8000)
    (1.1581,0.8333)
    (1.1702,0.8333)
    (1.1703,0.8667)
    (1.1835,0.8667)
    (1.1838,0.9000)
    (2.0500,1.0000)
  };
  \addplot+[mark=none, thick, red!40!gray, densely dotted] coordinates {
    (1.0000,0.5333)
    (1.0000,0.6333)
    (1.0001,0.6333)
    (1.0012,0.6667)
    (1.0022,0.6667)
    (1.0022,0.7000)
    (1.0027,0.7000)
    (1.0029,0.7333)
    (1.0048,0.7333)
    (1.0053,0.7667)
    (1.0071,0.8000)
    (1.0113,0.8000)
    (1.0120,0.8333)
    (1.0125,0.8667)
    (1.0137,0.8667)
    (1.0174,0.9000)
    (1.0201,0.9000)
    (1.0211,0.9333)
    (1.0270,0.9333)
    (1.0273,0.9667)
    (1.0539,0.9667)
    (1.0571,1.0000)
    (2.0500,1.0000)
  };
  \draw node[right,draw,align=left] {$L=2$\\};
  \end{axis}
\end{tikzpicture}
            \end{center}
        \end{minipage}
    \end{center}
    \hfill
    \begin{center}
        \begin{minipage}{0.475\textwidth}
            \begin{center}
                 \begin{tikzpicture}
  \begin{axis}[const plot,
  cycle list={
  {blue,solid},
  {red!40!gray,dashed},
  {black,dotted},
  {brown,dashdotted},
  {green!80!black,dashdotdotted},
  {magenta!80!black,densely dotted}},
    xmin=1, xmax=1.25,
    ymin=-0.003, ymax=1.003,
    ymajorgrids,
    ytick={0,0.2,0.4,0.6,0.8,1.0},
    xlabel={$\tau$},
    ylabel={$P(\tau)$},
,
    legend pos={south east},
    legend style={font=\tiny, fill opacity=0.7, draw=none},
    width=\textwidth
    ]
  \addplot+[mark=none, thick, pink, dashed] coordinates {
    (1.0000,0.2667)
    (1.0006,0.2667)
    (1.0037,0.3000)
    (1.0177,0.3000)
    (1.0189,0.3333)
    (1.0322,0.3333)
    (1.0344,0.3667)
    (1.0372,0.4000)
    (1.0446,0.4000)
    (1.0466,0.4333)
    (1.0467,0.4333)
    (1.0490,0.4667)
    (1.0505,0.4667)
    (1.0511,0.5000)
    (1.0542,0.5000)
    (1.0564,0.5333)
    (1.0685,0.5333)
    (1.0687,0.5667)
    (1.0732,0.5667)
    (1.0754,0.6000)
    (1.0838,0.6333)
    (1.0961,0.6333)
    (1.0977,0.6667)
    (1.0983,0.6667)
    (1.0996,0.7000)
    (1.1139,0.7000)
    (1.1139,0.7333)
    (1.1211,0.7333)
    (1.1216,0.7667)
    (1.1384,0.7667)
    (1.1479,0.8000)
    (1.1508,0.8333)
    (1.1518,0.8333)
    (1.1549,0.8667)
    (1.1775,0.8667)
    (1.1859,0.9000)
    (1.1902,0.9333)
    (1.2356,0.9333)
    (4.2982,1.0000)
  };
  \addplot+[mark=none, thick, orange, dashed] coordinates {
    (1.0000,0.2000)
    (1.0253,0.2000)
    (1.0259,0.2333)
    (1.0383,0.2333)
    (1.0395,0.2667)
    (1.0539,0.2667)
    (1.0542,0.3000)
    (1.0564,0.3000)
    (1.0576,0.3333)
    (1.0588,0.3667)
    (1.0600,0.4000)
    (1.0604,0.4333)
    (1.0626,0.4667)
    (1.0687,0.4667)
    (1.0730,0.5000)
    (1.0880,0.5000)
    (1.0881,0.5333)
    (1.0891,0.5667)
    (1.0915,0.5667)
    (1.0938,0.6000)
    (1.0960,0.6333)
    (1.1131,0.6333)
    (1.1139,0.6667)
    (1.1223,0.6667)
    (1.1257,0.7000)
    (1.1300,0.7000)
    (1.1301,0.7333)
    (1.1349,0.7667)
    (1.1667,0.7667)
    (1.1687,0.8000)
    (1.1756,0.8000)
    (1.1775,0.8333)
    (1.1902,0.8333)
    (1.1978,0.8667)
    (1.2164,0.9000)
    (4.2982,1.0000)
  };
  \addplot+[mark=none, thick, red!40!gray, dashed] coordinates {
    (1.0000,0.8333)
    (1.0000,0.8667)
    (1.0001,0.8667)
    (1.0006,0.9000)
    (1.0037,0.9000)
    (1.0177,0.9333)
    (1.0221,0.9333)
    (1.0245,0.9667)
    (1.0511,0.9667)
    (1.0531,1.0000)
    (4.2982,1.0000)
  };
  \addplot+[mark=none, thick, pink, densely dotted] coordinates {
    (1.0000,0.1667-0.01)
    (1.0001,0.2000-0.01)
    (1.0245,0.2000-0.01)
    (1.0252,0.2333-0.01)
    (1.0286,0.2333-0.01)
    (1.0322,0.2667-0.01)
    (1.0493,0.2667-0.01)
    (1.0493,0.3000-0.01)
    (1.0626,0.3000-0.01)
    (1.0684,0.3333-0.01)
    (1.0685,0.3333-0.01)
    (1.0685,0.3667-0.01)
    (1.0731,0.3667-0.01)
    (1.0732,0.4000-0.01)
    (1.0838,0.4000-0.01)
    (1.0872,0.4333-0.01)
    (1.0879,0.4667-0.01)
    (1.0960,0.4667-0.01)
    (1.0961,0.5000-0.01)
    (1.1210,0.5000-0.01)
    (1.1211,0.5333-0.01)
    (1.1219,0.5333-0.01)
    (1.1223,0.5667-0.01)
    (1.1349,0.5667-0.01)
    (1.1361,0.6000-0.01)
    (1.1384,0.6333-0.01)
    (1.1508,0.6333-0.01)
    (1.1518,0.6667-0.01)
    (1.1549,0.6667-0.01)
    (1.1561,0.7000-0.01)
    (1.1568,0.7333-0.01)
    (1.1622,0.7333-0.01)
    (1.1624,0.7667-0.01)
    (1.1667,0.8000-0.01)
    (1.2260,0.8000-0.01)
    (1.2261,0.8333-0.01)
    (1.2354,0.8667-0.01)
    (1.25,1.0000-0.01)
  };
  \addplot+[mark=none, thick, orange, densely dotted] coordinates {
    (1.0000,0.1667)
    (1.0001,0.2000)
    (1.0252,0.2000)
    (1.0253,0.2333)
    (1.0322,0.2333)
    (1.0322,0.2667)
    (1.0490,0.2667)
    (1.0493,0.3000)
    (1.0684,0.3000)
    (1.0685,0.3333)
    (1.0685,0.3667)
    (1.0730,0.3667)
    (1.0731,0.4000)
    (1.0872,0.4000)
    (1.0872,0.4333)
    (1.0879,0.4333)
    (1.0880,0.4667)
    (1.0960,0.4667)
    (1.0961,0.5000)
    (1.1182,0.5000)
    (1.1210,0.5333)
    (1.1216,0.5333)
    (1.1219,0.5667)
    (1.1349,0.5667)
    (1.1361,0.6000)
    (1.1384,0.6333)
    (1.1508,0.6333)
    (1.1518,0.6667)
    (1.1549,0.6667)
    (1.1561,0.7000)
    (1.1568,0.7333)
    (1.1624,0.7333)
    (1.1667,0.7667)
    (1.1687,0.7667)
    (1.1756,0.8000)
    (1.2164,0.8000)
    (1.2260,0.8333)
    (1.2354,0.8333)
    (1.2356,0.8667)
    (4.2982,1.0000)
  };
  \addplot+[mark=none, thick, red!40!gray, densely dotted] coordinates {
    (1.0000,0.4000)
    (1.0000,0.4333)
    (1.0189,0.4333)
    (1.0221,0.4667)
    (1.0259,0.4667)
    (1.0284,0.5000)
    (1.0286,0.5333)
    (1.0372,0.5333)
    (1.0383,0.5667)
    (1.0395,0.5667)
    (1.0446,0.6000)
    (1.0466,0.6000)
    (1.0467,0.6333)
    (1.0493,0.6333)
    (1.0505,0.6667)
    (1.0531,0.6667)
    (1.0539,0.7000)
    (1.0891,0.7000)
    (1.0915,0.7333)
    (1.0977,0.7333)
    (1.0983,0.7667)
    (1.0996,0.7667)
    (1.1131,0.8000)
    (1.1139,0.8000)
    (1.1182,0.8333)
    (1.1257,0.8333)
    (1.1300,0.8667)
    (1.1568,0.8667)
    (1.1622,0.9000)
    (4.2982,1.0000)
  };
  \draw node[right,draw,align=left] {$L=4$\\};
  \end{axis}
\end{tikzpicture}
            \end{center}
        \end{minipage}
        \hfill
        \begin{minipage}{0.475\textwidth}
            \begin{center}
                 \begin{tikzpicture}
  \begin{axis}[const plot,
  cycle list={
  {blue,solid},
  {red!40!gray,dashed},
  {black,dotted},
  {brown,dashdotted},
  {green!80!black,dashdotdotted},
  {magenta!80!black,densely dotted}},
    xmin=1, xmax=1.25,
    ymin=-0.003, ymax=1.003,
    ymajorgrids,
    ytick={0,0.2,0.4,0.6,0.8,1.0},
    xlabel={$\tau$},
    ylabel={$P(\tau)$},
,
    legend pos={south east},
    legend style={font=\tiny, fill opacity=0.7, draw=none},
    width=\textwidth
    ]
  \addplot+[mark=none, thick, pink, dashed] coordinates {
    (1.0000,0.2667)
    (1.0000,0.3333)
    (1.0001,0.3333)
    (1.0001,0.3667)
    (1.0036,0.4000)
    (1.0244,0.4000)
    (1.0271,0.4333)
    (1.0356,0.4333)
    (1.0394,0.4667)
    (1.0400,0.4667)
    (1.0404,0.5000)
    (1.0423,0.5000)
    (1.0461,0.5333)
    (1.0577,0.5333)
    (1.0584,0.5667)
    (1.0623,0.5667)
    (1.0638,0.6000)
    (1.0659,0.6333)
    (1.0691,0.6667)
    (1.0710,0.6667)
    (1.0715,0.7000)
    (1.0736,0.7333)
    (1.0910,0.7333)
    (1.0918,0.7667)
    (1.0938,0.8000)
    (1.1293,0.8000)
    (1.1325,0.8333)
    (1.1497,0.8333)
    (1.1526,0.8667)
    (1.1744,0.8667)
    (1.1802,0.9000)
    (1.1845,0.9000)
    (1.1863,0.9333)
    (3.2659,1.0000)
  };
  \addlegendentry{BIN2}
  \addplot+[mark=none, thick, orange, dashed] coordinates {
    (1.0000,0.1000)
    (1.0000,0.2000)
    (1.0394,0.2000)
    (1.0400,0.2333)
    (1.0412,0.2333)
    (1.0423,0.2667)
    (1.0461,0.2667)
    (1.0473,0.3000)
    (1.0474,0.3333)
    (1.0481,0.3667)
    (1.0496,0.4000)
    (1.0619,0.4000)
    (1.0622,0.4333)
    (1.0623,0.4667)
    (1.0691,0.4667)
    (1.0710,0.5000)
    (1.0736,0.5000)
    (1.0754,0.5333)
    (1.0840,0.5333)
    (1.0848,0.5667)
    (1.0987,0.5667)
    (1.1003,0.6000)
    (1.1034,0.6000)
    (1.1038,0.6333)
    (1.1041,0.6667)
    (1.1052,0.6667)
    (1.1083,0.7000)
    (1.1217,0.7000)
    (1.1222,0.7333)
    (1.1333,0.7333)
    (1.1392,0.7667)
    (1.1526,0.7667)
    (1.1588,0.8000)
    (1.1609,0.8333)
    (1.1687,0.8667)
    (1.2054,0.8667)
    (1.2163,0.9000)
    (3.2659,1.0000)
  };
  \addlegendentry{BIN3}
  \addplot+[mark=none, thick, red!40!gray, dashed] coordinates {
    (1.0000,0.7667)
    (1.0000,0.8667)
    (1.0036,0.8667)
    (1.0138,0.9000)
    (1.0244,0.9333)
    (1.1392,0.9333)
    (1.1461,0.9667)
    (1.1977,0.9667)
    (1.2037,1.0000)
    (3.2659,1.0000)
  };
  \addlegendentry{HybS}
  \addplot+[mark=none, thick, pink, densely dotted] coordinates {
    (1.0000,0.0333-0.01)
    (1.0000,0.1667-0.01)
    (1.0001,0.2000-0.01)
    (1.0404,0.2000-0.01)
    (1.0412,0.2333-0.01)
    (1.0616,0.2333-0.01)
    (1.0619,0.2667-0.01)
    (1.0754,0.2667-0.01)
    (1.0786,0.3000-0.01)
    (1.0848,0.3000-0.01)
    (1.0851,0.3333-0.01)
    (1.0874,0.3333-0.01)
    (1.0878,0.3667-0.01)
    (1.0903,0.4000-0.01)
    (1.1003,0.4000-0.01)
    (1.1033,0.4333-0.01)
    (1.1034,0.4667-0.01)
    (1.1041,0.4667-0.01)
    (1.1044,0.5000-0.01)
    (1.1052,0.5333-0.01)
    (1.1104,0.5333-0.01)
    (1.1105,0.5667-0.01)
    (1.1127,0.5667-0.01)
    (1.1130,0.6000-0.01)
    (1.1149,0.6333-0.01)
    (1.1217,0.6667-0.01)
    (1.1222,0.6667-0.01)
    (1.1247,0.7000-0.01)
    (1.1293,0.7333-0.01)
    (1.1325,0.7333-0.01)
    (1.1333,0.7667-0.01)
    (1.1845,0.7667-0.01)
    (1.1845,0.8000-0.01)
    (1.1863,0.8000-0.01)
    (1.1956,0.8333-0.01)
    (1.2225,0.8333-0.01)
    (1.2226,0.8667-0.01)
    (1.25,1.0000-0.01)
  };
  \addlegendentry{T-BIN2}
  \addplot+[mark=none, thick, orange, densely dotted] coordinates {
    (1.0000,0.0333)
    (1.0000,0.1667)
    (1.0001,0.2000)
    (1.0271,0.2000)
    (1.0352,0.2333)
    (1.0584,0.2333)
    (1.0616,0.2667)
    (1.0786,0.2667)
    (1.0786,0.3000)
    (1.0848,0.3000)
    (1.0851,0.3333)
    (1.0874,0.3667)
    (1.0878,0.3667)
    (1.0903,0.4000)
    (1.1033,0.4000)
    (1.1033,0.4333)
    (1.1034,0.4667)
    (1.1041,0.4667)
    (1.1044,0.5000)
    (1.1052,0.5333)
    (1.1083,0.5333)
    (1.1104,0.5667)
    (1.1127,0.5667)
    (1.1130,0.6000)
    (1.1149,0.6333)
    (1.1217,0.6667)
    (1.1222,0.6667)
    (1.1247,0.7000)
    (1.1293,0.7333)
    (1.1325,0.7333)
    (1.1333,0.7667)
    (1.1802,0.7667)
    (1.1845,0.8000)
    (1.1863,0.8000)
    (1.1956,0.8333)
    (1.2225,0.8333)
    (1.2226,0.8667)
    (3.2659,1.0000)
  };
  \addlegendentry{T-BIN3}
  \addplot+[mark=none, thick, red!40!gray, densely dotted] coordinates {
    (1.0000,0.1000)
    (1.0000,0.3000)
    (1.0352,0.3000)
    (1.0356,0.3333)
    (1.0496,0.3333)
    (1.0511,0.3667)
    (1.0577,0.4000)
    (1.0786,0.4000)
    (1.0824,0.4333)
    (1.0840,0.4667)
    (1.0903,0.4667)
    (1.0910,0.5000)
    (1.0938,0.5000)
    (1.0987,0.5333)
    (1.1105,0.5333)
    (1.1127,0.5667)
    (1.1461,0.5667)
    (1.1497,0.6000)
    (1.1687,0.6000)
    (1.1712,0.6333)
    (1.1744,0.6667)
    (1.1956,0.6667)
    (1.1977,0.7000)
    (1.2037,0.7000)
    (1.2054,0.7333)
    (1.2163,0.7333)
    (1.2225,0.7667)
    (1.2226,0.7667)
    (1.2373,0.8000)
    (1.2437,0.8333)
    (3.2659,1.0000)
  };
  \addlegendentry{T-HybS}
  \draw node[right,draw,align=left] {$L=6$\\};
  \end{axis}
\end{tikzpicture}
            \end{center}
        \end{minipage}
    \end{center}
    \hfill
    \caption{Performance profiles to dual bounds of separable MIP relaxations on dense instances.}\label{univar_dense}
\end{figure}

For dense instances, the picture is much clearer. Here, \HybS and T-\HybS are considerably better than \morsireform, \zellmerreform, and their tightened variants, particularly from $L=2$ to $L=6$; see \cref{univar_dense}.
With $L=2$, \HybS and T-\HybS are able to compute dual bounds that are within a factor $1.05$ of the overall best bounds for nearly all instances.
All other methods require a corresponding factor of more than $1.2$.
For $L=4$ and $L=6$, we obtain by \HybS the best overall bounds for roughly 90\% of all instances, while all other approaches provide the best bounds for less than 50\% of the instances.
With the exception of $L=2$, where tightening \HybS results in slightly better dual bounds, the tightened versions of the separable MIP relaxations attain significantly weaker dual bounds than their corresponding counterparts.

\subsubsection{Feasible Solutions \label{sec:results:feas}}

Finally, we highlight some important results on primal bounds.
\textcolor{black}{\Cref{table_feasible_univar_all} gives the number of feasible solutions that the separable MIP methods were able to find in combination with IPOPT as the local QCQP solver.
The quality of the corresponding solutions is computed in terms of relative optimality gaps, where we used the best-known dual bounds from the literature or computed them elsewhere using Gurobi and our methods.}
Regarding the ability to find feasible solutions, 
all separable methods perform quite similarly and find more feasible solutions with higher $L$ values.
With $L=6$, \HybS in combination with IPOPT is able to compute feasible solutions \textcolor{black}{to the original MIQCQP} for $51$ out of $60$ benchmark instances, 43 of which have a relative optimality gap below 1\% and 40 of which are even globally optimal, i.e., which have a gap below $0.01$\%.
\textcolor{black}{All in all, \HybS offers a slight advantage in terms of finding feasible solutions when coupled with IPOPT.}

\begin{table}
    \caption{Number of feasible solutions found with different relative optimality gaps. The first number corresponds to a gap of less than 0.01\%, the second to a gap of less than 1\% and the third number indicates the number of feasible solutions.}
    \label{table_feasible_univar_all}
    \centering
    \begin{tabular} {l r  r  r  r r r}
        \toprule
        {\qquad} & {\qquad \morsireform} & {\qquad T-\morsireform} & {\qquad \ \ \zellmerreform} & {\qquad T-\zellmerreform} & {\qquad \ \  \HybS}  & {\qquad T-\HybS}\\ 
        \midrule
        L=1 & 23/29/39 & 24/31/38 & 29/\textbf{33}/40 & 24/31/38 & \textbf{31}/\textbf{33}/40 & 30/\textbf{33}/\textbf{43} \\
        L=2 & 28/32/39 & \textbf{33}/33/38 & 32/35/43 & \textbf{33}/33/38 & 32/\textbf{37}/\textbf{44} & 32/36/42 \\
        L=4 & 39/42/\textbf{51} & 35/40/48 & 38/41/49 & 35/40/48 & \textbf{41}/\textbf{44}/50 & 38/\textbf{44}/49 \\
        L=6 & \textbf{40}/\textbf{43}/46 & 37/42/45 & 39/42/47 & 37/42/46 & \textbf{40}/\textbf{43}/\textbf{51} & 38/\textbf{43}/50 \\
        \bottomrule
    \end{tabular}
\end{table}

\subsection{Discussion \label{sec:discussion}}

All in all, the clear winner among the separable methods is \HybS. 
For large $L$ values, \HybS provides the best bounds, the shortest run times, and finds in combination with IPOPT the most and best feasible solutions for the original MIQCQP instances. 
This advantage is especially noticeable on dense instances and consistent with the theoretical findings from \cref{sec:theory}. 
While in \HybS the number of binary variables increases linearly in the number of variable products, it increases quadratically in \morsireform and \zellmerreform.
\textcolor{black}{On the one hand, this results in short run times for the \HybS models or better bounds after certain run time limits. 
On the other hand, with significantly fewer binaries we are more likely to find feasible solutions for the MIP relaxations.
As the accuracy increases, the MIP relaxations lead to solutions with smaller and smaller MIQCQP feasibility violations. 
Therefore, at higher $L$ values, we are more likely to find an MIQCQP feasible solution using the heuristic IPOPT approach, which coincides with \Cref{table_feasible_univar_all}.
}
Furthermore, based on the computational results, a tightening of the separable methods is not advisable, except for sparse instances with small $L$ values. 
This is most likely due to the large number of additional constraints that are needed to underestimate~$p_1^2$ and~$p_2^2$; see \cref{tab:full-char}.

In Part II of this work, we revisit the idea of tightening MIP relaxations for the \emph{normalized multiparametric disaggregation technique} (\NMDT) introduced in~\cite{castro2015-nmdt}.
In addition, we perform a comparison of \HybS with \NMDT-based methods and Gurobi as an MIQCQP solver.
To this end, we reuse the results of \HybS from Part I.

\section{Conclusion}

We introduced an enhanced MIP relaxation for \non convex quadratic products of the form $z=xy$, called \emph{hybrid separable} (\HybS).
We showed that \HybS has clear theoretical advantages over its predecessors \morsireform and \zellmerreform, all based on separable reformulation of $xy$ to univariate quadratic terms.
Most importantly, \HybS requires a significantly lower number of binary variables and has a tighter linear programming relaxation. 
In addition to this enhanced MIP relaxation for $z=xy$, we introduced a hereditary sharp MIP relaxation called \emph{sawtooth relaxation} for $z=x^2$ terms, which requires only a logarithmic number of binary variables with respect to the relaxation error.
We combined the sawtooth relaxation and \HybS to obtain MIP relaxations for MIQCQPs.

In a broad computational study, we compared \HybS against its predecessors from the literature, which we again combined with the sawtooth relaxation for univariate quadratic terms.
We showed that \HybS determines far better dual bounds, while also exhibiting shorter run times.
Finally, \HybS is also able to find high-quality solutions to the original quadratic problems when used in conjunction with a primal solution callback function and a local \non linear programming solver.

\bibliographystyle{plain}  
\bibliography{references.bib}

\begin{thebibliography}{10}

\bibitem{ADJIMAN19981137}
C.S. Adjiman, S.~Dallwig, C.A. Floudas, and A.~Neumaier.
\newblock A global optimization method, $\alpha$bb, for general
  twice-differentiable constrained nlps — i. theoretical advances.
\newblock {\em Computers \& Chemical Engineering}, 22(9):1137--1158, 1998.

\bibitem{aigner2020solving}
Kevin-Martin Aigner, Robert Burlacu, Frauke Liers, and Alexander Martin.
\newblock Solving ac optimal power flow with discrete decisions to global
  optimality.
\newblock {\em INFORMS Journal on Computing}, 35(2):458--474, 2023.

\bibitem{Androulakis1995}
Ioannis~P. Androulakis, Costas~D. Maranas, and Christodoulos~A. Floudas.
\newblock $\alpha$bb: A global optimization method for general constrained
  nonconvex problems.
\newblock {\em Journal of Global Optimization}, 7(4):337--363, 1995.

\bibitem{Williams:2006}
Gautam~M Appa, Leonidas Pitsoulis, and H~Paul Williams.
\newblock {\em Handbook on Modelling for Discrete Optimization}, volume~88.
\newblock Springer Science \& Business Media, 2006.

\bibitem{Hager-2021}
Andreas B{\"{a}}rmann, Robert Burlacu, Lukas Hager, and Thomas Kleinert.
\newblock On piecewise linear approximations of bilinear terms: structural
  comparison of univariate and bivariate mixed-integer programming
  formulations.
\newblock {\em J. Glob. Optim.}, 85(4):789--819, 2023.

\bibitem{Beach2020}
Benjamin Beach, Robert Hildebrand, Kimberly Ellis, and Baptiste Lebreton.
\newblock An approximate method for the optimization of long-horizon tank
  blending and scheduling operations.
\newblock {\em Computers {\&} Chemical Engineering}, 141:106839, 2020.

\bibitem{Beach2020-compact}
Benjamin Beach, Robert Hildebrand, and Joey Huchette.
\newblock Compact mixed-integer programming formulations in quadratic
  optimization.
\newblock {\em J. Glob. Optim.}, 84(4):869--912, 2022.

\bibitem{Belotti:2009}
Pietro Belotti, Jon Lee, Leo Liberti, Fran{\c{c}}ois Margot, and Andreas
  W{\"a}chter.
\newblock Branching and bounds tightening techniques for non-convex {MINLP}.
\newblock {\em Optimization Methods \& Software}, 24(4-5):597--634, 2009.

\bibitem{Billionnet2012}
Alain Billionnet, Sourour Elloumi, and Am{\'{e}}lie Lambert.
\newblock Extending the {QCR} method to general mixed-integer programs.
\newblock {\em Mathematical Programming}, 131(1-2):381--401, 2012.

\bibitem{Burlacu-et-al:2020}
Robert Burlacu, Bj{\"o}rn Gei{\ss}ler, and Lars Schewe.
\newblock Solving mixed-integer nonlinear programmes using adaptively refined
  mixed-integer linear programmes.
\newblock {\em Optimization Methods and Software}, 35(1):37--64, 2020.

\bibitem{barmann2022bipartite}
Andreas Bärmann, Alexander Martin, and Oskar Schneider.
\newblock The bipartite boolean quadric polytope with multiple-choice
  constraints, 2022.
\newblock Available at: https://arxiv.org/abs/2009.11674.

\bibitem{Castillo2018}
Pedro A.~Castillo Castillo, Pedro~M. Castro, and Vladimir Mahalec.
\newblock Global optimization of {MIQCPs} with dynamic piecewise relaxations.
\newblock {\em Journal of Global Optimization}, 71(4):691--716, 2018.

\bibitem{castro2015-nmdt}
Pedro~M. Castro.
\newblock Normalized multiparametric disaggregation: an efficient relaxation
  for mixed-integer bilinear problems.
\newblock {\em Journal of Global Optimization}, 64(4):765--784, 2015.

\bibitem{Castro2015-Chem}
Pedro~M. Castro.
\newblock Tightening piecewise {McCormick} relaxations for bilinear problems.
\newblock {\em Computers {\&} Chemical Engineering}, 72:300--311, 2015.

\bibitem{Castro2016}
Pedro~M. Castro.
\newblock Source-based discrete and continuous-time formulations for the crude
  oil pooling problem.
\newblock {\em Computers {\&} Chemical Engineering}, 93:382--401, 2016.

\bibitem{Castro2022}
Pedro~M. Castro, Qi~Liao, and Yongtu Liang.
\newblock Comparison of mixed-integer relaxations with linear and logarithmic
  partitioning schemes for quadratically constrained problems.
\newblock {\em Optimization and Engineering}, 23:717--747, 2022.

\bibitem{Chen2012}
Jieqiu Chen and Samuel Burer.
\newblock Globally solving nonconvex quadratic programming problems via
  completely positive programming.
\newblock {\em Mathematical Programming Computation}, 4(1):33--52, 2012.

\bibitem{NESTA}
Carleton Coffrin, Dan Gordon, and Paul Scott.
\newblock {NESTA}, the {NICTA} energy system test case archive.
\newblock {\em arXiv preprint arXiv:1411.0359}, 2014.

\bibitem{Correa-Posada:2014}
Carlos~M Correa-Posada and Pedro S{\'a}nchez-Mart{\'i}n.
\newblock Gas network optimization: A comparison of piecewise linear models.
\newblock {\em Optimization Online}, 2014.

\bibitem{Dolan2002}
Elizabeth~D Dolan and Jorge~J Mor{\'e}.
\newblock Benchmarking optimization software with performance profiles.
\newblock {\em Mathematical Programming}, 91(2):201--213, 2002.

\bibitem{Dong:2018}
Hongbo Dong.
\newblock Relaxing nonconvex quadratic functions by multiple adaptive diagonal
  perturbations.
\newblock {\em SIAM Journal on Optimization}, 26(3):1962--1985, 2016.

\bibitem{Dong-Luo-2018}
Hongbo Dong and Yunqi Luo.
\newblock Compact disjunctive approximations to nonconvex quadratically
  constrained programs.
\newblock {\em arXiv preprint:1811.08122}, 2018.

\bibitem{Faria-Bagajewicz:2011}
D{\'e}bora~C Faria and Miguel~J Bagajewicz.
\newblock Novel bound contraction procedure for global optimization of bilinear
  {MINLP} problems with applications to water management problems.
\newblock {\em Computers \& Chemical Engineering}, 35(3):446--455, 2011.

\bibitem{qplib}
Fabio Furini, Emiliano Traversi, Pietro Belotti, Antonio Frangioni, Ambros
  Gleixner, Nick Gould, Leo Liberti, Andrea Lodi, Ruth Misener, Hans
  Mittelmann, et~al.
\newblock Qplib: a library of quadratic programming instances.
\newblock {\em Mathematical Programming Computation}, 11(2):237--265, 2019.

\bibitem{Furini:2019}
Fabio Furini, Emiliano Traversi, Pietro Belotti, Antonio Frangioni, Ambros
  Gleixner, Nick Gould, Leo Liberti, Andrea Lodi, Ruth Misener, Hans
  Mittelmann, Nikolaos~V. Sahinidis, Stefan Vigerske, and Angelika Wiegele.
\newblock {QPLIB}: a library of quadratic programming instances.
\newblock {\em Mathematical Programming Computation}, 11(2):237--265, 2019.

\bibitem{Galli2014}
Laura Galli and Adam~N. Letchford.
\newblock A compact variant of the {QCR} method for quadratically constrained
  quadratic 0--1 programs.
\newblock {\em Optimization Letters}, 8(4):1213--1224, 2014.

\bibitem{Geissler:2012}
Bj{\"o}rn Gei{\ss}ler, Alexander Martin, Antonio Morsi, and Lars Schewe.
\newblock Using piecewise linear functions for solving {MINLPs}.
\newblock In {\em Mixed Integer Nonlinear Programming}, pages 287--314.
  Springer, 2012.

\bibitem{gurobi}
{Gurobi Optimization, LLC}.
\newblock {Gurobi Optimizer Reference Manual}, 2022.

\bibitem{Huchette:2018}
Joseph~A. Huchette.
\newblock {\em Advanced mixed-integer programming formulations: methodology,
  computation, and application}.
\newblock PhD thesis, Massachusetts Institute of Technology, 2018.

\bibitem{Joly2003}
M.~Joly and J.M. Pinto.
\newblock Mixed-integer programming techniques for the scheduling of fuel oil
  and asphalt production.
\newblock {\em Chemical Engineering Research and Design}, 81(4):427--447, 2003.

\bibitem{Kolodziej2013b}
Scott~P. Kolodziej, Ignacio~E. Grossmann, Kevin~C. Furman, and Nicolas~W.
  Sawaya.
\newblock A discretization-based approach for the optimization of the
  multiperiod blend scheduling problem.
\newblock {\em Computers {\&} Chemical Engineering}, 53:122--142, 2013.

\bibitem{Kutzer:2020}
Katja Kutzer.
\newblock {\em Using Piecewise Linear Approximation Techniques to Handle
  Bilinear Constraints}.
\newblock PhD thesis, Friedrich-Alexander-Universit\"at Erlangen-N\"urnberg,
  2020.

\bibitem{Linderoth:2005}
Jeff Linderoth.
\newblock A simplicial branch-and-bound algorithm for solving quadratically
  constrained quadratic programs.
\newblock {\em Mathematical Programming}, 103(2):251--282, 2005.

\bibitem{McCormick1976}
Garth~P. McCormick.
\newblock Computability of global solutions to factorable nonconvex programs:
  Part {I} {\textemdash} convex underestimating problems.
\newblock {\em Mathematical Programming}, 10(1):147--175, 1976.

\bibitem{Misener2012}
Ruth Misener and Christodoulos~A. Floudas.
\newblock Global optimization of mixed-integer quadratically-constrained
  quadratic programs ({MIQCQP}) through piecewise-linear and edge-concave
  relaxations.
\newblock {\em Mathematical Programming}, 136(1):155--182, Dec 2012.

\bibitem{Nagarajan:2019}
Harsha Nagarajan, Mowen Lu, Site Wang, Russell Bent, and Kaarthik Sundar.
\newblock An adaptive, multivariate partitioning algorithm for global
  optimization of nonconvex programs.
\newblock {\em Journal of Global Optimization}, 74:639--675, 2019.

\bibitem{Hao:1982}
E.~{Phan-huy-Hao}.
\newblock Quadratically constrained quadratic programming: {S}ome applications
  and a method for solution.
\newblock {\em Zeitschrift f{\"u}r Operations Research}, 26(1):105--119, 1982.

\bibitem{perprof}
Abel~Soares Siqueira, Raniere~Costa da~Silva, and Luiz-Rafael Santos.
\newblock Perprof-py: A python package for performance profile of mathematical
  optimization software.
\newblock {\em Journal of Open Research Software}, 4(1), 2016.

\bibitem{Telgarsky2015}
Matus Telgarsky.
\newblock Representation benefits of deep feedforward networks.
\newblock https://arxiv.org/abs/1509.08101, 2015.

\bibitem{Vielma:2010}
Juan~Pablo Vielma, Shabbir Ahmed, and George Nemhauser.
\newblock Mixed-integer models for nonseparable piecewise-linear optimization:
  {U}nifying framework and extensions.
\newblock {\em Operations Research}, 58(2):303--315, 2010.

\bibitem{ipopt}
Andreas Wachter.
\newblock {\em An interior point algorithm for large-scale nonlinear
  optimization with applications in process engineering}.
\newblock PhD thesis, Carnegie Mellon University, 2002.

\bibitem{Yarotsky-2016}
Dmitry Yarotsky.
\newblock Error bounds for approximations with deep relu networks.
\newblock {\em Neural Networks}, 94:103--114, 2017.

\end{thebibliography}

\appendix

\section{MIP Relaxations on General Intervals}
\label{app:gen-bnds}

In this section, we generalize the MIP relaxations
for $ \gra_{[0, 1]^2}(xy) $ and $ \gra_{[0, 1]}^2(x^2) $ discussed in this article
to general box domains $ (x, y) \in [\xmin, \xmax] \times \in [\ymin, \ymax] $
and $ x \in [\xmin, \xmax] $, where $ \xmin < \xmax $, $ \ymin < \ymax $
and $ \xmin, \xmax, \ymin, \ymax \in \R$.
by giving explicit formulations for general bounds on~$x$ and~$y$.

\subsection{MIP Relaxations for Bivariate Quadratic Equations}
First, we consider MIP relaxations for $ z = xy $
and give an explicit model of \HybS
for general box domains.
We omit the formulation of \morsireform and \zellmerreform here,
as these work analogously to \HybS.

In the \HybS MIP relaxation, in addition to the variables~$x$ and~$y$,
we must also transform the variables~$ p_1 = x + y $ and $ p_2 = x - y $ and their respective bounds.
In the following, the sawtooth modeling $ (x, \zx) \in R^{L,L_1}, (y, \zy) \in R^{L, L_1},
    (p_1, \zpone) \in Q^{L_1}, (p_2, \zptwo) \in Q^{L_1} $
is performed according to \Cref{rem:sawtooth-gbnds}.
HybS \eqref{eq:bin2-bin3} for general box domains then reads as follows:
\begin{equation}
    \label{eq:bin2-bin3-gen}
    \begin{array}{rll}
         p_1 &= x + y\\
         p_2 &= x - y\\
         (x, \zx) &\in R^{L, L_1}\\
         (y, \zy) &\in R^{L, L_1}\\
         (p_1, \zpone) &\in Q^{L_1}\\
         (p_2, \zptwo) &\in Q^{L_1}\\
         \zpone &\ge (\lx + \ly)^2 \F^j(\tfrac{p_1 - \xmin - \ymin}{\lx + \ly}, \bm g^{p_1}) + (\xmin + \ymin) (2p_2 - \xmin - \ymin) & \quad j \in 0, \ldots, L_1\\
         \zptwo &\ge (\lx + \ly)^2 \F^j(\tfrac{p_2 - \xmin + \ymax}{\lx + \ly}, \bm g^{p_2}) + (\xmin - \ymax) (2p_2 - \xmin + \ymax) & \quad j \in 0, \ldots, L_1\\
         \zx &\le \lx^2 \F^L(\tfrac{x-\xmin}{\lx}, \bm g^x) + \xmin (2x - \xmin)\\
         \zy &\le \ly^2 \F^L(\tfrac{y-\ymin}{\ly}, \bm g^y) + \ymin (2y - \ymin)\\
         \z &\ge \tfrac 12 (\zpone - \zx - \zy)\\
         \z &\le \tfrac 12 (\zx + \zy - \zptwo)\\
         (x, y, \z) &\in \mathcal{M}(x,y)\\
         x &\in [\xmin, \xmax]\\
         y &\in [\ymin, \ymax]\\
         p_1 &\in [\xmin + \ymin, \xmax + \ymax]\\
         p_2 &\in [\xmin - \ymax, \xmax - \ymin].
    \end{array}
\end{equation}

\subsection{MIP Relaxations for Univariate Quadratic Equations}

In order to MIP relaxations for $ \z = x^2 $
where $ x \in [\xmin, \xmax] $ with $ \xmin < \xmax $ and $ \xmin, \xmax \in \R $,
we introduce the auxiliary variable $ \xhat \in [0, 1] $
and apply each original MIP relaxation to model $ \yhat = \xhat^2 $. 
In addition, we map~$ \xhat $ and~$ \yhat $ back to $ [0, 1] $, yielding 
\begin{equation*}
    \xhat = \tfrac{x - \xmin}{\lx}, \quad
    \yhat = \tfrac{y - \xmin(2x-\xmin)}{\lx^2} \text{ , with } x \in [\xmin, \xmax],
\end{equation*}
\cf \Cref{rem:sawtooth-gbnds}.
With this transformation, we are able to formulate the tightened sawtooth relaxation
for $ x \in [\xmin, \xmax] $.
The tightened sawtooth relaxation \eqref{eq:sawtooth-relax-tight} for general box domains then reads
\begin{equation}\label{eq:sawtooth-relax-tight-gen}
    \{(x, \y) \in [\xmin, \xmax] \times \R: \exists (\xhat, \yhat, \bm g, \bm \alpha) \in [0, 1] \times \R \times [0, 1]^{L_1 + 1} \times \{0, 1\}^L: \eqref{eq:sawtooth-relax-tight-gen-constr}\},
\end{equation}
where the constraints are
\begin{equation}
    \label{eq:sawtooth-relax-tight-gen-constr}
    \begin{array}{rll}
         \xhat &= \tfrac{x-\xmin}{\lx}\\
         \yhat &= \tfrac{y - \xmin(2x-\xmin)}{\lx^2}\\
         (\xhat, \bm{g}_{\lrbr{0,L}}, \bm{\alpha}) &\in S^L(\hat x)\\
         (\xhat, \bm{g}) &\in T^{L_1}(\hat x)\\
         \yhat &\le \F^L(\xhat, \bm{g}_{\lrbr{0,L}})\\
         \yhat &\ge \F^j(\xhat, \bm{g}) - 2^{-2j-2} & \quad j \in 0, \ldots, L_1\\
         \yhat &\ge 0\\
         \yhat &\ge 2\xhat-1.
    \end{array}
\end{equation}
We note that generalizing  the sawtooth epigraph relaxation~\eqref{eq:sawtooth-epi-relax}
works analogously.

\section{Proof of Theorem 2: Hereditary Sharpness of the Tightened Sawtooth Relaxation}
\label{sec:proof_thm3}
This section is devoted to proving \cref{thm:sawtooth-hsharp} which states that the tightened sawtooth relaxation~\eqref{eq:sawtooth-relax-tight}
for $ \y = x^2 $ is hereditarily sharp.
This is a similar, albeit, more difficult result than the related one in \cite{Beach2020-compact}
regarding the original sawtooth approximation.
It is not clear how to obtain the former as a corollary of the latter.
Furthermore, we use the result of \cite{Beach2020-compact} to shorten the work needed here.
Before we begin the proof, we first introduce some required notation
and restate several helpful results from \cite{Beach2020-compact}.
For integers $ L_1 \ge L \ge 0 $, let $ \PIP_{L, L_1} $ be the tightened sawtooth relaxation
from~\eqref{eq:sawtooth-relax-tight} in the space of $ (x, \y, \bm g, \bm \alpha) $
and let $ \PLP_{L, L_1} $ be its LP relaxation,
where in the latter all $ \alpha $-variables are relaxed to the interval~$ [0, 1] $.
For convenience, and to avoid the variable redundancy $ g_0 = x $ throughout this section,
we will omit the use of~$ g_0 $ and use the abbreviated notation $ \bm g = \bm g_{\lrbr{1, L_1}} $.
\newcommand{\ubmalpha}{\ubar{\bm \alpha}}
To further simplify the notation, we omit the subscript~$ L, L_1 $
when the context is clear and simply write~$ \PIP $ and~$ \PLP $
instead of~$ \PIP_{L, L_1} $ and~$ \PLP_{L, L_1} $.

Now let $ I \subseteq \lrbr{L} $ be the index set of the binary variables~$ \bm \alpha $
which are fixed to given values $ \ubmalpha \in \{0, 1\}^I $.
This can be thought of as considering the branch in a branch-and-bound tree
where $ \bm \alpha = \ubmalpha $ holds.
Then we wish to show that at this node in the tree, sharpness also holds.
More precisely, the goal is to show that~$ \PIP $
is sharp under the restriction $ \bm \alpha_I = \ubmalpha $,
where $ \bm \alpha_I = [\alpha_{i_1}, \ldots, \alpha_{i_{|I|}}]^\top $
and $ I = \{i_1, \ldots, i_{|I|}\} $.
Hereditary sharpness of $ \PIP $ then means
\begin{equation*}
    \conv(\proj_{x, \y}(\PIP|_{\bm \alpha_I = \ubmalpha}))
        = \proj_{x, \y}(\PLP|_{\bm \alpha_I = \ubmalpha}).
\end{equation*}
In order to show this result, we cover $ \PIP|_{\bm \alpha_I = \ubmalpha} $ using the following two sets,
which encapsulate the upper and lower bounds \wrt $ \y $, respectively:
\begin{equation}
    \label{eq:sawtooth-Pplusminus}
    \begin{array}{ll}
        \hPIP \define \{(x, \y, \bm g, \bm \alpha) \in [0, 1]^2 \times [0, 1]^{L_1} \times \{0, 1\}^L: \bm \alpha_I = \ubmalpha,\,
        \mref{eq:x-sawtooth-constr,eq:x-sawtooth-epi-constr,eq:sawtooth-relax-tight-UB}\},\\
        \cPIP \define \{(x, \y, \bm g, \bm \alpha) \in [0, 1]^2 \times [0, 1]^{L_1} \times \{0, 1\}^L: \bm \alpha_I = \ubmalpha,\, 
        \mref{eq:x-sawtooth-constr,eq:x-sawtooth-epi-constr,eq:sawtooth-relax-tight-LB,eq:sawtooth-relax-tight-LB-ends}\}.
    \end{array}
\end{equation}
\begin{observation}
    It holds $ \PIP|_{\bm \alpha_I = \ubmalpha} = \hPIP \cap \cPIP $,
    and the formulation $ \PIP $ is hereditarily sharp
    if and only if both $ \hPIP $ and $ \cPIP $ are sharp.
\end{observation}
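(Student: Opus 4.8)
The first assertion, $\PIP|_{\bm \alpha_I = \ubmalpha} = \hPIP \cap \cPIP$, is immediate from the definitions: the constraints defining $\PIP|_{\bm \alpha_I = \ubmalpha}$ are exactly~\eqref{eq:x-sawtooth-constr},~\eqref{eq:x-sawtooth-epi-constr}, the fixing $\bm \alpha_I = \ubmalpha$, the single upper bound~\eqref{eq:sawtooth-relax-tight-UB}, and the lower bounds~\eqref{eq:sawtooth-relax-tight-LB}--\eqref{eq:sawtooth-relax-tight-LB-ends}; the first three are common to $\hPIP$ and $\cPIP$, the upper bound belongs only to $\hPIP$, and the lower bounds belong only to $\cPIP$, so intersecting the two sets reproduces $\PIP|_{\bm \alpha_I = \ubmalpha}$. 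The substance is therefore the equivalence in the second assertion, which I would prove for a fixed pair $(I, \ubmalpha)$ and then take the conjunction over all such pairs.

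The plan is to reduce sharpness of $\PIP|_{\bm \alpha_I = \ubmalpha}$ to sharpness of $\hPIP$ and of $\cPIP$ via three parallel ``upper/lower decomposition'' identities in the $(x, \y)$-space. The crucial structural input is that in any integer-feasible point of $\PIP$ the components $\bm g_{\lrbr{1, L}}$ are forced to $(G^1(x), \dots, G^L(x))$ by~\eqref{eq:x-sawtooth-constr} (cf.~\Cref{prop:F^L}), while $\bm g_{\lrbr{L + 1, L_1}}$ occur only in the lower bounds and can always be pushed to their maximal values $g_i = G^i(x)$; hence the achievable upper bound on $\y$ equals $F^L(x)$ and the tightest achievable lower bound equals $\ell(x) \define \max\{0,\, 2x - 1,\, \max_{0 \le j \le L_1}(F^j(x) - 2^{-2j - 2})\}$, both functions of $x$ alone. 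Consequently $\proj_{x, \y}(\hPIP) = \{(x, \y) : x \in X_{\ubmalpha},\, 0 \le \y \le F^L(x)\}$ and $\proj_{x, \y}(\cPIP) = \{(x, \y) : x \in X_{\ubmalpha},\, \ell(x) \le \y \le 1\}$ over a common $x$-domain $X_{\ubmalpha}$, which gives the first identity $\proj_{x, \y}(\PIP|_{\bm \alpha_I = \ubmalpha}) = \proj_{x, \y}(\hPIP) \cap \proj_{x, \y}(\cPIP)$. The second is the analogous identity $\proj_{x, \y}(\PLP|_{\bm \alpha_I = \ubmalpha}) = \proj_{x, \y}(\hPLP) \cap \proj_{x, \y}(\cPLP)$ for the LP relaxations; here, given a point of the right-hand side, I would interpolate between a $\bm g$-witness for the upper bound and a $\bm g$-witness for the lower bound, using convexity of $T^{L_1}$ and linearity of the $\F^j$ in $\bm g$, and close the argument with the pointwise inequality $\F^j(x, \bm g) - 2^{-2j - 2} \le \F^L(x, \bm g)$, which should hold for all $(x, \bm g) \in T^{L_1}$ and all $j$. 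The third is the convex-hull identity $\conv(\proj_{x, \y}(\PIP|_{\bm \alpha_I = \ubmalpha})) = \conv(\proj_{x, \y}(\hPIP)) \cap \conv(\proj_{x, \y}(\cPIP))$, which exploits that $F^L$ and $\ell$ coincide (both equal $x^2$) at every dyadic breakpoint $k/2^L$, together with the fact that $X_{\ubmalpha}$ is a finite union of closed intervals whose endpoints are all such breakpoints; thus the ``lens'' $\{\ell(x) \le \y \le F^L(x)\}$ pinches to a single point at each interval endpoint, which prevents $\conv$ of the intersection from being strictly smaller than the intersection of the $\conv$'s.

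Granting these identities, both directions follow. If $\hPIP$ and $\cPIP$ are sharp, then $\proj_{x, \y}(\PLP|_{\bm \alpha_I = \ubmalpha}) \subseteq \proj_{x, \y}(\hPLP) \cap \proj_{x, \y}(\cPLP) = \conv(\proj_{x, \y}(\hPIP)) \cap \conv(\proj_{x, \y}(\cPIP)) = \conv(\proj_{x, \y}(\PIP|_{\bm \alpha_I = \ubmalpha}))$, and the reverse inclusion is automatic, so $\PIP|_{\bm \alpha_I = \ubmalpha}$ is sharp. Conversely, if, say, $\hPIP$ is not sharp, I would pick $(x^*, \y^*) \in \proj_{x, \y}(\hPLP) \setminus \conv(\proj_{x, \y}(\hPIP))$; since $\proj_{x, \y}(\hPLP)$ enforces $\y \in [0, 1]$, the value $\y^*$ must lie strictly above the upper boundary of $\conv(\proj_{x, \y}(\hPIP))$, hence strictly above $F^L(x^*) \ge (x^*)^2 \ge \ell(x^*)$; this puts $(x^*, \y^*)$ into $\proj_{x, \y}(\cPLP)$, hence into $\proj_{x, \y}(\PLP|_{\bm \alpha_I = \ubmalpha})$ by the second identity, while it also lies above the upper boundary of $\conv(\proj_{x, \y}(\PIP|_{\bm \alpha_I = \ubmalpha})) \subseteq \conv(\proj_{x, \y}(\hPIP))$ and is therefore outside that hull, so $\PIP|_{\bm \alpha_I = \ubmalpha}$ is not sharp. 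Quantifying over all $I$ and $\ubmalpha$ then yields the equivalence between hereditary sharpness of $\PIP$ and sharpness of all the $\hPIP$ and $\cPIP$.

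The main obstacle will be the third identity, the convex-hull decomposition: intersections of convex hulls are in general strictly larger than convex hulls of intersections, so one must use the precise geometry --- a careful description of $X_{\ubmalpha}$ as a union of breakpoint-delimited intervals, and the verification that every breakpoint of $F^L$ is simultaneously a point where one of the lower-bounding tangents underlying $\ell$ touches $x^2$ (this is \Cref{prop:F^L} applied at the appropriate depth). The LP decomposition is comparatively routine once the inequality $\F^j - 2^{-2j - 2} \le \F^L$ over $T^{L_1}$ is in hand, and the first identity is essentially bookkeeping using that the ``inner'' variables can always be chosen to realise the weakest lower bound and the (unique) upper bound simultaneously.
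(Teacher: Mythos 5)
Your proposal is correct, and it supplies an actual proof where the paper offers only a two-sentence justification (echoing the proof sketch of \Cref{thm:sawtooth-sharp}): the upper bounds always lie on or above $\gra_{[0,1]}(x^2)$ and the lower bounds on or below it, so the two constraint families ``do not interact''. Your three-identity decomposition (projection, LP projection, convex hull) is a rigorous elaboration of exactly that germ, and all three identities do hold; two points, however, deserve attention. First, the inequality $\F^j(x,\bm g)-2^{-2j-2}\le\F^L(x,\bm g_{\lrbr{0,L}})$ underpinning your LP decomposition amounts to $\sum_{i=j+1}^{L}2^{-2i}g_i\le 2^{-2j-2}$; this is true on $T^{L_1}$, but only because \emph{both} constraints $g_i\le 2g_{i-1}$ and $g_i\le 2(1-g_{i-1})$ are present --- the maximum is attained at $g_{j+1}=1$, which then forces $g_i=0$ for all $i>j+1$; with only the second constraint one could take $g_{j+1}=g_{j+3}=\dots=1$ and the sum would exceed $2^{-2j-2}$ --- so this lemma needs its own short induction rather than a termwise bound. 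Second, in the reverse direction your chain ``$\y^*$ strictly above $F^L(x^*)\ge (x^*)^2\ge\ell(x^*)$'' is literally valid only for $x^*\in\XIP$; over a gap of $\XIP$ the interpolated lower envelope is the secant of $x^2$ and there \emph{exceeds} $x^2$, so one should instead observe that the upper boundary of $\conv(\proj_{x,\y}(\hPIP))$ over the gap is at least the secant of $F^L$, which coincides with the secant of $x^2$ (both functions agree with $x^2$ at the breakpoints delimiting the gap) and hence with the interpolated lower envelope --- a cosmetic repair. Relatedly, for the convex-hull identity the pinching at \emph{interior} breakpoints is not what does the work: what matters is that $\gra_{\XIP}(x^2)$ is contained in both projections, so the ``lens'' is nonempty over all of $\XIP$ and already contains the points generating both the upper concave envelope (the chord over $\conv(\XIP)$, since $F^L$ is convex) and the lower convex envelope, whence the convex hull of the intersection contains the intersection of the convex hulls. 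With these repairs your argument is complete and, as you note, yields the observation by conjoining over all pairs $(I,\ubmalpha)$.
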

    
     \noindent\textbf{Sharpness of $\hPIP$.} This follows directly from \cite[Theorem 3]{Beach2020-compact}: the theorem establishes hereditary sharpness of the sawtooth approximation~\eqref{eq:sawtooth-approx}, which has the same upper-bounding constraints on $ \y $ as~\eqref{eq:sawtooth-relax-tight}. Thus, it remains for us to show that $\cPIP$ is sharp. 
     
      \noindent\textbf{Sharpness of $\cPIP$.} %
    Before beginning the proof, we set up some helpful notation.
    First, we define the projections onto $ (x, \bm g, \bm \alpha) $:
    \begin{equation}\label{eq:sawtooth-Xdef}
        \begin{array}{rl}
            \DIP \define \proj_{x, \bm g, \bm \alpha}(\cPIP),\\
            \DLP \define \proj_{x, \bm g, \bm \alpha}(\cPLP). %
        \end{array}
    \end{equation}
    In particular, these variables must satisfy~\eqref{eq:x-sawtooth-constr}
    and~\eqref{eq:x-sawtooth-epi-constr}.  
   We also define the corresponding projections onto~$x$, namely
    \begin{equation*}
        \XIP \define \proj_{x}(\cPIP) \quad  \text{and} \quad \XLP \define \proj_{x}(\cPLP).
    \end{equation*}
    
    Next, we define the lower-bounding functions
    $ \G^j\colon [0, 1] \times [0, 1]^{L_1 + 1} \to [0, 1] $,
    \begin{equation}\label{eq:hsharp-Gjdef}
        \begin{array}{rll}
            \G^j(x, \bm g) &= \F^j(x, \bm{g}) - 2^{-2j-2} &\quad j = 0, \ldots, L_1,\\
            \G^{-1}(x, \bm g) &= 2x - 1,\\
            \G^{-2}(x, \bm{g}) &= 0.
        \end{array}
    \end{equation}
    Note that $ \G^{-1} $ and $ \G^{-2} $ do not actually depend on $ \bm{g} $.
    Further, note that there is a slight abuse of the notation above,
    since technically~$ \F^j $ has the domain $ [0, 1] \times [0, 1]^{j + 1} $;
    however, we assume the reader will interpret the functional expressions
    as $ \F^j(x, \bm g_{\lrbr{j}}) $ instead.
    We also define the lower-bounding functions $ \capG^j\colon [0, 1] \to [0, 1] $,
    \begin{equation}
    \begin{array}{rll}
            \capG^j(x) &= F^j(x) - 2^{-2j-2} &\quad j = 0, \ldots, L_1,\\
            \capG^{-1}(x) &= 2x - 1,\\
            \capG^{-2}(x) &= 0
        \end{array}
    \end{equation}
    in terms of only~$x$, based on the functions~$ F^L $ from~\eqref{eq:def_F^j},
    as the $j$-th \pwl underestimator to $ \y = x^2 $
    in the construction of the sawtooth relaxation, as defined in \Cref{ssec:Sawtooth}. 
    Further, define $ \G\colon [0, 1] \times [0, 1]^L \to [0, 1] $
    and $ \capG \colon [0, 1] \to [0, 1] $ with
    \begin{equation*}
        \G(x,\bm g) = \max_{j \in \lrbr{-2, L}} \G^j(x, \bm g)  \ \ \text{ and } \ \ 
     \capG(x) = \max_{j \in \lrbr{-2, L}} \capG^j(x).
     \end{equation*}
\begin{observation}
    The function $ \capG $ is convex as it is the maximum of a finite set of convex functions.
\end{observation}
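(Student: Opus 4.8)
The plan is to establish the Observation directly from the definitions, since it is a short consequence of two elementary stability properties of convex functions: convexity is preserved under the addition of a constant, and under taking the pointwise maximum of a finite family. So the work reduces to checking that each of the constituent functions $\capG^j$, $j \in \lrbr{-2, L}$, is convex on $[0,1]$, and then invoking the maximum rule.

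First I would treat the ``generic'' pieces. For each $j \in \lrbr{0, L}$, the function $\capG^j(x) = F^j(x) - 2^{-2j-2}$ is obtained from $F^j$ by subtracting the constant $2^{-2j-2}$, and $F^j$ is convex on $[0,1]$ by Item~6 of \Cref{prop:F^L}; hence $\capG^j$ is convex on $[0,1]$. Next I would dispatch the two boundary cuts: $\capG^{-1}(x) = 2x - 1$ is affine and therefore convex, and $\capG^{-2}(x) \equiv 0$ is constant and therefore convex. Since $\capG = \max_{j \in \lrbr{-2, L}} \capG^j$ is the pointwise maximum of the finite collection $\{\capG^j : j \in \lrbr{-2, L}\}$ of convex functions on $[0,1]$, the function $\capG$ is convex on $[0,1]$, which is the claim.

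There is essentially no obstacle here; the assertion is immediate once the constituent functions are identified as convex, and the only points worth a word of care are bookkeeping ones. In particular, the maximum in the definition of $\capG$ ranges over $j \in \lrbr{-2, L}$ rather than over all indices up to $L_1$, so only the interpolants $F^0, \dots, F^L$ contribute — which is harmless, as each of those is convex — and $\capG^j$ depends on $x$ alone (in contrast to $\G^j$, which also carries the sawtooth variables $\bm g$), so no joint-convexity considerations in auxiliary variables enter. I would therefore present the proof in two or three sentences along the lines above.
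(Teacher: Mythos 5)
Your proof is correct and follows exactly the reasoning the paper itself gives (the Observation's justification is precisely that $\capG$ is a finite pointwise maximum of convex functions, with each $\capG^j$ convex because $F^j$ is convex by \Cref{prop:F^L} and the remaining two pieces are affine). There is nothing to add; your write-up simply makes the paper's one-line argument explicit.
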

Finally, we define the following sets with respect to $j$:
    \begin{equation}
    \label{eq:hsharp-Mjdef}
        \begin{array}{rll}
            \cPIP_j &\define \{(x, \y, \bm g, \bm \alpha):
                (x, \bm g, \bm \alpha) \in \DIP,\, \y \ge \G^j(x, \bm{g})\}, &\quad j = -2, \ldots, L_1,\\
            \cPLP_j &\define \{(x, \y, \bm g, \bm \alpha):
                (x, \bm g, \bm \alpha) \in \DLP,\, \y \ge \G^j(x, \bm{g})\}, &\quad j = -2, \ldots, L_1,
        \end{array}
    \end{equation}
    and have $ \cPIP = \bigcap_{j = -2}^{L_1} \cPIP_j $ or, equivalently,
    \begin{equation*}
        \cPIP = \{(x, \y, \bm g, \bm \alpha):
            (x, \bm g, \bm \alpha) \in \DIP,\, \y \ge \max_{j \in -2, \ldots, L_1} \G^j(x, \bm{g})\}.
    \end{equation*}
    This applies analogously to $ \cPLP $.

    We now state some important results from \cite{Beach2020-compact}
    that establish bounds on each variable~$ g_i $ within $ \DLP $
    and a closed-form optimal solution for~$ \bm g $ when minimizing~$ \y $
    within~$ \cPIP $ or any~$ \cPIP_j $. %
\begin{lemma}[Bounds in Projection, Lemma 3 from \cite{Beach2020-compact}] \label{lem:bndcomp}
     For all $i \in \lrbr{0,L}$, we have $\proj_{g_i}(\DLP) = \conv(\proj_{g_i}(\DIP)) \eqqcolon [a_i,b_i] \neq \emptyset$. 
     Furthermore, it holds that $ [a_L, b_L] = [0, 1] $,
     and $ [a_{i - 1}, b_{i - 1}] $ can be computed from $ [a_i, b_i] $ as
        \begin{equation}
        [a_{i - 1}, b_{i - 1}] = 
        \begin{cases}
            [\tfrac{1}{2} a_i, \tfrac{1}{2} b_i], & \text{if $i \in I $ and $ \bar{\alpha}_i = 0$ },\\
            [1 - \tfrac{1}{2} b_i, 1 - \tfrac{1}{2}a_i], & \text{if $ i \in I $ and $ \bar{\alpha}_i = 1 $},\\
            [\tfrac{1}{2} a_i, 1 - \tfrac{1}{2}a_i], & \text{if $ i \notin I $}.
        \end{cases}
    \end{equation}
    Note that in the last case, $ a_{i - 1} \le \tfrac{1}{2} $ and $ b_{i - 1} \ge \tfrac{1}{2} $ hold.
\end{lemma}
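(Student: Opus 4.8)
The plan is to establish all parts of the lemma --- the identity $\proj_{g_i}(\DLP)=\conv(\proj_{g_i}(\DIP))=[a_i,b_i]\neq\emptyset$, the initialization $[a_L,b_L]=[0,1]$, and the backward recursion --- simultaneously by backward induction on $i$, from $i=L$ down to $i=0$. The crucial structural point is that, once $\y$ and the coordinates $g_{L+1},\dots,g_{L_1}$ are projected out, the description of $\DIP$ (resp.\ $\DLP$) in $(x=g_0,g_1,\dots,g_L,\alpha_1,\dots,\alpha_L)$ is a ``chain'': it consists of $g_0\in[0,1]$, the box constraints $g_j\in[0,1]$, and, for each $j$, the local coupling of $(g_{j-1},\alpha_j,g_j)$ from~\eqref{eqn:sawtooth-formulation} with $\alpha_j$ fixed to $\bar\alpha_j$ whenever $j\in I$. (The remaining $T^{L_1}$-constraints and $\y$-inequalities involve only $g_{L+1},\dots,g_{L_1}$ and $\y$, and are satisfiable for any value of $g_L$, so they do not restrict the coordinates $g_i$ with $i\le L$.) I would first record the transfer relations implied by one link: for $j\in I$ the coordinate $g_j$ is a deterministic function of $g_{j-1}$, namely $g_j=2g_{j-1}$ when $\bar\alpha_j=0$ and $g_j=2(1-g_{j-1})$ when $\bar\alpha_j=1$ (the accompanying inequality $g_j\le 1$ already forces the side condition $g_{j-1}\le\tfrac12$, resp.\ $g_{j-1}\ge\tfrac12$), and these hold identically in the IP and the LP; for $j\notin I$ one gets $g_j\in\{2g_{j-1},2(1-g_{j-1})\}$ in the IP, whereas eliminating $\alpha_j\in[0,1]$ in the LP (the weakest lower bound being attained at $\alpha_j=g_{j-1}$, as observed after~\eqref{eqn:sawtooth-formulation}) yields $0\le g_j\le\min\{2g_{j-1},2(1-g_{j-1})\}$.

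For the base case $i=L$, I would show $\proj_{g_L}(\DIP)=\proj_{g_L}(\DLP)=[0,1]$: containment in $[0,1]$ is the box constraint, and for the reverse inclusion, given any $t\in[0,1]$, I set $g_L=t$ and build the rest of the point by a backward sweep --- for $j=L,L-1,\dots,1$, take the forced value of $g_{j-1}$ if $j\in I$ and otherwise pick $g_{j-1}\in\{g_j/2,\,1-g_j/2\}$ with the matching $\alpha_j\in\{0,1\}$, and finally put $x:=g_0$. Since $g_j\in[0,1]$ forces $g_j/2\in[0,\tfrac12]$ and $1-g_j/2\in[\tfrac12,1]$, all box constraints and side conditions hold, so the point lies in $\DIP\subseteq\DLP$.

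In the inductive step, I assume $\proj_{g_i}(\DLP)=\conv(\proj_{g_i}(\DIP))=[a_i,b_i]$ with $\emptyset\neq[a_i,b_i]\subseteq[0,1]$. The chain structure lets me factor the projection onto $g_{i-1}$: such a value is feasible for $\DIP$ iff (i) it transfers forward to some $g_i\in\proj_{g_i}(\DIP)$ and (ii) it extends backward to some $x=g_0\in[0,1]$; condition (ii) holds for every $g_{i-1}\in[0,1]$ by the same backward sweep as in the base case (applied to the indices $1,\dots,i-1$), so only (i) matters. Applying the transfer relations to (i) gives that $\proj_{g_{i-1}}(\DIP)$ equals $\tfrac12\proj_{g_i}(\DIP)$ if $i\in I,\bar\alpha_i=0$, equals $1-\tfrac12\proj_{g_i}(\DIP)$ if $i\in I,\bar\alpha_i=1$, and equals $\tfrac12\proj_{g_i}(\DIP)\cup\bigl(1-\tfrac12\proj_{g_i}(\DIP)\bigr)$ if $i\notin I$ (the side conditions $g_{i-1}\le\tfrac12$, resp.\ $\ge\tfrac12$, being automatic because $\proj_{g_i}(\DIP)\subseteq[0,1]$); taking convex hulls and using $0\le a_i\le b_i\le 1$ turns these into the three cases $[\tfrac12 a_i,\tfrac12 b_i]$, $[1-\tfrac12 b_i,1-\tfrac12 a_i]$ and $[\tfrac12 a_i,1-\tfrac12 a_i]$ of the stated recursion, with $a_{i-1}\le\tfrac12\le b_{i-1}$ in the last case. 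For the LP identity, the inclusion $\conv(\proj_{g_i}(\DIP))\subseteq\proj_{g_i}(\DLP)$ is immediate because $\proj_{g_i}(\DLP)$ is convex; for the reverse I run the same argument with the LP transfer relation, noting that for $i\notin I$, since $0$ is always an admissible value of $g_i$ and $a_i\le b_i$, a value $g_{i-1}$ is LP-feasible iff $\min\{2g_{i-1},2(1-g_{i-1})\}\ge a_i$, i.e.\ $g_{i-1}\in[\tfrac12 a_i,1-\tfrac12 a_i]$. Hence $\proj_{g_{i-1}}(\DLP)=\conv(\proj_{g_{i-1}}(\DIP))=[a_{i-1},b_{i-1}]$; nonemptiness is preserved because $t\mapsto t/2$ and $t\mapsto 1-t/2$ both map $[0,1]$ into itself, and the induction closes.

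The step I expect to be the main obstacle is precisely the bookkeeping just sketched: $\proj_{g_i}(\DIP)$ need not be an interval (a deterministic $\bar\alpha_j=0$ link halves the upper endpoint below $1$, and subsequent links with $j\notin I$ then split the set into several disjoint intervals), so one must argue throughout at the level of convex hulls, check carefully that backward extendability never restricts the admissible set of $g_{i-1}$, and verify that $\conv\bigl(\tfrac12 S\cup(1-\tfrac12 S)\bigr)=[\tfrac12\inf S,\,1-\tfrac12\inf S]$ for bounded $S\subseteq[0,1]$. A shortcut I would be equally happy to use in the paper is to observe that, by the parenthetical remark in the first paragraph, the constraints that actually govern the coordinates $g_i$ with $i\le L$ inside $\DIP$ are exactly those of $S^L$ together with the box constraints, and for that set the statement is literally \cite[Lemma~3]{Beach2020-compact}; thus the lemma follows verbatim from that result.
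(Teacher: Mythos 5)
The paper never proves this lemma: it is imported verbatim as Lemma~3 of \cite{Beach2020-compact}, the implicit justification being that the constraints of $\cPIP$ which actually govern the coordinates $g_0,\dots,g_L$ are exactly those of $S^L$ together with the fixing $\bm\alpha_I=\ubmalpha$ and the box constraints, so the cited result applies directly. Your proposal instead supplies a self-contained backward induction, and it is correct: the chain decomposition, the automatic backward extendability of any $g_{i-1}\in[0,1]$, the three forward transfer rules, and the identity $\conv\bigl(\tfrac12 S\cup(1-\tfrac12 S)\bigr)=[\tfrac12\inf S,\,1-\tfrac12\inf S]$ for compact nonempty $S\subseteq[0,1]$ are all sound, and you rightly insist on working with convex hulls because $\proj_{g_i}(\DIP)$ need not be an interval. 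Two small imprecisions, neither fatal: first, the lower-bounding inequalities \eqref{eq:sawtooth-relax-tight-LB}--\eqref{eq:sawtooth-relax-tight-LB-ends} involve $x$ and all of $\bm g$, not only the tail coordinates $g_{L+1},\dots,g_{L_1}$; the correct reason they can be discarded is that every lower bound on $\y$ is at most $1$ (indeed $\F^j(x,\bm g)-2^{-2j-2}<1$ and $2x-1\le 1$), so $\y=1$ is always admissible and these constraints impose no restriction on $(x,\bm g,\bm\alpha)$. Second, in the LP step for $i\notin I$ the phrase ``$0$ is always an admissible value of $g_i$'' should be read as saying that the LP transfer imposes no lower bound on $g_i$, so feasibility reduces to $\min\{2g_{i-1},2(1-g_{i-1})\}\ge a_i$; the value $0$ itself need not lie in $[a_i,b_i]$. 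Your closing shortcut --- reducing the statement to \cite[Lemma~3]{Beach2020-compact} after observing that only the $S^L$-constraints matter --- is precisely the route the paper takes, so your full argument is strictly more informative than what the paper provides.
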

Note that \Cref{lem:bndcomp} with $ i = 0 $ and $ g_0 = x $ yields $ \XLP = \conv(\XIP) $, via
\begin{equation}
    \label{cor:lemma3}
\XLP = \proj_x(\cPLP) = \conv(\proj_x(\cPIP)) = \conv(\XIP),
\end{equation}
which has also been used in \cite{Beach2020-compact}.

Next, we adapt Lemma~5 from \cite{Beach2020-compact},
which establishes that, when minimizing or maximizing~$ \y $
within $ \PLP_{L, L}|_{\bm \alpha_I = \ubmalpha} $ 
given a fixed value for~$ \x $, each~$ g_i $ can directly be computed from~$ g_{i - 1} $
and the bounds established in \Cref{lem:bndcomp}.
In particular, for the sawtooth relaxation (\ie $ I = \emptyset $),
when minimizing~$ \y $ over the MIP-feasible points with a fixed~$x$,
we find that $ g_i = \min\{2g_{i - 1}, 1 - 2g_{i - 1}\} $.
That is, the $ \bm g $-variables take one of the two upper bounds that restrict them.
However, in this section, we have fixed several of the $ \bm \alpha $-variables
and have thus changed the feasible domain for each $ \bm g $-variable.
Now, it could be that~$ b_i $ becomes an additional upper bound.
\begin{lemma}[Adapted from Lemma 5 from \cite{Beach2020-compact}]
\label{lem:greedy}
Let $ a_i $ and~$ b_i $ be defined as in \Cref{lem:bndcomp} for all $ i \in \lrbr{L_1} $
and let $ \x \in [a_0, b_0] $.
Further, define $ \bm g^*$ %
as
\begin{alignat*}{2}
    g^*_0 &\define \x\\
    g^*_i &\define \min\{b_i, 2g_{i - 1}, 1 - 2g_{i - 1}\} \ \ \ & i \in \lrbr{L_1} \setminus I\\
    g^*_i &\define G^i(g_{i - 1}) & i \in I,
\end{alignat*}
where, for $ i \in I $, it holds $ G^i(g_{i - 1}) = 2g_{i - 1} $ if $ \alpha_i = 0 $,
and $ G^i(g_{i = 1}) = 2(1 - g_{i - 1}) $ otherwise.
Then we have
\begin{subequations}
    \label{greedy_min_opt}
    \begin{alignat}{2}
        \bm g^{*} &\in \argmin \{\y\, : \, (\y,\bm g) \in \proj_{\y,\bm g}(\cPLP|_{x=\x}) \}, \label{eq:l2}\\
        \bm g^{*} &\in \argmin \{\y\, : \, (\y,\bm g) \in \proj_{\y,\bm g}(\cPLP_j|_{x=\x}) \} \quad \forall j \in \lrbr{-2,L_1}. \label{eq:l1}
    \end{alignat}
\end{subequations}
That is, each~$ g_i $ with unfixed~$ \alpha_i $ can take on one of its upper bounds \wrt $ g_{i-1} $
when minimizing~$ \y $ within $ \cPLP|_{x = \x} $ and $ \cPLP_j|_{x = \x} $.
Furthermore, this choice is unique for all $ i \le j $, \ie
\begin{equation*}
    \lvert \argmin \{\y\, : \, (\y,\bm g_{\lrbr{j}}) \in \proj_{\y,\bm g_{\lrbr{j}}}(\cPLP_j)|_{x=\x}) \} \rvert = 1.
\end{equation*}
Finally, there exists some $j \in \lrbr{-2,L_1}$ for which 
\begin{equation}\label{eq:greedy-fullset}
    \G^j(\x, \bm g^*) = \min \{\y\, : \, (\y,\bm g) \in \proj_{\y,\bm g}(\cPLP)|_{x=\x})\}.
\end{equation}
\end{lemma}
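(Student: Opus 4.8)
The plan is to reduce the minimization of $\y$ to a purely combinatorial optimization over the $\bm g$–variables. Since the only constraints of $\cPLP$, and of each $\cPLP_j$, that involve $\y$ are the lower bounds $\y \ge \G^k(x,\bm g)$, fixing $x = \x$ gives
\begin{equation*}
    \min\{\y : (\y,\bm g)\in\proj_{\y,\bm g}(\cPLP|_{x=\x})\} = \min_{(\x,\bm g,\bm\alpha)\in\DLP}\ \max_{k\in\lrbr{-2,L_1}}\G^k(\x,\bm g),
\end{equation*}
and likewise $\min\{\y : \cPLP_j|_{x=\x}\}$ equals $\min\G^j(\x,\bm g)$ over the same domain of $\bm g$. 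By \Cref{lem:bndcomp} together with the (linear) constraints \eqref{eq:x-sawtooth-constr} and \eqref{eq:x-sawtooth-epi-constr}, the set of feasible $\bm g$ is a polytope. Because $\G^{-1},\G^{-2},\G^0$ do not depend on $\bm g$ and, for $j\ge 1$, $\G^j(\x,\cdot)=\F^j(\x,\cdot)-2^{-2j-2}$ with $\F^j(\x,\bm g)=\x-\sum_{i=1}^j 2^{-2i}g_i$ strictly decreasing in each of $g_1,\dots,g_j$, minimizing any single $\G^j$ is the same as maximizing the linear functional $\sum_{i=1}^j 2^{-2i}g_i$ over that polytope.

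\textbf{Feasibility and optimality of the greedy solution.} First I would check that $\bm g^*$ is feasible: inductively on $i$, using \Cref{lem:bndcomp}, one verifies $g^*_i\in[a_i,b_i]$, that $g^*_i$ satisfies the $T^{L_1}$–bounds $g^*_i\le 2g^*_{i-1}$ and $g^*_i\le 2(1-g^*_{i-1})$, and that for $i\in I$ the forced value $G^i(g^*_{i-1})$ lies in $[0,1]$ — here the three cases of \Cref{lem:bndcomp} are exactly what is needed, e.g. $\bar\alpha_i=0$ forces $b_{i-1}\le\tfrac12$, hence $2g^*_{i-1}\le 1$. One also checks that $\bm g^*$ extends to an admissible $\bm\alpha$ (with $\bm\alpha_I=\ubmalpha$) and to $\y=\max_k\G^k(\x,\bm g^*)$. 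The core step is to show that $\bm g^*$ maximizes $\sum_{i=1}^j 2^{-2i}g_i$ over all feasible $\bm g$, uniquely in the coordinates $g_1,\dots,g_j$; this yields \eqref{eq:l1} including the uniqueness claim, since $\bm g^*_{\lrbr{j}}$ is then the unique maximizer of a linear functional over a polytope. I would establish this by the inductive exchange argument of \cite[Lemma 5]{Beach2020-compact}: setting each $g^*_i$ as large as $\min\{b_i,2g^*_{i-1},2(1-g^*_{i-1})\}$ (respectively its forced value) permits, and observing that lowering some $g_i$ by $\epsilon$ recovers at most $2^{-2i}\epsilon$ through the downstream coordinates $g_{i+1},\dots,g_j$ — each constraint magnifies a perturbation by a factor $2$ while the objective coefficients shrink by $4$, so the recovered amount is a geometric series strictly below $2^{-2i}\epsilon$ for finite $j$ — whence any deviation strictly decreases the objective.

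\textbf{From single $j$ to the full set.} Since the greedy rule is coordinatewise and depends only on $a_i,b_i$ and whether $i\in I$, the first $j$ coordinates of the full $\bm g^*$ coincide with the greedy solution for $\cPLP_j$; hence $\G^j(\x,\bm g^*)=\min\{\G^j(\x,\bm g):(\x,\bm g,\bm\alpha)\in\DLP\}$ for every $j$. Choosing $j_0\in\argmax_{k\in\lrbr{-2,L_1}}\G^k(\x,\bm g^*)$, for any feasible $\bm g$ we get
\begin{equation*}
    \max_k\G^k(\x,\bm g)\ \ge\ \G^{j_0}(\x,\bm g)\ \ge\ \G^{j_0}(\x,\bm g^*)\ =\ \max_k\G^k(\x,\bm g^*),
\end{equation*}
which proves \eqref{eq:l2}, and the common minimum value is $\G^{j_0}(\x,\bm g^*)$, giving \eqref{eq:greedy-fullset}. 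The main obstacle is the core step of the second paragraph: in \cite{Beach2020-compact} the greedy choice is $g^*_i=G^i(g^*_{i-1})$ at every level, whereas here the extra upper bound $b_i$ and the equality constraints forced for $i\in I$ reshape the feasible polytope. One must argue that capping at $b_i$ loses nothing relative to the feasible set (as $b_i$ bounds every feasible point, by \Cref{lem:bndcomp}) and that the non-monotone dependence of the downstream bound $2(1-g_{i-1})$ on $g_{i-1}$ still leaves the greedy optimal and unique — this is where the geometric-series estimate must be carried out carefully.
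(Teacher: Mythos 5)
Your proposal is correct and follows essentially the same route as the paper: establish feasibility and per-$j$ optimality of the greedy $\bm g^*$ (reducing to maximization of the linear functional $\sum_{i\le j}2^{-2i}g_i$ and invoking the exchange/induction argument of \cite[Lemma 5]{Beach2020-compact}), handle $j\le 0$ by noting $\G^j$ is then independent of $\bm g$, and obtain \eqref{eq:l2} and \eqref{eq:greedy-fullset} via the $j_{\max}\in\argmax_k\G^k(\x,\bm g^*)$ sandwich. The paper likewise defers the core exchange step to \cite{Beach2020-compact} and to the proof of \Cref{thm:sawtooth-sharp}, so your level of detail there (including the geometric-series estimate justifying strictness, hence uniqueness) matches or slightly exceeds the paper's.
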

\begin{proof}
    The proofs of the optimality results~\eqref{eq:l2} and~\eqref{eq:l1} on $ \bm g^* $
    for $ j \ge 1 $ closely follow the structure of the proof of \Cref{thm:sawtooth-sharp},
    with the same underlying reasoning as in the proof of \cite[Lemma  5]{Beach2020-compact}.
    In fact, the uniqueness of the optimizer also follows from the proof.
    Thus, the details are omitted here.
    To establish the optimality results for $ j \le 0 $,
    we observe that in this case~$ \G^j $ is purely a function of~$x$,
    such that the choice of $ \bm g $ has no effect on~$ \G^j $, and $ \bm g^* $ is thus still optimal.
    
    Finally, to fulfil~\eqref{eq:greedy-fullset}, let $ j_{\max} \in \lrbr{-2, L_1} $ be chosen
    such that
    \begin{equation*}
        \max_{j \in \lrbr{-2, L_1}} \G^j(\x, \bm g^*) = \G^{j_{\max}}(\x, \bm g^*).
    \end{equation*}
    Then we have
    \begin{align*}
          &\min \{\y\, : \, (\y,\bm g) \in \proj_{\y,\bm g}(\cPLP)|_{x=\x})\} = \max_{j \in \lrbr{-2,L_1}} \G^j(\x, \bm g^*) = \G^{j_{\max}}(\x, \bm g^*) \\
          =& \min \{\y\, : \, (\y,\bm g) \in \proj_{\y,\bm g}(\cPLP_{j_{\max}})|_{x=\x})\} \le \min \{\y\, : \, (\y,\bm g) \in \proj_{\y,\bm g}(\cPLP)|_{x=\x})\},
    \end{align*}
    as required.\hfill \qed
\end{proof}

The next auxiliary result we need
is a lemma concerning reflections over $ x = \tfrac 12 $ in $ \DIP $ and~$ \DLP $
for the case where~$ \alpha_1 $ is not fixed. 
\begin{lemma}\label{lem:sawtooth-reflection}
    Let $ L \geq 0 $, let $ \x \in \XIP $ and assume $ 1 \notin I $, so that $ \alpha_1 $ is not fixed.
    Then
    \begin{equation}\label{eq:X-symmetric}
        \proj_{\bm g, \bm \alpha_{\lrbr{2,L}}} (\DIP|_{x=\x}) = \proj_{\bm g, \bm \alpha_{\lrbr{2,L}}} (\DIP|_{x=1-\x}).
    \end{equation}
    Furthermore, 
    \begin{align}
        \x^2 - \G^j(\x, \bm g^*) &= (1-\x)^2 - \G^j(1-\x, \bm g^*) & \text{ for all $j \in \lrbr{0,L_1}$}.
    \end{align}
    That is, the maximum errors from the lower bounds coincide. Similarly,
    \begin{align}
        \x^2 - \G^{-2}(\x, \bm g^*) &= (1-\x)^2 - \G^{-1}(1-\x, \bm g^*),\label{eq:greedy_2}
        \\
        \x^2 - \G^{-1}(\x, \bm g^*) &= (1-\x)^2 - \G^{-2}(1-\x, \bm g^*),
        \label{eq:greedy_1}
    \end{align}
  where $\bm g^*$ is defined on \Cref{lem:greedy}. Lastly, 
  \begin{align}
  \label{eq:greedy_main}
        \x^2 - \G(\x, \bm g^*) &= (1-\x)^2 - \G(1-\x, \bm g^*).
    \end{align}
    
 \end{lemma}
\begin{proof}
    Recall that $\DIP$ is formed from the constraints in $S^L$ and $T^{L_1}$, along with fixing binary variables ${\bm \alpha}_I = \ubmalpha$. 
    It is easy to check that $ (\x, \bm g, \bm \alpha) \in \DIP $
    if an only if $ (1 - \x , \bm g, \bar {\bm \alpha}) \in \DIP $,
    where $ \bar \alpha_1 \define 1 - \alpha_1 $
    and $ \bar \alpha_i \define \alpha_i $ for $ i \in I \setminus \{1\} $.
    Thus, \eqref{eq:X-symmetric} holds due to this correspondence.
    
    For $ j \in \lrbr{0, L_1} $, we have
    \begin{align*}
        \x^2 - \G^j(\x, \bm g^*) 
        &= \x^2 - \left(\x - \sum_{i = 1}^j 2^{-2i} \bm g^*_i - 2^{-2j - 2}\right)\\
        &= \left(1 - 2\x\right) + \x^2 - \left(\left(1 - 2\x\right) + \x - \sum_{i = 1}^j 2^{-2i} \bm g^*_i - 2^{-2j - 2}\right)\\
        &= \left(1 - \x\right)^2  - \left(1 - \x - \sum_{i = 1}^j 2^{-2i} \bm g^*_i - 2^{-2j - 2}\right)\\
        &= (1 - \x)^2 - \G^j(1 - \x, \bm g^*).
    \end{align*}
    \textcolor{black}{Thus~\eqref{eq:greedy_main} holds. Similarly, \eqref{eq:greedy_1} holds  as}
    \begin{align*}
        \x^2 - \G^{-1}(\x, \bm g^*)
        &= \x^2 - (2\x - 1)\\
        &= \left(1 - \x\right)^2\\
        &= (1 - \x)^2 - \G^{-2}(1 - \x, \bm g^*).\\
    \end{align*}
    \textcolor{black}{Lastly, \eqref{eq:greedy_2} holds by considering the substitution $\x \leftarrow 1-\x$ from \eqref{eq:greedy_1}.}
    
    The same secondary result holds if~$ \G^j(x, \bm g) $ is replaced with~$ \G(x, \bm g) $.
    This follows since each constituting function (for the pair $j=-1,j=-2$)
    is symmetric about $ x = \tfrac 12 $ \wrt the maximum error;
    the pointwise maximum over the functions retains the same symmetry. 
    Similarly, the same result holds if $ I = \emptyset $, such that $ \X = [0, 1] $.
    \hfill \qed
\end{proof}

\textcolor{black}{The following lemma formalizes the convex hull of convex functions whose domain is a finite union of closed and bounded intervals.  By \emph{gaps}, we refer to the open intervals in the convex hull of the domain but do not intersect the domain.}

\begin{lemma}
\label{lem:gappy-epi-characterization}
Let $ X \subseteq \R $ be a \textcolor{black}{finite union of compact intervals},
and let $ \funcf \colon \conv(X) \to \R $ be a convex function. 
For any $ \bar x \in \conv(X) \setminus X $, define 
\begin{equation*}
    \bar x_- \define \max\{x \in X : x < \bar x\}
        \quad \text{and} \quad \bar x_+ \define \min\{x \in X : x > \bar x\}.
\end{equation*}
Now define $ \funcf_X\colon \conv(X) \to \R $,
\begin{equation}
    \funcf_X(x) = \begin{cases}
        \funcf(x), & \text{if } x \in X,\\
        \lambda \funcf(x_-) + (1 - \lambda) \funcf(x_+),
            & \begin{array}{l}
                \!\text{if } x \notin X, \text{for } x = \lambda x_- + (1 - \lambda) x_+,\\
                \text{with } \lambda \in (0, 1).
          \end{array}
    \end{cases}
\end{equation}
Then we have
\begin{equation*}
    \conv(\epi_X(\funcf)) = \epi_{\conv(X)}(\funcf_X).
\end{equation*}
\end{lemma}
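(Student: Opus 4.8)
The plan is to prove the two inclusions $\conv(\epi_X(\funcf)) \supseteq \epi_{\conv(X)}(\funcf_X)$ and $\conv(\epi_X(\funcf)) \subseteq \epi_{\conv(X)}(\funcf_X)$ separately; the only non-routine ingredient is the elementary fact that the chords of a convex function nest. First I would fix notation: if $X=\emptyset$ or $X$ is a singleton the statement is trivial, so assume $\conv(X)=[m,M]$ with $m=\min X$, $M=\max X$, and record two facts. (a) $\funcf_X \ge \funcf$ on $\conv(X)$, with equality on $X$: on a maximal gap interval $(\bar x_-,\bar x_+)\subseteq\conv(X)\setminus X$ the function $\funcf_X$ is the affine chord of $\funcf$ joining $(\bar x_-,\funcf(\bar x_-))$ and $(\bar x_+,\funcf(\bar x_+))$, which lies above the convex function $\funcf$ on $[\bar x_-,\bar x_+]$. (b) Chord nesting: if $[c,d]\subseteq[a,b]\subseteq\conv(X)$, then the affine chord of $\funcf$ over $[c,d]$ is bounded above on $[c,d]$ by the affine chord of $\funcf$ over $[a,b]$; this is immediate since the latter dominates $\funcf$ at $c$ and $d$ by convexity, and two affine functions ordered at two distinct points are ordered between them. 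Adopting the convention that the ``chord'' over a degenerate interval $[t,t]$ is the value $\funcf(t)$, fact (a) and the definition of $\funcf_X$ give that $\funcf_X(t)$ equals the value at $t$ of the chord of $\funcf$ over $[t_-,t_+]$ for every $t\in\conv(X)$, where $t_\pm$ denote the nearest points of $X$ to the left/right of $t$ (so $t_-=t_+=t$ when $t\in X$).

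For the inclusion $\epi_{\conv(X)}(\funcf_X)\subseteq\conv(\epi_X(\funcf))$, I would take $(\bar x,\bar y)$ with $\bar x\in\conv(X)$ and $\bar y\ge\funcf_X(\bar x)$. If $\bar x\in X$ then $(\bar x,\bar y)\in\epi_X(\funcf)$ outright. Otherwise $\bar x$ lies in a gap, $\bar x=\lambda\bar x_-+(1-\lambda)\bar x_+$ with $\lambda\in(0,1)$ and $\funcf_X(\bar x)=\lambda\funcf(\bar x_-)+(1-\lambda)\funcf(\bar x_+)$, and I would write
\[
    (\bar x,\bar y) = \lambda\Bigl(\bar x_-,\ \funcf(\bar x_-) + \tfrac{\bar y - \funcf_X(\bar x)}{\lambda}\Bigr) + (1-\lambda)\bigl(\bar x_+,\ \funcf(\bar x_+)\bigr),
\]
which exhibits $(\bar x,\bar y)$ as a convex combination of two points of $\epi_X(\funcf)$ (the raised coordinate in the first slot is $\ge\funcf(\bar x_-)$ because $\bar y\ge\funcf_X(\bar x)$ and $\lambda>0$).

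For the reverse inclusion, I would take an arbitrary finite convex combination $(\bar x,\bar y)=\sum_{i=1}^N\lambda_i(x_i,y_i)$ of points of $\epi_X(\funcf)$, so $x_i\in X$, $y_i\ge\funcf(x_i)$, $\lambda_i\ge0$, $\sum_i\lambda_i=1$; then $\bar x\in\conv(X)$ and $\bar y\ge\sum_i\lambda_i\funcf(x_i)$, so it suffices to prove $\sum_i\lambda_i\funcf(x_i)\ge\funcf_X(\bar x)$. This is the step I expect to be the real obstacle: on a gap $\funcf_X$ strictly exceeds $\funcf$, so a direct application of Jensen's inequality only gives $\sum_i\lambda_i\funcf(x_i)\ge\funcf(\bar x)$, which is too weak; what saves the day is that every $x_i$ must avoid the gap containing $\bar x$. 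Concretely, split the index set into $S_-=\{i:x_i\le\bar x_-\}$ and $S_+=\{i:x_i\ge\bar x_+\}$ (with $\bar x_-=\bar x_+=\bar x$ when $\bar x\in X$, in which case the claim follows from plain Jensen together with (a)), put $\mu=\sum_{i\in S_-}\lambda_i\in(0,1)$, and let $\hat a=\tfrac1\mu\sum_{i\in S_-}\lambda_i x_i\le\bar x_-$ and $\hat b=\tfrac1{1-\mu}\sum_{i\in S_+}\lambda_i x_i\ge\bar x_+$, so that $\bar x=\mu\hat a+(1-\mu)\hat b$. Jensen within each group gives
\[
    \sum_{i=1}^N\lambda_i\funcf(x_i)\ge\mu\funcf(\hat a)+(1-\mu)\funcf(\hat b),
\]
whose right-hand side is the value at $\bar x$ of the chord of $\funcf$ over $[\hat a,\hat b]$; since $[\bar x_-,\bar x_+]\subseteq[\hat a,\hat b]\subseteq\conv(X)$, chord nesting (b) bounds this below by the value at $\bar x$ of the chord over $[\bar x_-,\bar x_+]$, which equals $\funcf_X(\bar x)$ by the last sentence of the first paragraph. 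Hence $\bar y\ge\funcf_X(\bar x)$, which completes both inclusions and the proof. The remaining items are pure bookkeeping: the existence of $\bar x_\pm$ (closedness of $X$), the fact that $\mu\in(0,1)$ (otherwise $\bar x$ would lie on the wrong side of the gap), and the degenerate-chord convention.
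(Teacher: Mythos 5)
Your proof is correct, and it overlaps with the paper's on one inclusion while taking a genuinely different route on the other. The inclusion $\epi_{\conv(X)}(\funcf_X)\subseteq\conv(\epi_X(\funcf))$ is handled essentially as in the paper: a point over a gap is written as an explicit convex combination of two points of $\epi_X(\funcf)$ sitting over the gap endpoints (you push all of the slack $\bar y-\funcf_X(\bar x)$ onto the left endpoint, the paper adds it to both endpoints; either bookkeeping works). The divergence is in the inclusion $\conv(\epi_X(\funcf))\subseteq\epi_{\conv(X)}(\funcf_X)$. The paper gets this in one line by asserting that $\funcf_X$ is convex (it is the pointwise maximum of $\funcf$ and its gap secants), so that $\epi_{\conv(X)}(\funcf_X)$ is a convex set containing $\epi_X(\funcf)$ and therefore its convex hull. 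You instead take an arbitrary finite convex combination of points of $\epi_X(\funcf)$, split the constituent points according to which side of the gap containing $\bar x$ they lie on, apply Jensen within each group, and conclude with your chord-nesting fact (b). Your argument is longer but entirely self-contained: it never needs the convexity of $\funcf_X$, and the chord-nesting observation you isolate is precisely the ingredient one would need to fully verify the paper's claim that the maximum of $\funcf$ and its gap secants coincides with $\funcf_X$. The supporting details you flag (existence of $\bar x_\pm$ from closedness, $\mu\in(0,1)$, the degenerate-chord convention) are all handled correctly, so both proofs are sound; the paper's is simply the shorter one once convexity of $\funcf_X$ is granted.
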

This lemma is proved in \Cref{app:lem-pfs}. 
We are now ready to prove Theorem~\ref{thm:sawtooth-hsharp}.

We denote the boundary of the set $X$ by~$ \partial X $.
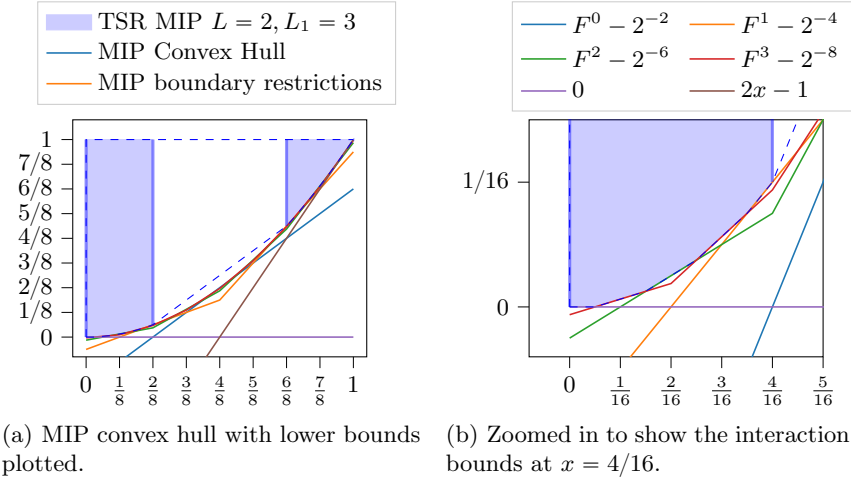
\begin{figure}[h]
    \hfill
    \subfigure[MIP convex hull with lower bounds plotted.]{
        \begin{tikzpicture}
\pgfplotsset{%
    width=0.45\textwidth,
}

\definecolor{color0}{rgb}{0.12156862745098,0.466666666666667,0.705882352941177}
\definecolor{color1}{rgb}{1,0.498039215686275,0.0549019607843137}
\definecolor{color2}{rgb}{0.172549019607843,0.627450980392157,0.172549019607843}
\definecolor{color3}{rgb}{0.83921568627451,0.152941176470588,0.156862745098039}
\definecolor{color4}{rgb}{0.580392156862745,0.403921568627451,0.741176470588235}
\definecolor{color5}{rgb}{0.549019607843137,0.337254901960784,0.294117647058824}

\begin{axis}[
legend cell align={left},
legend style={fill opacity=0.8, draw opacity=1, text opacity=1, at={(1.1,1.5)}, draw=white!80.0!black},
tick align=outside,
tick pos=left,
x grid style={white!69.0196078431373!black},
xmin=-0.05, xmax=1.05,
xtick style={color=black},
xtick={0,0.125,0.25,0.375,0.5,0.625,0.75,0.875,1},
xticklabels={0,\(\tfrac18\),\(\tfrac28\),
\(\tfrac38\),\(\tfrac48\),\(\tfrac58\),\(\tfrac68\),\(\tfrac78\),1},
y grid style={white!69.0196078431373!black},
ymin=-0.1, ymax=1.1,
ytick style={color=black},
ytick={0,0.125,0.25,0.375,0.5,0.625,0.75,0.875,1},
yticklabels={0,\(\displaystyle {1}/{8}\),\(\displaystyle {2}/{8}\),\(\displaystyle {3}/{8}\),\(\displaystyle {4}/{8}\),\(\displaystyle {5}/{8}\),\(\displaystyle {6}/{8}\),\(\displaystyle {7}/{8}\),1}
]
\path [draw=blue, fill=blue, opacity=0.2]
(axis cs:0,1)
--(axis cs:0,0)
--(axis cs:0.0625,0)
--(axis cs:0.125,0.015625)
--(axis cs:0.1875,0.03125)
--(axis cs:0.25,0.0625)
--(axis cs:0.25,1)
--(axis cs:0.25,1)
--(axis cs:0.1875,1)
--(axis cs:0.125,1)
--(axis cs:0.0625,1)
--(axis cs:0,1)
--cycle;
\addlegendimage{area legend, draw=blue, fill=blue, opacity=0.2}
\addlegendentry{TSR MIP $L=2, L_1 = 3$}

\path [draw=blue, fill=blue, opacity=0.2]
(axis cs:0.75,1)
--(axis cs:0.75,0.5625)
--(axis cs:0.8125,0.65625)
--(axis cs:0.875,0.765625)
--(axis cs:0.9375,0.875)
--(axis cs:1,1)
--(axis cs:1,1)
--(axis cs:1,1)
--(axis cs:0.9375,1)
--(axis cs:0.875,1)
--(axis cs:0.8125,1)
--(axis cs:0.75,1)
--cycle;

\addplot [semithick, color0]
table {%
0 -0.25
1 0.75
};
\addplot [semithick, color1]
table {%
0 -0.0625
0.5 0.1875
1 0.9375
};
\addplot [semithick, color2]
table {%
0 -0.015625
0.25 0.046875
0.5 0.234375
0.75 0.546875
1 0.984375
};
\addplot [semithick, color3]
table {%
0 -0.00390625
0.125 0.01171875
0.25 0.05859375
0.375 0.13671875
0.5 0.24609375
0.625 0.38671875
0.75 0.55859375
0.875 0.76171875
1 0.99609375
};
\addplot [semithick, color4]
table {%
0 0
1 0
};
\addplot [semithick, color5]
table {%
0 -1
1 1
};
\addplot [blue, dashed]
table {%
0 0
0.03125 0
0.0625 0.00390625
0.09375 0.0078125
0.125 0.015625
0.15625 0.0234375
0.1875 0.03515625
0.21875 0.046875
0.25 0.0625
0.75 0.5625
0.78125 0.609375
0.8125 0.66015625
0.84375 0.7109375
0.875 0.765625
0.90625 0.8203125
0.9375 0.87890625
0.96875 0.9375
1 1
};
\addlegendentry{MIP Convex Hull}
\addplot [blue, dashed]
table {%
0 0
0 1
1 1
};

\addplot [very thick, blue, opacity=0.4]
table {%
0 0
0 1
};
\addlegendentry{MIP boundary restrictions}
\addplot [very thick, blue, opacity=0.4]
table {%
0.25 0.0625
0.25 1
};
\addplot [very thick, blue, opacity=0.4]
table {%
0.75 0.5625
0.75 1
};
\addplot [very thick, blue, opacity=0.4]
table {%
1 1
1 1
};
\end{axis}

\end{tikzpicture}
        \label{fig:MIP_convex_hull}
    }
    \hfill
    \subfigure[Zoomed in to show the interaction of bounds at $ x = 4/16 $.]{
        \begin{tikzpicture}
\pgfplotsset{%
    width=0.45\textwidth,
}
\pgfplotsset{scaled y ticks=false}
\definecolor{color0}{rgb}{0.12156862745098,0.466666666666667,0.705882352941177}
\definecolor{color1}{rgb}{1,0.498039215686275,0.0549019607843137}
\definecolor{color2}{rgb}{0.172549019607843,0.627450980392157,0.172549019607843}
\definecolor{color3}{rgb}{0.83921568627451,0.152941176470588,0.156862745098039}
\definecolor{color4}{rgb}{0.580392156862745,0.403921568627451,0.741176470588235}
\definecolor{color5}{rgb}{0.549019607843137,0.337254901960784,0.294117647058824}

\begin{axis}[
legend columns=2,
legend cell align={left},
legend style={fill opacity=0.8, draw opacity=1, text opacity=1, at={(1.1,1.5)}, draw=white!80.0!black,/tikz/column 2/.style={column sep=5pt}},
tick align=outside,
tick pos=left,
x grid style={white!69.0196078431373!black},
xmin=-0.05, xmax=0.3125,
xtick style={color=black},
xtick={0,0.0625,0.125,0.1875,0.25,0.3125,0.375,0.4375,0.5,0.5625,0.625,0.6875,0.75,0.8125,0.875,0.9375,1},
xticklabels={0,\(\tfrac1{16}\),\(\tfrac2{16}\),\(\tfrac3{16}\),\(\tfrac4{16}\),\(\tfrac5{16}\),,\(\displaystyle {6}/{16}\),\(\displaystyle {7}/{16}\),\(\displaystyle {8}/{16}\),\(\displaystyle {9}/{16}\),\(\displaystyle {10}/{16}\),\(\displaystyle {11}/{16}\),\(\displaystyle {12}/{16}\),\(\displaystyle {13}/{16}\),\(\displaystyle {14}/{16}\),\(\displaystyle {15}/{16}\),1},
y grid style={white!69.0196078431373!black},
ymin=-0.025, ymax=0.09375,
ytick style={color=black},
ytick={0,0.0625,0.125,0.1875,0.25,0.3125,0.375,0.4375,0.5,0.5625,0.625,0.6875,0.75,0.8125,0.875,0.9375,1},
yticklabels={0,\(\displaystyle {1}/{16}\),\(\tfrac2{16}\),\(\tfrac3{16}\),\(\tfrac4{16}\),\(\tfrac5{16}\),\(\displaystyle {6}/{16}\),\(\displaystyle {7}/{16}\),\(\displaystyle {8}/{16}\),\(\displaystyle {9}/{16}\),\(\displaystyle {10}/{16}\),\(\displaystyle {11}/{16}\),\(\displaystyle {12}/{16}\),\(\displaystyle {13}/{16}\),\(\displaystyle {14}/{16}\),\(\displaystyle {15}/{16}\),1}
]
\path [draw=blue, fill=blue, opacity=0.2]
(axis cs:0,1)
--(axis cs:0,0)
--(axis cs:0.03125,0)
--(axis cs:0.0625,0.00390625)
--(axis cs:0.09375,0.0078125)
--(axis cs:0.125,0.015625)
--(axis cs:0.15625,0.0234375)
--(axis cs:0.1875,0.03515625)
--(axis cs:0.21875,0.046875)
--(axis cs:0.25,0.0625)
--(axis cs:0.25,1)
--(axis cs:0.25,1)
--(axis cs:0.21875,1)
--(axis cs:0.1875,1)
--(axis cs:0.15625,1)
--(axis cs:0.125,1)
--(axis cs:0.09375,1)
--(axis cs:0.0625,1)
--(axis cs:0.03125,1)
--(axis cs:0,1)
--cycle;

\path [draw=blue, fill=blue, opacity=0.2]
(axis cs:0.75,1)
--(axis cs:0.75,0.5625)
--(axis cs:0.78125,0.609375)
--(axis cs:0.8125,0.66015625)
--(axis cs:0.84375,0.7109375)
--(axis cs:0.875,0.765625)
--(axis cs:0.90625,0.8203125)
--(axis cs:0.9375,0.87890625)
--(axis cs:0.96875,0.9375)
--(axis cs:1,1)
--(axis cs:1,1)
--(axis cs:1,1)
--(axis cs:0.96875,1)
--(axis cs:0.9375,1)
--(axis cs:0.90625,1)
--(axis cs:0.875,1)
--(axis cs:0.84375,1)
--(axis cs:0.8125,1)
--(axis cs:0.78125,1)
--(axis cs:0.75,1)
--cycle;

\addplot [semithick, color0]
table {%
0 -0.25
1 0.75
};
\addlegendentry{$F^0 - 2^{-2}$}
\addplot [semithick, color1]
table {%
0 -0.0625
0.5 0.1875
1 0.9375
};
\addlegendentry{$F^1 - 2^{-4}$}
\addplot [semithick, color2]
table {%
0 -0.015625
0.25 0.046875
0.5 0.234375
0.75 0.546875
1 0.984375
};
\addlegendentry{$F^2 - 2^{-6}$}
\addplot [semithick, color3]
table {%
0 -0.00390625
0.125 0.01171875
0.25 0.05859375
0.375 0.13671875
0.5 0.24609375
0.625 0.38671875
0.75 0.55859375
0.875 0.76171875
1 0.99609375
};
\addlegendentry{$F^3 - 2^{-8}$}
\addplot [semithick, color4]
table {%
0 0
1 0
};
\addlegendentry{$0$}
\addplot [semithick, color5]
table {%
0 -1
1 1
};
\addlegendentry{$2x-1$}
\addplot [blue, dashed]
table {%
0 0
0.03125 0
0.0625 0.00390625
0.09375 0.0078125
0.125 0.015625
0.15625 0.0234375
0.1875 0.03515625
0.21875 0.046875
0.25 0.0625
0.75 0.5625
0.78125 0.609375
0.8125 0.66015625
0.84375 0.7109375
0.875 0.765625
0.90625 0.8203125
0.9375 0.87890625
0.96875 0.9375
1 1
};
\addplot [blue, dashed]
table {%
0 0
0 1
1 1
};

\addplot [very thick, blue, opacity=0.4]
table {%
0 0
0 1
};
\addplot [very thick, blue, opacity=0.4, forget plot]
table {%
0.25 0.0625
0.25 1
};
\addplot [very thick, blue, opacity=0.4, forget plot]
table {%
0.75 0.5625
0.75 1
};
\addplot [very thick, blue, opacity=0.4, forget plot]
table {%
1 1
1 1
};
\end{axis}

\end{tikzpicture}
        \label{fig:MIP_convex_hull_zoomed}
    }
    \hfill\null
    \caption{The projected MIP convex hull for $ L = 2 $, $ L_1 = 3 $ where we fix $ \alpha_2 = 0 $.
        In particular, note that at the boundary points $ \partial \XIP = \{0, \tfrac28, \tfrac68, 1\} $,
        the tight lower-bounding inequalities are $ \y \geq 0 $, $ \y \geq 2x - 1 $
        and $ z \geq F^1-2^{-4} $.
        Thus, on the gap $ (\tfrac28, \tfrac68) $ the functions~$ \capG^2 $, $ \capG^3 $
        are not needed to describe the convex hull of the MIP.}
    \label{fig:MIP_convex_hull_zoom}
\end{figure}
\begin{proof}[of Theorem~\ref{thm:sawtooth-hsharp}]
    \renewcommand{\alph}{\tilde{\alpha}}
    \newcommand{\alphbm}{\tilde{\bm{\alpha}}}
   
    As discussed before, we only need to show 
    that $ \cPIP_{L, L_1} $ is sharp to conclude that $ \PIP_{L, L_1} $ is hereditarily sharp.
    In particular, we need to show that
    \begin{equation*}
        \conv(\proj_{x, \y}(\cPIP_{L, L_1})) = \proj_{x, \y}(\cPLP_{L, L_1}).
    \end{equation*}
     
    \noindent\underline{\textbf{Reduction to $L_1 = L$:}}
    Recall that $ L_1 \geq L $ holds by definition.   
    \begin{claim}
        We claim that it suffices to reduce $L_1$ to $L$ to conclude hereditary sharpness of $ \PIP_{L, L_1} $.
    \end{claim}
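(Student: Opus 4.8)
By the observation preceding this claim, $\PIP_{L,L_1}$ is hereditarily sharp once $\cPIP_{L,L_1}$ is known to be sharp, and the sharpness of $\hPIP$ has already been settled; so the plan is to show that sharpness of $\cPIP_{L,L}$ implies sharpness of $\cPIP_{L,L_1}$ for every $L_1 \ge L$.

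The key observation is that the variables $g_{L+1},\dots,g_{L_1}$, the chain inequalities on them contributed by $T^{L_1}$, and the lower bounds $\y \ge \F^j(x,\bm g) - 2^{-2j-2}$ with $L < j \le L_1$ contain no binary variables: they form a purely continuous ``tail'' attached to the depth-$L$ construction that is completely untouched by the restriction $\bm\alpha_I = \ubmalpha$. I would therefore first eliminate this tail. Fixing $x$ and $\bm g_{\lrbr{0,L}}$, the $\y$-lower bound that the tail forces is the minimum, over all tails feasible for the $T^{L_1}$-chain, of $\max_{L < j \le L_1}\bigl(\F^j(x,\bm g) - 2^{-2j-2}\bigr)$; by \Cref{lem:greedy} (applied with the trivial projection bounds $b_i = 1$ for $i > L$ from \Cref{lem:bndcomp}) a single greedy tail $g^*_j = \min\{2g^*_{j-1},\,1 - 2g^*_{j-1}\}$ simultaneously minimizes each $\F^j(x,\cdot)$, and substituting it and using the definition~\eqref{eq:def_F^j} of the $F^m$ rewrites this value as
\[
    \F^L(x,\bm g_{\lrbr{0,L}}) - 2^{-2L}g_L + 2^{-2L}\max_{1 \le m \le L_1 - L}\capG^m(g_L),
\]
a function convex in $g_L$ because each $F^m$ is convex (\Cref{prop:F^L}). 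Hence, after projecting out the tail, $\cPIP_{L,L_1}$ is exactly $\cPIP_{L,L}$ with one additional lower bound of this form (which subsumes the depth-$L$ bound), and — since the tail is binary-free — the very same elimination turns $\cPLP_{L,L_1}$ into $\cPLP_{L,L}$ together with the identical extra bound.

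It then remains to argue that adjoining this single convex lower bound preserves sharpness, and here the plan is to use the structure already in place: \Cref{lem:bndcomp} gives $\XLP = \conv(\XIP)$; the underestimators $\capG^j$ are convex, hence so is their pointwise maximum; and by \Cref{lem:gappy-epi-characterization} both $\conv(\proj_{x,\y}(\cPIP_{L,L_1}))$ and $\proj_{x,\y}(\cPLP_{L,L_1})$ are the epigraph, over $\conv(\XIP)$, of the ``gap-filled'' version (chords across the gaps of $\XIP$) of a convex underestimator of $x^2$. Passing from $L_1 = L$ to general $L_1$ only enriches this underestimator on $\XIP$ itself by further supporting inequalities to $\y \ge x^2$ — precisely the type appearing in the sawtooth epigraph relaxation $Q^{L_1}$ of \Cref{prop:sawtooth-epi-error} — while leaving the gap-filling chords in place, provided these new inequalities do not protrude above the chords over the gaps. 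Granting that, the equality $\conv(\proj_{x,\y}(\cPIP_{L,L_1})) = \proj_{x,\y}(\cPLP_{L,L_1})$ reduces verbatim to the $L_1 = L$ case, which establishes the claim.

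The step I expect to be the main obstacle is exactly this last point: verifying that over each gap $(\bar x_-,\bar x_+)$ of $\XIP$ the extra depth-$>L$ lower bounds stay weakly below the convex-hull chord — the behaviour visible in \Cref{fig:MIP_convex_hull_zoom}, where over the gap $(\tfrac28,\tfrac68)$ the functions $\capG^2,\capG^3$ play no role in describing the convex hull of the MIP. Making this rigorous will require tracking, via the endpoints $\bar x_\pm \in \XIP$ and the bounds $a_i,b_i$ of \Cref{lem:bndcomp}, that the extra (piecewise-linear in $x$) bound already lies on or below the $L_1 = L$ convex hull at both gap endpoints, so that convexity carries it below the chord throughout the gap; once this is established, the remaining verification on the non-gap intervals is routine given \Cref{lem:greedy,lem:gappy-epi-characterization}.
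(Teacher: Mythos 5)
Your high-level architecture matches the paper's (treat $\XIP$ and the gaps of $\XIP$ separately, and control the gaps via \Cref{lem:gappy-epi-characterization}), and your tail-elimination computation is correct; collapsing all of $g_{L+1},\dots,g_{L_1}$ at once is a legitimate alternative to the paper's one-constraint-at-a-time downward induction on $L_1$. The problem is that the crux — that over each gap the added depth-$>L$ lower bounds do not push the projected LP lower bound above the chord — is exactly the step you leave open, and the plan you sketch for it does not close as stated. Two concrete issues. First, ``convexity carries it below the chord'' needs convexity in $x$, over the gap, of the function you actually have to control, namely $\min_{\bm g}\max_j \G^j(x,\bm g)$: a \emph{single} $\bm g$ must make the old and the new bounds simultaneously small (this is what \Cref{lem:greedy} supplies, but you never invoke it for this purpose — ``some $\bm g$ handles the new bounds'' and ``some $\bm g$ handles the old ones'' do not combine). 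Its convexity in $x$ is not the convexity of $\max_m\capG^m(g_L)$ in $g_L$ that you establish; it comes either from partial minimization of a jointly convex function over the polytope $\proj_{x,\bm g}(\DLP)$, or from the constancy of the greedy $\bm g^*$ across the gap — and the latter is precisely the hard content of Case~II.A.3 of the main induction, so you would be importing the difficult part of the theorem into what should be the easy reduction. Second, you assert but do not prove that the gap-filling chords of $\conv(\proj_{x,\y}(\cPIP))$ are ``left in place'' as $L_1$ grows.

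The paper's proof avoids all of this with one observation that is absent from your proposal: every point of $\partial \XIP$ has the form $i2^{-L}$, and by \Cref{prop:F^L} (item~4) the inequality $\y \ge F^j(x) - 2^{-2j-2}$ with $j>L$ is tight only at the midpoints $\tfrac{i}{2^j}+\tfrac{1}{2^{j+1}}$, hence strictly slack at every gap endpoint. So the MIP set is unchanged on $\partial\XIP$, whence by \Cref{lem:gappy-epi-characterization} and the convexity of $\capG$ the convex hull over each gap is \emph{literally unchanged}; over the gaps, sharpness then follows from the two inclusions $\conv(\proj_{x,\y}(\cPIP_{L,L_1})) \subseteq \proj_{x,\y}(\cPLP_{L,L_1}) \subseteq \proj_{x,\y}(\cPLP_{L,L_1-1}) = \conv(\proj_{x,\y}(\cPIP_{L,L_1-1})) = \conv(\proj_{x,\y}(\cPIP_{L,L_1}))$, with no convexity analysis of the LP over the gap at all. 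Adopting that slackness observation turns your declared ``main obstacle'' into a two-line argument.
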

    \begin{claimproof}
    Assume that $ L_1 > L $ holds.
    To construct $ \cPIP_{L, L_1} $ from $ \cPIP_{L, L_1 - 1} $,
    we simply maintain the same fixing $ \bm \alpha_I = \ubmalpha $,
    then add a new variable $ g_{L_1} \ge 0 $, together with the new constraints
    \begin{subequations}
        \begin{alignat}{2}
                g_{L_1} &\le 2 g_{L_1-1},\quad
                g_{L_1} \le 2 (1-g_{L_1-1}), \tag{from \eqref{eq:x-sawtooth-epi-constr} via \eqref{eqn:sawtooth-formulation-LP-constr}}\\
                \y &\ge x - \sum_{i=1}^{L_1} 2^{-2i} g_i - 2^{-2L_1-2}. \tag{from \eqref{eq:sawtooth-relax-tight-LB}}
        \end{alignat}
    \end{subequations}
    We then note the following:
    \begin{enumerate}
        \item It holds $ \cPIP_{L, L_1} \subseteq \cPIP_{L, L_1 - 1} $,
            since $ L_1 > L_1 - 1 $,
            and thus there are more inequalities used to define $ \cPIP_{L, L_1} $.
        \item We have $ \cPIP_{L, L_1}|_{x \in \partial \XIP} = \cPIP_{L, L_1 - 1}|_{x \in \partial \XIP} $.
            To see this, first notice that $ \partial \XIP \subseteq \{\tfrac{i}{2^L} :
                i \in \lrbr{2^L}\} $,
            since $ I \subseteq \lrbr{L} $. 
            Thus, for $ L_1 > L $,
            the inequality $ \y \ge x - \sum_{i = 1}^{L_1} 2^{-2i} g_i - 2^{-2L_1 - 2} $ is not tight
            at any of these points in $ \partial \XIP $;
            see \Cref{prop:F^L}, \Cref{prop1:weak}.
            \label{same_xip}
        \item It follows from the previous equation that for any $ \bar x \in \partial \XIP $, we have
        \begin{equation*}
            \proj_{x, \y}(\cPIP_{L, L_1 - 1}|_{x = \bar x})
                = \proj_{x, \y}(\cPIP_{L, L_1}|_{x = \bar x})
                = \{(x, \y) : \y \geq \capG(x), x = \bar x\}.
        \end{equation*}
        \item When we restrict to the domain $\conv(\XIP) \setminus \XIP$ and consider the convex hulls, we have equality as we reduce $L_1$, \ie
        \begin{equation*}
        \conv(\proj_{x, \y}(\cPIP_{L, L_1 - 1})|_{x \in \conv(\XIP) \setminus \XIP}) =  \conv(\proj_{x, \y}(\cPIP_{L, L_1 })|_{x \in \conv(\XIP) \setminus \XIP}).
        \end{equation*}
        This is due to \Cref{same_xip}, the convexity of $ \capG $
       and \Cref{lem:gappy-epi-characterization}.
    \end{enumerate}
    Thus, the convex hull remains unchanged across the gaps in $ \XIP $,
    and since the LP relaxation does not weaken, sharpness in lower bound is maintained;
    see \Cref{fig:MIP_convex_hull_zoom}.
    This implies that $ \cPIP_{L, L_1} $ is sharp  if $ \cPIP_{L, L_1 - 1} $ is sharp.
    The claim then holds by induction.
    \end{claimproof}
    
   We now proceed to prove sharpness of $\cPIP_{L,L}$ by induction on $L$.

    \noindent \underline{\textbf{Base case:}}
    If $ L = 0 $, then there are no binary variables and, hence, nothing to branch on;
    therefore, the result holds trivially.

    \noindent \underline{\textbf{Induction on $L$:}}
    For the inductive step, we assume that $ \ctPIP_{L - 1, L - 1} $ is hereditarily sharp
    for all possible fixings of $ \bm \alpha $-variables,
    and show that $ \cPIP_{L, L} $ is hereditarily sharp. 
    
    We begin by observing that
    \begin{equation*}
        \proj_{x,\y}\left(\cPIP_{L,L}\right) = \epi_{\XIP}(\capG).
    \end{equation*}
    By \Cref{lem:gappy-epi-characterization}, it follows that
    \begin{equation*}
        \conv(\epi_{\XIP}(\capG)) = \epi_{\conv(\XIP)}\left(\capG_{\XIP}\right),
    \end{equation*}
    where $ \capG_{\XIP} $ is defined as in \Cref{lem:gappy-epi-characterization}.
    Thus, proving \Cref{thm:sawtooth-hsharp} is equivalent to proving that
    \begin{equation*}
        \proj_{x,\y}(\cPLP_{L,L}) = \epi_{\conv(\XIP)}(\capG_{\XIP}).
    \end{equation*}
    In particular, it suffices to show that for any $ \x \in \conv(\XIP) $, we have
    \begin{equation}
        \label{eq:proof-goal}
        \capG_{\XIP}(\x) = \min_{\bm g \in \cPLP_{L,L}|_{x = \x}}\G(\x, \bm g)
    \end{equation}
    which we do in the following.
    
    \noindent\underline{\textbf{Case I: $\x \in \XIP$.}}
    By \Cref{thm:sawtooth-sharp}, $\PIP_{L,L}$ is sharp (\ie when $ I = \emptyset $).
    Thus, the LP lower bounds 
    on $\y$ coincide with the MIP lower bounds %
    for MIP-feasible points $x \in \XIP$, such that we have $\proj_{x,\y}(\cPLP_{L,L})|_{x \in \XIP} = \epi_{\XIP}(\capG)|_{x \in \XIP}$.
    This implies \eqref{eq:proof-goal}.

\noindent\underline{\textbf{Case II: $\x \in \conv(\XIP) \setminus \XIP$.}}
 Let $\x_-, \x_+ \in \XIP$ as defined in \Cref{lem:gappy-epi-characterization}.  Since $\x \notin \XIP$, it follows that $\x_- ,\x_+ \in \partial \XIP$.

    \noindent\underline{\textbf{Case II.A: $1 \notin I$.}}
    Assume $ 1 \notin I $.
    
    \noindent\underline{Case II.A.1: $[\x_-, \x_+]\subseteq  \partial \XIP \cap [0,1/2]$.}
    We make use of the induction hypothesis here.
    To this end, we will work with $ L - 1 $ layers.
    We will decorate variables and parameters from the smaller set using  ``\ $\tilde{}$\ ".
    
    Define $ \tilde{\ubar{\bm \alpha}} \define \ubar{\bm \alpha}$ 
    and $ \tilde{I} \define \{i - 1 : i \in I\} $, \ie the same variables~$ \alpha_i $ are fixed 
    but with indices decremented by~$1$.
    Now, define the linear map
    \begin{equation*}
        \Phi\colon [0, 1] \times [0, 1] \times [0, 1]^{L - 1} \times [0, 1]^{L - 1} \to [0, 1] \times [0, 1] \times [0, 1]^L \times [0, 1]^L
    \end{equation*}
    such that $ (\tilde{x}, \tilde{\y}, \tilde{\bm g}, \tilde{\bm \alpha}) \mapsto (x, \y, \bm g, \bm \alpha) $ is defined via
    \begin{equation}
        \begin{array}{rll}
            x &= \tfrac{\tilde{x}}{2}, \quad
            \y = \tfrac{\tilde{\y}}{4},\\
            g_1 &= \tilde{x}, \quad 
            \bm g_{\lrbr{2,L}} = \tilde{\bm g},\\
            \alpha_1 &= \tilde x, \quad \bm \alpha_{\lrbr{2,L}} = \tilde{\bm \alpha}.
        \end{array}
    \end{equation}
    
    For convenience, under the definitions above, we write $x=\Phi_x(\tilde x)$, $\y=\Phi_{\y}(\tilde \y)$, $\bm g = \Phi_{\bm g}(\tilde{\bm g})$, and $\bm \alpha = \Phi_{\bm \alpha}(\tilde{\bm \alpha})$,
    and note that $g_0=x$ and $\tilde{g}_0 = \tilde{x}$. 
    
    \begin{claim}
        $ \Phi\left(\ctPIP_{L - 1, L - 1}\right)
            = \cPIP_{L, L}\Big|_{x \in \conv(\XIP \cap [0, \nicefrac 12])} $.
    \end{claim}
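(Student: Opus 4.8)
The plan is to establish the set equality by proving the two inclusions, in each case translating the constraints that describe one side into those of the other through the affine map $\Phi$. The conceptual point is that the tightened sawtooth construction is self-similar: on the left half $x\le\tfrac12$ the first sawtooth layer forces $\alpha_1=0$ and $g_1=2g_0$, and after rescaling by $x=\tilde x/2$, $\y=\tilde\y/4$, $g_{i+1}\leftrightarrow\tilde g_i$, $\alpha_{i+1}\leftrightarrow\tilde\alpha_i$ the depth-$L$ relaxation on $[0,\tfrac12]$ becomes the depth-$(L-1)$ relaxation on $[0,1]$. So the bulk of the work is a careful but routine constraint check, driven by one algebraic identity.

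First I would record that identity. With $g_0=x=\tilde x/2$, $g_1=\tilde x$ and $g_i=\tilde g_{i-1}$ for $i\ge2$, reindexing the sum and using $2^{-2i}=\tfrac14\,2^{-2(i-1)}$ gives
\[
    \F^{j}(x,\bm g)=\tfrac14\,\F^{j-1}(\tilde x,\tilde{\bm g})\qquad(j\ge1),
\]
hence $\G^{j}(x,\bm g)=\F^{j}(x,\bm g)-2^{-2j-2}=\tfrac14\bigl(\F^{j-1}(\tilde x,\tilde{\bm g})-2^{-2(j-1)-2}\bigr)=\tfrac14\,\G^{j-1}(\tilde x,\tilde{\bm g})$ for $j=1,\dots,L$. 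Together with $\F^{0}(x,\bm g)-2^{-2}=x-\tfrac14=\tfrac14(2\tilde x-1)$, $2x-1=\tilde x-1$, and $\y=\tilde\y/4$, this shows that the entire lower-bound system of $\cPIP_{L,L}$ — the inequalities $\y\ge\G^{j}(x,\bm g)$ for $j=0,\dots,L$ together with $\y\ge0$ and $\y\ge2x-1$ — becomes, after the $\Phi$-substitution and division by $4$, exactly the lower-bound system $\tilde\y\ge\G^{j}(\tilde x,\tilde{\bm g})$ for $j=0,\dots,L-1$ together with $\tilde\y\ge0$ and $\tilde\y\ge2\tilde x-1$ of $\ctPIP_{L-1,L-1}$; the extra bound $\tilde\y\ge4(\tilde x-1)$ coming from $\y\ge2x-1$ is redundant since $\tilde x\le1$ and $\tilde\y\ge0$.

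Next I would handle the combinatorial part. The block $(x,\bm g,\bm\alpha)\in S^{L}$ splits into the layer-$1$ block, which at $\alpha_1=0$ reduces to ``$g_1=2g_0$ and $g_0\le\tfrac12$'' and is matched by $g_1=\tilde x=2(\tilde x/2)$ and $\tilde x\le1$, and the layer-$j$ blocks for $j=2,\dots,L$, which are literally the layer-$(j-1)$ blocks of $S^{L-1}$ in the $\tilde{}$ variables with $\tilde g_0=\tilde x$; the epigraph block $(x,\bm g)\in T^{L}$ is implied by $S^{L}$ when $L_1=L$ and translates likewise. Since $1\notin I$ we have $I\subseteq\lrbr{2,L}$, so the fixing $\bm\alpha_I=\ubmalpha$ corresponds coordinatewise, via $\tilde I=\{i-1:i\in I\}$, to $\tilde{\bm\alpha}_{\tilde I}=\tilde{\ubmalpha}$; adding the routine check that the variable domains rescale consistently yields $\Phi(\ctPIP_{L-1,L-1})\subseteq\cPIP_{L,L}$, and such an image has $x=\tilde x/2\le\tfrac12$ and lies in $\cPIP_{L,L}$, so $x\in\XIP\cap[0,\tfrac12]\subseteq\conv(\XIP\cap[0,\tfrac12])$. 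For the reverse inclusion, I would take a point of $\cPIP_{L,L}$ with $x\in\conv(\XIP\cap[0,\tfrac12])$, hence $x\le\tfrac12$; the integrality of $\alpha_1$ together with the layer-$1$ block of $S^{L}$ forces $g_1=2x$, and setting $\tilde x:=2x$, $\tilde\y:=4\y$, $\tilde{\bm g}:=\bm g_{\lrbr{2,L}}$, $\tilde{\bm\alpha}:=\bm\alpha_{\lrbr{2,L}}$ and running the translations above backwards exhibits the required preimage in $\ctPIP_{L-1,L-1}$.

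The step needing the most care, and the main obstacle, is the boundary value $x=\tfrac12$: there the layer-$1$ block of $S^{L}$ also admits $\alpha_1=1$ (still forcing $g_1=1=2x$), so in the full $(x,\y,\bm g,\bm\alpha)$-space the configurations $\Phi$ misses are exactly those with $x=\tfrac12$ and $\alpha_1=1$. I would resolve this by observing that fixing $\alpha_1=0$ is without loss of generality whenever $x\le\tfrac12$ — it changes neither the attainable $(x,\y,\bm g)$-triples nor the projection onto $(x,\y)$ that the remaining hereditary-sharpness argument uses — so the equality holds in the sense in which it is invoked (indeed Case~II.A.1 only applies it at points $\x\in(\x_-,\x_+)\subseteq(0,\tfrac12)$). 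With the equality in hand, the inductive step closes because sharpness of $\ctPIP_{L-1,L-1}$ (the induction hypothesis) transports along the affine isomorphism $\Phi$ to sharpness of $\cPIP_{L,L}$ over $\conv(\XIP\cap[0,\tfrac12])$.
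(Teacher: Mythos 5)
Your proof is correct and follows essentially the same route as the paper's: the self-similar rescaling $x=\tilde x/2$, $\y=\tilde\y/4$, $g_1=\tilde x$, $\bm g_{\lrbr{2,L}}=\tilde{\bm g}$ yielding $\G^{j}(x,\bm g)=\tfrac14\,\G^{j-1}(\tilde x,\tilde{\bm g})$ (with $\G^{0}\leftrightarrow\G^{-1}$, $\G^{-2}\leftrightarrow\G^{-2}$ and the image of $\y\ge 2x-1$ redundant on the left half), combined with a layer-by-layer translation of the $S^L$/$T^{L_1}$ blocks and the index shift $\tilde I=\{i-1:i\in I\}$. Your explicit treatment of the boundary point $x=\tfrac12$, where $\alpha_1=1$ is also feasible but yields the same $(x,\y,\bm g)$, is a detail the paper handles only implicitly by ``reversing the map, ignoring $\alpha_1$,'' and is a welcome clarification.
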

 \begin{claimproof}
    Let $(\tilde{x}, \tilde{\y}, \tilde{\bm g}, \tilde{\bm \alpha}) \in \ctPLP_{L-1, L-1}$ such that $\tilde \y$ is minimal, and let $(x, \y, \bm g, \bm \alpha) = \Phi(\tilde x, \tilde \y, \tilde{\bm g}, \tilde{\bm \alpha})$. We will show that $(x,\y, \bm g, \bm \alpha) \in \cPIP_{L,L}\Big|_{x \in \conv(\XIP \cap [0, \nicefrac 12])}$. To do so, we 
    reference the formula~\eqref{eq:sawtooth-Pplusminus}, and show that Constraints~\eqref{eq:x-sawtooth-constr},
    \eqref{eq:x-sawtooth-epi-constr}, \eqref{eq:sawtooth-relax-tight-LB}
    and~\eqref{eq:sawtooth-relax-tight-LB-ends} hold for $ (x, \y, \bm g, \bm \alpha) $.

    Since $ \tilde \y $ is minimal, 
    we have $\tilde{\y} = \tilde\G^j(\tilde{x}, \tilde{\bm g})$ for some $j$. We claim that $z = \G^{j'}(x,\bm g)$ for some $j'$.
    
    If $j\geq 0$, then, noting that 
    $
        \tfrac 14 \tilde{x} = \tfrac 12 \tilde x - \tfrac 14 \tilde x = x - \tfrac 14 g_1
    $, we have
    \begin{equation*}
        \begin{array}{rl}
            \y &= \Phi_{\y}(\tilde{\y})\\
              &= \tfrac 14 (\G^j(\tilde x, \tilde{\bm g})) \\
              &= \tfrac 14 (\tilde x - \sum_{i=1}^j 2^{-2i} \tilde g_i - 2^{-2j-2}) \\
              &= x - \tfrac 14 g_1 - \tfrac 14(\sum_{i=1}^j 2^{-2i}\tilde g_i - 2^{-2j-2})\\
              &= x - \sum_{i = 1}^{j + 1} 2^{-2i} g_i - 2^{-2(j + 1) - 2} = \G^{j + 1}(x, \bm g).
        \end{array}
    \end{equation*}
    If $ j = -1 $, we have
     \begin{equation*}
            \y = \Phi_{\y}(\tilde{\y})
              = \tfrac 14 (\G^{-1}(\tilde x, \tilde{\bm g})) 
              = \tfrac 14 (2\tilde x - 1) 
              = x - \tfrac 14 
              = \G^{0}(x, \bm g).
    \end{equation*}
    Finally, if $ j = -2 $, then
        \begin{equation*}
            \y = \Phi_{\y}(\tilde{\y})
              = \tfrac 14 (\G^{-1}(\tilde x, \tilde{\bm g})) 
              = 0
              = \G^{-2}(x, \bm g).
    \end{equation*}
    Thus, we have that $ \Phi_{\y}(\tilde{\y}) \ge \G^j(\Phi_x(\tilde x), \Phi_{\bm g}(\tilde{\bm g})) $
    for all~$ j \neq 1 $,
    where the absence of~$ \G^{-1}(x, \bm g) $ is due to the fact that $ \G^{-1}(x, \bm g) \le 0 $
    for $ x \in [0, \tfrac 12] $, such that that the corresponding bound is inactive on~$ \Phi_x(\tXLP) $.
    
    Note that the above calculations also imply that, for all $ \tilde{j} \in \lrbr{-2, L - 1} $
    and for all $ (\tilde x, \tilde{\bm g}) \in \proj_{x, \bm g}(\tDLP) $,
    we have for some $ j \in \lrbr{-2, L} $ that
    $\Phi_{\y}(\tilde\G^{\tilde j}(\tilde{x}, \tilde{\bm g})) = \G^j(\Phi_x(\tilde x), \Phi_{\bm g}(\bm g))$.
    Further, since each~$ \tilde{j} $ maps to a unique~$j$ (with only the inactive $j=-1$ skipped), this implies that $\Phi_{\y}(\tilde{\G}(\tilde{x}, \tilde{\bm g})) = \G(\Phi_x(\tilde x), \Phi_{\bm g}(\bm g))$.
    Thus, we can conclude that~\eqref{eq:sawtooth-relax-tight-LB}
    and~\eqref{eq:sawtooth-relax-tight-LB-ends} hold.

    Next, we argue that $ (x,\bm g, \bm \alpha)\in \proj_{x, \bm g, \bm \alpha_{\lrbr{2,L}}}(\DLP)$. This implies in particular that~\eqref{eq:x-sawtooth-constr} as well as \eqref{eq:x-sawtooth-epi-constr} hold and that we have $\bm\alpha_I = \ubmalpha_I$.
    
    Since $g_1 = \tilde{x} = 2x$, we observe that $\DLP$ can be written as the set of points $(x, \bm g, \bm \alpha) \in [0,1]\times [0,1]^L \times[0,1]^L$ such that
    \begin{equation*}
        \begin{array}{rll}
            g_0 &= x\\
            g_i &= 2g_{i-1} & i=1 \text{ or } i \in I, \ubar{\alpha}_i = 0\\
            g_i &= 2(1-g_{i-1}) & i \in I, \ubar{\alpha}_i = 1\\
            |g_{i-1}-\alpha_i| &\le g_i \le \min(2g_{i-1}, 2(1-g_{i-1})) & i \in \lrbr{L} \setminus I,~i \ge 2\\
            \bm \alpha_I &= \ubar{\bm \alpha}_I\\
            x,g_i,\alpha_i &\in [0,1] & i \in \lrbr{L}.
        \end{array}
    \end{equation*}
    In this form, it is straightforward to confirm $ (x, \bm g, \bm \alpha) \in \DLP $
    from the corresponding form for $ \tDLP $: since the indices for both the map on $\bm g$ and on the shift from $\tilde{I}$ to $I$ are shifted by~$1$ in the same direction, with the same choice of $\ubar{\bm \alpha}$, all equality constraints on~$ g_i $, $ i \in \tilde{I} $, are preserved through the mapping. Further, the relationship between each $g_i$ and $g_{i-1}$ is likewise preserved, as the corresponding $\alpha_i$ is the same, and finally the choice of $g_1$ is feasible given $x$. Thus, all constraints are satisfied, such that $(x, \bm g, \bm \alpha) \in \DLP$, yielding for the choice of $\y$ above that $(x, \y, \bm g, \bm \alpha) \in \cPLP_{L,L}|_{x \in \conv(\XIP \cap [0, \nicefrac 12])}$.

    Further, from the form for $\DLP$ above, we observe that $\Phi_x( \tXIP) = \XIP \cap [0, \tfrac 12]$ and $\Phi_x(\tXLP) = \conv(\XIP \cap [0, \tfrac 12])$. To show the first part, we have already shown that $ \Phi_x(\tXIP) \subseteq \XIP \cap [0, \tfrac 12] $.
    To prove the other direction, we simply reverse the map for any $(x, \bm g, \bm \alpha) \in \DIP|_{x \in [0,\nicefrac 12]}$, ignoring $\alpha_1$:
    letting $ \tilde{x} = g_1 = \tfrac x2 $, $ \tilde{\bm g} = \bm g_{\lrbr{2, L}} $
    and $ \tilde{\bm \alpha} = \bm \alpha_{\lrbr{2, L}} $,
    it is easy to confirm $ (\tilde{x}, \tilde{\bm g}, \tilde{\bm \alpha}) \in \tD $. 
    
    To show that $\proj_x\left(\Phi(\ctPLP_{L-1,L-1})\right) = \conv(\XIP \cap [0, \tfrac 12])$, we observe that $\conv(\XIP)|_{x \in [0, \nicefrac 12]}$ is a closed interval with boundary points in $\XIP \cap [0,\tfrac 12] = \Phi_x(\tXIP)$, such that $\conv(\XIP \cap [0,\tfrac 12]) = \conv(\Phi_x(\tXIP)) = \Phi_x(\conv(\tXIP)) = \Phi_x(\tXLP)$, since $ \Phi $ is linear in~$x$.
 \end{claimproof}

We now show two facts:
\begin{claims}
    Let $\x \in \tXLP$ and 
    $\tilde \y^* \in \argmin 
    \{\tilde \y : (\tilde \y,\tilde{\bm g}) \in \proj_{\tilde \y, \tilde{\bm g}}(\ctPLP_{L-1,L-1}|_{\tilde x=\x}) \}$
    with the corresponding solution $\tilde{\bm g}^*$ 
    defined in \Cref{lem:greedy}.
    Then
    \begin{equation*}
        (\tfrac 14 \tilde{\y}^*, \Phi_{\bm g}(\tilde{\bm g}^{*})) \in \argmin \{\y\, : \, (\y,\bm g) \in \proj_{\y,\bm g}(\cPLP_{L,L}|_{x=\Phi_x(\tilde{x})})\}.
    \end{equation*}
    \label{item:fact1}
\end{claims} 
\begin{claims}
    We have $ \tilde{\y} = \tuG(\tilde{x}) $
    if and only if $ \Phi_{\y}(\tilde{\y}) = \uG(\Phi_x(\tilde{x})) $,
    such that $ \uG(\Phi_x(\tilde{x})) = 4\tuG(\tilde{x}) $.
    \label{item:fact2}
\end{claims}

\noindent By the sharpness of $ \ctPIP_{L - 1, L - 1} $, these facts then imply that
\begin{equation*}
    \uG(\Phi_x(\tilde{x})) = 4\tuG(\tilde{x}) = 4\min_{\bm g \in \ctPLP_{L-1,L-1}|_{x=\tilde{x}}}(\G(\x, \bm g))= \min_{\bm g \in \cPLP_{L,L}|_{x=\Phi_x(\tilde{x})}}(\G(\x, \bm g)),
\end{equation*}
such that~\eqref{eq:proof-goal} holds.\\

    \noindent\textit{Proof of Claim \ref{item:fact1}.}
    Let $ \tilde{x} \in \tXLP $ and $ \tilde{\y}^* \define \min \{\y\, : \, (\y,\bm g) \in \proj_{\y,\bm g}(\ctPLP_{L-1,L-1}|_{x=\tilde{x}}) \}$, and let $\tilde{\bm g}^*$ be the optimizing solution from \Cref{lem:greedy}.
    For convenience, let $ \x \define \Phi_x(\tilde x) $
    and $ \bm g^* \define \Phi_{\bm g}(\tilde{\bm g}^*) $.
    Then~$ \bm g^* $ takes on the optimal form from \Cref{lem:greedy},
    with $ \tilde{\y}^* = \tilde{\G}(\tilde{x}, \tilde{\bm g}^*) $, yielding 
    \begin{equation*}
        \y^* \define \Phi_{\y}(\tilde{\y}^*) = \Phi_{\y}(\tilde{\G}(\tilde{x},\tilde{\bm g}^*)) = \G(\x, \bm g^*) = \min \{\y\, : \, (\y,\bm g) \in \proj_{\y,\bm g}(\cPLP_{L,L}|_{x=\x})\},
    \end{equation*}
    such that $(\tfrac 14 \tilde{\y}^*, \Phi_{\bm g}(\tilde {\bm g}^*)) \in \argmin \{\y\, : \, (\y,\bm g) \in \proj_{\y,\bm g}(\cPLP_{L,L}|_{x=\hat{x}})\}$, as required. As a corollary, observing that $\tilde{\G}(\tilde x) = \min \{\y\, : \, (\y,\bm g) \in \proj_{\y,\bm g}(\PLP_-|_{x=\x}) \}$, and likewise for $\G(\x)$, we have that $\Phi_{\y}(\tilde{\G}(\tilde x)) = \tfrac 14 \tilde{\G}(\tilde x) = \G(\x)$.

    \noindent\textit{Proof of Claim~\ref{item:fact2}.}
    In order to show $ \tilde{\y} = \tuG(\tilde{x}) $
    if and only if $ \Phi_\y(\tilde{y}) = \uG(\Phi_x(\tilde{x})) $,
    we observe that, for any $ \tilde{x} \in \tilde X $, we have $ \Phi_x(\tilde x) \in X $,
    and therefore
    \begin{equation*}
        \Phi_x(\tuG(\tilde{x})) = \Phi_x(\tilde{\G}(\tilde{x})) = \G(\Phi_x(\tilde{x})) = \uG(\Phi_x(\tilde{x})).
    \end{equation*}
    Consequently, $ \Phi_{\y}(\tuG(\tilde{x})) = \uG(\Phi_x(\tilde{x})) $ holds on $ \tilde{X} $.
    Now, by \Cref{lem:gappy-epi-characterization},
    across any gap $ \tilde{x}_-, \tilde{x}_+ \in \tilde X $
    for which $ (\tilde{x}_-, \tilde{x}_+) \cap \tilde{X} = \emptyset $
    and $ \tilde{x} \in [\tilde{x}_-, \tilde{x}_+] $,
    we have that $ \tuG(\tilde{x}) $ is on the line
    between the points $ (\tilde x_-, \tilde \G(\tilde x_-)) $
    and $ (\tilde x_+, \tilde \G(\tilde x_+)) $.
    Thus, since $ \x \define \Phi_x(\tilde{x}) $, and since $ \Phi $ is linear in~$x$ and~$ \y $,
    $\G(\x)$ lies on the line between the points $(\Phi_x(\tilde x_-),\tilde \G(\tilde x_-)))$ and $\Phi_x((\tilde x_+), \tilde \G(\tilde x_+))$.
    
    Now, observe that, since $\Phi_x(\tilde \X) = X \cap [0, \tfrac 12]$, we have that $(x_-, x_+) \define (\Phi_x(\tilde x_-), \Phi_x(\tilde x_+))$ is a gap in $X$, with $x_-, x_+ \in X$ and $(x_-,x_+) \cap X = \emptyset$. Furthermore, as $x_+,x_- \in X$, we have that $\uG(\hat{x}) = \Phi_x(\tuG(\tilde{x}_-))$, and similarly for $x_+$.
    Then, by \Cref{lem:gappy-epi-characterization}, we have for $ x \in (x_+,x_-) $
    that $ \uG(\Phi_x(\tilde{x})) = \uG(x) = \Phi_x(\uG(\tilde{x})) $, as required.

    \noindent\underline{Case II.A.2: $[\x_-, \x_+]\subseteq \conv(\XIP \cap [1/2,1])$.}
    Applying \Cref{lem:sawtooth-reflection} to $\cPIP$, we immediately recover sharpness on $1-\Phi_x(\tXLP) = \conv(\XIP \cap [\tfrac 12, 1])$. To see this, let $x \in \Phi_x(\tXLP)$. Then, via \Cref{lem:sawtooth-reflection}, we obtain exactly the same feasible regions for $\bm g, \bm \alpha$ with $x=1-\x$ as with $x=\x$,
    \ie $ \proj_{\bm g, \bm \alpha_{\lrbr{2, L}}} (\DIP|_{x = \x})
        = \proj_{\bm g, \bm \alpha_{\lrbr{2, L}}} (\DIP|_{x = 1 - \x}) $,
    and moreover, similar to \Cref{lem:sawtooth-reflection},
    it is not hard to show that we have $ \x^2 - \capG(\hat x) = (1 - \x)^2 - \capG(1 - \x) $.
    Thus, we have that both $ \capG(1 - \x) $
    and $ \min_{\bm g \in \cPLP_{L, L}|_{x = \x}}(\G(1 - \x, \bm g)) $
    maintain the same distance below $ (1 - \x)^2 $ as~$ \uG(\x) $
    and $ \min_{\bm g \in \cPLP_{L, L}|_{x = \x}}(\G(\x, \bm g)) $, respectively.
    Since the second pair coincides, so must the first pair, such that
    \begin{equation*}
        \uG(1 - \x) = \min_{\bm g \in \cPLP|_{x = \x}}(\G(1 - \x, \bm g)),
    \end{equation*}
    and therefore sharpness holds on $ 1 - \Phi_x(\tXLP) $.
    
    \noindent\underline{Case II.A.3: $\tfrac{1}{2} \in [\x_-, \x_+]$.}
    
    Since we showed sharpness on both~$ \conv(\XIP \cap [0, \tfrac 12]) $
    and $ \conv(\XIP \cap [\tfrac 12, 1]) $,
    we only have to show sharpness on the gap $(\x_-, \x_+)$ in $\XIP$. Note, in this case, $\tfrac 12 \not\in \XIP$.
    We wish to show that $\min_{\bm g \in \cPLP|_{x=\x}}(\G(\x, \bm g))$ coincides with the line between $(\x_-, \G(\x_-))$ and $(\x_+, \G(\x_+))$. 
    
    To show this, we first note that both endpoints coincide with $ \G^{j_{\max}}(x, \bm g^*) $
    for some~$ j_{\max} $, and by \Cref{lem:sawtooth-reflection}, both this value of~$j$
    and the corresponding solution~$ \bm g^* $ must be the same for both gap endpoints.
    Further, since $\x_-,\x_+$ are the endpoints of a gap, we have that $\G(\x_-) = \x_-^2$ and $\G(\x_+) = \x_+^2$.
    This can be seen as follows:
    first, by \cite[Lemma 6]{Beach2020-compact}, we have that each~$ \G^j $, $ j \ge 0 $,
    is incident with $x^2$ exactly at the points $x = \tfrac{k}{2^j} + \tfrac{1}{2^{j+1}}$, $k = 0, \ldots, 2^j-1$.
    Furthermore, the points at which the $\bm \alpha$-vector changes, and thus the possible gaps in $\XIP$, are exactly the points $\x=k 2^{-L}$, which must take the form above for some $j \in \lrbr{0,L-1}$, so that $\capG^{j-1}(x)=x^2$ for $x \in \{\x^-, \x^+\}$. Since each other $\G^j(x) \le x^2$ at these points, this yields $\G(x) = x^2$ for $x \in \{\x^-, \x^+\}$.
    
    Now, let $[a_1, b_1]$ be the bounds on $g_1$ from \Cref{lem:bndcomp}. Then we have $g^*_1=b_1$: through the mapping $\Phi$, we have $g^*_1 = \tilde{x} = \tilde{b}_0$ at both $\x^-$ and $\x^+$, where $\tilde{b}_0$ is defined in the manner of \Cref{lem:bndcomp}.
    Thus, since~$ g_1 $ is subject to every constraint in~$ \DIP $ that~$ \tilde{x} $ is in~$ \tDIP $, %
    we have that $ b_1 \leq \tilde{b}_0 = g^*_1 \leq b_1 $, such that $g^*_1 = b_1$.
    
    Furthermore, by the convexity of $ \proj_{x, \bm g}\left(\DLP\right) $,
    since $(\x^-, \bm g^*),(\x^+, \bm g^*) \in \proj_{x,\bm g} \left(\DLP\right)$, we have that $(\x, \bm g^*) \in \proj_{x,\bm g} \left(\DLP\right)$ for all $\hat{x} \in (\x^-, \x^+)$. Thus, we have for any such $\x$ that
    \begin{equation*}
        g^*_1 = b_1 \ge \min(2\x, 2(1 - \x), b_1) \ge g^*_1,
    \end{equation*}
    yielding by \Cref{lem:greedy} that $\bm g^* \in \argmin \{\y\, : \, (\y,\bm g) \in \proj_{\y,\bm g}(\cPLP_{L,L})|_{x=\x}\}$.
    Thus, we have 
    \begin{equation*}
        \G(\x, \bm g^*) = \min_{\bm g \in \cPLP_{L,L}|_{x = \x}}(\G(\x, \bm g)) = \G(\x)
    \end{equation*}
    is linear in $\x$ across the gap $[\x^-,\x^+]$ and coincides with $\G(\x)$ at the endpoints, as required. Therefore, we have that $\cPLP_{L,L}$ is sharp across the gap. We have now established sharpness of $\cPLP_{L,L}$ over all of $\conv(\D)$, and thus the proof is complete for $1 \notin I$.
    
    \noindent\underline{\textbf{Case II.B: $1 \in I$.}}
    Finally, to recover sharpness if $1 \in I$, we only have to observe that inserting~1 into $I$, thereby restricting $\alpha_1 = 1$ or $\alpha_1 = 0$, simply restricts $\cPIP_{L,L}$ to either $x \in \Phi_x(\XIP)$ or $x \in 1-\Phi_x(\XIP)$, on which sharpness holds exactly as the sharpness result on the image of $\Phi$ (or its reflection) with $1 \in I$, with one difference: we define $\Phi$ so that $\alpha_1 = \hat{\alpha}_1$. However, this difference has no effect on the $\y$-minimal solutions for $g_1^*$ within $\XLP$, and thus no effect on sharpness. \hfill \qed
    
\end{proof}

\section{Auxiliary Results and Proofs}
\label{app:lem-pfs}

In this section of the appendix, we give the proofs of \Cref{lem:gappy-epi-characterization}
and \Cref{prop:Bin2Bin3-MIPvol} which we have moved here for better readability.

\subsection{Epigraphs Over Non-Contiguous Domains}

Here we present the proof of \Cref{lem:gappy-epi-characterization}.

\begin{proof}[\Cref{lem:gappy-epi-characterization}]
    We first note that we have $\funcf_X(x) \ge \funcf(x)$ for all $x \in \conv(X)$: for all $x \in \conv(X)$, we have that either $\funcf_X(x) = \funcf(x)$ or that $\funcf_X(x)$ is the line between two points on the graph of $f$, which must lie above the graph of $f$ by the convexity of $f$. Further, we have that $\funcf_X$ is convex, as it is a maximum between the convex function $\funcf$ and some of its secant lines, which are also convex.
    
    Now, trivially, by the convexity of $\funcf_X$, we have
    \begin{equation*}
        \conv(\epi_X(\funcf)) = \conv(\epi_X(\funcf_X))
            \subseteq \conv(\epi_{\conv(X)}(\funcf_X))
            = \epi_{\conv(X)}(\funcf_X)
    \end{equation*}
    To show that $\epi_{\conv(X)}(\funcf_X) \subseteq \conv(\epi_X(\funcf))$, let $(x,y) \in \epi_{\conv(X)}(\funcf_X)$. Then if $x \in X$, $y \ge \funcf_X(x)=\funcf(x)$, such that $(x,y) \in \epi_X(\funcf) \subseteq \conv(\epi_X(\funcf))$. On the other hand, if $x \in \conv(X) \setminus X$, then by definition of $\funcf_X$ we have that there exist some $\lambda \in [0,1]$ and $x_1,x_2 \in X$ such that $x = \lambda x_1 + (1-\lambda) x_2$ and $\funcf_X(x) = \lambda \funcf(x_1) + (1-\lambda) \funcf(x_2)$. Then we have that $(x,y)$ is a convex combination of the points $(x_1, f(x_1)+(y-\funcf_X(x)))$ and $(x_2, \funcf(x_2) + (y-\funcf_X(x)))$, which are in $\epi_X(\funcf)$ (since $y-\funcf_X(x) \ge 0$), yielding $(x,y) \in \conv(\epi_X(\funcf))$ as required. \hfill \qed
\end{proof}

\subsection{Volume Proof for Bin2 and Bin3}
Now we prove \Cref{prop:Bin2Bin3-MIPvol}.
\begin{proof}[\Cref{prop:Bin2Bin3-MIPvol}]
    Let $\PIP_{L,\L_1}$ be the MIP relaxation \morsireform,
    where~$ \capF^L $ is the sawtooth approximation of $ \y_x = x^2 $ and $ \y_y = y^2 $
    that consists of secant lines to~$ x^2 $ between consecutive breakpoints $ x_k = k 2^{-L} $
    and $ y_k = k 2^{-L} $ for $ k \in \lrbr{0, 2^L} $. 
    Further, for $ L_1 \to \infty $
    we have
    \begin{equation*}
        \lim_{L_1 \to \infty} \{(p, \zp) \in [0, 1] \times \R : (p,\zp) \in Q^{L_1}\}
            = \{(p, \zp) \in [0, 1] \times \R : (p,\zp) \in \epi_{[0, 1]}(p^2)\}
    \end{equation*}
    under Hausdorff distance.
    As a result, we obtain
    \begin{align*}
    \label{eq:PIPmorsi}
    \lim_{L,L_1\to \infty}& (\projxyz(\PIP_{L,L_1})) = \{(x,y,z) \in [0,1]^2\times \R:\\
     &\tfrac 12 \lrp{(x + y)^2 - \capF^L(x) - \capF^L(y)} \leq z \leq \tfrac 12 \lrp{4 \capF^L\lrp{\tfrac{x + y}{2}} - x^2 - y^2} \}.
\end{align*}
    Now let and $ \lx = \ly = 2^{-(L-1)} $ %
    be the distance between any two consecutive breakpoints $ x_k, x_{k - 1} $ and $ y_k, y_{k - 1} $,
    respectively, and consider the volume of $ \projxyz(\PIP_{L, L_1}) $
    over the grid piece $ [x_{k - 1}, x_k] \times [y_{k - 1}, y_k] $:
    \begin{align*}
   &\lim_{L,L_1\to \infty} \vol(\projxyz(\PIP_{L,L_1}))\\ 
       &= \tfrac 12\int_{x_{k-1}}^{x_k} \int_{y_{k-1}}^{y_k} \lrp{4 \capF^L\lrp{\tfrac{x+y}{2}} - x^2 - y^2 - \lrp{(x+y)^2 - F^L(x) - \capF^L(y)}}\, dy dx\\
       &= \tfrac 12\int_{x_{k-1}}^{x_k} \int_{y_{k-1}}^{y_k} \lrp{\lrp{4 \capF^L\lrp{\tfrac{x+y}{2}} - (x+y)^2} + (\capF^L(x) - x^2) + (\capF^L(y) - y^2)}\, dy dx\\
       &= \tfrac {\ly}{2} \int_{x_{k-1}}^{x_k} (\capF^L(x) - x^2)\, dx + \tfrac{\lx}{2} \int_{y_{k-1}}^{y_k} (\capF^L(y) - y^2)\, dy \\
       &\ \ \ \ \  + 2\int_{x_{k-1}}^{x_k} \int_{y_{k-1}}^{y_k}  \lrp{\capF^L\lrp{\tfrac{x+y}{2}} - (\tfrac{x+y}{2})^2}\, dy dx.
    \end{align*}
    The first two integrals are each the overapproximation volumes for the sawtooth approximation
    over two consecutive univariate domain segments, each of which has an area of $ \tfrac 16 2^{-3L} $,
    see \cite[Appendix A]{Beach2020-compact}. 
    Thus, since $ \lx = \ly = 2*2^{-L} $, %
    we have that the first two integrals %
    add up to $ \tfrac 23 2^{-4L} $. %
    
    To process the third integral, we apply the two substitutions
    $ u = \tfrac{(x - x_{k - 1}) + (y - y_{k - 1})}{2} $
    and $ v = \tfrac{(x - x_{k - 1}) - (y - y_{k - 1})}{2} $.
    The integral then becomes
    \begin{align*}
         &2\int_{x_{k-1}}^{x_k} \int_{y_{k-1}}^{y_k}  \lrp{\capF^L\lrp{\tfrac{x+y}{2}} - (\tfrac{x+y}{2})^2}\, dy dx\\
        &=~2\int_{0}^{2^{-L}} (\capF^L\lrp{u + \tfrac{x_{k-1}+y_{k-1}}{2}} - (u+\tfrac{x_{k-1}+y_{k-1}}{2})^2) \int_{-u}^{u} 1\, dv du \\
        & \ \ +~ 2\int_{2^{-L}}^{2 \cdot 2^{-L}} (F^L\lrp{u + \tfrac{x_{k-1}+y_{k-1}}{2}} - (u+\tfrac{x_{k-1}+y_{k-1}}{2})^2) \int_{-(2 \cdot 2^{-L} - u)}^{2 \cdot 2^{-L} - u} 1\, dv du\\
        &=~ 4\int_{0}^{2^{-L}} u(\capF^L\lrp{u + \tfrac{x_{k-1}+y_{k-1}}{2}} - (u+\tfrac{x_{k-1}+y_{k-1}}{2})^2)\,  du \\
        & \ \ +~ 4\int_{2^{-L}}^{2 \cdot 2^{-L}} (2 \cdot 2^{-L} - u)(\capF^L\lrp{u + \tfrac{x_{k-1}+y_{k-1}}{2}} - (u+\tfrac{x_{k-1}+y_{k-1}}{2})^2)\, du\\
        & =~ 8\int_{0}^{2^{-L}} u(\capF^L\lrp{u + \tfrac{x_{k-1}+y_{k-1}}{2}} - (u+\tfrac{x_{k-1}+y_{k-1}}{2})^2)\,  du &\quad&\text{(J1)}\\
        &=~ 8\int_{0}^{2^{-L}} u(u(2^{-L}-u))\,  du &&(\text{J2})\\
        &=~ 8\int_{0}^{2^{-L}} (2^{-L} u^2-u^3)\,  du\\
        &=~ 8(\tfrac 13 2^{-4L} - \tfrac 14 2^{-4L}) = \tfrac 23 2^{-4L}.
    \end{align*}
    The steps J1 and J2 rely on the observation that $\capF^L$ is the secant line to $x^2$ across the intervals $[\tfrac{x_{k-1}+y_{k-1}}{2},\tfrac{x_{k-1}+y_{k-1}}{2}+2^{-2L}]$ and~$[\tfrac{x_{k-1}+y_{k-1}}{2}+2^{-2L},\tfrac{x_{k-1}+y_{k-1}}{2}+2 \cdot 2^{-2L}]$, due to the positions of $x_{k-1}$ and $y_{k-1}$. 
    In addition, for some $\x \in [x_{k-1},x_k]$, the error between and $x^2$ and the secant line to $x^2$ at points $x_{k-1}$ and $x_k$ is given by $(x-x_{k-1})(x_k-x)$ - the product of distances to each endpoint. 
    Thus, for $u \in [0, 2^{-L}]$, we have
    \begin{equation*}
        \capF^L\lrp{u + \tfrac{x_{k-1}+y_{k-1}}{2}} - (u+\tfrac{x_{k-1}+y_{k-1}}{2})^2 = u(2^L-u),
    \end{equation*}
    yielding the validity of step~J2.
    On the other hand, to show that step~J1 is valid, we observe for $u \in [0, 2^{-L}]$ that
    \begin{equation*}
        \capF^L\lrp{u + \tfrac{x_{k-1}+y_{k-1}}{2}} - (u+\tfrac{x_{k-1}+y_{k-1}}{2})^2 = (u-2^L)(2^{-2L}-u)
    \end{equation*}
    holds, such that the second integral becomes the first integral under the substitution $\tilde{u} = 2^{-L}-u$, since the secant-error portion of the integrand is symmetric about $u=2^{-L}$. Thus, the volume related to the second integral is $\tfrac 43 2^{-4L}$. %
    The volume of $\PIP_{L,L_1}$ over each grid piece converges to $2 \cdot 2^{-4L}$, yielding a total volume convergence of $$ \lim_{L_1 \to \infty} \vol(\projxyz(\PIP_{L,L_1}))=2^{2(L-1)} (2 \cdot 2^{-4L}) = \tfrac 12 2^{-2L}.$$
    The proof for \zellmerreform is similar and therefore omitted here.
\end{proof}

\section{Instance set}
\label{sec:instance_set}
In \cref{table_instance} we show a listing of all instances of the computational study from \cref{sec:computations}.
The boxQP instances are publicly available at
\href{https://github.com/joehuchette/quadratic-relaxation-experiments}{https://github.com /joehuchette/quadratic-relaxation-experiments}.
The ACOPF instances are also publicly available at \href{https://github.com/robburlacu/acopflib}{https://github.com/robburlacu/acopflib}. 
The QPLIB instances are available at \href{https://qplib.zib.de/}{https://qplib.zib.de/}.
In total, we have 60 instances, of which 30 are dense and 30 are sparse.

\begin{table}[h]
\caption{IDs of all 60 instances used in the computational study. In bold are the IDs of the instances that are dense. }
\centering
\begin{tabular}{rrrrrrrrr}
\toprule
\multicolumn{9}{c}{\textnormal{boxQP instances: spar}} \\
\midrule
\textbf{020-100-1} & & \textbf{020-100-2} & & \textbf{030-060-1} & & \textbf{030-060-3} & & \textbf{040-030-1} \\
\textbf{040-030-2} & & \textbf{050-030-1} & & \textbf{050-030-2} & & \textbf{060-020-1} & & \textbf{060-020-2} \\
070-025-2 & & \textbf{070-050-1} & & \textbf{080-025-1} & & \textbf{080-050-2} & & \textbf{090-025-1} \\
\textbf{090-050-2} & & \textbf{100-025-1} & & \textbf{100-050-2} & & \textbf{125-025-1} & & \textbf{125-050-1} \\
\midrule
\multicolumn{9}{c}{\textnormal{ACOPF instances: miqcqp\_ac\_opf\_nesta\_case}} \\
\midrule
3\_lmbd\_api & & 4\_gs\_api & & 4\_gs\_sad & & 5\_pjm\_api & & 5\_pjm\_sad \\
6\_c\_api & & 6\_c\_sad & & 6\_ww\_sad & & 6\_ww & & 9\_wscc\_api \\
9\_wscc\_sad & & 14\_ieee\_api & & 14\_ieee\_sad & & 24\_ieee\_rts\_api & & 24\_ieee\_rts\_sad \\
29\_edin\_api & & 29\_edin\_sad & & 30\_fsr\_api & & 30\_ieee\_sad & & 9\_epri\_api \\
\midrule
\multicolumn{9}{c}{\textnormal{QPLIB instances: QPLIB\_}} \\
\midrule
\textbf{0031} & & \textbf{0032} & & \textbf{0343} & & 0681 & & 0682 \\
0684 & & 0698 & & \textbf{0911} & & \textbf{0975} & & \textbf{1055} \\
\textbf{1143} & & \textbf{1157} & & \textbf{1423} & & \textbf{1922} & & 2882 \\
2894 & & 2935 & & 2958 & & 3358 & & 3814 \\
\bottomrule
\label{table_instance}
\end{tabular}
\end{table}

\end{document}